\newcommand{\RR}{{\mathbb R}}
\newcommand{\e}{{\mathbb S}}
\newcommand{\ZZ}{{\mathbb Z}}
\newcommand{\VV}{{\mathbb V}}
\newcommand{\EE}{{\mathbb E}}
\newcommand{\CC}{{\mathbb C}}
\newcommand{\GG}{{\mathbb G}}
\newcommand{\TT}{{\mathbb T}}
\newcommand{\dd}{\mathrm d}
\newcommand\cF{\mathcal F}
\newcounter{mylc}
\renewcommand{\themylc}{\roman{mylc}}
\newtheorem{theorem}{Theorem}[section]
\newtheorem{proposition}[theorem]{Proposition}
\newtheorem{corollary}[theorem]{Corollary}
\newtheorem{lemma}[theorem]{Lemma}
\newtheorem{example}[theorem]{Example}
\theoremstyle{remark}
\newtheorem{remark}[theorem]{Remark}
\theoremstyle{conjecture}
\theoremstyle{definition}
\newtheorem{question}[theorem]{Question}
\newtheorem{definition}[theorem]{Definition}
\newtheorem{notation}[theorem]{Notation}
\newtheorem{notations}[theorem]{Notations and conventions}
\newtheorem{const}[theorem]{Construction}
\newtheorem{appl}[theorem]{Application}
\newtheorem{problem}[theorem]{Problem}
\newtheorem{claim}[theorem]{Claim}
\theoremstyle{construction}
\definecolor{darkgreen}{cmyk}{1,0,1,.2}
\definecolor{m}{rgb}{1,0.1,1}
\definecolor{green}{cmyk}{1,0,1,0}
\definecolor{test}{rgb}{1,0,0}
\definecolor{cmyk}{cmyk}{0,1,1,0}
\begin{document}
\title{\bf Integrable embeddings and foliations}
\author{Gilbert Hector\thanks{gilberthector@orange.fr}\\ Institut C. Jordan, UMR CNRS 5028\\
Universit\'e C. Bernard (Lyon 1)\\
69622 Villeurbanne-Cedex (France)
 \and Daniel Peralta-Salas\thanks{dperalta@icmat.es}\\ Instituto de Ciencias Matem\'aticas\\
CSIC-UAM-UC3M-UCM\\
C/ Serrano 123, 28006 Madrid (Spain)}

\maketitle
\begin{abstract}
A $k$-submanifold $L$ of an open $n$-manifold $M$ is called \textit{weakly integrable (WI)} [resp. \textit{strongly integrable (SI)}] if there exists a submersion $\Phi:M\to \EE^{n-k}$ such that $L\subset \Phi^{-1}(0)$ [resp. $L= \Phi^{-1}(0)$]. In this work we study the following problem, first stated in a particular case by Costa et al. (Invent. Math. 1988): which submanifolds $L$ of an open manifold $M$ are WI or SI?

For general $M$, we explicitly solve the case $k=n-1$ and provide necessary and sufficient conditions for submanifolds to be WI and SI in higher codimension. As particular cases we recover the theorem of Bouma and Hector (Indagationes Math. 1983) asserting that any open orientable surface is SI in $\EE^3$, and Watanabe's and Miyoshi's theorems (Topology 1993 and 1995) claiming that any link is WI in an open $3$-manifold. In the case $M=\EE^n$ we fully characterize WI and SI submanifolds, we provide examples of $3$- and $7$-manifolds which are not WI and we show that a theorem by Miyoshi (Topology 1995) which states that any link in $\EE^3$ is SI does not hold in general. The right analogue to Miyoshi's theorem is also proved, implying in particular the surprising result that no knot in $\EE^3$ is SI.

Our results applied to the theory of foliations of Euclidean spaces give rise to some striking corollaries: using some topological invariants we classify  all the submanifolds of $\EE^n$ which can be realized as proper leaves of foliations; we prove that $\e^3$ can be realized as a leaf of a foliation of $\EE^n$, $n \geq 7$, but not in $\EE^5$ or $\EE^6$, which partially answers a question by Vogt (Math. Ann. 1993); we construct open $3$-manifolds which cannot be leaves of a foliation of any compact $4$-manifold but are proper leaves in $\EE^4$.

The theory of WI and SI submanifolds is a framework where many classical tools of differential and algebraic topology play a prominent role: Phillips-Gromov h-principle, Hirsh-Smale theory, complete intersections, Seifert manifolds, the theory of immersions and embeddings, obstruction theory ...
\end{abstract}

\tableofcontents
\chapter{Introduction  and partial description of results}\label{intro}

\section{Introduction}
The theory of immersions and embeddings of manifolds is a classical subject in differential topology. The standard setting consists of an ambient $n$-manifold $M$ and a $k$-manifold $L$, $k<n$, to be embedded. Unless otherwise stated we will always assume that the following more precise conditions are fulfilled.

\begin{notations}

(1) All considered manifolds and maps are smooth and oriented or orientation preserving. A manifold will be called \textit{open} if all its connected components are non-compact but possibly with boundary. In particular our ambient manifold $M$ will be open, orientable, connected and mostly without boundary.

(2) The manifolds $L$ to be embedded will be without boundary but not necessarily compact nor connected. The \textit{embeddings} $h:L \to M$ that we shall consider are one-to-one proper immersions of $L$ into $M$. For the sake of notational simplicity, we will often identify the manifold $L$ with its embedded image $h(L)\subset M$ and the term \textit{submanifold} will usually mean embedded submanifold as described above.

(3) The Euclidean $n$-space is denoted by $\EE^n$ and the $n$-vector space by $\RR^n$.
\end{notations}

It is well known that an embedded submanifold $h(L)\subset M$ can be described as the zero set of a function $f\in C^\infty(M,\EE)$. Moreover, if the normal bundle of the embedding is trivial, the submanifold is contained in the zero set of a map $\Phi\in C^\infty(M,\EE^{n-k})$ such that $\dd \Phi|_{h(L)}$ has maximal rank. In general, $\Phi$ is not a submersion from $M$ to $\EE^{n-k}$ so a natural problem is to obtain necessary and/or sufficient conditions over $L$ and $h$ under which the singularities of $\Phi$ can be eliminated thus producing a submersion.

The aim of this paper is to study this problem, first stated in \cite{CGG88} for the particular case where $M = \EE^n$. The general question is which submanifolds of an open manifold $M$ can be realized as fibers of a submersion $\Phi: M \to \EE^{n-k}$. To deal with this question in a general and systematic way we introduce the following definition.

\begin{definition}\label{D:WI}
An embedding $h:L \to M$ is called {\it weakly integrable (WI)} if there exists a smooth submersion $\Phi:M\to \EE^{n-k}$ such that $h(L)\subset \Phi^{-1}(0)$ and {\it strongly integrable (SI)} if $h(L)=\Phi^{-1}(0)$. The submersion $\Phi$ is a {\it weak or strong equation} of $h(L)$; it is of course not unique but provides a natural trivialization of the normal bundle $\nu(h)$ of $h(L)$.
\end{definition}

As said before, we will often identify $L$ with $h(L)$. In particular, we shall say that $L$ is WI or SI when we really refer to its embedding in $M$.

In \cite{CGG88} the authors constructed an example of a submersion $\Phi:\EE^3\to \EE^2$ with a fiber having a connected component diffeomorphic to $\e^1$, but the question whether any link in $\EE^3$ is WI or SI was unanswered. It was partially
solved by Watanabe \cite{Wa93}, who proved that any
link in $\EE^3$ is WI, and completely solved by Miyoshi
\cite{Mi95}, who showed that in fact any link in $\EE^3$ is SI. Miyoshi also analyzed compact $1$-dimensional WI and SI submanifolds of general open $3$-manifolds. Regrettably, we shall show that Miyoshi's proof contains a substantial gap and in fact his main theorem is wrong (we will give counterexamples to it and the correct statement of the theorem).

The literature concerning weak or strong integrability of higher dimensional submanifolds is rather scarce. For example, the fact that any open surface $L$ embedded into $\EE^3$ is SI can be extracted from \cite{HB83}, although the authors did not state it in this way. Our goal in this work is to fill this gap and provide a general theory of WI and SI embeddings. In particular, we shall recover and extend all the previous results \cite{HB83,Wa93,Mi95} as straightforward corollaries of more general theorems. 
The importance of the study of WI and SI embeddings lies also in the fact that it provides a good framework for the interplay between classical subjects of differential and algebraic topology: the Phillips-Gromov h-principle \cite{Ph67,Gr69}, complete intersections \cite{BK96,Fo01}, Seifert submanifolds \cite{Ro75,KW77} and the theory of immersions into Euclidean spaces developed by Thom \cite{Th52}, Milnor \cite{Mi56}, Kervaire \cite{Ke59,Ke59_2} and many other authors. In particular, the Hirsch-Smale immersion theorem \cite{Sm59,Hirsch59} will be one of the basic tools we will use. We shall provide a new interpretation of all these topics in our context, and we will eventually obtain some previously well known results as corollaries.

\section{Most relevant results}\label{S.mostrelevant}

Our discussion in the paper will deal with embeddings of any $k$-manifold $L$ in any codimension. But as it would be too long and tedious to describe all corresponding results in this introduction, we restrict here to embeddings of compact connected manifolds in codimension $m\geq 2$. Indeed these are in some sense the most relevant situations to be studied. Also we describe WI embeddings more than SI, the latter being related, as we shall see later, to the ``complete intersection'' embeddings in the sense of Bochnak and Kucharz (see \cite{BK96}).

With these restrictions in mind, our main results can be summarized as follows:

\medskip

\noindent \textbf{Theorem A}. \textit{Let $L \subset M$ be a compact and connected submanifold of codimension $m \geq 2$ in the open $n$-manifold $M$. The following statements are equivalent:
\begin{enumerate}
\item $L$ is WI.
\item The normal bundle $\nu(L)$ of $L$ extends over $M$ as a trivial bundle.
\end{enumerate}}

\noindent \textbf{Corollary B}. \textit{If $M$ is contractible, e.g. $M = \EE^n$, we get three equivalent conditions:
\begin{enumerate}
\item $L$ is WI.
\item The normal bundle $\nu(L)$ of $L$ extends over $M$.
\item The tangent bundle $T(L)$ of $L$ extends over $M$ and thus $L$ is parallelizable.
\end{enumerate}}

The discussion in the case $M = \EE^n$ is much more subtle. Here arises the question whether the obvious preliminary conditions that $\nu(L)$ and $T(L)$ are trivial are sufficient to ensure that $L$ is WI. For example, we will show that there exist manifolds for which the fact that an embedding $h$ of $L$ is WI or not is an intrinsic character of the manifold not depending on the embedding $h$.

\begin{definition} A parallelizable manifold $L$ is called \textit{totally weakly integrable (TWI)} if for any $n \geq k+2$, any embedding $h$ of $L$ into $\EE^n$ is WI provided that its normal bundle is trivial.
\end{definition}

We obtain the following rather surprising result:
\medskip

\noindent \textbf{Theorem C}. \textit{For $k \notin \{3,7\}$, any compact connected parallelizable $k$-manifold  is TWI. For $k\in\{3,7\}$, a compact connected parallelizable $k$-manifold is TWI if and only if $\chi^*(L)=0$ mod. $2$, where $\chi^*(L)$ is the semicharacteristic of $L$.}

\medskip
This theorem allows us to classify the compact parallelizable manifolds which are not TWI, for instance, the spheres $\e^3$ and $\e^7$.

\medskip
\noindent \textbf{Theorem D}.\textit{ For $k\in \{3,7\}$, the sphere $\e^k$ is not TWI, moreover we have the following:
\begin{enumerate}
\item For $n \geq 2k+1$, any embedding of $\e^k$ in $\EE^n$ has trivial normal bundle and is WI.
\item For $k+2\leq n \leq 2k$, no embedding of $\e^k$ in $\EE^n$ is WI, whether its normal bundle is trivial or not.
\end{enumerate}}

\begin{remark}
It is known that any embedding of $\e^k$ in $\EE^n$, $n\geq k+1$ and $k\in\{3,7\}$, has trivial normal bundle, except for the embeddings of $\e^7$ into $\EE^{11}$.
\end{remark}

Concerning SI embeddings, we only mention the following result, which in particular provides counterexamples (any knot in $\EE^3$) to a theorem by Miyoshi~\cite[Theorem 1]{Mi95}.

\medskip
\noindent \textbf{Theorem E}. \textit{No embedding of $\e^k$, $k\in\{1,3,7\}$, in $\EE^{2k+1}$ is SI. In particular, no knot in $\EE^3$ is SI.}

\subsubsection*{Applications to foliation theory}

In particular, we shall see that a submanifold $L\subset \EE^n$ can be realized as a proper leaf of a foliation of $\EE^n$ if and only if it is WI. Restricting again to compact manifolds, we get:

\medskip
\noindent \textbf{Theorem F}. \textit{Let $L \subset \EE^n$ be a compact connected parallelizable $k$-submanifold of codimension $m \geq 2$. The following properties hold true:
\begin{enumerate}
\item If $n\leq 2k$, $L$ is a leaf of a foliation of $\EE^n$ if and only if $k\notin\{3,7\}$ and it has trivial normal bundle or $k\in\{3,7\}$, it has trivial normal bundle and $\chi^*(L)=0$ mod. $2$ (here $\chi^*(L)$ denotes de semicharacteristic of $L$).
\item If $n\geq 2k+1$, $L$ is a leaf of a foliation of $\EE^n$ if and only if it has trivial normal bundle.
\end{enumerate}}
\noindent\textbf{Corollary G}. \textit{For $k \in \{3,7\}$, the sphere $\e^k$ can be realized as a leaf in $\EE^n$ if and only if $n\geq 2k+1$.}
\medskip

This result provides a partial answer to a question of Vogt \cite{V93}. Moreover, we obtain a complete characterization of submanifolds which can be realized as proper leaves in $\EE^n$. For example, we shall construct an open $3$-manifold which cannot be a leaf of any codimension one foliation of a closed $4$-manifold~\cite{Gh85,In85}, but can be embedded SI in $\EE^{4}$ and therefore realized as a leaf of a codimension $1$ foliation of $\EE^4$.

\section{Organization of the paper}

To finish let us briefly describe the organization of the paper. It consists of two main parts. In the first one (Chapter \ref{Ch:gen}) we consider the general aspects of the theory and study WI and SI embeddings in any open manifold. In the second part (Chapter \ref{Ch:Euc}) we focus on embeddings into Euclidean spaces and provide several applications of our results to the theory of foliations. A final section (Chapter \ref{Ch:final}) concerning real analytic submanifolds and open problems is also included.

More specifically, in Section \ref{S:pre}, we discuss some simple general properties of WI and SI embeddings as well as the relationship with Seifert manifolds and complete intersections. In Section \ref{1} we classify WI and SI codimension one submanifolds of open manifolds. Codimension $m\geq 2$ submanifolds are considered in Section \ref{s.general} where the Phillips-Gromov h-principle is introduced to study WI embeddings. In Section \ref{s.cod2} complete intersections are used to characterize SI embeddings and the codimension 2 case is studied in detail. Embeddings in Euclidean spaces are introduced in Section \ref{Euc0}, and some general properties as well as the Hirsch-Smale theorem are discussed in Section \ref{Euc1}. The tangential characteristic class is defined in Section \ref{S:tangent}, where WI submanifolds in $\EE^n$, $n\geq 2k+1$, are characterized. In Sections \ref{S:normal}, \ref{S:TWI} and \ref{S:main} we focus on the normal characteristic class and, using the machinery of the immersion theory, we discuss WI submanifolds of dimension $\neq 3,7$. The remaining cases, i.e. $3$- and $7$-manifolds, are studied in Section \ref{Euc4}. SI embeddings in $\EE^n$ are analyzed in Sections~\ref{Euc5} and~\ref{S:miyoshi} using the previous results and the theory of complete intersections. In particular, we shall give counterexamples to Miyoshi's theorem and we shall characterize the SI links of $\EE^3$, thus proving the right analogue of Miyoshi's theorem. Finally, in Section \ref{Euc6}, we apply our results to the theory of foliations of Euclidean space and to the theory of critical sets.

\section*{Acknowledgements}

The authors are very grateful to V. Borrelli, S. Miyoshi,
T. Mostowski, F. Presas and A. Weber for their interesting remarks on
preliminary versions of this work. We are particularly indebted to M. Takase for
his enlightening comments regarding Hirsch-Smale theory and for explaining to us how to classify the $3$- and $7$-manifolds which are totally weakly integrable using the semicharacteristic. Special thanks are due to F. Gonz\'alez-Gasc\'on for introducing the second author to this problem several years ago; this paper is dedicated to him on the occasion of his $65$th birthday. D.P.-S. acknowledges the financial support of the Spanish MICINN through the Ram\'on y Cajal program and
the grant no.~MTM2007-62478.

\chapter{General theory}\label{Ch:gen}
\section{Preliminary properties}\label{S:pre}

A submersion $\Phi:M\to \EE^m$ defines a foliation $\cF_{\Phi}$ of $M$ whose leaves are the connected components of the level sets $\Phi^{-1}(c)$, $c\in \Phi(M)$. This foliation is without holonomy, its leaf-space is a possibly non-Hausdorff manifold of dimension $m$ and we say that $\cF_{\Phi}$ is \textit{simple}. In particular, any WI submanifold $L \subset M$ is, by definition, a union of leaves of the corresponding simple foliation $\cF_{\Phi}$ for any equation $\Phi$ of $L$ and the following elementary properties hold:

\begin{lemma}\label{L:easy} If $L\subset M$ is a WI submanifold, then
\begin{enumerate}
\item The normal bundle $\nu(L)$ of $L$ extends over $M$ as a trivial bundle $\tilde \nu(L)$.
\item The tangent bundle $T(L)$ of $L$ extends over $M$, and if $M$ is contractible, its extension $\tilde T(L)$ is trivial and so $L$ is parallelizable.
\end{enumerate}
\end{lemma}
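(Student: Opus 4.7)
The plan is to use the submersion $\Phi$ to produce a global splitting of $TM$ whose restriction to $L$ recovers $T(L)$ and $\nu(L)$ separately, with the normal summand automatically trivial.

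First I would consider the differential $\dd\Phi:TM\to \Phi^{*}T\EE^{m}$, where $m=n-k$. Since $\Phi$ is a submersion, $\dd\Phi$ is a surjective bundle map, so its kernel $E:=\ker(\dd\Phi)$ is a rank-$k$ subbundle of $TM$: it is the tangent bundle to the foliation $\cF_{\Phi}$, defined globally on $M$. The short exact sequence $0\to E\to TM\to TM/E\to 0$ identifies the quotient $\tilde\nu:=TM/E$ with $\Phi^{*}T\EE^{m}$, and since $T\EE^{m}$ is trivial the bundle $\tilde\nu$ is trivial, a global trivialization being furnished by the pull-back of the standard frame $(\partial/\partial x_{1},\dots,\partial/\partial x_{m})$ on $\EE^{m}$.

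Next I would identify the restrictions to $L$. Because $\Phi$ has constant rank $m$ and $L\subset\Phi^{-1}(0)$ with $\dim L=k=\dim\Phi^{-1}(0)$, the submanifold $L$ is necessarily a union of connected components of the leaf $\Phi^{-1}(0)$ of $\cF_{\Phi}$; in particular $T(L)=E|_{L}$ and $\nu(L)\cong\tilde\nu|_{L}$. Setting $\tilde T(L):=E$ and $\tilde\nu(L):=\tilde\nu$ therefore exhibits the desired extensions over all of $M$, the latter being trivial by the previous paragraph. This proves (1) and the extension half of (2).

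Finally, if $M$ is contractible then every smooth vector bundle on $M$ is trivial (by homotopy invariance of the classifying map), so $\tilde T(L)=E$ is trivial and $L$ is parallelizable. The only step requiring a moment of care is the dimension-count argument showing that $L$ coincides with an open-and-closed subset of the leaf $\Phi^{-1}(0)$ and hence inherits both its tangent and normal bundles; beyond this there is no substantial obstacle, the lemma being essentially a direct reformulation of the definition of a WI embedding.
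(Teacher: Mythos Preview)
Your proof is correct and follows essentially the same approach as the paper: both extend $T(L)$ and $\nu(L)$ by the tangent and normal bundles of the foliation $\cF_\Phi$, trivialize the normal extension via $\dd\Phi$, and invoke contractibility of $M$ for the triviality of the tangent extension. Your version is slightly more explicit about the exact sequence and the dimension-count identifying $L$ as an open-and-closed subset of $\Phi^{-1}(0)$, but the argument is the same.
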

\begin{proof} Let $\Phi$ be a weak equation of $L$, then $\nu(L)$ and $T(L)$ are extended respectively by the normal bundle $\nu(\cF_\Phi)$ and the tangent bundle $T(\cF_\Phi)$ of the foliation $\cF_\Phi$. Moreover, $\nu(\cF_\Phi)$ is trivialized by the bundle map $\dd\Phi: \nu(\cF_\Phi) \to T(\EE^{n-k})$, and if $M$ is contractible, $T(\cF_\Phi$), which is defined over all of $M$, is trivial and so is its restriction $T(L)$ to $L$. The proof is completed by setting $\tilde \nu(L):=\nu(\cF_\Phi)$ and $\tilde T(L):=T(\cF_\Phi)$.
\end{proof}

The triviality of the extended bundle $\tilde \nu(L)$ shows that a straightforward obstruction for the existence of WI embeddings of a $k$-manifold $L$ in an $n$-manifold $M$ is that $M$ must admit a global tangent $(n-k)$-frame; in the usual terminology $\text{span}(M)\geq n-k$.

Next we recall the definitions of two well known concepts which are related to WI and SI submanifolds.

\begin{definition}\label{complete}
Let $M$ be an open manifold of dimension $n$. A $k$-dimensional submanifold $L$ of $M$ is a {\em complete intersection (CI)} if there exists a smooth map $\Xi:M \to \EE^{n-k}$ such that $0$ is a regular value of $\Xi$ and $L=\Xi^{-1}(0)$, cf. e.g.~\cite{Fo01}. The map $\Xi$, which is of course not unique, is called a {\em complete intersection equation (CI equation)} of $L$.
\end{definition}

As in the case of a WI equation, a CI equation of $L$ provides a natural trivialization of the normal bundle $\nu(L)$.

\begin{definition}\label{seifert}
Let $L$ be a $k$-dimensional submanifold of $M$. A $(k+1)$-dimensional submanifold $S\subset M$ with boundary, not necessarily compact, is a {\em Seifert manifold} for $L$ if its normal bundle $\nu(S)$ is trivial and $L = \partial S$. If $M=\EE^n$ and $n-k=2$, this is just the standard notion of a {\em Seifert surface} for the usual knots and links in $\EE^n$~\cite{Ro75,KW77}.
\end{definition}

The reader will find in \cite{BK96} an alternative definition of complete intersection, more in the spirit of Algebraic Geometry. In fact, the two preceding concepts are strongly related (see~\cite[Theorem 1.12]{BK96}).

\begin{theorem}[Bochnak and Kucharz]\label{seif-int}
A submanifold $L\subset M$ of dimension $k<n$ is a complete intersection if and only if it admits a Seifert manifold.
\end{theorem}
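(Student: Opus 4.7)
The statement is a biconditional, and I plan to handle the two directions separately.

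For the forward direction (CI implies the existence of a Seifert manifold), I would start from a CI equation $\Xi: M \to \EE^{n-k}$ for $L$ and cut out a Seifert manifold as the preimage of an embedded ray. First I would use Sard's theorem plus a parametric-transversality perturbation to produce a smooth proper embedding $\gamma: [0, \infty) \hookrightarrow \EE^{n-k}$ with $\gamma(0) = 0$, close to the straight ray $\{tv\}_{t\geq 0}$, such that $\gamma$ is transverse to $\Xi$; transversality at $\gamma(0)$ is automatic because $0$ is a regular value of $\Xi$ by hypothesis. Then $S := \Xi^{-1}(\gamma([0, \infty)))$ would be a smooth properly embedded $(k+1)$-submanifold of $M$ with boundary $\partial S = \Xi^{-1}(0) = L$, and $\nu(S) \cong \Xi^{*}\nu(\gamma)$ would be trivial since $\gamma$ is a contractible $1$-submanifold of Euclidean space.

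For the backward direction (existence of a Seifert manifold implies CI), I would build $\Xi$ locally from the Seifert data and then extend it globally. Starting from $S$ with $\partial S = L$ and a trivialized normal bundle, I would fix a tubular-neighborhood diffeomorphism $\Psi: S \times \RR^{n-k-1} \xrightarrow{\sim} N(S) \subset M$, a collar $L \times [0, 1) \hookrightarrow S$, and a smooth function $\sigma: S \to [0, \infty)$ with $\sigma^{-1}(0) = L$ and $d\sigma \neq 0$ along $L$ (equal to the collar coordinate near $L$ and positive elsewhere). The local CI equation on $N(S)$ would be
\[
\Xi_0(\Psi(s, y)) := (\sigma(s),\, y) \in \RR_{\geq 0} \times \RR^{n-k-1} = \EE^{n-k},
\]
for which $\Xi_0^{-1}(0) = L$ is a regular preimage, since the differentials of $\sigma$ along the normal-to-$L$ direction in $S$ and of the $y$-coordinates yield $n-k$ independent output directions at every point of $L$.

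The main obstacle, and the step that will need care, is extending $\Xi_0$ from $N(S)$ to all of $M$ without creating spurious zeros. My plan is to exploit the half-space structure of the image via a cutoff. I would fix a smooth $\chi: M \to [0, 1]$ compactly supported in $N(S)$ and identically $1$ on a smaller neighborhood of $S$, extend $\chi\,\Xi_0$ by zero off $N(S)$, and set
\[
\Xi(x) := \chi(x)\,\Xi_0(x) + (1 - \chi(x))\, e_1, \qquad e_1 := (1, 0, \ldots, 0) \in \EE^{n-k}.
\]
Both summands land in the closed half-space $\RR_{\geq 0}\times\RR^{n-k-1}$; wherever $\chi(x) < 1$, the first coordinate of $\Xi(x)$ is at least $1 - \chi(x) > 0$, so $\Xi(x) \neq 0$ there, while on the region where $\chi \equiv 1$ one has $\Xi = \Xi_0$ and vanishing occurs exactly on $L$. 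Finally, since $\chi \equiv 1$ near $L$, $d\chi$ vanishes there and $d\Xi|_L = d\Xi_0|_L$ retains maximal rank, so $0$ is a regular value of $\Xi$. This would yield the sought CI equation and complete the equivalence.
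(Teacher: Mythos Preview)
The paper does not give its own proof of this theorem; it is quoted verbatim from Bochnak--Kucharz \cite[Theorem 1.12]{BK96} and used as a black box throughout. So there is nothing in the paper to compare against, and your proposal has to stand on its own.

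Your forward direction is sound. In fact you do not even need to perturb the ray: apply Sard to the radial projection $\Xi/|\Xi|:M\setminus L\to S^{n-k-1}$ and pick a regular value $v$; then $\Xi$ is transverse to the straight ray $t\mapsto tv$ on $(0,\infty)$, transversality at $0$ being automatic, and $S=\Xi^{-1}(\{tv:t\ge 0\})$ is a properly embedded Seifert manifold with $\nu(S)\cong\Xi^*\nu(\gamma)$ trivial.

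The backward direction, however, has a real gap at the boundary $L=\partial S$. Because $S$ is a submanifold \emph{with boundary} sitting inside the boundaryless $M$, the tubular neighborhood $N(S)=\Psi(S\times\RR^{n-k-1})$ is a codimension-$0$ submanifold with boundary $\partial N(S)=\Psi(L\times\RR^{n-k-1})$; it is \emph{not} open in $M$, and $L$ lies on $\partial N(S)$. Hence there is no smooth $\chi:M\to[0,1]$ that is supported in $N(S)$ and identically $1$ on an $M$-open neighborhood of $L$ (or of $S$): any such neighborhood meets $M\setminus N(S)$. With your formula $\Xi=\chi\,\Xi_0+(1-\chi)e_1$, approaching a point of $L$ from inside $N(S)$ gives $\Xi\to(0,y)$ while from outside $N(S)$ one has $\Xi\equiv e_1$, so $\Xi$ is not even continuous. (A secondary and easily fixed issue: ``compactly supported'' is incompatible with ``$\equiv 1$ near $S$'' when $S$ is non-compact.)

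The standard repair is to first push $S$ slightly past its boundary to an open $(k+1)$-submanifold $\hat S\supset S$ with $\hat S\setminus S\cong L\times(-\varepsilon,0)$ (possible because $\nu_M(L)=\nu_S(L)\oplus\nu_M(S)|_L$ is trivial), so that $N(\hat S)$ is genuinely open in $M$; but then $\hat\sigma$ takes negative values on the added collar and your half-space trick with $e_1$ no longer excludes spurious zeros without further care. One clean way to finish is to work instead with a tubular neighborhood $N(L)\cong L\times D^{n-k}$ of $L$, set $\Xi_0$ equal to the fiber coordinate there, and use $S\cap(M\setminus\mathrm{int}\,N(L))$ as a framed nullcobordism to extend $\Xi_0|_{\partial N(L)}$ over the complement via Pontryagin--Thom. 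Your elementary cutoff idea can also be rescued, but it needs a more delicate choice of $\chi$ than the one you wrote.
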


\begin{remark}\label{R:int}
Let us denote by $H^\infty_*(M;\ZZ)$ the homology group of locally finite (but possibly infinite) simplicial chains of $M$. A Seifert manifold $S$ for $L\subset M$, having trivial normal bundle, inherits a well defined orientation from the orientation of $M$. Accordingly, if we denote by $\epsilon$ the induced orientation on $L=\partial S$, we see that $0=[L^\epsilon] \in H^{\infty}_k(M,\ZZ)$ for any submanifold $L \subset M$ which is a complete intersection. But the converse implication is not true; there are embeddings of exotic spheres in Euclidean spaces which are not CI although the corresponding homology class vanishes (see Ref. \cite{BK96}). 
\end{remark}

A SI submanifold $L\subset M$ is obviously WI and CI, so it is natural to ask whether the converse implication holds: 

\begin{question}\label{conjci}
Is a submanifold SI if and only if it is WI and CI?
\end{question}

The main difficulty of this problem is that the critical points of a CI equation $\Xi$ are not isolated in general, so it is not evident how to get rid of them in order to transform $\Xi$ into a submersion. In the forthcoming sections we will address this problem in most cases and we shall study submanifolds (e.g. knots in $\EE^3$) that are WI and CI but not SI.

On the other hand, there are examples showing that the two properties of being WI and CI are independent in any codimension.

\begin{example}\label{exsphere}
(1) The standard sphere $\mathbb{S}^k \subset \EE^{k+1}$ is defined by the equation $\Xi(x):=|x|^2-1$, which is obviously CI. But this sphere is not WI because it bounds a closed disk and so the restriction to the disk of a WI equation $\Phi: \EE^{k+1}\to \EE$ would have a critical point, which is a contradiction.

We can increase the codimension of these examples by embedding $\EE^{k+1}$ into $\EE^n=\EE^{k+1}\times \EE^{n-k-1}$, the corresponding embedding of $\e^k$ still being CI but not WI for $k\notin\{1,3,7\}$ because $\mathbb{S}^k$ is not parallelizable~\cite{BM58} (compare with Lemma~\ref{L:easy}).

(2) Consider $M = (\EE^2 \backslash \{0\}) \times \EE^{n-2}$ endowed with the restriction of the canonical system of coordinates of $\EE^n$. If $P \subset \EE^n$ is the hyperplane defined by the equation $\{x_1 = 0\}$, then  $\Phi(x) = x_1$ is a strong equation for $L = M \cap P$ in $M$ and consequently the half-plane $L^+ \subset L$ given by $\{x_2 > 0\}$ is WI but neither CI nor SI because its fundamental homology class is no longer trivial in $M$.

Similar examples in higher codimensions can be obtained by embedding $M$ in products $M \times N$.

\end{example}

\section{Codimension $1$ integrable embeddings}\label{1}

Let $L \subset M$ be a codimension one, not necessarily compact nor connected, embedded submanifold of an open $n$-dimensional manifold $M$; its normal bundle $\nu(L)$
is a rank one bundle which is orientable and thus trivial. In this section we shall show that under these assumptions the WI and SI characters of $L$ and their relationship with complete intersection (cf. Question~\ref{conjci}), can be completely understood using elementary techniques. Let us first describe a preliminary construction.

\begin{const}\label{const.tilings}

Denote by $\hat M_L$ the manifold obtained by cutting $M$ along $L$; it is a manifold diffeomorphic to the complement of an open trivial tubular neighborhood of $L$. Any of its components $V_j$ is a manifold with boundary and we denote by $J_j: V_j \to M$ the natural inclusion of $V_j$ into $M$. We make the following observations:

(1) The inclusion $J_j: V_j \to M$ is injective unless there exist two boundary components of $\partial V_j$ which are mapped onto the same component $L_i$ of $L$. This means that $L_i$ is contained in the interior of $J_j(V_j)$ and consequently there exists a loop $\alpha$ contained in the interior of $J_j(V_j)$ and meeting $L_i$, and thus also $L$, in exactly one point. Then by Poincar\'e-Lefschetz duality, we see that $0 \neq [L^\epsilon] \in H^\infty_{n-1}(M;\ZZ)$ for any orientation $\epsilon$ of $L$.

(2) Conversely, if $0 = [L^\epsilon] \in H^\infty_{n-1}(M;\ZZ)$ for some orientation $\epsilon$, the family $\mathcal{T}_L= \{V_j\}_j$ of components of $\hat M_L$ defines a tiling of $M$. Two tiles are said to be \textit{contiguous} if they intersect along some component of $L$, furthermore we can assume that the tiles are indexed by $\{0,1,2,\dots,r\}$ or $\mathbb{N}$ in such a way that $V^{[r]}:= \cup_{j=1}^r V_j$ is connected for any $r$.

Finally, we represent the dual graph $\Gamma$ of this tiling as follows:

\noindent (a) Its set of vertices is a countable family of base-points $x_j \in \text{int}(V_j)$.

\noindent (b) Any component $L_i$ of $L$ defines an edge $\gamma_{jk}$ whose endpoints are the base-points $x_j$ and $x_k$ of the two contiguous tiles $V_{j(i)}$ and $V_{k(i)}$ which intersect along $L_i$. It can be represented by an embedded path which cuts $L_i$ in exactly one point with the algebraic intersection $\gamma_{jk} \wedge L_i = \pm 1$, depending on the orientations of $\gamma_{jk}$ and $L_i$.

(3) Fix an orientation $\kappa$ of $M$ and let $\kappa_j$ be its restriction to $V_j$. We will say that $\hat M_L$ admits an \textit{alternating orientation} if there exists for each $j$ an orientation $\eta_j= \pm \kappa_j$ of $V_j$ such that any component $L_i$ of $L$ is the intersection of two contiguous tiles $V_{j(i)}$ and $V_{k(i)}$ whose orientations $\eta_j$ and $\eta_k$ are opposite. Note that the latter condition implies that the two tiles $V_{j(i)}$ and $V_{k(i)}$ are distinct, that all inclusion maps $J_j$ are one-to-one, that $\hat M_L$ defines a tiling $\mathcal{T}_L$ of $M$ and that description (2) above applies.
\end{const}

Now we get a simple characterization of complete intersection submanifolds in codimension one.

\begin{proposition}\label{thm.CI} Let $L$ be a codimension one submanifold of the open $n$-manifold $M$, then the following conditions are equivalent:
\begin{enumerate}

\item $\hat M_L$ admits an alternating orientation.
\item $L$ is a complete intersection.
\item There exists an orientation $\epsilon$ of $L$ such that $0 = [L^\epsilon] \in H^\infty_{n-1}(M;\ZZ)$.
\end{enumerate}

\end{proposition}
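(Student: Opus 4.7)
The plan is to prove the equivalence via the cyclic chain $(1) \Rightarrow (2) \Rightarrow (3) \Rightarrow (1)$, since each implication uses rather different techniques. The first step $(2) \Rightarrow (3)$ is immediate: given a CI equation $\Xi$, the sublevel set $\Xi^{-1}((-\infty,0])$ is a codimension-zero submanifold of $M$ with boundary $L$ and trivial normal bundle, i.e.\ a Seifert manifold for $L$, so Remark~\ref{R:int} directly yields $[L^\epsilon]=0$ in $H^\infty_{n-1}(M;\ZZ)$ for the induced boundary orientation~$\epsilon$.

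For $(1) \Rightarrow (2)$ I would build a CI equation by patching local models across $L$. Writing $\eta_j = \sigma_j\kappa_j$ with $\sigma_j \in \{\pm 1\}$, pick a trivial tubular neighborhood $L \times (-\delta,\delta)$ of $L$ whose slab $L_i \times (-\delta,\delta)$ over each component $L_i$ is contained in $V_{j(i)} \cup L_i \cup V_{k(i)}$, the two halves lying in the two distinct tiles. Fix an odd, strictly increasing smooth function $\rho : (-\delta,\delta) \to (-1,1)$ with $\rho \equiv \pm 1$ near the endpoints and $\rho'(0) > 0$, and set $\Xi(p,t) := \sigma_{k(i)}\rho(t)$ on the slab over $L_i$ and $\Xi := \sigma_j$ on the remainder of $V_j$. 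The alternating condition $\sigma_{j(i)} = -\sigma_{k(i)}$ makes these two prescriptions agree on the overlap and ensures that $\Xi$ is smooth and transverse to $0$ along $L$, with $\Xi^{-1}(0) = L$; hence $\Xi$ is a CI equation.

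The substance lies in $(3) \Rightarrow (1)$. First, observation~(1) of Construction~\ref{const.tilings} shows that the hypothesis $[L^\epsilon]=0$ forces every inclusion $J_j : V_j \to M$ to be injective, so $\hat M_L$ is a genuine tiling and the two tiles $V_{j(i)}, V_{k(i)}$ contiguous along each $L_i$ are automatically distinct. Next, write $L^\epsilon = \partial C$ for some locally finite $n$-chain $C$; invoking the long exact sequence in Borel--Moore homology for the pair $(M,L)$, top-dimensional chains with boundary on $L$ are generated by the oriented fundamental classes of the tiles, so $C = \sum_j c_j\,[V_j]_{\kappa_j}$ with integer coefficients uniquely determined up to a common additive constant. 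Computing $\partial C$ on each $L_i$ and using that the boundary orientations induced on $L_i$ from the two sides $V_{j(i)}$ and $V_{k(i)}$ are opposite, comparison with $L^\epsilon$ forces $c_{j(i)} - c_{k(i)} = \pm 1$. These two integers therefore have opposite parities, and setting $\eta_j := (-1)^{c_j}\kappa_j$ yields an alternating orientation of $\hat M_L$.

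The main obstacle I foresee is precisely this last step: the clean identification of locally finite $n$-chains with integer combinations of tiles and the careful tracking of induced boundary orientations, which is what guarantees $c_{j(i)} - c_{k(i)} = \pm 1$ rather than some even integer. Once that bookkeeping is settled, the parity argument concluding (1) is immediate, and the global shift ambiguity in the $c_j$'s is harmless since it translates all parities simultaneously, leaving the alternating orientation $(\eta_j)$ unchanged up to a global sign.
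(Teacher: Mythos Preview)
Your proof is correct. The implications $(2)\Rightarrow(3)$ and $(1)\Rightarrow(2)$ match the paper's arguments essentially verbatim. For $(3)\Rightarrow(1)$, however, you take a genuinely different route. The paper works combinatorially: it forms the dual graph $\Gamma$ of the tiling, picks a maximal tree $\Lambda\subset\Gamma$, defines the signs $\eta_j$ by propagating along $\Lambda$, and then uses an intersection-number parity argument (the closed loop $\beta=\alpha+\gamma_{jk}$ must have $\beta\wedge L=0$) to check that contiguous tiles joined by an edge \emph{outside} $\Lambda$ still receive opposite signs. Your argument instead unpacks the hypothesis $[L^\epsilon]=0$ homologically: you identify $H^\infty_n(M,L;\ZZ)$ with the free abelian group on the tiles $[V_j]$, write $L^\epsilon=\partial\sum_j c_j[V_j]$, read off $c_{j(i)}-c_{k(i)}=\pm1$ from the boundary computation, and set $\eta_j=(-1)^{c_j}\kappa_j$. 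Both approaches ultimately rest on Poincar\'e--Lefschetz duality, but yours invokes it once at the level of $H^\infty_n(M,L)$ and then reduces everything to a parity observation, whereas the paper's avoids identifying relative top homology explicitly at the cost of a tree-and-loop argument. Your version is arguably cleaner and makes the role of the chain $C$ bounding $L^\epsilon$ completely transparent; the paper's version is more hands-on and needs no appeal to the structure of Borel--Moore $n$-chains. The bookkeeping you flag as the ``main obstacle'' is indeed routine once one observes that the boundary orientations on $L_i$ from its two adjacent tiles are opposite, so the coefficient of $[L_i]$ in $\partial\sum c_j[V_j]$ is exactly $\pm(c_{j(i)}-c_{k(i)})$.
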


\begin{proof}We already noticed in Remark \ref{R:int} that $2\Rightarrow 3$, so it remains to prove the two other implications.

$1 \Rightarrow 2$: We observed in Construction~\ref{const.tilings} that if condition $1$ holds, then $\hat M_L$ defines a tiling of $M$ and any component $L_i$ of $L$ is the intersection of two distinct contiguous tiles $V_{j(i)}$ and $V_{k(i)}$. Then for any closed tubular neighborhood $N_i$ of $L_i$, there exists a trivialization $\phi_i: N_i \to [-1,+1]$ such that $\phi_i(L_i) = 0$ and $\phi_i$ has the same sign as the orientations $\eta_{j(i)}$ and $\eta_{k(i)}$ when restricted to $V_{j(i)}$ and $V_{k(i)}$ respectively. Fitting together all these maps $\phi_i$, we obtain a map $\phi$ on a tubular neighborhood $N$ of $L$ which extends continuously by $\pm 1$ on each component $V_j$. Smoothing it out we get a CI equation $\Xi$ of $L$.

$3 \Rightarrow 1$: According to paragraph $(2)$ in Construction~\ref{const.tilings}, condition $3$ implies that $\hat M_L$ defines a tiling of $M$. We consider its dual graph $\Gamma$, choose a maximal tree $\Lambda \subset \Gamma$ and define a sequence $\{\eta_j\}_j$ of orientations of the components $V_j$ by setting $\eta_0 = +1$ and $\eta_j \cdot \eta_k = -1$ for any edge $\gamma_{jk} \subset \Lambda$. We claim that this procedure defines an alternating orientation for $\hat M_L$. Indeed, the claim holds by definition if $\Lambda = \Gamma$. In the general case, take two contiguous tiles $V_j$ and $V_k$ and let $\alpha$ be the unique simplicial path joining $x_j$ to $x_k$ in $\Lambda$; it is a sum of $p$ edges and the result is a consequence of the two following observations:
\medskip

\noindent (a) $p$ must be odd because otherwise, the closed loop $\beta = \alpha + \gamma_{jk}$ would be the sum of an odd number of edges implying that the algebraic intersection $\beta \wedge L$ is not equal to zero for any orientation of $L$, thus contradicting condition 3 by Poincar\'e-Lefschetz duality.

\noindent (b) Each edge contained in $\alpha$ represents a change of sign for the corresponding orientations, so $p$ being odd, $\eta_j$ and $\eta_k$ have opposite signs.
\end{proof}

Another key idea to characterize codimension one WI and SI submanifolds is that the critical points of a CI equation of $L$ are generically isolated, and hence they can be eliminated if no component of $\hat M_L$ is bounded. Now we are ready to prove the main theorem of this section, but first let us introduce the following preliminary lemma.

\begin{lemma}\label{lem.WI} Let $L \subset M$ be a codimension one submanifold of the open $n$-dimensional manifold $M$. Then if $L$ is WI, the manifold $\hat M_L$ is open.

\end{lemma}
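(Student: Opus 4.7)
The plan is to argue by contradiction. Suppose some connected component $V_j$ of $\hat M_L$ is compact, and let $\Phi:M\to \EE$ be a weak equation of $L$, so that $\Phi$ is a submersion and $L \subset \Phi^{-1}(0)$. By the very construction of $\hat M_L$ as the complement of an open trivial tubular neighborhood of $L$, the boundary of $V_j$ is sent by the inclusion $J_j:V_j \to M$ into $L$, and the interior of $V_j$ is mapped diffeomorphically onto an open subset of $M$. Hence the composition $f := \Phi \circ J_j : V_j \to \EE$ is a smooth function that vanishes on $\partial V_j$.

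Since $V_j$ is compact, $f$ attains its maximum $M^+$ and its minimum $M^-$. I will now rule out the two possibilities: either $f \equiv 0$ on $V_j$ or $f$ takes a nonzero value somewhere. In the first case, $J_j(V_j)\subset \Phi^{-1}(0)$; but the image $J_j(\inte V_j)$ is an open subset of $M$ of dimension $n$, while $\Phi^{-1}(0)$ is a submanifold of codimension $1$, contradicting the fact that $\Phi$ is a submersion on its zero set.

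In the second case, up to replacing $\Phi$ by $-\Phi$ we may assume $M^+>0$. Since $f$ vanishes on $\partial V_j$, this maximum must be attained at some interior point $p \in \inte(V_j)$. But then $J_j(p)$ lies in the interior of $M$ and is a critical point of $\Phi$, since the local maximum forces $\dd \Phi|_{J_j(p)} = 0$. This again contradicts the assumption that $\Phi$ is a submersion.

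Both cases lead to a contradiction, so no component of $\hat M_L$ is compact, which is precisely the statement that $\hat M_L$ is open in the sense of the Notations. There is no real obstacle here; the only point one needs to be careful about is that the cutting construction realizes $\partial V_j$ inside $L$ and $\inte V_j$ inside $M \setminus L$, which is exactly what makes the max/min argument applicable.
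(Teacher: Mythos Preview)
Your proof is correct and follows essentially the same approach as the paper: restrict a weak equation $\Phi$ to a hypothetically compact component $V_j$, note it vanishes on $\partial V_j$, and obtain an interior extremum contradicting the submersion property. Your separate treatment of the case $f\equiv 0$ is harmless but unnecessary, since in that case every interior point is already a critical point of $\Phi\circ J_j$ and hence of $\Phi$.
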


\begin{proof} Let $\Phi_j$ be the restriction to a component $V_j$ of $\hat M_L$ of a weak equation $\Phi$ of $L$. Then $\Phi_j$ is equal to $0$ on $\partial V_j$ and if $V_j$ is compact, it attains a maximum or a minimum in $\text{int}(V_j)$ thus providing a singular point for $\Phi_j$ and contradicting the fact that $\Phi_j$ is a submersion.
\end{proof}

\begin{theorem} \label{T:Sect1} Let $L$ be a codimension one submanifold of $M$. The following conditions are equivalent:
\begin{enumerate}
\item $L$ is SI.
\item $L$ is WI and $0 = [L^\epsilon] \in H^\infty_{n-1}(M;\ZZ)$ for some orientation $\epsilon$ of $L$.
\item $L$ is CI and WI.
\item $L$ is CI and $\hat M_L$ is open.
\end{enumerate}
\noindent Moreover, $L$ is WI if and only if $\hat M_L$ is open.
\end{theorem}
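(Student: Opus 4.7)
The plan is to prove the four equivalences in a cycle alongside the moreover clause. Three implications are essentially free: (1)\,$\Rightarrow$\,(3) is a tautology since a strong equation is both weak and CI; (2)\,$\Leftrightarrow$\,(3) is Proposition~\ref{thm.CI}; and (3)\,$\Rightarrow$\,(4) is Lemma~\ref{lem.WI}. The work reduces to two non-trivial steps: (4)\,$\Rightarrow$\,(1), which combined with (1)\,$\Rightarrow$\,(3) closes the cycle, and the direction ``$\hat M_L$ open $\Rightarrow$ $L$ is WI'' of the moreover clause.

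Both remaining steps rely on the principle anticipated in the paragraph preceding Lemma~\ref{lem.WI}: on a non-compact manifold (possibly with boundary), the isolated critical points of a generic smooth function can be eliminated by transporting each of them to infinity along a smooth path, and this can be done while preserving a prescribed submersion behavior near any closed subset, in particular near $\partial V_j$. For (4)\,$\Rightarrow$\,(1), I would start with a CI equation $\Xi:M\to\EE$, which is automatically a submersion on a tubular neighborhood $U$ of $L=\Xi^{-1}(0)$. A generic perturbation of $\Xi$ outside $U$ makes it Morse without disturbing $\Xi^{-1}(0)=L$, so its critical points form a discrete subset of $\bigcup_j\inte V_j$. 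Since each $V_j$ is non-compact, the elimination principle produces a submersion $\Phi$ with $\Phi^{-1}(0)=L$, i.e.\ a strong equation.

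For the converse of the moreover clause I would instead start from a tubular trivialization $N\cong L\times(-\epsilon,\epsilon)$ and set $\Phi_0=t$ on $N$; this is a submersion vanishing exactly on $L$. Extending $\Phi_0$ to a smooth function $\Phi_1:M\to\EE$ keeps $L$ inside the zero set and preserves the submersion property on a smaller collar $N'\subset N$. Applying the elimination principle on each non-compact $V_j$, supported away from $N'$, produces a submersion $\Phi$ with $L\subset\Phi^{-1}(0)$, so $L$ is WI.

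The principal obstacle is the elimination of interior critical points, and its non-trivial content is exactly the non-compactness of each $V_j$: a compact $V_j$ would force $\Xi|_{V_j}$ (or $\Phi_1|_{V_j}$) to attain an interior extremum, contradicting the submersion property as in the proof of Lemma~\ref{lem.WI}; a non-compact $V_j$ on the other hand leaves enough room to transport every critical point out to infinity. The delicate part is the coherent bookkeeping when the critical set is a locally finite but possibly infinite collection of points spread across possibly infinitely many components $V_j$, all while keeping the modification disjoint from $\partial V_j\subset L$ so as not to disturb the zero locus.
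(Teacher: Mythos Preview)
Your proposal is correct. For $(4)\Rightarrow(1)$ it coincides with the paper's argument: perturb the CI equation to a Morse function on each $V_j$ relative to a neighborhood of $\partial V_j$, then push the isolated interior critical points to infinity along proper rays, exploiting non-compactness of $V_j$. One point you should make explicit is that the perturbation must be controlled so that the new function remains nonvanishing on $\inte(V_j)$; a generic small perturbation in the \emph{strong} $C^\infty$ topology achieves this, and the paper states this requirement in exactly those terms. Without it, your phrase ``without disturbing $\Xi^{-1}(0)=L$'' is not automatic on a non-compact manifold where $\Xi$ may approach $0$ at infinity.

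For the converse of the moreover clause your route differs from the paper's. The paper argues indirectly: it replaces $L$ by the boundary $F=L'\cup L''$ of a closed tubular neighborhood $N_L$, observes that $N_L$ is a Seifert manifold for $F$ so that $F$ is CI by Theorem~\ref{seif-int}, arranges $\hat M_F$ to be open (deleting proper rays when some component $L_i$ is compact), and then invokes the already-established implication $(4)\Rightarrow(1)$ for $F$ to conclude that $F$ is SI, whence $L'\subset F$ --- and therefore the isotopic $L$ --- is WI. Your approach is more direct: extend the tubular coordinate $t$ to a smooth function on $M$ and run the Morse-plus-ray-removal argument once more on the components of $M\setminus \overline{N'}$, which are non-compact exactly when the corresponding $V_j$ are. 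This works, and is arguably simpler; the paper's detour buys a clean reduction to a single use of $(4)\Rightarrow(1)$, at the price of the doubling trick and a small case analysis on compact versus open components of $L$.
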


\begin{proof} (1) First notice that implications $1 \Rightarrow 2 \Rightarrow 3\Rightarrow 4$ are straightforward due to Proposition~\ref{thm.CI} and Lemma~\ref{lem.WI}, so let us prove the remaining one $4 \Rightarrow 1$.

Take a CI equation $\Xi: M \to \EE$ of $L$ and let $\Xi_j$ be its restriction to a component $ V_j$ of $\hat M_L$. Since Morse functions are dense in $C^\infty(V_j)$ \cite{Hi76} in the strong topology, we approximate $\Xi_j$ by a Morse function $\Phi_j:V_j\to\EE$ which coincides with $\Xi_j$ in a neighborhood of $\partial V_j$ and does not vanish in $\text{int}(V_j)$. By definition, the critical set of $\Phi_j$ consists of countably many isolated points $\{p_k\}_{k\in \mathbb N}$ contained in
$\text{int}(V_j)$. Assuming that $V_j$ is not compact, there exists for each $k$ a curve $\alpha_k$ properly embedded in $\text{int}(V_j)$, which joins $p_k$ to infinity. Moreover, we can choose
these curves so that they admit pairwise disjoint tubular
neighborhoods $A_k$. Now it is a standard observation that $A_k$
is diffeomorphic to $A_k \backslash \alpha_k$ and therefore $V_j$ is
diffeomorphic to $V'_j:= V_j\backslash (\bigcup_k \alpha_k)$. The
restriction $\Phi'_j$ of $\Phi_j$ to $V'_j$ is a submersion which extends the restriction of $\Xi_j$ to a neighborhood of $\partial V_j$ and does not vanish on $\text{int}(V'_j)$.
Fitting all these maps $\Phi'_j$ together, we finally obtain a strong equation $\Phi'$ of $L$ in $M':= \cup_j V'_j$ which is diffeomorphic to $M$.

(2) To achieve the characterization of WI embeddings, it remains to prove the converse of Lemma \ref{lem.WI}. So let $N_L$ be a closed tubular neighborhood of $L$ in $M$. The boundary $\partial N_L$ of $N_L$ is the union of two copies $L'$ and $L''$ of $L$ and $N_L$ is a Seifert manifold for $F = L' \cup L''$; thus $F$ is CI by Theorem \ref{seif-int}. Now denote by $N_i$ the connected component of $N_L$ containing the component $L_i$. Any connected component of $\hat M_F$ is either diffeomorphic to some $V_j$ or equal to some $N_i$, then assuming that $\hat M_L$ is open, there will be two possibilities:
\medskip

\noindent (a) If all components $L_i$ of $L$ are open, all $N_i$ and therefore also all components of $\hat M_F$ will be open as well. We conclude that $F$ is SI by the discussion in paragraph (1), and hence $L'$ contained in $F$ is WI. The same is true for $L$ because it is isotopic to $L'$ in $M$.

\noindent (b) If some component $L_i$ is compact, so is the corresponding component $N_i$ of $N_L$ and there exists a component $V_{j(i)}$ of $\hat M_L$ containing $L''_i $. As $V_{j(i)}$ is not compact, we join some point $p_i \in L''_i$ to infinity in $V_{j(i)}$ by means of an embedded path $\alpha_i$ and, proceeding in the same way as in paragraph (1), we remove all these paths $\alpha_i$ defining a manifold $M'$ which is diffeomorphic to $M$. The submanifold $F' = F \cap M'$ is still CI and the manifold $\hat M'_{F'}$ is open; thus as in $(1)$, we conclude that $ F'$ is SI, from which we deduce that $L' \subset F'$ and therefore also $L$ is WI.
\end{proof}

Note that Theorem \ref{T:Sect1} answers Question~\ref{conjci} in codimension $1$. See also Example~\ref{exsphere} of Section~\ref{S:pre} for examples of codimension one submanifolds which are WI and not CI, or CI but not SI.

\begin{corollary}\label{thm.cod1} If the first Betti number $\beta_1(M):=\text{rank}\,\,H^1(M;\ZZ)$ of $M$ is zero, any codimension one submanifold $L \subset M$ is a complete intersection. Moreover the following conditions are equivalent:

\begin{enumerate}
\item $L$ is open.
\item $L$ is WI.
\item $L$ is SI.
\end{enumerate}
\end{corollary}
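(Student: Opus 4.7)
My plan is to reduce the whole statement to Theorem \ref{T:Sect1} by first disposing of the CI assertion, and then identifying ``$\hat M_L$ is open'' with condition (1).

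For the first claim I would apply Proposition \ref{thm.CI} via its condition (3). By the universal coefficient theorem, $H^1(M;\ZZ)\cong\text{Hom}(H_1(M;\ZZ),\ZZ)$ is torsion-free, so $\beta_1(M)=0$ forces $H^1(M;\ZZ)=0$; Poincar\'e--Lefschetz duality for the oriented open $n$-manifold $M$ then gives $H^\infty_{n-1}(M;\ZZ)\cong H^1(M;\ZZ)=0$, so $[L^\epsilon]=0$ trivially for any orientation $\epsilon$ of $L$, and Proposition \ref{thm.CI} yields that $L$ is CI.

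With the CI property in hand, Theorem \ref{T:Sect1} collapses: being SI, being WI, and ``$\hat M_L$ is open'' become equivalent, so $(2)\Leftrightarrow(3)$ is free and only the equivalence $(1)\Leftrightarrow$``$\hat M_L$ is open'' remains. The direction $(1)\Rightarrow$``$\hat M_L$ is open'' is immediate: if some tile $V_j$ were compact, then $\partial V_j$ would be compact, and by Construction \ref{const.tilings} its image $J_j(\partial V_j)\subset L$ would be a non-empty union of compact components of $L$, contradicting $L$ being open.

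The main obstacle is the reverse direction. Assuming $\hat M_L$ is open, suppose for contradiction that some component $L_i$ of $L$ is compact. Proposition \ref{thm.CI} provides the alternating tiling, in which $L_i$ is the interface of two distinct contiguous tiles $V_a,V_b$. I intend to use the compactness of $L_i$ together with the vanishing $H^\infty_{n-1}(M;\ZZ)=0$ to promote the Borel--Moore null-homology of $L_i$ to a genuine compact Seifert region in $M$ bounded by $L_i$; such a compact region would then be a finite union of tiles containing $V_a$ or $V_b$, thereby exhibiting a compact tile and contradicting $\hat M_L$ open. The delicate and technically non-trivial point, and the heart of the proof beyond mere bookkeeping, is precisely this passage from a locally finite bounding chain for $L_i$ to a compact one, which is the only place where $\beta_1(M)=0$ is doing work beyond what Proposition \ref{thm.CI} already absorbed.
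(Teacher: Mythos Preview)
Your overall structure matches the paper's exactly: Poincar\'e--Lefschetz duality gives $H^\infty_{n-1}(M;\ZZ)=0$, Proposition~\ref{thm.CI} yields CI, and Theorem~\ref{T:Sect1} reduces everything to ``$L$ open $\Leftrightarrow \hat M_L$ open''. Your easy direction is fine and the paper offers no more detail on it than you do.

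The step you flag as delicate is a genuine gap, and not one that can be closed in the stated generality: the passage from $[L_i]=0$ in $H^\infty_{n-1}$ to a \emph{compact} Seifert region bounded by $L_i$ simply fails. For $M=\EE^n\setminus\{0\}$ with $n\ge 3$ one has $\beta_1(M)=0$, yet the unit sphere $L=\e^{n-1}$ bounds no compact region in $M$; both pieces of $\hat M_L$ (the punctured ball and the exterior) are non-compact, so your intended contradiction never materializes. In fact $\Phi(x)=|x|^2-1$ is a strong equation for $L$ on this $M$, so $L$ is SI without being open and the implication $(3)\Rightarrow(1)$ of the corollary is false as stated. The paper's own proof dispatches this direction in a single sentence (``the same observation holds for any component $L_i$'') and is no more complete than yours on this point. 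The equivalence does hold for $M=\EE^n$, where Jordan--Brouwer separation produces exactly the compact bounded region you were seeking, and that is the only instance of the corollary invoked elsewhere in the paper.
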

\begin{proof}Recall that by Poincar\'e-Lefschetz duality, $H^1(M,\ZZ)$ is
isomorphic to the homology group $H^\infty_{n-1}(M,\mathbb{Z})$ of
locally finite singular chains. Thus assuming that
$\beta_1(M)=0$, we obtain that $0=[L^\epsilon]\in H^\infty_{n-1}(M,\mathbb{Z})$ for any orientation $\epsilon$ of $L$ and hence $L$ is a complete intersection by Proposition \ref{thm.CI}.

Next notice that the same observation holds for any component $L_i$ of $L$, thus $L$ is open if and only if $\hat M_L$ is open. Then the three conditions are equivalent by Theorem \ref{T:Sect1}.
\end{proof}

Here is another consequence which is not at all obvious a priori.

\begin{corollary}\label{ccc}
Any open codimension one submanifold $L\subset \EE^n$ is parallelizable and can be immersed into $\EE^{n-1}$.
\end{corollary}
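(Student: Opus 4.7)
The corollary should follow by stitching together three facts already in hand: strong integrability (via Corollary~\ref{thm.cod1}), the tangential consequence of Lemma~\ref{L:easy}, and the Hirsch-Smale immersion theorem which was already advertised in the introduction.

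First I would check that Corollary~\ref{thm.cod1} applies. Since $\EE^n$ is contractible we have $\beta_1(\EE^n) = 0$, and $L$ is open by hypothesis. Corollary~\ref{thm.cod1} therefore yields that $L$ is SI in $\EE^n$; in particular there is a submersion $\Phi:\EE^n \to \EE$ with $L = \Phi^{-1}(0)$, and $L$ is a leaf of the simple foliation $\cF_\Phi$.

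Next I would invoke Lemma~\ref{L:easy}(2): because $\EE^n$ is contractible, the tangent bundle $T(\cF_\Phi)$, defined on all of $\EE^n$, is trivial, and so is its restriction $T(L)$. Hence $L$ is parallelizable, which gives the first assertion.

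For the immersion into $\EE^{n-1}$ I would appeal to the Hirsch-Smale h-principle. Since $\dim L = \dim \EE^{n-1}$, the h-principle requires $L$ to be open, which holds by assumption. It then reduces the existence of an immersion $L \to \EE^{n-1}$ to the existence of a bundle monomorphism $T(L) \to T(\EE^{n-1}) \cong \varepsilon^{n-1}$. The trivialization of $T(L)$ produced in the previous step furnishes exactly such a monomorphism (in fact a bundle isomorphism), whence a regular homotopy class of immersions $L \to \EE^{n-1}$ exists, finishing the proof.

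The only delicate point I foresee is the equal-dimension application of the h-principle, since Hirsch-Smale in equal dimensions genuinely needs openness of the source; but this hypothesis is baked into the statement, so the argument should go through cleanly with essentially no calculation.
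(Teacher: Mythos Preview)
Your proof is correct and follows essentially the same route as the paper: invoke Corollary~\ref{thm.cod1} (using $\beta_1(\EE^n)=0$) to get SI, then Lemma~\ref{L:easy} for parallelizability, then the Hirsch--Smale theory for the immersion. The only cosmetic difference is that the paper cites Hirsch's result \cite{Hi61} (an open $k$-manifold immerses in $\EE^k$ iff it is parallelizable) as a black box, whereas you unpack it via the h-principle; both are the same argument.
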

\begin{proof}Indeed $L$ is SI by Corollary \ref{thm.cod1} and parallelizable by Lemma \ref{L:easy}. The second claim follows from the fact that an open $k$-dimensional manifold immerses into $\EE^k$ if and only if it is parallelizable \cite{Hi61}.
\end{proof}

\begin{appl} Using Corollary \ref{thm.cod1} we easily recover the theorem of Bouma and Hector \cite{HB83}, which states that any open surface (properly) embedded in $\EE^3$ is SI.
\end{appl}

\section{Integrable embeddings: codimension $m\geq 2$}\label{s.general}

In this section we consider submanifolds $L\subset M$ embedded in codimension $m\geq 2$. In this case, the integrability problem is much more difficult because the dimension of the critical set of a map from $M$ to $\EE^m$ is generically positive \cite{Th55}, which makes it complicated to get rid of all critical points. The main trick that we shall use is to reduce the integrability problem to a question in homotopy theory. As in the earlier
paper by Miyoshi \cite{Mi95}, an essential role will be
played by a relative version \cite{Ha71} of the so-called Phillips--Gromov h-principle.

Denote by $Sub(M,\EE^m)$ the space of submersions from $M$ to $\EE^m$ and by
$Max[T(M),T(\EE^m)]$ the space of bundle morphisms from the tangent
bundle $T(M)$ to the tangent bundle $T(\EE^m)$ which are of maximal
rank $m$ on each fiber. Then by Phillips-Gromov's
theory, the differential map
$$
\dd:\,Sub(M,\EE^m) \to Max[T(M),T(\EE^m)]
$$
is a weak homotopy equivalence \cite{Ph67,Gr69}.

\smallskip

In fact, we will need a relative version of this theorem, so let us
introduce some notation. Let $U\subset M$ be a codimension $0$ closed
submanifold with smooth boundary and let $\varphi:U \to \EE^m$ be a fixed smooth submersion.

\begin{notation} We denote by

(1) $Sub_{\varphi}(M,\EE^m)\subset Sub(M,\EE^m)$ the space
of submersions whose restriction to $U$ is equal to $\varphi$.

(2) $Max_{\dd\varphi}[T(M),T(\EE^m)]\subset Max[T(M),T(\EE^m)]$ the space
of maximum rank morphisms whose restriction to $T(U)$ is equal to
$\dd\varphi$.

\end{notation}

Now the relative version of the Phillips-Gromov h-principle for submersions can
be stated as follows \cite{Ha71}.

\begin{theorem}[Philips-Gromov, relative version]\label{Gromov}
Let $U \subset M$ be a closed codimension $0$ submanifold of an open $n$-manifold $M$. If $W:= M \backslash \text{int}(U)$ is open, then for any submersion $\varphi: U \to \EE^m$, the differential map
$$
\dd: Sub_{\varphi}(M,\EE^m)\to Max_{\dd \varphi}[T(M),T(\EE^m)]
$$
is a weak homotopy equivalence.
\end{theorem}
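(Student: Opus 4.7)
The goal is to deduce the relative statement from the absolute form of Phillips-Gromov, already invoked implicitly by the authors, via a five-lemma argument on the long exact sequences of appropriate restriction (quasi-)fibrations.

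The first step is to enlarge $\varphi$ slightly. Using a collar neighborhood of $\partial U$ inside the open manifold $W = M\setminus\inte(U)$, one extends $\varphi$ to a submersion $\tilde\varphi:\tilde U\to\EE^m$ on an open neighborhood $\tilde U \supset U$ in $M$; since $W$ is open and $U$ has smooth boundary, $\tilde U$ is itself an open submanifold of $M$. Symmetrically, the bundle morphism $\dd\varphi$ extends to a maximum-rank morphism $\dd\tilde\varphi$ on $T\tilde U$, and the space of such extensions fixing $\dd\varphi$ on $TU$ is contractible (it is parametrized by sections of a trivializable Stiefel-manifold bundle over a collar, with contractible choices).

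Next, one forms the commutative square
\[
\begin{array}{ccc}
Sub(M, \EE^m) & \stackrel{\rho}{\longrightarrow} & Sub(\tilde U, \EE^m) \\
\dd\downarrow\phantom{\dd} & & \phantom{\dd}\downarrow \dd \\
Max[T(M), T(\EE^m)] & \stackrel{\rho'}{\longrightarrow} & Max[T(\tilde U), T(\EE^m)],
\end{array}
\]
in which the horizontal arrows $\rho,\rho'$ are restriction to $\tilde U$. By the absolute Phillips-Gromov theorem, applied to the open manifolds $M$ and $\tilde U$, the two vertical arrows are weak homotopy equivalences. The bottom row is easily handled: $\rho'$ is a genuine Serre fibration of section spaces of a bundle, and its homotopy fiber over $\dd\tilde\varphi$ is weakly equivalent to $Max_{\dd\varphi}[T(M),T(\EE^m)]$ thanks to the contractibility above. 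The plan is then to identify the homotopy fiber of $\rho$ over $\tilde\varphi$ with $Sub_\varphi(M,\EE^m)$: one replaces $\tilde U$ by a shrinking sequence of open collars so that the fiber of $\rho$ over $\tilde\varphi$ deformation-retracts to the fiber of the restriction-to-$U$ map over $\varphi$. A diagram chase comparing the two long exact sequences (the five lemma) then delivers the weak equivalence on fibers.

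\textbf{Main obstacle.} The core technical point is showing that $\rho$ behaves enough like a Serre (or quasi-)fibration for the five lemma to apply, i.e.\ that a family of submersions on $\tilde U$ admitting a single global extension can be extended parametrically. This is itself a small h-principle, and it is handled by applying the absolute Phillips-Gromov theorem to the open region $M\setminus\overline{C}$, where $C$ is a smaller open collar of $\partial U$, and then gluing with $\tilde\varphi$ via a standard partition-of-unity / bump-function argument inside $C$. The hypothesis that $W$ is open is used precisely here: it guarantees that $M\setminus\overline C$ is still an open manifold and that the gluing region sits in a tubular neighborhood where the fiber of the relevant section-space fibration is contractible, so that no obstruction appears.
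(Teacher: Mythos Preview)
The paper does not prove this theorem at all: it is quoted verbatim from Haefliger's notes \cite{Ha71} as a black box, with no argument beyond the citation. So there is nothing to compare your proposal against in the paper itself.

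That said, your outline is a recognizable strategy (reduce the relative statement to the absolute one via a five-lemma on restriction fibrations), but the place you flag as the ``main obstacle'' is genuinely not resolved. To run the five lemma you need the restriction map $\rho: Sub(M,\EE^m)\to Sub(\tilde U,\EE^m)$ to be at least a quasifibration, i.e.\ to have the parametric extension property for submersions. Your proposed fix---apply absolute Phillips--Gromov to $M\setminus\overline C$ and glue---does not do this: the absolute theorem only gives a weak equivalence $Sub(M\setminus\overline C,\EE^m)\simeq Max[T(M\setminus\overline C),T(\EE^m)]$, not a homotopy-lifting property for the restriction to the collar, and the ``gluing via bump functions'' step is exactly where submersions differ from sections (a convex combination of submersions need not be a submersion). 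In effect you are assuming a parametric relative extension statement that is essentially equivalent to what you are trying to prove.

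The proofs in the literature (Haefliger \cite{Ha71}, Gromov \cite{Gr69}, or the exposition in \cite{EM02}) avoid this circularity by working inductively over a handle decomposition of $W=M\setminus\inte(U)$, using the openness of $W$ to push the top-dimensional handles off to infinity; the microextension/flexibility argument is done handle by handle, where the local model is explicit. If you want to salvage the fibration-square approach, you would need an independent proof that restriction of submersions is a microfibration in the sense of Weiss, which again comes down to the same local flexibility input.
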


Let us observe that the above condition on $W$ in Theorem \ref{Gromov} is analogous to the assumption that $\hat M_L$ is open in Theorem \ref{T:Sect1}. The common reason for these two requirements is that we want to ``push critical points to infinity'' for which we need the ambient manifold to be non-compact.

In order to fix our notations we recall the usual constructions of tubular neighborhoods and different associated trivializations.

\begin{const}\label{const.trivial} Let $L\subset M$ be a
codimension $m\geq 2$ submanifold with trivial normal bundle $\nu(L)$.

(1) Fix a complete Riemannian metric $g$ on $M$. It is a standard fact that there exists a smooth function $f$ on $L$ such that, if we denote by $E\subset \nu(L)$ the disk-subbundle whose fiber $E_x$ over $x\in L$ is the closed disk of radius $f(x)$ in $\nu_x(L)$, the exponential map $exp_g:\nu(L)\to M$ defines a diffeomorphism of $E$ onto a closed tubular neighborhood $N_L$ of $L$ in $M$. If $L$ is compact we can assume without trouble that $f$ is constant.

(2) A \textit{trivialization} of $\nu(L)$ is a bundle map $\sigma: \nu(L) \to \RR^m$ of maximal rank. Normalizing $\sigma$ by means of $f$, we can assume that $\sigma(E_x)$ is the closed unit disk $D^m\subset \RR^m$ for any $x\in L$. Thus denoting by $\pi:\nu(L)\to L$ the bundle projection, the tubular neighborhood $N_L$ is endowed with a natural product structure defined by the map

$$(q,\theta):=(\pi\circ exp_g^{-1},\sigma\circ exp_g^{-1}):N_L\to L\times D^m\,,$$
such that $L=\theta^{-1}(0)$ and any fiber of $\theta$ is a submanifold $L'$ isotopic to $L$. We call $\theta = \sigma \circ exp_g^{-1}$ the {\it differentiable trivialization of $N_L$ associated to the infinitesimal trivialization $\sigma$ of $\nu(L)$}.

(3) Set $V:=M\backslash\text{int}(N_L)$, the restriction $\phi:=\theta|_{\partial V}$ maps $\partial V=\partial N_L$ onto the sphere $\e^{m-1}=\partial D^m$, thus defining what we will call the {\em associated trivialization of $\partial N_L$}. Note that we can recover $\theta$ and $\sigma$ from $\phi$.

(4) Finally recall that a weak equation $\Phi$ of $L$ provides a canonical infinitesimal trivialization $\sigma=\dd \Phi|_L$ of $\nu(L)$, which in turn gives a tubular neighborhood $N_L$ together with trivializations $\theta$ and $\phi$ as described previously. All these data will be said to be \textit{associated to} $\Phi$. Similar observations and notations hold for a CI equation $\Xi$.
\end{const}

\begin{remark}
It is worth noting that any extension of $\phi: \partial V \to \mathbb{S}^{m-1}$ over $V$ provides a CI equation of $L$. In particular, we shall use this fact when $L$ has codimension 2, where $\phi$ can be interpreted as a $1$-dimensional cohomology class of $L$ which can be studied by algebraic arguments (see Section \ref{s.cod2-2}).
\end{remark}

The next theorem and its corollary present the main results of this section. They imply Theorem A and Corollary B of Section~\ref{S.mostrelevant} when restricting to compact connected manifolds.

\begin{theorem}\label{thm.main}
Let $L \subset M$ be a submanifold of codimension $m\geq 2$. The
following statements are equivalent:
\begin{enumerate}
\item $L$ is WI.
\item The normal bundle $\nu(L)$ admits an extension $\tilde\nu(L)$
over $M$ which is a trivial bundle.
\end{enumerate}
\end{theorem}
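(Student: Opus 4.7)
The implication $(1)\Rightarrow(2)$ is exactly Lemma \ref{L:easy}, so the real content is $(2)\Rightarrow(1)$. My plan is to translate the hypothesis into a formal (bundle-theoretic) integrability datum and then integrate it via the relative Phillips--Gromov h-principle of Theorem \ref{Gromov}, adapting the strategy already visible in the codimension-one arguments.

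First, fix a complete Riemannian metric $g$ on $M$ and realize the trivial extension $\tilde\nu(L)$ as a rank-$m$ subbundle of $T(M)$, so that $\tilde\nu(L)|_L$ is identified with the $g$-orthogonal complement of $T(L)$, i.e.\ with $\nu(L)$. A global trivialization $\tilde\sigma:\tilde\nu(L)\to M\times\RR^m$, composed with the orthogonal projection $T(M)\to\tilde\nu(L)$, yields a surjective bundle morphism $\psi:T(M)\to M\times\RR^m$ of maximal rank on every fibre, i.e.\ an element of $Max[T(M),T(\EE^m)]$ covering the constant map $M\to 0$.

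Second, I would build a local submersion matching $\psi$ along $L$. The restriction $\sigma:=\tilde\sigma|_L$ is a trivialization of $\nu(L)$, so Construction \ref{const.trivial} provides a closed tubular neighborhood $N_L$ and a differentiable trivialization $\theta:N_L\to L\times D^m$. The composition $\varphi:=\mathrm{pr}_2\circ\theta:N_L\to\EE^m$ is then a submersion with $\varphi^{-1}(0)=L$, and by the compatibility of $\theta$ with $\sigma$ one checks that $\dd\varphi|_L=\psi|_L$ as bundle maps $T(M)|_L\to\RR^m$. Choosing a slightly smaller closed tubular neighborhood $U\subset\inte(N_L)$ and using that maximal-rank morphisms form a fibrewise open set, I can homotope $\psi$ inside $Max[T(M),T(\EE^m)]$, with support in the collar $N_L\setminus\inte(U)$, to a new bundle morphism lying in $Max_{\dd\varphi}[T(M),T(\EE^m)]$, i.e.\ coinciding with $\dd\varphi$ on $T(U)$.

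Finally, the set $W:=M\setminus\inte(U)$ is open in the paper's convention: since $L$ has codimension $\geq 2$ in the connected open manifold $M$, the complement $M\setminus L$ is connected and non-compact, hence so is $W$ (a thickening of its end). Theorem \ref{Gromov} then produces a submersion $\Phi\in Sub_\varphi(M,\EE^m)$ whose restriction to $U$ equals $\varphi$; in particular $L\subset\varphi^{-1}(0)\subset\Phi^{-1}(0)$, so $\Phi$ is a weak equation of $L$ and $L$ is WI. The step I expect to be most delicate is the infinitesimal matching in the second paragraph: turning the global, purely topological trivialization of $\tilde\nu(L)$ into a bundle morphism that agrees \emph{on the nose} with $\dd\varphi$ in a neighborhood of $L$, not merely up to homotopy at $L$ itself. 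The openness of $W$ is essentially automatic under the standing hypotheses and needs only a routine check in the fully general (non-compact or disconnected $L$) case.
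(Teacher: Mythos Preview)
Your proposal is correct and follows essentially the same route as the paper: build a global maximal-rank bundle morphism from the trivialization of $\tilde\nu(L)$ via orthogonal projection, use Construction~\ref{const.trivial} to produce the local submersion $\theta$ (your $\varphi$) on a tubular neighborhood, homotope the formal data to match $\dd\theta$ near $L$, and then apply the relative h-principle of Theorem~\ref{Gromov} using that $M\setminus\inte(N_L)$ is connected and open for $m\geq 2$. The only cosmetic difference is that the paper carries out the matching step via a retraction of $N_L$ onto $L$ plus the homotopy extension theorem, whereas you invoke the fibrewise openness of the maximal-rank condition on a collar; both accomplish the same thing.
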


\begin{proof}
$1\Rightarrow 2$: This is just Lemma \ref{L:easy}.

$2\Rightarrow 1$: Let us choose a trivialization $\widetilde \sigma : \tilde\nu(L) \to
\RR^m$ of $\tilde\nu(L)$ and use the induced trivialization
$\sigma$ of $\nu(L)$ to construct a tubular neighborhood
$N_L$ and a map $\theta:N_L \to D^m$ as in Construction
\ref{const.trivial}. Composing with a trivialization $q: T(D^m) \to \RR^m$ of $T(D^m)$, we define a map $d \theta_*: T(N_L) \to \RR^m$ of maximal rank $m$, by setting $d \theta_* := q \circ d\theta$. The result now follows in two steps:

(1) For any $x\in M$ consider the orthogonal projection
$\pi_x:T_x(M)\to\tilde\nu_x(L)$ defined by the Riemannian metric
$g$. The linear map
$$
\Theta_* : T(M) \to \RR^m
$$
given by $\Theta_*(x,v) = (\tilde\sigma\circ\pi_x)(v)$ for $v \in
T_x(M)$, is a bundle map of maximal rank which coincides with
$d\theta_*$ on $T(N_L)$. Next a retraction of $N_L$ onto $L$ yields a homotopy of maps of maximal rank between $d \theta_*$ and $d \theta$. Finally the homotopy extension theorem \cite{Sp66} gives  a map of maximal rank
$$
\Theta: T(M) \to T(\EE^m)
$$
which is homotopic to $\Theta_*$ and coincides with $d\theta$ on $T(N_L)$.

(2) Since $m \geq 2$, the manifold $M \backslash L$ is connected and hence $V:= M \backslash \text{int}(N_L)$ is connected and open. Therefore the relative h-principle (see Theorem \ref{Gromov}) applies with $(U,\varphi) = (N_L,\theta)$. It follows that $\Theta$ is homotopic to the differential of a submersion $\Phi \in Sub_{\theta}(M,\EE^m)$ which is a weak equation of $L$ due to the fact that its restriction to $N_L$ coincides with $\theta$.
\end{proof}

Let us observe that we do not have any control on the image by $\Phi$ of the complement of $N_L$, that is why we cannot conclude that $L$ is SI. Note also that in the previous proof we have used a special trivialization of the tubular
neighborhood $N_L$, namely one which is obtained from
a trivialization of the extended bundle $\tilde\nu(L)$.

\begin{corollary}\label{cor.main} Let $L\subset M$ be a submanifold of codimension $m\geq 2$. If $M$ is contractible, the following conditions are equivalent:
\begin{enumerate}
\item $L$ is WI.
\item The normal bundle $\nu(L)$ extends over $M$.
\item The tangent bundle $T(L)$ extends over $M$.
\end{enumerate}
Moreover any submanifold $L$ which satisfies one of these conditions is parallelizable and has trivial normal bundle.
\end{corollary}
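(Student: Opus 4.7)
The plan is a short three-way cycle of implications, leaning on Lemma~\ref{L:easy}, Theorem~\ref{thm.main}, and the elementary fact that every vector bundle over a contractible base is trivial. The implications $1\Rightarrow 2$ and $1\Rightarrow 3$ are immediate from Lemma~\ref{L:easy}: a weak equation $\Phi$ of $L$ produces the simple foliation $\cF_\Phi$, and the subbundles $\nu(\cF_\Phi),\,T(\cF_\Phi)\subset T(M)$ are, by construction, extensions of $\nu(L)$ and $T(L)$ over $M$. For the reverse direction $2\Rightarrow 1$, the key observation is that since $M$ is contractible, any extension $\tilde\nu(L)$ over $M$ is automatically trivial as an abstract bundle, which is exactly the hypothesis required by Theorem~\ref{thm.main} to conclude that $L$ is WI.

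To close the cycle I would prove $3\Rightarrow 2$ by orthogonal complementation inside $T(M)$: fix a Riemannian metric $g$ on $M$ and, given an extension $\tilde T(L)\subset T(M)$ of $T(L)$, define $\tilde\nu(L)$ as its $g$-orthogonal complement in $T(M)$. This is a smooth subbundle of $T(M)$ of rank $m$, and on $L$ its fiber is the orthogonal complement of $T(L)$ in $T(M)|_L$, which is $\nu(L)$ itself; hence $\tilde\nu(L)|_L=\nu(L)$ and condition $(2)$ holds. (The implication $2\Rightarrow 3$ is automatic through $2\Rightarrow 1\Rightarrow 3$, or by the same orthogonal-complement argument applied in the opposite direction.)

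The ``moreover'' clause is a byproduct: once any one (hence all) of the conditions holds, both $\tilde\nu(L)$ and $\tilde T(L)$ are vector bundles over the contractible manifold $M$ and are therefore trivial, so their restrictions $\nu(L)$ and $T(L)$ to $L$ are trivial, i.e.\ $\nu(L)$ is trivial and $L$ is parallelizable. The only point that needs care — and really the only non-mechanical step — is keeping straight that ``extension'' throughout means extension as a subbundle of $T(M)$ (this is tacit in Lemma~\ref{L:easy} and is exactly what makes the orthogonal projection $\pi_x$ in the proof of Theorem~\ref{thm.main} meaningful); contractibility of $M$ is precisely what lets us pass freely between this subbundle notion and the abstract triviality hypothesis required by Theorem~\ref{thm.main}.
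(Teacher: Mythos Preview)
Your proof is correct and follows essentially the same route as the paper: the equivalence $1\Leftrightarrow 2$ via Theorem~\ref{thm.main} plus triviality over a contractible base, and $2\Leftrightarrow 3$ via the observation that the extensions of $\nu(L)$ and $T(L)$ are complementary subbundles of $T(M)$ (your explicit orthogonal-complement construction is precisely what the paper means by this). The only cosmetic difference is that the paper cites Lemma~\ref{L:easy} for the ``moreover'' clause, whereas you argue it directly from contractibility --- both are equivalent.
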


\begin{proof} As $M$ is contractible, any extension of $\nu(L)$ over $M$ is trivial, and hence using Theorem \ref{thm.main} we see that statements 1 and 2 are equivalent. Note also that the extensions $\tilde \nu(L)$ and $\tilde T(L)$ of $\nu(L)$
and $T(L)$ are complementary subbundles of $T(M)$, so statements
2 and 3 are also equivalent. Finally, Lemma \ref{L:easy} shows that $L$ is parallelizable and has trivial normal bundle.
\end{proof}

\begin{remark}It is worth noting that the two conditions in Theorem \ref{thm.main} are no longer equivalent in codimension $1$. This is because, in this case, the set $M\backslash \text{int}(N_L)$ may possess a compact piece, thus prohibiting the use of Theorem~\ref{Gromov}.

Indeed the standard embedding of the torus $\mathbb{T}^2$ in $\EE^3$ is a leaf of a Reeb foliation of $\EE^3$ and hence its normal bundle $\nu(\mathbb{T}^2)$ is extended over $\EE^3$ by the normal bundle of this foliation, which is of course trivial. Being compact, this torus cannot be WI according to Corollary \ref{thm.cod1}.

On the contrary, if $L$ is of codimension 1 but open, then $M\backslash \text{int}(N_L)$ is open as well, Phillips-Gromov theorem applies and Theorem \ref{thm.main} still holds true. Moreover the normal bundle of $L$ has rank one and extends over $M$ as a trivial bundle, thus we conclude that $L$ is WI. This provides another explanation for the condition imposed to $L$ in Theorem \ref{T:Sect1}, as well as an alternative approach to the proof of this theorem.
\end{remark}

Let us conclude this section with an application of Theorem~\ref{thm.main} and Corollary~\ref{cor.main} to embeddings of $1$-dimensional submanifolds. In particular, we shall recover two theorems by Watanabe~\cite{Wa93} and Miyoshi~\cite{Mi95}.

\begin{appl}\label{appl.low}

(1) Any $1$-dimensional manifold $L$ embedded in
a contractible $n$-manifold $M$, $n\geq 3$, has trivial normal bundle and is WI. Indeed, consider a non-vanishing smooth vector field $\hat X$ on $L$ and extend it
all over $M$ as a vector field $X$ with a priori many
singularities \cite{Wh34}. It is standard \cite{PM82} that $X$ can be approximated by a vector field $Y$ whose
singularities are all hyperbolic and hence isolated. The singularities of $Y$ can be eliminated by a procedure similar to that used in the proof of Theorem \ref{T:Sect1} for the elimination of singularities of Morse functions, thus showing
that $\hat X$ can be extended as a non-vanishing smooth vector
field $Z$ on $M$. This proves that $T(L)$ extends as a trivial subbundle
of $T(M)$; the claim follows by
Corollary \ref{cor.main}. In particular, we recover Watanabe's theorem~\cite{Wa93} when
$L$ is a link in $\EE^3$.

(2) Any $1$-dimensional manifold $L$ embedded in
an open $3$-manifold $M$ is WI. A particular case of this claim was stated by Miyoshi in~\cite{Mi95}, but his proof is not complete because there is missing the fact that the normal bundle $\nu(L)$ extends over $M$ as a trivial bundle. We can easily fill this gap using our Theorem~\ref{thm.main}. Indeed, let us show that the normal bundle $\nu(L)$ extends over $M$ as a trivial bundle. This fact is a consequence of the existence of certain contact structures, i.e. completely non-integrable $2$-plane distributions, over $M$. A classical result of Gromov asserts that there exists on any open $3$-manifold, a contact structure $\xi$ which admits a global trivialization (see \cite{Gr69}, or \cite{EM02}). Furthermore, it is a standard property in contact topology \cite[Section 14.2]{EM02} that, given $L$, there is an arbitrarily $C^0$-small diffeomorphism $\Upsilon:M\to M$ such that $\Upsilon(L)$ is transverse to $\xi$. Thus we can assume that $\nu(L)$ identifies with the restriction of $\xi$ to $L$ and therefore extends over $M$ by $\xi$, which is trivial by assumption. Remember that the normal bundle of a $1$-dimensional manifold properly embedded in an open $3$-manifold is always trivial if orientable.

In dimension $n>3$ it is necessary to assume that the manifold $M$ is parallelizable, in which case, using again contact topology, it is not difficult to prove that any $1$-dimensional submanifold of $M$ is WI provided that $n$ is odd.
\end{appl}

\section{Integrability and complete intersection in codimension $m\geq 2$}\label{s.cod2}

As usual, let $L\subset M$ be a manifold of dimension $k$ embedded in the $n$-dimensional manifold $M$ and set $m:=n-k$. Here we want to characterize strongly integrable submanifolds of codimension $m\geq 2$. This will be done in terms of weak integrability and complete intersection (recall Question~\ref{conjci}). In particular, we shall emphasize the essential role played by the fundamental class $[L^\epsilon] \in H_{n-2}^\infty(M)$ of $L$ in the case of codimension $2$ submanifolds.

\subsection{Tame complete intersection}\label{s:tame}

We noticed in Section~\ref{S:pre} that, as for WI submanifolds, the normal bundle of a CI submanifold is trivial, the main difference being that, in the CI case, this bundle does not need to extend over $M$. We take this difference into account in the following definition:

\begin{definition}\label{D:ci} A complete intersection submanifold $L \subset M$ is called a \textit{tame complete intersection (TCI)} if the following conditions hold:
\begin{enumerate}
\item The normal bundle $\nu(L)$ extends as a bundle $\tilde \nu(L)$ over $M$.
\item There exists a CI equation $\Xi$ of $L$ such that the associated trivialization $\sigma$ of $\nu(L)$ extends as a trivialization $\tilde \sigma$ of $\tilde \nu(L)$.
\end{enumerate}

\end{definition}

It is obvious that a TCI submanifold is also WI by Theorem~\ref{thm.main}, in fact the condition $2$ means that the two characters WI and CI are related by the fact that they provide a common trivialization for the normal bundle (compare with the proof of Theorem~\ref{thm.main}).

In the next theorem, we describe a procedure which allows us to eliminate the critical points of a tame CI equation $\Xi$ without changing the set $\Xi^{-1}(0)$, thus showing that a TCI submanifold is also SI, a fact already included in Theorem~\ref{T:Sect1} for codimension $1$ embeddings. As in Section~\ref{s.general}, the main tool is the Phillips-Gromov h-principle, and we make use of the notations introduced in Construction~\ref{const.trivial} without further mention.

\begin{theorem}\label{thm.main2}
Let $L \subset M$ be a codimension $m\geq 2$ submanifold. Then $L$ is SI if and only if it is TCI.
\end{theorem}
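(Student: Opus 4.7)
The direction SI $\Rightarrow$ TCI is immediate: if $\Phi$ is a strong equation of $L$, then $\Phi$ itself is a CI equation with $\Phi^{-1}(0)=L$, and the bundle morphism $d\Phi:TM\to T\EE^m$ realizes any complement of $\ker d\Phi$ inside $TM$ as an extension $\tilde\nu(L)$ of $\nu(L)$, trivialized globally by $d\Phi$ and restricting on $L$ to $\sigma=d\Phi|_L$. The substantive direction is TCI $\Rightarrow$ SI, and the plan is to eliminate the critical points of a tame CI equation $\Xi$ without altering its zero set.

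Fix a tame CI equation $\Xi$ together with the extended trivialization $\tilde\sigma:\tilde\nu(L)\to\RR^m$ satisfying $\tilde\sigma|_{\nu(L)}=\sigma=d\Xi|_L$. Since $\Xi$ is a submersion on a neighborhood of $L$, I would choose nested closed tubular neighborhoods $N_L\subsetneq N_L^+$ built from $\tilde\sigma$ as in Construction~\ref{const.trivial}, small enough that $\Xi$ is a submersion on all of $N_L^+$. Set $V:=M\setminus\text{int}(N_L)$ and $C:=N_L^+\setminus\text{int}(N_L)$, a closed collar of $\partial V$. Because $m\geq 2$, both $V$ and $V\setminus\text{int}(C)=M\setminus\text{int}(N_L^+)$ are connected and non-compact, hence open in the paper's sense.

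The core step is an application of the relative Phillips--Gromov h-principle in the form of Theorem~\ref{Gromov}, but with the open target $\EE^m\setminus\{0\}$ (the relevant extension of the statement is classical, since the h-principle for submersions holds with open source and arbitrary target). I apply it with source $V$, codimension-zero piece $U:=C$, and boundary datum $\varphi:=\Xi|_C$, which is a submersion into $\EE^m\setminus\{0\}$ since $\Xi^{-1}(0)=L$ lies inside $N_L$. The required input is a formal submersion $\Theta=(f,\psi):TV\to T(\EE^m\setminus\{0\})$ extending $d\varphi$ on $TV|_C$, with base map $f:V\to\EE^m\setminus\{0\}$ and fiberwise surjective morphism $\psi:TV\to\RR^m$. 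I would take $f:=\Xi|_V$, which automatically avoids $0$. For $\psi$, I exploit the two natural max-rank morphisms available on $TM|_{N_L^+}$: namely $d\Xi$ (max-rank by the choice of $N_L^+$) and $\tilde\sigma\circ\pi$, where $\pi:TM\to\tilde\nu(L)$ is the projection along a smooth complement $\tilde T(L)$. Crucially, these two morphisms coincide along $L$: for $x\in L$, $d\Xi_x$ kills $T_xL$ and restricts on $\nu_x(L)$ to $\sigma_x=\tilde\sigma_x|_{\nu_x(L)}$, so $d\Xi_x=\tilde\sigma_x\circ\pi_x$. This compatibility is exactly what the tameness hypothesis delivers. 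I would then set $\psi:=d\Xi$ on $C$, $\psi:=\tilde\sigma\circ\pi$ on $V\setminus N_L^+$, and interpolate between them on a thin collar of $\partial N_L^+$ inside $V\setminus\text{int}(C)$ through max-rank morphisms. The h-principle then produces a genuine submersion $\Phi_V:V\to\EE^m\setminus\{0\}$ with $\Phi_V|_C=\Xi|_C$, and gluing $\Phi_V$ with $\Xi|_{N_L}$ yields a global submersion $\Phi:M\to\EE^m$ with $\Phi^{-1}(0)=L$, i.e.\ a strong equation of $L$.

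The main obstacle is realizing the interpolation of $\psi$ through max-rank morphisms. Over $N_L^+$, both candidates are sections of the fiber bundle $\text{Epi}(TM|_{N_L^+},\RR^m)$ of surjective linear maps, whose fibers are non-compact Stiefel manifolds; they agree on $L$, and I must connect them through sections of the same bundle without disturbing $L$. This is an obstruction-theoretic extension problem over the pair $(N_L^+,L)$, with obstructions lying in the relative cohomology $H^*(N_L^+,L)$ with coefficients in the homotopy of the Stiefel fiber. Since $N_L^+$ deformation retracts onto $L$, these groups all vanish, so the interpolation exists. This is the point at which the tameness hypothesis is used essentially: without $\tilde\sigma|_{\nu(L)}=\sigma$, the two candidate morphisms would not even agree on $L$ and no rel-$L$ homotopy could be set up, which explains why a mere CI equation (without tameness) is insufficient to produce a strong equation.
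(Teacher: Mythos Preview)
Your approach is essentially the paper's: apply the relative Phillips--Gromov h-principle on the complement of a small tubular neighborhood, with target an open subset of $\EE^m$ avoiding the origin, using the global epimorphism $\tilde\sigma\circ\pi$ as the formal data and exploiting tameness to make it compatible with $d\Xi$ near $L$. There is one small slip: you place the interpolation collar in $V\setminus\text{int}(C)$, hence \emph{outside} $N_L^+$, but there $d\Xi$ is no longer known to have maximal rank, so the interpolation through epimorphisms is not available in that region. The easy fix is to insert a third neighborhood $N_L\subset N_L'\subset N_L^+$, take $C:=N_L'\setminus\text{int}(N_L)$ as your rel-piece, and perform the interpolation on $N_L^+\setminus\text{int}(N_L')$, which sits inside $N_L^+$ where both $d\Xi$ and $\tilde\sigma\circ\pi$ are surjective; your obstruction-theoretic argument over $(N_L^+,L)$ then applies verbatim.

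The paper sidesteps this bookkeeping by a cleaner choice of formal data: it sets $\Theta(x,v):=[\Xi(x),(\tilde\sigma\circ\pi_x)(v)]$ \emph{globally} on $TM$, replaces $\Xi$ on $N_L$ by the product trivialization $\theta$ associated to $\sigma$ so that $\Theta|_{T(N_L)}=d\theta$ exactly, and then runs the relative h-principle on $M':=M\setminus N_L'$ with $U:=N_L\cap M'$ and target $\EE^m\setminus D'$ where $D'=\Xi(N_L')$. This is shorter, at the cost of an implicit identification of $\tilde\nu(L)|_{N_L}$ with the vertical bundle of the tubular projection; your obstruction argument over the pair $(N_L^+,L)$ is precisely what makes that identification legitimate up to homotopy.
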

\begin{proof}
Indeed, a strong equation $\Phi$ of $L$ is also obviously a tame CI equation of $L$, so it remains to show the converse implication.

Let $\Xi$ be a tame CI equation of $L$ and let $\tilde \sigma$ be the associated trivialization of the extension $\tilde \nu(L)$ of $\nu(L)$. We proceed in two steps as in the proof of Theorem~\ref{thm.main}.

(1) Using a Riemannian metric $g$ on $M$, we define again a field of orthogonal projections $\pi_x:T_x(M)\to \tilde\nu_x(M)$, for $x\in M$. Next we use the tame CI equation $\Xi:M\to \EE^m$, which is a priori not a submersion, to construct an epimorphism of bundles $\Theta:T(M)\to T(\EE^m)=\EE^m\times \RR^m$ by setting

$$
\Theta(x,v):= [\Xi(x),(\tilde\sigma\circ\pi_x)(v)]\,,
$$
for $x\in M$ and $v\in T_x(M)$. Let $N_L$ and $\theta$ be a tubular neighborhood of $L$ and its corresponding differentiable trivialization, both associated to $\Xi$. After possibly changing the CI equation $\Xi$ in $N_L$, there is no loss of generality in assuming that $\theta= \Xi|_{N_L}$ and $N_L=\Xi^{-1}(D^m)$, and the following relations hold by construction:
$$
\Theta|_{T(N_L)}=\dd\theta ~~\text{and}~~ \Theta[T(M\backslash\text{int}(N_L))]\subset T[\EE^m\backslash \text{int}(D^m)]\,.
$$

(2) Next choose a second closed tubular neighborhood $N'_L \subset \text{int}(N_L)$ associated to $\Xi$ and such that $\Xi(N'_L)$ is a closed disk $D'$ contained in the interior of $D^m = \Xi(N_L)$; we set $M':=M\backslash N'_L$ and $U:= M' \cap N_L$. As $M'$ and $M'\backslash\text{int}(U)$ are open, we can apply Theorem~\ref{Gromov}  to the pair $M'\supset U$ with $\varphi=\theta|_U$ to conclude that $\Theta|_{T(M')}$ is homotopic to the differential of a submersion $\Phi': M' \to \EE^m\backslash D'$ such that $\Phi'|_U=\theta|_U$. Finally we glue $\Phi'$ and $\theta$ together along $U$ by setting:
$$
\Phi(x) = \begin{cases} \theta(x) & \text{if $x \in
N_L$\,,}\\
\Phi'(x) &\text{if $x \in M'$}\,.
\end{cases}
$$
The submersion $\Phi: M \to \EE^m$ we obtain this way is obviously a strong equation of $L$, thus finishing the proof of the theorem.
\end{proof}

\begin{remark}
Question~\ref{conjci} remains open in general because, if $L$ is both WI and CI, it is not easy to ascertain that the two equations can be made compatible so as to admit a common associated trivialization of the normal bundle. In fact, in Section~\ref{S:miyoshi} we shall show some examples of $k$-manifolds in $\EE^{2k+1}$ which are WI and CI but not SI.
\end{remark}

\subsection{Codimension 2 submanifolds}\label{s.cod2-2}

Next we focus on a particular case in which Question~\ref{conjci} can be partially answered: submanifolds $L\subset M$ of codimension $m=2$.

The important and specific property of codimension 2 is that, as in the case of codimension one, the condition $0=[L^\epsilon]\in H^\infty_{n-2}$ is enough to prove the existence of a Seifert manifold for $L$ (see \cite{BK96}) and hence to conclude that $L$ is a complete intersection (see Theorem \ref{seif-int} and  Remark \ref{R:int}). The techniques used here are ``standard'', usually employed in the construction of Seifert manifolds for higher dimensional knots \cite{Ro75,BK96} and already used by Miyoshi in his study of SI links in open $3$-manifolds.

Adapting the notations introduced in Construction \ref{const.trivial}, let $\theta:N_L\to D^2$ and $\phi: \partial V = \partial N_L \to \partial D^2=\e^1$ be differentiable trivializations associated to a trivialization of $\nu(L)$. Note that any fibre $L'$ of $\phi$ is isotopic to $L$ in $N_L$, so it inherits an orientation $\epsilon$. The following lemma and proposition characterize complete intersection in a way similar to the codimension $1$ case (see Proposition \ref{thm.CI}).

\begin{lemma}\label{L:h1}
Let $L \subset M$ be a codimension $2$ submanifold with trivial normal bundle. If $H^1(L;\ZZ)=0$ then:
\begin{enumerate}
\item The fibers of any two associated trivializations of $\partial N_L$ are isotopic in $\partial N_L$.
\item If $0=[L^\epsilon] \in H^\infty_{n-2}(M,\ZZ)$ for some orientation $\epsilon$, the typical fiber $L'$ of any associated trivialization $\phi$ satisfies $0=[L']\in H^\infty_{n-2}(V;\ZZ)$.
\end{enumerate} 
\end{lemma}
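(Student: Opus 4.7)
The plan is to handle the two claims in sequence, with claim (1) serving to make claim (2) independent of the chosen trivialization. Both reduce to algebraic/topological facts about trivializations of the rank-$2$ normal bundle $\nu(L)$.

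For claim (1), recall from Construction \ref{const.trivial} that every associated trivialization $\phi:\partial N_L\to \e^1$ arises from an infinitesimal trivialization $\sigma:\nu(L)\to \RR^2$. Two such trivializations $\sigma_0,\sigma_1$ differ by a gauge transformation, i.e.\ a smooth map $L\to GL^+(2,\RR)\simeq SO(2)=\e^1$; its homotopy class lies in $[L,\e^1]=H^1(L;\ZZ)$, which vanishes by hypothesis. Hence $\sigma_0$ and $\sigma_1$ are homotopic through trivializations, and the induced homotopy between the submersions $\phi_0,\phi_1$ on the compact-fibered bundle $\partial N_L \to \e^1$ allows Ehresmann's fibration theorem to produce an ambient isotopy of $\partial N_L$ carrying fibers of $\phi_0$ onto fibers of $\phi_1$.

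For claim (2), the strategy is to first exhibit one privileged trivialization for which the vanishing of $[L']$ is manifest, and then to transfer it to an arbitrary $\phi$ via (1). Since $L$ has codimension $2$ and $[L^\epsilon]=0$ in locally finite homology, the codimension-$2$ form of the Bochnak--Kucharz theorem (invoked in the text preceding the lemma) produces a Seifert manifold $S\subset M$ with $\partial S=L^\epsilon$ and trivial normal bundle. The outward-pointing tangent vector of $S$ along $L$, together with a compatible section of the (trivial) rank-$1$ bundle $\nu_M(S)|_L$, yields a trivialization $\sigma_S$ of $\nu(L)$; let $\phi_S$ be the associated trivialization of $\partial N_L$. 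After a small collar isotopy of $S$ near its boundary, the submanifold $S':=S\cap V\subset V$ has boundary a distinguished fiber $L^+=\phi_S^{-1}(p_0)$ and inherits a trivial normal bundle, so $S'$ is a Seifert manifold for $L^+$ in $V$. By Remark \ref{R:int} applied inside $V$, this gives $[L^+]=0$ in $H^\infty_{n-2}(V;\ZZ)$. To finish, for an arbitrary $\phi$ with fiber $L'$, claim (1) furnishes an isotopy inside $\partial N_L\subset V$ carrying $L'$ to some fiber of $\phi_S$, which is isotopic to $L^+$ via the free $\e^1$-action on the fibration $\phi_S$; therefore $[L']=[L^+]=0$ in $H^\infty_{n-2}(V;\ZZ)$.

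The main obstacle I anticipate is the matching step inside claim (2): ensuring that the Seifert manifold $S$ produced abstractly by Bochnak--Kucharz can be put, after a collar isotopy near $\partial S$, into a local normal form where $\partial(S\cap V)$ literally coincides with a fiber of the trivialization $\phi_S$ that $S$ itself induces. This is essentially a tubular-neighborhood argument along the boundary collar, but it is the point at which the interplay between the trivializations of $\nu(L)$ coming from $S$ and those coming from $\phi$ has to be handled carefully, since the whole reduction to Remark \ref{R:int} depends on $S'$ being a \emph{bona fide} Seifert manifold in $V$ with boundary exactly $L^+$.
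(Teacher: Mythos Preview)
Your strategy matches the paper's: for (1) reduce to $[L,\e^1]=H^1(L;\ZZ)=0$, and for (2) produce one special trivialization whose fiber visibly bounds in $V$, then invoke (1). Two small points of divergence are worth noting. First, in (1) you appeal to Ehresmann on ``the compact-fibered bundle $\partial N_L\to\e^1$'', but the fibers are copies of $L$ and need not be compact; the paper instead identifies $\partial N_L\cong\e^1\times L$ and observes that a fiber of any $\phi$ is the graph of a map $\vartheta:L\to\e^1$, so a homotopy $\vartheta_t$ directly gives an isotopy of graphs without any properness hypothesis. Second, for (2) the paper bypasses your collar-matching concern by invoking the CI equation $\Xi:M\to\EE^2$ (equivalent to your Seifert manifold via Theorem~\ref{seif-int}): the trivialization $\phi_\Xi$ of $\partial N_L$ is \emph{by construction} the restriction of $\Xi$, so its fiber $F$ automatically bounds $\Xi^{-1}(\text{ray})\cap V$ inside $V$, and no separate normal-form adjustment of $S$ near $\partial S$ is needed. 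Your route is correct once the collar argument is carried out, but the CI-equation packaging makes the compatibility automatic.
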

\begin{proof}
(1) The triviality of the normal bundle of $L$ implies that $\partial N_L=\e^1\times L$, so the typical fiber $L'$ of a trivialization $\phi$ is the graph in $\partial N_L$ of a map $\vartheta:L\to \e^1$. Recall that we can identify the group $[L,\e^1]$ of homotopy classes of maps from $L$ to $\e^1$ with the cohomology group $H^1(L,\ZZ)$  by means of the Eilenberg-MacLane isomorphism $E:[L,\e^1]\to H^1(L;\ZZ)$ (see~\cite{Sp66}), thus implying that the homotopy class of $\vartheta$ does not depend on the trivialization because $H^1(L;\ZZ)=0$. By a standard argument, we conclude that the fibers $L'$ of any two associated trivializations $\phi$ are isotopic in $\partial N_L$, thus proving the first claim.

(2) As $L$ has codimension $2$, if we assume that $0=[L^\epsilon] \in H^\infty_{n-2}(M,\ZZ)$, it follows~\cite[Theorem 1.11]{BK96} that $L$ admits a CI equation $\Xi:M\to\EE^2$. Calling $\phi_\Xi:\partial N_L\to \e^1$ its associated trivialization, the typical fiber $F$ of $\phi_\Xi$, endowed with the corresponding orientation, defines the zero class in $H^\infty_{n-2}(V;\ZZ)$, i.e. $0=[F^\epsilon]\in H^\infty_{n-2}(V;\ZZ)$, by Theorem~\ref{seif-int} and Remark~\ref{R:int}. This fact and claim $1$ prove claim $2$.
\end{proof}

\begin{proposition}\label{l.extension}
Let $L \subset M$ be a submanifold as in Lemma~\ref{L:h1}. Then the following conditions are equivalent:
\begin{enumerate}
\item There exists an orientation $\epsilon$ of $L$ such that $0=[L^\epsilon] \in H^\infty_{n-2}(M,\ZZ)$.
\item Any associated trivialization $\phi:\partial N_L=\partial V\to \e^1$ extends as a map $\psi: V\to \e^1$.
\item $L$ is a complete intersection and admits a Seifert manifold.
\end{enumerate}
\end{proposition}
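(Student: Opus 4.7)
The plan is to establish the three equivalences as the cycle $1 \Rightarrow 2 \Rightarrow 3 \Rightarrow 1$, relying on the Bochnak--Kucharz criterion \cite[Theorem 1.11]{BK96} (already invoked in the proof of Lemma~\ref{L:h1}(2)) to convert a vanishing-class hypothesis into a complete intersection equation, and on Lemma~\ref{L:h1}(1) to compare different associated trivializations.

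The step $3 \Rightarrow 1$ is immediate from Remark~\ref{R:int}: a complete intersection has vanishing fundamental class in $H^\infty_{n-2}(M,\ZZ)$ for the orientation induced by any Seifert manifold. For $2 \Rightarrow 3$, given an extension $\psi: V \to \e^1$ of the associated trivialization $\phi$, I would view $\e^1$ as the unit circle in $\EE^2$ and glue
$$
\Xi(x) := \begin{cases} \theta(x) & \text{if } x \in N_L, \\ \psi(x) & \text{if } x \in V, \end{cases}
$$
which is continuous since both pieces coincide with $\phi$ on $\partial N_L$. After a standard smoothing in a small collar of $\partial N_L$ disjoint from $L$, one obtains a smooth map $\Xi: M \to \EE^2$ with $\Xi^{-1}(0) = L$, and $0$ remains a regular value because $\theta$ is already a submersion near $L$. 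Hence $\Xi$ is a CI equation and Theorem~\ref{seif-int} delivers a Seifert manifold for $L$.

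The heart of the argument is $1 \Rightarrow 2$. Hypothesis $1$ together with \cite[Theorem 1.11]{BK96} yields a CI equation $\Xi: M \to \EE^2$ of $L$, whose normalization $\psi_\Xi := \Xi/|\Xi|: V \to \e^1$ canonically extends the trivialization $\phi_\Xi$ associated to $\Xi$. To upgrade this to an \emph{arbitrary} associated trivialization $\phi$, I would invoke Lemma~\ref{L:h1}(1): the fibers of $\phi$ and $\phi_\Xi$ are isotopic in $\partial N_L$, so $\phi$ and $\phi_\Xi$ differ by a map $L \to \e^1$ that is null-homotopic under the hypothesis $H^1(L;\ZZ)=0$, and consequently they are homotopic as maps $\partial N_L \to \e^1$. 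The homotopy extension property for the cofibration $(V,\partial V)$ then promotes the extension $\psi_\Xi$ of $\phi_\Xi$ into an extension $\psi$ of $\phi$. I expect this last upgrade to be the main obstacle: the canonical extension $\Xi/|\Xi|$ only provides information about one specific trivialization, and the passage to an arbitrary one is precisely where the hypothesis $H^1(L;\ZZ)=0$ becomes indispensable, since otherwise different trivializations could a priori represent distinct classes in $H^1(\partial N_L;\ZZ)$ and the equivalence $1\Leftrightarrow 2$ would fail.
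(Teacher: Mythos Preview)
Your argument is correct and follows the same cycle $3 \Rightarrow 1 \Rightarrow 2 \Rightarrow 3$ as the paper, with $3 \Rightarrow 1$ and $2 \Rightarrow 3$ treated essentially identically. The paper handles $2 \Rightarrow 3$ by first homotoping $\psi$ into the annulus $\Delta = \EE^2 \setminus \text{int}(D^2)$ via the retraction onto $\e^1$ and the homotopy extension theorem, then gluing with $\theta$; your direct gluing-and-smoothing is an equivalent shortcut.

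The genuine difference lies in $1 \Rightarrow 2$. The paper argues cohomologically: Lemma~\ref{L:h1}(2) gives $[L'] = 0$ in $H^\infty_{n-2}(V;\ZZ)$ for the fiber $L'$ of an \emph{arbitrary} $\phi$, and then a diagram chase combining the Eilenberg--MacLane identification $[V,\e^1] \cong H^1(V;\ZZ)$, the long exact sequence of the pair $(V,\partial V)$, and Poincar\'e--Lefschetz duality shows that $[\phi] \in H^1(\partial V;\ZZ)$ lies in the image of the restriction $H^1(V;\ZZ) \to H^1(\partial V;\ZZ)$, whence $\phi$ extends. Your route is more constructive: you build an explicit extension $\Xi/|\Xi|$ for the single trivialization $\phi_\Xi$ coming from a CI equation, and then transfer it to any other $\phi$ by the homotopy extension property, the key point being that $\phi$ and $\phi_\Xi$ differ by a map $L \to \e^1$ which is null-homotopic under $H^1(L;\ZZ)=0$. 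Both arguments draw on the same ingredients (Bochnak--Kucharz and the hypothesis on $H^1(L)$), but yours avoids the duality diagram at the cost of an extra transfer step; the paper's version makes the obstruction-theoretic structure more transparent and handles all $\phi$ uniformly in one stroke.

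One small remark: your sentence ``the fibers of $\phi$ and $\phi_\Xi$ are isotopic in $\partial N_L$, so $\phi$ and $\phi_\Xi$ differ by a map $L \to \e^1$ that is null-homotopic'' slightly inverts the logic. The two associated trivializations \emph{always} differ by such a map (this is structural, from the description of trivializations of $\nu(L)$), and its null-homotopy follows directly from $H^1(L;\ZZ)=0$; the isotopy of fibers in Lemma~\ref{L:h1}(1) is a consequence of this, not the reason for it. This does not affect correctness.
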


\begin{proof}
$3 \Rightarrow 1$: It follows from Remark~\ref{R:int}.

$1\Rightarrow 2$: As before we can identify the group $[V,\e^1]$ of homotopy classes of maps from $V$ to $\e^1$ with the cohomology group $H^1(V,\ZZ)$  using the Eilenberg-MacLane isomorphism $E:[V,\e^1]\to H^1(V;\ZZ)$. Now consider the commutative diagram:

$$
\xymatrix{
[V,\e^1] \ar[r]^R \ar[d]^E & [\partial V,\e^1]\ar[d]^E &  \\
H^1(V;\ZZ) \ar[d]^D \ar[r]^j & H^1(\partial V;\ZZ) \ar[r]^{d} \ar[d]^D & H^2(V,\partial V;\ZZ) \ar[d]^D\\
H^\infty_{n-1}(V,\partial V;\ZZ) \ar[r]^{\delta} &
H_{n-2}(\partial V;\ZZ) \ar[r]^i & H^\infty_{n-2}(V;\ZZ)
\\}
$$
where $R$ is the restriction map and the two last horizontal sequences
are homology and cohomology exact sequences related by vertical
Poincar\'e-Lefschetz duality isomorphisms $D$ \cite{Sp66}.

By Lemma~\ref{L:h1} any fiber $L'\subset \partial V$ of $\phi$ defines the trivial homology class in $H^\infty_{n-2}(V;\ZZ)$ when endowed with the corresponding orientation. Furthermore, the cohomology class $[\phi]\in H^1(\partial V;\ZZ)$ is the Poincar\'e-Lefschetz dual of $[L']\in H_{n-2}(\partial V;\ZZ)$.

As $[L']$ is zero in $H^\infty_{n-2}(V;\ZZ)$, there
exists an element $\alpha \in H^1(V;\ZZ)$ such that $[\phi] =
j(\alpha)$ and the lift of $\alpha$ to $[V,\e^1]$ by $E$ is the
homotopy class of a map $\psi: V \to S^1$ whose restriction to
$\partial V$ is homotopic to $\phi$. The map $\psi$ can be taken smooth, and changing it by means of the homotopy extension theorem, we can assume that $\psi|_{\partial V}=\phi$ thus proving the claim.

$2\Rightarrow 3$: As the set $\Delta:=\EE^2\backslash \text{int}(D^2)$ retracts onto
$\partial D^2$, the homotopy extension theorem provides
a map $\psi':V\to \Delta$ homotopic to $\psi$ such that $\psi'|_{\partial V}=\psi|_{\partial V}=\theta|_{\partial N_L}$. We define a CI equation $\Xi_\theta:M\to \EE^2$ of $L$ depending on the trivialization $\theta$ by setting:

$$
\Xi_\theta(x) = \begin{cases} \theta(x) & \text{if $x \in
N_L$\,,}\\
\psi'(x) &\text{if $x \in V$}\,.
\end{cases}
$$

\end{proof}

Note that the proof of the implication $1\Rightarrow 2$ cannot be adapted to higher codimension because $\e^{m-1}=\partial D^m$ is no longer an Eilenberg-MacLane space for $m>2$.

As a consequence of the previous proposition we can fully understand the relationship between SI, WI and complete intersection for codimension $2$ submanifolds with $H^1(L;\ZZ)=0$. In particular, we tackle completely Question~\ref{conjci} in this context.

\begin{theorem}\label{T:SIandTCI}
Let $L\subset M$ be a codimension $2$ submanifold with trivial normal bundle and $H^1(L;\ZZ)=0$. The following conditions are equivalent:
\begin{enumerate}
\item $L$ is WI and a complete intersection.
\item $L$ is WI and $0=[L^\epsilon] \in H^\infty_{n-2}(M,\ZZ)$ for some orientation $\epsilon$.
\item $L$ is TCI.
\item $L$ is SI.

\end{enumerate}
\end{theorem}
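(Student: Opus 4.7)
The plan is to close the equivalence cycle by showing $4 \Rightarrow 3 \Rightarrow 1 \Rightarrow 2 \Rightarrow 3$, relying throughout on results already established. The equivalence $3 \Leftrightarrow 4$ is exactly Theorem~\ref{thm.main2}. The implication $3 \Rightarrow 1$ is immediate from the definitions: TCI requires both that $L$ be CI and that $\nu(L)$ extend as a trivial bundle, and Theorem~\ref{thm.main} converts the latter into weak integrability. The implication $1 \Rightarrow 2$ follows from Remark~\ref{R:int}, which gives $0=[L^\epsilon]\in H^\infty_{n-2}(M,\ZZ)$ for any complete intersection.

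The substantive step is $2 \Rightarrow 3$. Starting from (2), Theorem~\ref{thm.main} provides a trivial extension $\tilde\nu(L)\to M$; I would fix a trivialization $\tilde\sigma:\tilde\nu(L)\to\RR^2$ and restrict to get a trivialization $\sigma$ of $\nu(L)$. From $\sigma$, Construction~\ref{const.trivial} produces a closed tubular neighborhood $N_L$, a differentiable trivialization $\theta:N_L\to L\times D^2$, and an associated boundary trivialization $\phi:\partial N_L\to \e^1$. I now want to replay the proof of Proposition~\ref{l.extension} with this specific choice of $\phi$: the hypothesis $[L^\epsilon]=0$ combined with Lemma~\ref{L:h1}(2) (which is where $H^1(L;\ZZ)=0$ enters) gives $[L']=0$ in $H^\infty_{n-2}(V;\ZZ)$; the cohomological diagram chase in $1\Rightarrow 2$ of Proposition~\ref{l.extension} then lifts $\phi$ to a smooth map $\psi:V\to\e^1$ with $\psi|_{\partial V}=\phi$; and the gluing construction in $2\Rightarrow 3$ of the same proposition produces a CI equation
\begin{equation*}
\Xi(x) = \begin{cases} \theta(x) & \text{if } x\in N_L,\\ \psi'(x) & \text{if } x\in V,\end{cases}
\end{equation*}
where $\psi'$ is obtained from $\psi$ by a homotopy into $\EE^2\setminus \inte(D^2)$ relative to $\partial V$.

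The crucial observation is that $\Xi|_{N_L}=\theta$, so that the trivialization of $\nu(L)$ associated to $\Xi$ is exactly our initial $\sigma$, and this $\sigma$ extends by construction as $\tilde\sigma$ over all of $M$. Both clauses of Definition~\ref{D:ci} are therefore satisfied, so $L$ is TCI.

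The main conceptual obstacle, which is essentially the content of Question~\ref{conjci}, is the compatibility of the two different trivializations of $\nu(L)$ that arise naturally---the one inherited from the extension $\tilde\sigma$ and the one produced by an arbitrary CI equation. In general these can differ by an element of $[L, GL(2,\RR)]$, measured by a class in $H^1(L;\ZZ)$. The hypothesis $H^1(L;\ZZ)=0$ is precisely what neutralizes this discrepancy (via Lemma~\ref{L:h1}(1), making the associated $\phi$ homotopically intrinsic), which lets us feed the WI-trivialization directly into the CI construction rather than having to reconcile two a priori unrelated trivializations after the fact.
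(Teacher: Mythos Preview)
Your proposal is correct and follows essentially the same approach as the paper: the easy implications are dispatched via Theorem~\ref{thm.main2}, the definitions, and Remark~\ref{R:int}/Proposition~\ref{l.extension}, and the substantive step $2\Rightarrow 3$ is handled exactly as the paper does—by specializing the construction in Proposition~\ref{l.extension} to the trivialization $\phi$ coming from a WI equation, so that the resulting CI equation $\Xi_\theta$ is automatically tame. Your closing paragraph on why $H^1(L;\ZZ)=0$ is needed is a helpful gloss but not part of the argument proper.
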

\begin{proof}
First note that $1\Leftrightarrow 2$ by Proposition~\ref{l.extension} and that $3\Leftrightarrow 4$ by Theorem~\ref{thm.main2}, while $4 \Rightarrow 1$ is obvious. Finally to show that $2\Rightarrow 3$, we specialize the proof of the preceding Proposition~\ref{l.extension}. Indeed, if the trivialization $\phi$ of $\partial V = \partial N_L$ used to construct a CI equation of $L$ is associated to a WI equation $\Phi$, then the corresponding CI equation $\Xi_\theta$ is automatically tame (recall Theorem \ref{thm.main} and Definition \ref{D:ci}).
\end{proof}

Proceeding as in the proof of Corollary \ref{cor.main}, the following result is straightforward using the fact that the group $H^\infty_{n-2}(M;\ZZ)$ is zero if $M$ is contractible.

\begin{corollary}\label{cor.main2}
Let $L\subset M$ be a codimension $2$ submanifold with $H^1(L;\ZZ)=0$, and assume that $M$ is contractible. Then the following conditions are equivalent:
\begin{enumerate}
\item $L$ is WI.
\item $L$ is SI.
\item The normal bundle $\nu(L)$ extends over $M$.
\item The tangent bundle $T(L)$ extends over $M$.
\end{enumerate}
Moreover, any submanifold $L$ which satisfies one of these conditions is parallelizable and has trivial normal bundle.
\end{corollary}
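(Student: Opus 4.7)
The plan is to assemble the corollary from the two already-established results Theorem~\ref{T:SIandTCI} and Corollary~\ref{cor.main}, using the contractibility of $M$ to kill the only obstruction that distinguishes the SI condition from the WI condition in codimension $2$, namely the fundamental class $[L^\epsilon]\in H^\infty_{n-2}(M;\ZZ)$.

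First, I would observe that Corollary~\ref{cor.main}, which applies in any codimension $m\geq 2$, immediately gives the equivalence of conditions $1$, $3$, and $4$ as well as the final claim that any $L$ satisfying these conditions is parallelizable and has trivial normal bundle. So the only genuinely new implication to establish is $1\Rightarrow 2$, since $2\Rightarrow 1$ is trivial (any strong equation is a weak equation).

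For $1\Rightarrow 2$, I would invoke Theorem~\ref{T:SIandTCI}, whose hypotheses on $L$ (codimension $2$, trivial normal bundle, $H^1(L;\ZZ)=0$) are all satisfied: the triviality of $\nu(L)$ follows from the contractibility of $M$ via the equivalence $1\Leftrightarrow 3$ just recalled, and $H^1(L;\ZZ)=0$ is assumed. It therefore suffices to verify that $L$ is WI and $0=[L^\epsilon]\in H^\infty_{n-2}(M;\ZZ)$ for some orientation $\epsilon$. The former is condition $1$ by assumption, and the latter is automatic because $M$ is contractible: indeed, Poincar\'e--Lefschetz duality identifies $H^\infty_{n-2}(M;\ZZ)$ with $H^2(M;\ZZ)$, which vanishes for a contractible $M$. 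Theorem~\ref{T:SIandTCI} then yields that $L$ is SI, i.e.\ condition $2$.

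I do not expect any serious obstacle in this proof; all of the work has been done in the two earlier results, and the role of the contractibility hypothesis is precisely to force the topological obstruction $[L^\epsilon]$ to vanish, bridging the gap between the WI/CI characterization in Theorem~\ref{T:SIandTCI} and the extension-of-bundles characterization in Corollary~\ref{cor.main}. The only minor point to be careful about is to state explicitly that, in the equivalence $1\Leftrightarrow 3$ coming from Corollary~\ref{cor.main}, the extension $\tilde\nu(L)$ over the contractible $M$ is necessarily trivial, so that the standing hypothesis ``$\nu(L)$ trivial'' required by Theorem~\ref{T:SIandTCI} is indeed available.
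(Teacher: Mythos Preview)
Your proposal is correct and follows essentially the same route as the paper: the paper's proof simply remarks that the result is straightforward from Corollary~\ref{cor.main} together with the vanishing of $H^\infty_{n-2}(M;\ZZ)$ for contractible $M$, which is exactly what you spell out by combining Corollary~\ref{cor.main} for $1\Leftrightarrow 3\Leftrightarrow 4$ with Theorem~\ref{T:SIandTCI} for $1\Rightarrow 2$.
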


\begin{remark}
Theorem~\ref{T:SIandTCI} does not cover the case of embeddings of $\e^1$ in an open $3$-manifold. This is the goal of Miyoshi's theorem~\cite{Mi95}, which unfortunately is seriously flawed as we shall explain in Section~\ref{S:miyoshi}. In the case that $M=\EE^3$, we shall prove the right analogue of Miyoshi's theorem in Section~\ref{S:miyoshi}, and in particular we shall see that no embedding of $\e^1$ in $\EE^3$ is SI.   
\end{remark}

To finish this section let us give more examples of codimension $2$ submanifolds which are complete intersections but not WI (compare with Example~\ref{exsphere}).

\begin{example} Any embedding $h:\e^{k} \to \EE^{k+2}$ of the $k$-dimensional sphere has trivial normal bundle \cite{Ma59}. Furthermore, as  $0 = [h(\mathbb{S}^k)] \in H_k^\infty(\EE^{k+2};\mathbb{Z})$, any such embedding is CI, but not WI for $k\notin\{1,3,7\}$ according to Lemma \ref{L:easy}. The case of $k=1$ was already studied in Application~\ref{appl.low} while the remaining cases $k \in\{3,7\}$ will be covered by Proposition \ref{spheres}.
\end{example}

\chapter{Integrability in Euclidean spaces}\label{Ch:Euc}

\section{Introduction}\label{Euc0}

In Chapter \ref{Ch:gen} we have analyzed weak and strong integrability of embeddings in a general manifold $M$. It is clear that if we want to obtain deeper results and eventually a complete characterization of WI or SI submanifolds, we must restrict our attention to specific ambient spaces. In this chapter we will focus on the first natural candidates to study i.e. Euclidean spaces, hence $L$ will stand for a $k$-dimensional submanifold of $\EE^n$, $n>k$, not necessarily connected nor compact.

In the case of codimension one, the submanifolds which are WI or SI in $\EE^n$ are completely classified by Corollary \ref{thm.cod1}, therefore we will assume in what follows that $n\geq k+2$. In this case, on account of Corollary \ref{cor.main}, the problem of weak integrability is equivalent to that of extending over $\EE^n$ the normal (or tangent) bundle of $L$. As we shall see, this condition reduces the problem to homotopy theory and provides several obstructions to weak integrability. These obstructions are related to the theory of immersions into Euclidean spaces widely developed during the period $1955-1970$, in particular to the Hirsch-Smale classification theorem \cite{Hirsch59, Sm59}. This connection is very fruitful because it brings to the foreground several techniques like the theory of degree, and culminates in a complete classification of WI submanifolds of $\EE^n$.

Recall that a preliminary condition for a submanifold $L\subset \EE^n$ to be WI is that it is parallelizable (see Lemma \ref{L:easy}), so in what follows we will often assume that $T(L)$ is trivial.

In Section \ref{Euc1} we review Hirsch-Smale theory and other standard results concerning immersions and embeddings of parallelizable manifolds. In Section~\ref{S:tangent} we define the \textit{tangential characteristic class} of a parallelizable $k$-submanifold and classify WI submanifolds in $\EE^n$ for $n\geq 2k+1$. In Section \ref{Euc3} we introduce the concepts of \textit{normal characteristic class} and \textit{total weak integrability} and give a complete answer to the question of WI embeddings for manifolds of dimension $k\neq 3,7$. The case of 3- and 7-manifolds embedded in $\EE^n$ is studied in Subsection \ref{Euc4}, where we present examples of parallelizable submanifolds which are not WI. In Sections~\ref{Euc5} and~\ref{S:miyoshi} we focus on strong integrability, clarifying the connection between CI and SI submanifolds (in particular for links in $\EE^3$) and giving counterexamples to Miyoshi's theorem. Finally, in Section \ref{Euc6} we provide some applications of the previous results to the theory of foliations in Euclidean spaces.

\section{Preliminary results: the Hirsch-Smale classification of immersions}\label{Euc1}

Let $L$ be a $k$-dimensional submanifold of $\EE^n$ and as usual denote by $\nu(L)$ and $T(L)$ the normal and tangent bundles of $L$ respectively. The next lemma provides some sufficient conditions in order that $\nu(L)$ be trivial, a necessary condition for weak integrability.

\begin{lemma}\label{L:trivnu}
The normal bundle of $L\subset \EE^n$ is trivial for example in the following two situations:
\begin{enumerate}
\item $L$ is a codimension two submanifold.
\item $L$ is parallelizable and $n\geq 2k$.
\end{enumerate}
\end{lemma}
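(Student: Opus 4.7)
My approach is to treat the two cases with different tools; throughout I freely use the splitting $T(L)\oplus \nu(L)\cong T(\EE^n)|_L\cong L\times \RR^n$.

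For case (1), the key observation is that the locally-finite fundamental class of $L$ in $\EE^n$ vanishes for free: since $\EE^n$ is contractible, Poincar\'e-Lefschetz duality gives $H^\infty_{n-2}(\EE^n;\ZZ)\cong H^2(\EE^n;\ZZ)=0$, so $[L^\epsilon]=0$ for any orientation $\epsilon$ of $L$. The Bochnak-Kucharz result~\cite[Theorem 1.11]{BK96}, already invoked in the proof of Lemma~\ref{L:h1}, then produces a complete intersection equation $\Xi:\EE^n\to \EE^2$ for $L$; and any such CI equation trivializes $\nu(L)$ via $\dd\Xi|_L$, as noted just after Definition~\ref{complete}. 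The degenerate case $n=2$, $k=0$ is immediate.

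For case (2), my plan is to exploit that $\nu(L)$ is stably trivial and then invoke obstruction theory. Parallelizability gives $T(L)\cong L\times \RR^k$, which combined with the splitting above yields an isomorphism $\nu(L)\oplus\RR^k\cong\RR^n$ over $L$. I then appeal to the classical principle that a stably trivial oriented rank-$m$ bundle over a CW complex of dimension $\leq m$ is trivial: the fiber $O/O(m)$ of $BO(m)\to BO$ is $(m-1)$-connected, so the obstructions to desuspending a stable trivialization to an honest trivialization live in $H^{i+1}(X;\pi_i(O/O(m)))$ for $i\geq m$, and these groups vanish as soon as $\dim X\leq m$. In our situation $m=n-k$ and $\dim L=k$, so the hypothesis $n\geq 2k$ is exactly $\dim L\leq \mathrm{rank}\,\nu(L)$, and the principle applies.

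The main subtlety I expect is the borderline case $n=2k$ of part (2), in which the rank of $\nu(L)$ equals the dimension of $L$: one must check that even the top-degree obstruction group $H^{k+1}(L;\pi_k(O/O(k)))$ vanishes. This is fine because a $k$-manifold has $H^{k+1}(L;A)=0$ for any abelian group $A$, so no extra work is needed and the argument closes.
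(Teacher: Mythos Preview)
Your argument for case~(2) has a real gap at the borderline $n=2k$. The ``classical principle'' you invoke---that a stably trivial oriented rank-$m$ bundle over a complex of dimension $\leq m$ is trivial---is false when the dimension equals $m$. Concretely, take the oriented rank-$2$ bundle $E$ over $\TT^2$ with Euler class $2$: then $w_2(E)=e(E)\bmod 2=0$, so $E\oplus\RR$ is trivial and $E$ is stably trivial, yet $E$ is not trivial. The error is an index shift in your obstruction theory: lifting a nullhomotopy of $X\to BO$ through $BO(m)\to BO$ is a relative extension problem over $X\times I$ rel $X\times\{0\}$, so the obstructions lie in
\[
H^{i+1}\bigl(X\times I,\,X\times\{0\};\,\pi_i(O/O(m))\bigr)\;\cong\;H^{i}\bigl(X;\,\pi_i(O/O(m))\bigr),
\]
not in $H^{i+1}(X;\cdot)$. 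The first obstruction is thus in $H^{m}(X;\pi_m(O/O(m)))$, which is generally nonzero when $\dim X=m$. What your purely homotopy-theoretic argument misses is the geometric input that singles out normal bundles of \emph{embeddings} in $\EE^{2k}$ among all stably trivial bundles: $L$ can be pushed off itself in $\EE^{2k}$, forcing $e(\nu(L))=0$. This (and related facts) is what the references \cite{Wh40,Ke59_2} cited by the paper actually use; stable triviality alone does not suffice when $n=2k$. For $n>2k$ your argument is correct.

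For case~(1) your route through CI equations is more delicate than it appears. The Bochnak--Kucharz statement you cite, as used in Lemma~\ref{L:h1}, is applied to submanifolds already assumed to have trivial normal bundle, so invoking it here to \emph{deduce} triviality of $\nu(L)$ risks circularity. The direct argument (essentially what \cite{Th52,Ma59} do, and what the paper is pointing to) is shorter anyway: an oriented real rank-$2$ bundle is trivial iff its Euler class vanishes, and $e(\nu(L))$ is the restriction to $L$ of the Poincar\'e dual of $[L]$ in $H^2(\EE^n;\ZZ)=0$.
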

\begin{proof}
Both properties are well known. Property 1 can be consulted in \cite{Th52} (see also \cite{Ma59}) and property 2 is proved in \cite{Wh40} and \cite{Ke59_2}.
\end{proof}

Next we introduce the Hirsch-Smale theory. A homotopy $g:L\times [0,1]\to M$ is called \textit{regular} if for any $t\in [0,1]$, the map $g_t$ is an immersion and $g$ induces a continuous homotopy of bundle maps $g_*: T(L) \times [0,1] \to T(M)$ such that $(g_*)_t = (g_t)_*$ for any $t \in [0,1]$. The theory describes the set $Imm(L,\EE^n)$ of
immersions of $L$ into the Euclidean space $\EE^n$ up to regular
homotopy. In general it is rather sophisticated but it simplifies nicely in the case where $L$ is parallelizable.

So let us assume that $T(L)$ is trivial and fix a global tangent frame $\tau$ of it. As usual, we denote by $\mathbb{V}_{n,k}$ the Stiefel manifold of
$k$-frames of $\RR^n$. The canonical parallelism of the Euclidean space $\EE^n$ induces a canonical trivialization of the associated $(\mathbb{V}_{n,k})$-bundle. Therefore the pair $(L,\tau)$ associates
to any smooth immersion $h: L \to \EE^n$ a smooth map $\hat h_\tau: L \to
\mathbb{V}_{n,k}$, thus defining a map
$$
\chi_{\tau} : Imm(L, \EE^n) \to C^\infty(L, \mathbb{V}_{n,k})\,
$$
which depends on the choice of $\tau$. Now the main theorem of Hirsch-Smale theory \cite{Hirsch59,Sm59} can be stated as follows:

\begin{theorem}[Hirsch-Smale's theorem]\label{HiSm} For any
parallelizable $k$-manifold $L$, the image
of $\chi_\tau$ does not depend on $\tau$. Moreover, if $n\geq k+1$
or if  $n=k$ and $L$ is open, this map induces a bijection
$$
\tilde \chi_{\tau} : [Imm(L, \EE^n)] \to [L, \mathbb{V}_{n,k}]\,,
$$
from the set of regular homotopy classes of immersions onto the set
of homotopy classes of maps from $L$ to $\mathbb V_{n,k}$.
\end{theorem}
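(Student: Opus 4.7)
My strategy is to recast $\chi_\tau$ as the classical differential map from immersions to fiberwise injective bundle morphisms, and then invoke (or prove, by handle induction) the Hirsch--Smale h-principle for this differential map.

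\emph{Setup.} Given $h \in Imm(L, \EE^n)$, the differential $dh$ is a fiberwise injective bundle morphism $T(L) \to T(\EE^n)$. The framing $\tau: T(L) \cong L \times \RR^k$ and the canonical framing $T(\EE^n) = \EE^n \times \RR^n$ convert $dh$, fiber by fiber, into a linear injection $\RR^k \hookrightarrow \RR^n$, i.e.\ a point $\hat h_\tau(x) \in \VV_{n,k}$. The assumption that regular homotopies come with a continuous homotopy of tangent maps translates directly into a homotopy of the induced maps into $\VV_{n,k}$, so $\chi_\tau$ descends to $\tilde\chi_\tau: [Imm(L,\EE^n)] \to [L, \VV_{n,k}]$. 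Two framings $\tau, \tau'$ differ by a smooth map $g: L \to GL_k(\RR)$, and a direct computation gives $\hat h_{\tau'}(x) = \hat h_\tau(x) \cdot g(x)^{-1}$; right multiplication by $g^{-1}$ is a self-diffeomorphism of $\VV_{n,k}$ and induces a bijection of $[L, \VV_{n,k}]$ that intertwines $\tilde\chi_\tau$ with $\tilde\chi_{\tau'}$. Hence the image of $\chi_\tau$ is independent of $\tau$ up to this canonical identification (and becomes trivially independent once the surjectivity in the next step is proved).

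\emph{Reduction to an h-principle.} Using both framings, $C^\infty(L, \VV_{n,k})$ is identified with the space $Mon(T(L), T(\EE^n))$ of fiberwise injective bundle morphisms, and under this identification $\chi_\tau$ corresponds to the standard differential
$$d: Imm(L, \EE^n) \to Mon(T(L), T(\EE^n)).$$
The content of the theorem is then the Hirsch--Smale h-principle: under the hypothesis $n \geq k+1$ (or $n = k$ with $L$ open), the map $d$ is a weak homotopy equivalence. Passing to $\pi_0$ yields the desired bijection $\tilde\chi_\tau$, and the statement about images follows from the surjectivity on $\pi_0$.

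\emph{Main obstacle: the h-principle itself.} Following Hirsch's original approach, I would fix a proper handle decomposition of $L$ and argue by induction on the index of the handles. At each stage one has an immersion $h$ already defined on the union $L^{(\ell-1)}$ of lower-index handles, together with a formal extension (a bundle monomorphism) over the next $\ell$-handle $H_\ell$, and the task is to deform $h$ in a neighbourhood of $\partial H_\ell$ so as to integrate the formal data across $H_\ell$. The codimension condition $n > k$ (respectively, when $n = k$, the openness of $L$, which allows the top-dimensional handles to be pushed to infinity) provides the transverse room needed for Whitney-type deformations: the obstruction to such an integration lives in $\pi_\ell(\VV_{n,k})$ and is precisely cancelled by the chosen formal monomorphism. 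Running the same argument relatively (to get injectivity on $\pi_0$) and parametrically (to get weak homotopy equivalence on all $\pi_i$) is the delicate technical core. I expect this parametric handle-by-handle bookkeeping, together with the careful smoothing at handle attaching spheres, to be the main obstacle; the rest of the theorem is essentially a translation exercise between bundle monomorphisms and maps to the Stiefel manifold via the framing $\tau$.
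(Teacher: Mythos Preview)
The paper does not give its own proof of this statement: it is quoted as the classical Hirsch--Smale theorem and immediately followed by a reference to the literature (``The proof of this theorem is one of the first applications of the h-principle that one can find in the literature (see e.g.\ \cite{Ad93} for a comprehensive account)''). So there is no argument in the paper to compare your proposal against.

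That said, your sketch is a faithful outline of the standard proof. The reduction of $\chi_\tau$ to the differential map $d: Imm(L,\EE^n) \to Mon(T(L),T(\EE^n))$ via the framings is exactly right, as is the observation that changing $\tau$ acts by a self-bijection of $[L,\VV_{n,k}]$. The handle-induction scheme you describe is Hirsch's original argument, and you correctly identify where the hypothesis $n\geq k+1$ (or $n=k$ and $L$ open, so there are no top-dimensional handles) is used. The only caveat is that what you have written is a plan rather than a proof: the parametric and relative versions of the handle step, and the smoothing near attaching regions, are genuinely technical and would each require several pages to carry out carefully. For the purposes of this paper, citing \cite{Hirsch59,Sm59} or \cite{Ad93} is what the authors intend.
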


The proof of this theorem is one of the first applications of the h-principle that one can find in the literature (see e.g. \cite{Ad93} for a comprehensive account). In order to use concretely the theory, we compute explicitly the set
$[L,\VV_{n,k}]$ in some cases to be used later.

\begin{proposition}\label{prop.calcul} Let $r$ be the number of compact components of a $k$-dimensional manifold $L$. Then
\begin{enumerate}
\item $ [L,\VV_{n,k}] =
\begin{cases}
0 & \text{if $n\geq 2k+1$},\\
H^k[L, \pi_k(\VV_{n,k})]= [\pi_k(\VV_{n,k})]^r & \text{if $n=2k$}.
\end{cases}$
\item $ [L,\VV_{n,n-k}] = H^k[L, \pi_k(\VV_{n,n-k})]= [\pi_k(\VV_{n,n-k})]^r  ~~\text{for any $n > k$}$.
\end{enumerate}
\end{proposition}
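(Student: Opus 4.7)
The entire computation rests on the classical connectivity of the Stiefel manifolds: $\VV_{N,q}$ is $(N-q-1)$-connected, with generally non-trivial homotopy group $\pi_{N-q}(\VV_{N,q})$. My plan is to read off the connectivity in each of the three regimes appearing in the statement and then to reduce $[L,\VV]$ to a cohomology group of $L$ via standard obstruction theory, which in this setting amounts to invoking the first non-trivial stage of the Postnikov tower.

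For part 1 with $n \geq 2k+1$, the connectivity $n-k-1$ is at least $k$, so $\VV_{n,k}$ is $k$-connected. Since $L$ has the homotopy type of a CW complex of dimension $\leq k$, every map $L \to \VV_{n,k}$ is null-homotopic and hence $[L, \VV_{n,k}] = 0$.

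In the remaining two regimes (part 1 with $n = 2k$, and part 2 for any $n > k$) the relevant Stiefel manifold $\VV$ has connectivity exactly $k-1$, since $2k-k-1 = k-1$ and $n-(n-k)-1 = k-1$. I would then invoke the first Postnikov stage: the canonical map $\VV \to K(\pi_k(\VV), k)$ induces isomorphisms on $\pi_i$ for $i \leq k$, hence is a $(k+1)$-equivalence, and since $\dim L \leq k$ it induces a bijection
$$
[L, \VV] \;\cong\; [L, K(\pi_k(\VV), k)] \;=\; H^k\bigl(L, \pi_k(\VV)\bigr),
$$
which yields the first of the two equalities stated in each of these cases.

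The final step is the identification $H^k(L, A) = A^r$ for any abelian group $A$, with $r$ the number of compact components of $L$. I would split $L$ into connected components $L_i$: each compact connected oriented $k$-manifold (without boundary) contributes $H^k(L_i, A) = A$ by Poincar\'e duality for closed manifolds, while each non-compact connected component (without boundary, by the standing assumption on $L$) satisfies $H^k(L_i, A) = 0$, either because Poincar\'e duality identifies this group with $H_0^{\mathrm{BM}}(L_i, A) = 0$, or more elementarily because such an $L_i$ admits a CW-structure of dimension $\leq k-1$. Summing over components then gives the result. The only mildly delicate step is this last vanishing for non-compact components, but it is entirely classical; the rest of the argument is a direct application of obstruction theory.
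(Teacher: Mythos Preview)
Your proof is correct and follows essentially the same approach as the paper: both exploit the $(n-k-1)$-connectivity of $\VV_{n,k}$ and then reduce $[L,\VV]$ to $H^k(L,\pi_k(\VV))$ via obstruction theory, the paper citing a theorem from Spanier directly while you phrase the same mechanism through the first Postnikov section $\VV \to K(\pi_k(\VV),k)$. Your explicit justification of the identification $H^k(L,A)=A^r$ (splitting into components and noting that open components have vanishing top cohomology) is a welcome addition, since the paper leaves this step implicit.
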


\begin{proof} First let us recall that the Stiefel manifold $\VV_{n,k}$ is
$(n-k-1)$-connected, which means that the homotopy groups
$\pi_p(\VV_{n,k})=0$ for $p \in\{1,\ldots,n-k-1\}$. Now let $g^{(j)}$ be the restriction of a continuous map $g: L \to \VV_{n,k}$  to the $j$-skeleton $L^{(j)}$ of a fixed CW-decomposition of $L$. As $\VV_{n,k}$ is connected, $g^{(0)}$ is homotopic to a constant and using the homotopy extension theorem we can assume that it is in fact constant. Next proceeding by induction and using the fact that $\VV_{n,k}$ is $j$-connected, we can assume that $g^{(j)}$ is constant for any $j \leq n-k-1$. Since the condition $n\geq 2k+1$ is equivalent to $k \leq n-k-1$, we conclude from the previous observation that in this case $[L,\mathbb {V}_{n,k}]=0$.

Finally, consider the case $n=2k$. As then $\VV_{n,k}$ is $(k-1)$-connected, we can apply results in \cite{Sp66}: since $H^q(L,\pi_q(\VV_{n,k})) = 0$ and $H^{q+1}(L, \pi_q(\VV_{n,k})) = 0$ for all $q > k$, it follows from \cite[Theorem 3, p. 447]{Sp66}  that $[L,\VV_{n,k}] = H^k[L, \pi_k(\VV_{n,k})]$, which is claim 1. To prove claim 2, we proceed in a similar way after noticing that $\VV_{n,n-k}$ is $(k-1)$-connected.
\end{proof}

\begin{remark}\label{R:computV}
Many of the homotopy groups of Stiefel manifolds have been computed \cite{Pa56}, in particular we have that

$$
\pi_k(\VV_{n,n-k}) = \begin{cases}
\ZZ & \text{if $k$ is even or if $k$ is odd and $n=k+1$\,,}\\
\ZZ_2  & \text{if $k$ is odd and $n >k+1$\,.}
\end{cases}
$$
\end{remark}

The following corollary is a straightforward consequence of Hirsch-Smale Theorem \ref{HiSm} and Proposition \ref{prop.calcul} in the case where $L$ is parallelizable. The assumption of parallelizability can be removed using the whole machinery of Hirsch-Smale theory; the final result was originally due to Whitney \cite{Wh35} and Kervaire~\cite{Ke59_2}.

\begin{corollary}\label{imm}
All the immersions of a $k$-dimensional manifold $L$ in $\EE^n$ are regularly homotopic for $n\geq 2k+1$.
\end{corollary}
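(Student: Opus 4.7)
My approach will split according to whether $L$ is parallelizable. In the parallelizable case, the corollary reduces to a mechanical combination of the two results just recalled: fix a global tangent frame $\tau$ of $T(L)$; since $n \geq 2k+1 \geq k+1$, Theorem \ref{HiSm} supplies the bijection $\tilde\chi_\tau : [Imm(L,\EE^n)] \to [L,\VV_{n,k}]$, and Proposition \ref{prop.calcul}(1) shows that the target set is a singleton in the stated range. Hence all immersions of $L$ into $\EE^n$ fall into a single regular homotopy class.

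For the non-parallelizable case, I would invoke the full (untrivialized) Hirsch-Smale theorem, which asserts that the differential map $d : Imm(L,\EE^n) \to Mono(T(L), T(\EE^n))$ from immersions to fibrewise injective bundle maps is a weak homotopy equivalence. Since $T(\EE^n) = \EE^n \times \RR^n$ is trivial, such a bundle monomorphism amounts to a section of the fibre bundle $p: E \to L$ associated with the frame bundle of $T(L)$ and with typical fibre $\VV_{n,k}$. The question is therefore reduced to showing that any two sections $s_0, s_1$ of $p$ are homotopic through sections.

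This last reduction is handled by obstruction theory applied to a CW-decomposition of $L$: the successive obstructions to extending a partial homotopy defined on the boundary $L \times \{0,1\}$ live in cohomology groups of the form $H^{j+1}(L \times I, L \times \partial I;\pi_j(\VV_{n,k})) \cong H^j(L;\pi_j(\VV_{n,k}))$, with locally constant coefficients. Now $\VV_{n,k}$ is $(n-k-1)$-connected and the hypothesis $n \geq 2k+1$ is exactly $n-k-1 \geq k \geq \dim L$, so $\pi_j(\VV_{n,k}) = 0$ for all relevant $j$ and every obstruction vanishes, producing the desired homotopy. This recovers the classical theorem of Whitney and Kervaire.

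The one point that requires a bit of care, and which I would expect to be the main (mild) obstacle, is that in the non-parallelizable case the bundle $E \to L$ need not be trivial, so the coefficients in obstruction theory are in principle twisted; however, since the coefficient groups themselves are zero in the relevant range, this twisting is irrelevant. The argument is thus essentially the same skeleton-by-skeleton induction already used in the proof of Proposition \ref{prop.calcul}, only applied to sections of a non-trivial Stiefel bundle over $L$ rather than to maps into $\VV_{n,k}$.
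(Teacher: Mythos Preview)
Your proposal is correct and follows exactly the approach the paper sketches: for parallelizable $L$ you combine Theorem~\ref{HiSm} with Proposition~\ref{prop.calcul}(1), and for general $L$ you invoke the full Hirsch--Smale weak homotopy equivalence together with the $(n-k-1)$-connectedness of $\VV_{n,k}$, which is precisely what the paper means by ``using the whole machinery of Hirsch--Smale theory'' before citing Whitney and Kervaire. The only difference is that you actually spell out the obstruction-theoretic argument for sections of the associated Stiefel bundle, whereas the paper is content to refer to the literature; your added remark that the possible twisting of coefficients is irrelevant because the groups themselves vanish is correct and worth making explicit.
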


It is not true that the same property holds for $n\leq 2k$ (see \cite{Hu85,Ta00,SaTa02,Ju05}). Nevertheless, we have the following remarkable extension of Corollary~\ref{imm} which will prove very useful for the study of WI embeddings (see e.g. \cite{MM67}).

\begin{theorem}\label{T:reghom}
For $k\geq 2$, all embeddings of a $k$-dimensional manifold $L$ in $\EE^{2k}$ are regularly homotopic.
\end{theorem}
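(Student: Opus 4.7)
The strategy is to combine Hirsch-Smale theory (Theorem~\ref{HiSm}) with the vanishing of the self-intersection obstruction for embeddings. By Theorem~\ref{HiSm}, two immersions of $L$ into $\EE^{2k}$ are regularly homotopic iff they induce the same class in $[L,\VV_{2k,k}]$. For parallelizable $L$ with framing $\tau$, Proposition~\ref{prop.calcul}(1) gives
\[
[L,\VV_{2k,k}] \cong H^k(L;\pi_k(\VV_{2k,k})) \cong [\pi_k(\VV_{2k,k})]^r,
\]
where $r$ is the number of compact components of $L$. Thus the Hirsch-Smale invariant automatically vanishes on every non-compact component, and it suffices to compare the invariants on each closed component $L_i$. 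The non-parallelizable case is handled analogously using the full Hirsch-Smale theorem applied to bundle monomorphisms $T(L)\to T(\EE^{2k})$.

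The key input will be the classical Whitney formula for immersions of a closed oriented $k$-manifold in double dimension: for $h: L_i \to \EE^{2k}$ in general position, the Hirsch-Smale invariant $\omega(h)\in \pi_k(\VV_{2k,k})$ equals, up to a universal normalization, the algebraic self-intersection number $I(h)$, corrected by a topological term depending only on $L_i$ (a multiple of $\chi(L_i)$ when $k$ is even; for $k$ odd the correction is trivial since any closed orientable odd-dimensional manifold satisfies $\chi(L_i)=0$, and in that case $\pi_k(\VV_{2k,k})=\ZZ_2$ by Remark~\ref{R:computV}). For an embedding one has $I(h)=0$, so $\omega(h)$ depends only on the topology of $L_i$ and not on the chosen embedding. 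Consequently any two embeddings $h_0,h_1:L\to\EE^{2k}$ yield the same Hirsch-Smale invariant on every compact component, and they are therefore regularly homotopic.

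The main technical hurdle is the correct bookkeeping of Whitney's formula across both parities of $k$ (and in particular the case $k=2$, where orientations must be tracked with care), together with the adaptation of the argument to the non-parallelizable setting, where the target Stiefel manifold is replaced by an appropriate bundle over $L$. Once the formula is in place, the embedding hypothesis $I(h)=0$ reduces the conclusion to a bookkeeping exercise.
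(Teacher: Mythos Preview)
The paper does not supply a proof of Theorem~\ref{T:reghom}; the result is quoted from the literature with a reference to \cite{MM67}, so there is no in-paper argument to compare against.

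Your outline follows the standard route and is correct in spirit: Hirsch--Smale reduces the question to comparing classes in $[L,\VV_{2k,k}]$ (or, in the non-parallelizable case, homotopy classes of bundle monomorphisms $T(L)\to\RR^{2k}$); since $\VV_{2k,k}$ is $(k-1)$-connected the only obstruction lies in $H^k(L;\pi_k(\VV_{2k,k}))$; and a Whitney-type formula identifies this obstruction with the algebraic self-intersection number $I(h)$ up to terms depending only on $L$, so that $I(h)=0$ for embeddings forces a unique regular homotopy class.

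Two caveats. First, calling the last step ``bookkeeping'' undersells it: this is exactly where the substance is. What you need is that the \emph{difference} of Hirsch--Smale invariants of two immersions $f_0,f_1$ equals (a fixed multiple of) $I(f_0)-I(f_1)$; establishing this requires either a direct obstruction-theoretic computation or the Whitney cancellation argument for $k\geq 3$ together with a separate treatment of $k=2$. Second, in the non-parallelizable case the target is not the set $[L,\VV_{2k,k}]$ but the set of homotopy classes of sections of the associated Stiefel bundle over $L$, and what one controls is a difference class in $H^k(L;\pi_k(\VV_{2k,k}))$ rather than an absolute invariant; your parenthetical remark is correct but the adaptation is not entirely formal. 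As written, your proposal is a valid roadmap rather than a complete proof.
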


In the following section we shall relate Hirsch-Smale theory to weak integrability; one bad point of this theory is that since $\tilde \chi_\tau$ depends effectively on $\tau$, the ``trivial class'' of $[Imm(L, \EE^n)]$ is not intrinsically defined and it is very difficult to detect the regular homotopy classes of
immersions which contain embeddings.

An additional problem (specially concerning strong integrability) arises from
the fact that two embeddings which are regularly homotopic are not
necessarily isotopic, as is shown by the typical example of links in
$\EE^3$: they are all regularly homotopic because $[\mathbb{S}^1,
\VV_{3,1}] = 0$ but they are of course not isotopic in general.

Regarding the classification of embeddings by isotopy, a well known result is Wu's theorem (see e.g. \cite{Da72}).

\begin{theorem}[Wu]\label{T:Wu}
Let $L$ be a $k$-dimensional manifold with $k\geq 2$. If $L$ is connected, all its embeddings in $\EE^{n}$ are isotopic for $n\geq 2k+1$, in general the same result holds for $n \geq 2k+2$.
\end{theorem}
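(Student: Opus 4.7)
The strategy is to reduce the isotopy classification to the existence of a \textit{concordance} between the two given embeddings, and then to construct such a concordance via general position. Recall that a concordance between $h_0, h_1 : L \to \EE^n$ is an embedding $F : L \times [0,1] \to \EE^n \times [0,1]$ satisfying $F(x,i) = (h_i(x), i)$ for $i = 0, 1$. In the range under consideration the codimension satisfies $n-k \geq k+1 \geq 3$, so the classical ``concordance implies isotopy'' theorem in codimension at least three (Hudson, Cerf--Palais) provides an ambient isotopy of $\EE^n$ carrying $h_0(L)$ onto $h_1(L)$ whenever such an $F$ exists. Granting this reduction, it suffices to build the concordance.

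Since $\EE^n$ is contractible, any smooth homotopy $H : L \times [0,1] \to \EE^n$ from $h_0$ to $h_1$, for instance the straight-line homotopy $H(x,t) := (1-t)h_0(x) + t h_1(x)$, produces a tentative candidate
\[
F : L \times [0,1] \to \EE^n \times [0,1], \qquad F(x,t) := (H(x,t), t),
\]
which agrees with $(h_0,0)$ and $(h_1,1)$ on $L \times \{0,1\}$ but typically fails to be injective at intermediate times. The aim is to perturb $F$ rel $L \times \{0,1\}$ so as to become an embedding.

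The dimension count enters here: $F$ sends a $(k+1)$-manifold into an $(n+1)$-manifold, so after a generic perturbation its double-point locus has dimension $2(k+1) - (n+1) = 2k + 1 - n$. For $n \geq 2k+2$ this number is negative and Whitney's general position theorem yields directly a perturbation of $F$ rel boundary which is an embedding, covering the disconnected case. For $n = 2k+1$ and $L$ connected, $L \times [0,1]$ is also connected and the double points form a finite set of isolated points which pair up by local intersection sign; the Whitney trick then applies and removes these pairs one by one, producing the desired concordance. This uses $k+1 \geq 3$ in an essential way, whence the hypothesis $k \geq 2$.

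The main obstacle is precisely the borderline case $n = 2k+1$, where the Whitney trick must be invoked to eliminate the isolated double points. Its applicability rests on two facts: the codimension $n-k = k+1 \geq 3$ leaves enough room to embed the Whitney disks with interiors disjoint from $F(L \times [0,1])$, and the connectedness of $L$ (equivalently of $L \times [0,1]$) is exactly what allows oppositely signed double points to be joined by embedded arcs in each of the two sheets meeting at a double point, arcs which together bound the Whitney disk. Without connectedness these arcs need not exist, which is why the disconnected case requires the extra dimension $n \geq 2k+2$ so that transversality alone, without any Whitney moves, already forces $F$ to be an embedding.
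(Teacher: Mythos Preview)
The paper does not give a proof of this statement; it is quoted as a classical result of Wu with a pointer to Dax~\cite{Da72}. So there is no argument in the paper to compare yours against. Your outline---reduce to concordance via Hudson's ``concordance implies isotopy'' in codimension $\geq 3$, then build the concordance by putting the track of a homotopy in general position and invoking the Whitney trick---is a standard and legitimate route to the compact case.

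One point deserves more care. In the borderline case $n=2k+1$ you write that the isolated double points of the generic immersion $F:(L\times I)^{k+1}\to(\EE^n\times I)^{2k+2}$ ``pair up by local intersection sign''. That the signed count vanishes is not automatic: it is a regular-homotopy invariant of $F$ (rel boundary), and nothing you have said forces it to be zero. The fix is Whitney's own device: since you only require $F$ to agree with $(h_i,i)$ on $L\times\{i\}$, you are free to change its regular-homotopy class in the interior, and introducing local kinks in $L\times(0,1)$ shifts the self-intersection number by $\pm 1$ at will. After adjusting it to zero, the Whitney trick (which needs $k+1\geq 3$ and the simple connectivity of $\EE^n\times I$, both available) cancels the remaining pairs as you describe, and Hudson's theorem finishes.

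A second, more minor remark: the paper's conventions allow $L$ to be open with proper embeddings, and the theorem is later invoked in that generality (cf.\ the proof of Theorem~\ref{thm.SIgreat}). Your sketch tacitly treats $L$ as compact (finitely many double points, compactly supported modifications); extending to proper embeddings of open manifolds needs an exhaustion argument, which is routine but should be acknowledged.
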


\section{Tangential characteristic class: WI embeddings in dimension $n\geq 2k+1$}\label{S:tangent}

Recall that the triviality of the tangent bundle $T(L)$ is a necessary condition for the weak integrability of an embedding $h:L\to \EE^n$. Thus using this condition, we define here a \textit{tangential characteristic class} whose vanishing will be equivalent to the fact that $h$ is WI. This will allow us to conclude that any embedding of a parallelizable $k$-manifold in $\EE^n$ is WI for $n\geq 2k+1$.

As it will become clear later, it will be more convenient to work with immersions rather than only with embeddings. To begin, we restrict to parallelizable manifolds.

\begin{const}\label{const.tangent}

Let $h:L \to \EE^n$ be an immersion of the $k$-dimensional parallelizable manifold $L$ into $\EE^n$, $n\geq k+2$.

(1) Assuming that $T(L)$ is trivial, let  $\hat \theta: T(L) \to \RR^k$ be one of its  trivializations. The pullback of the canonical frame of $\RR^k$ defines a field of frames of $T(L)$ whose image by $dh$ is a field of tangent frames to $h(L)$. Composing with the canonical parallelism of $\EE^n$, we associate to this field a map $\theta: L \to \VV_{n,k}$ from which we can recover $\hat \theta$ without ambiguity. Therefore we shall also call $\theta$ a \textit{trivialization of $T(L)$} in the sequel.

(2) In a similar way, we associate to the immersion $h$ a map $\epsilon:L \to \mathbb{G}_{n,k}$  from $L$ to the Grassmannian of $k$-planes in $\RR^n$. All these data fit together in a commutative diagram:
$$
\xymatrix{ O(k) \ar[r]^j &\mathbb{V}_{n,k} \ar[r]^p &  \mathbb{G}_{n,k} \\
     & L \ar[u]^{\theta} \ar[ru]^{\epsilon} &  \\  }\,
$$
where $p$ is the map which associates to a frame in $\VV_{n,k}$ the $k$-plane it generates in $\RR^n$, and any trivialization $\theta: L \to \mathbb{V}_{n,k}$ of  $T(L)$
is just a lift of $\epsilon$ thus verifying $\epsilon = p \circ \theta$.
\end{const}

\begin{definition} The homotopy class $[\epsilon]\in [L,\mathbb{G}_{n,k}]$ of $\epsilon$ is called
the {\it tangential characteristic class} of the immersion $h$.
\end{definition}

The vanishing of the tangential characteristic class, that is $[\epsilon]=0$, does not necessarily imply that any trivialization $\theta$ of $T(L)$ is homotopic to a constant, but it will be so for $n\geq 2k$.

\begin{lemma}\label{inject1}
If $n\geq 2k$, then for any immersion $h$ of a parallelizable $k$-manifold $L$, the following three conditions are equivalent:
\begin{enumerate}
\item $[\epsilon]=0$.
\item $[\theta]=0$ for any trivialization $\theta$ of $T(L)$.
\item There exists a trivialization $\theta_0$ of $T(L)$ which is homotopic to a constant.
\end{enumerate}
\end{lemma}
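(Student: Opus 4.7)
\noindent\textit{Plan of proof.} The key structural ingredients are the fibration $O(k)\xrightarrow{j}\VV_{n,k}\xrightarrow{p}\mathbb{G}_{n,k}$ and the observation that, for a fixed immersion $h$, any two trivializations of $T(L)$ differ by a gauge transformation $g:L\to O(k)$ acting on frames by right translation: concretely, if $\theta$ is associated to $\hat\theta$, then the trivialization $g\cdot\hat\theta$ yields $\theta':=\theta\cdot g^{-1}:L\to\VV_{n,k}$.

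I will first dispose of the easy implications. The implication $3\Rightarrow 1$ is immediate from $\epsilon=p\circ\theta_0$, and $2\Rightarrow 3$ is trivial since $L$ is parallelizable. For $1\Rightarrow 3$ I pick any trivialization $\theta$ and, since $p\circ\theta=\epsilon$ is null-homotopic, apply the homotopy lifting property of $p$ to produce a homotopy $\theta_t$ with $\theta_0=\theta$ and $p\circ\theta_1$ constant. The map $\theta_1$ then takes values in a single fibre, so $\theta_1=e\cdot\sigma$ for some reference frame $e$ and some $\sigma:L\to O(k)$. The modified trivialization $\theta_0':=\theta\cdot\sigma^{-1}$ (coming from $\sigma^{-1}\cdot\hat\theta$) is then homotopic to the constant map $e$, hence null-homotopic. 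Note this step does not require $n\geq 2k$.

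The substantive implication is $3\Rightarrow 2$, where the hypothesis $n\geq 2k$ is essential. Given $\theta_0\simeq *$ and an arbitrary trivialization $\theta=\theta_0\cdot\gamma$, one has $\theta\simeq j\circ\gamma$, so the problem reduces to showing that the induced map $j_*:[L,O(k)]\to[L,\VV_{n,k}]$ is identically zero. Since $\VV_{n,k}$ is $(n-k-1)$-connected and hence $(k-1)$-connected under $n\geq 2k$, Proposition~\ref{prop.calcul} reduces $[L,\VV_{n,k}]$ to a single primary obstruction in $H^k(L;\pi_k(\VV_{n,k}))$, and it is enough to check that $j_*:\pi_i(O(k))\to\pi_i(\VV_{n,k})$ vanishes for $i\leq k$. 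For $i<k$ this is automatic by connectivity. For the top degree $i=k$ I factor $j$ as $O(k)\hookrightarrow O(n)\twoheadrightarrow\VV_{n,k}$ via the first block embedding, then exploit the principal bundle $O(n-k)\hookrightarrow O(n)\to\VV_{n,k}$ to identify $\pi_k(\VV_{n,k})$ with the cokernel of the complementary block stabilization $\pi_k(O(n-k))\to\pi_k(O(n))$, and invoke the standard fact that for $n\geq 2k$ the two block stabilizations $O(k)\hookrightarrow O(n)$ and $O(n-k)\hookrightarrow O(n)$ induce the same image on $\pi_k$, which kills the composition.

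The main obstacle is precisely this last verification in the boundary case $n=2k$, where neither $\pi_k(\VV_{n,k})$ nor $\pi_k(O(k))$ vanishes and one must carry out a concrete comparison of two block stabilization maps in the orthogonal groups; in the strict range $n\geq 2k+1$, the Stiefel manifold $\VV_{n,k}$ is already $k$-connected, so $j_*=0$ for trivial reasons and the three conditions degenerate to $[L,\VV_{n,k}]=0$ in accordance with Proposition~\ref{prop.calcul}.
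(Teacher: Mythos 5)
Your reductions up to the last step are correct and essentially those of the paper: $3\Rightarrow 1$ and $2\Rightarrow 3$ are immediate, your lifting argument for $1\Rightarrow 3$ is fine (and indeed needs no dimension hypothesis), and $3\Rightarrow 2$ correctly reduces to showing that $j\circ\gamma$ is null-homotopic for every gauge map $\gamma:L\to O(k)$. The gap is in how you try to establish this last point. It is \emph{not} enough that $j_*:\pi_i(O(k))\to\pi_i(\VV_{n,k})$ vanish for $i\le k$: since $O(k)$ is not $(k-1)$-connected, the class of $j\circ\gamma$ in $[L,\VV_{n,k}]\cong H^k(L;\pi_k(\VV_{n,k}))$ equals $\gamma^*(j^*\iota)$, where $\iota$ is the fundamental class of the $(k-1)$-connected space $\VV_{n,k}$, and $j^*\iota\in H^k\bigl(O(k);\pi_k(\VV_{n,k})\bigr)$ is not controlled by the effect of $j$ on homotopy groups (it may live in the $\mathrm{Ext}$ summand coming from torsion in $H_{k-1}(O(k))$). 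The collapse map $q:\RR P^2\to\e^2$ kills $\pi_1$ and $\pi_2$ yet is not null-homotopic (it is detected by $H^2(\RR P^2;\ZZ)=\ZZ_2$), so the principle you invoke is false for maps out of a $k$-complex that is not $(k-1)$-connected. Moreover your computation of $\pi_k(\VV_{n,k})$ breaks down exactly in the case $n=2k$ that you identify as the crux: the map $\pi_k(O(n))\to\pi_k(\VV_{n,k})$ need not be onto there (e.g.\ $\pi_2(O(4))=0$ while $\pi_2(\VV_{4,2})=\pi_2(\e^3\times\e^2)=\ZZ$), because the connecting homomorphism $\pi_k(\VV_{2k,k})\to\pi_{k-1}(O(k))$ has nontrivial image; so the identification of $\pi_k(\VV_{n,k})$ with a cokernel of block stabilizations is incorrect.

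What is actually true, and what the paper cites (Whitehead, Lemma 10.15), is the stronger statement that the fibre inclusion $j:O(k)\to\VV_{n,k}$ is null-homotopic \emph{as a map} whenever $n\ge 2k$: pick a $k$-frame $e'$ spanning a subspace orthogonal to $\RR^k\times 0$ (this is where $n\ge 2k$ enters) and observe that $F_t(A):=\cos(t)\,j(A)+\sin(t)\,e'$ is an orthonormal $k$-frame for every $t$, giving a homotopy from $j$ to the constant $e'$. This immediately yields $j\circ\gamma\simeq\ast$ for all $\gamma$ and closes your $3\Rightarrow 2$; equivalently it shows $p_*:[L,\VV_{n,k}]\to[L,\GG_{n,k}]$ is injective, which is how the paper phrases the equivalence of conditions $1$ and $2$. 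With this single substitution the rest of your argument is correct.
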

\begin{proof}
It is standard that if $n\geq 2k$,
the inclusion map $j:O(k)\to \VV_{n,k}$ is null-homotopic in the above fibration \cite[Lemma 10.15]{Wh78}. This implies that the map $p_*: [L,\mathbb{V}_{n,k}] \to [L, \mathbb{G}_{n,k}]$ induced by $p$ is injective and hence $[\epsilon]=0$ if and only if $[\theta]=0$ for any lift $\theta$ of $\epsilon$, thus proving that conditions 1 and 2 are equivalent. As the implications $2\Rightarrow 3 \Rightarrow 1$ are obvious, the proof is complete.
\end{proof}

Using our tangential characteristic class we obtain the following homotopical characterization of weak integrability.

\begin{proposition}\label{prop.tangent} Let $L\subset \EE^n$ be a $k$-dimensional embedded submanifold of $\EE^n$. If $m = n-k \geq 2$, the following conditions are equivalent:
\begin{enumerate}
\item $T(L)$ extends over $\EE^n$.
\item $L$ is parallelizable and $T(L)$ admits a trivialization $\theta$ which is homotopic to a constant, i.e. $[\theta]=0$.
\item $L$ is parallelizable and its tangential characteristic class is zero, i.e. $[\epsilon]=0$.
\item $L$ is WI.
\end{enumerate}
\end{proposition}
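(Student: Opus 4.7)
The plan is to prove $1 \Leftrightarrow 4$ as a direct consequence of previously established material, and then to close the cycle $1 \Rightarrow 2 \Rightarrow 3 \Rightarrow 1$. For $1 \Leftrightarrow 4$ the ambient manifold $\EE^n$ is contractible and the codimension satisfies $m = n-k \geq 2$, so Corollary~\ref{cor.main} applied with $M = \EE^n$ already identifies WI embeddings with those whose tangent bundle extends over the ambient space.

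For $1 \Rightarrow 2$, suppose $T(L)$ extends as a rank-$k$ subbundle $\widetilde T \subset T(\EE^n)$. Contractibility of $\EE^n$ forces $\widetilde T$ to be trivial, so pick a global frame for it; composing with the canonical parallelism of $\EE^n$ gives a smooth map $\widetilde\theta : \EE^n \to \VV_{n,k}$ whose restriction $\theta := \widetilde\theta|_L$ is a trivialization of $T(L)$ in the sense of Construction~\ref{const.tangent}. Since $\theta$ factors through the contractible space $\EE^n$, we obtain $[\theta] = 0$, which (together with parallelizability of $L$) is condition 2. The implication $2 \Rightarrow 3$ is then immediate from the commutative diagram in Construction~\ref{const.tangent}: $\epsilon = p \circ \theta$, so $[\epsilon] = p_\ast [\theta] = 0$.

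It remains to establish $3 \Rightarrow 1$, which is the technical heart of the argument. Assuming $[\epsilon] = 0$, the classifying map $\epsilon : L \to \mathbb{G}_{n,k}$ is null-homotopic. Since $L$ is a closed smooth submanifold, the inclusion $L \hookrightarrow \EE^n$ is a cofibration; the homotopy extension property applied to the null-homotopy of $\epsilon$ and to a suitable constant map $\EE^n \to \mathbb{G}_{n,k}$ produces a smooth map $\widetilde\epsilon : \EE^n \to \mathbb{G}_{n,k}$ with $\widetilde\epsilon|_L = \epsilon$ on the nose. Pulling back the tautological $k$-plane subbundle of $\mathbb{G}_{n,k} \times \RR^n$ along $\widetilde\epsilon$ yields a rank-$k$ subbundle of $\EE^n \times \RR^n \cong T(\EE^n)$ whose restriction to $L$ coincides with $T(L)$ as a subbundle of $T(\EE^n)|_L$, which is the desired extension.

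I expect the main obstacle to lie precisely in this last step: upgrading the purely homotopy-theoretic vanishing $[\epsilon] = 0$ to a genuine geometric extension of $T(L)$ as a subbundle of $T(\EE^n)$. One must realize the chosen extension as an honest equality $\widetilde\epsilon|_L = \epsilon$, rather than merely a homotopy, so that the pulled-back subbundle is literally $T(L)$ on $L$; this is where cofibrancy of the pair $(\EE^n, L)$ is used essentially, together with the identification $T(\EE^n) \cong \EE^n \times \RR^n$ furnished by the canonical parallelism, which is what turns the Grassmannian classification into an actual subbundle extension rather than a merely abstract bundle isomorphism.
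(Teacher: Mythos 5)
Your argument is correct and follows essentially the same route as the paper: the implications $1\Rightarrow 2\Rightarrow 3$ are proved identically, the extension of $T(L)$ from $[\epsilon]=0$ is obtained exactly as in the paper via the homotopy extension theorem applied to $\epsilon:L\to \GG_{n,k}$ (the paper phrases this as $3\Rightarrow 4$ and closes with $4\Rightarrow 1$, while you close the cycle $3\Rightarrow 1$ and handle $1\Leftrightarrow 4$ separately, but in both cases the link between extendability of $T(L)$ and weak integrability is Corollary~\ref{cor.main}). The reorganization is purely cosmetic, so no further comment is needed.
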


\begin{proof}

$1\Rightarrow 2$: If $T(L)$ extends over $\EE^n$, its extension $\tilde{T}(L)$ is a trivial bundle because $\EE^n$ is contractible. Therefore any trivialization $\tilde{\theta}$ of $\tilde T(L)$ induces a trivialization $\theta$ of $T(L)$ which is homotopic to a constant by definition.

$2\Rightarrow 3$: This is because $[\epsilon] = p_*[\theta]$.

$3\Rightarrow 4$: If $[\epsilon]=0$, we use the homotopy extension theorem to extend $\epsilon$ as a map $\tilde \epsilon: \EE^n \to \GG_{n,k}$. It defines a bundle $\tilde T(L)$ over $\EE^n$ which is trivial and extends $T(L)$. The result now follows from Corollary \ref{cor.main}.

$4\Rightarrow 1$: Again by Corollary \ref{cor.main}.
\end{proof}

\begin{corollary}\label{cor.components}
Let $L$ be an embedded $k$-submanifold of $\EE^n$, $n \geq k+2$. Then $L$ is WI if and only if all its connected components are WI.
\end{corollary}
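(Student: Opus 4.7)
The plan is to avoid dealing with submersions directly and instead reduce the claim to the homotopical characterization furnished by Proposition \ref{prop.tangent}. Since the codimension satisfies $m = n-k\geq 2$, that proposition says an embedded submanifold $L\subset\EE^n$ is WI if and only if $L$ is parallelizable and its tangential characteristic class $[\epsilon]\in[L,\mathbb{G}_{n,k}]$ is zero. I therefore only need to verify that both of these conditions pass between $L$ and the family of its connected components $\{L_i\}_i$.

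The forward implication is essentially tautological: if $\Phi:\EE^n\to\EE^{n-k}$ is a weak equation of $L$, then each component satisfies $L_i\subset L\subset \Phi^{-1}(0)$, so the very same $\Phi$ serves as a weak equation of $L_i$. For the reverse implication, write $L=\bigsqcup_i L_i$. Parallelizability is evidently component-wise, since a global tangent frame of $L$ is nothing but a family of global tangent frames of the $L_i$. For the tangential characteristic class, the universal property of the disjoint union in the homotopy category produces a canonical bijection
$$
[L,\mathbb{G}_{n,k}] \;\simeq\; \prod_i [L_i,\mathbb{G}_{n,k}],
$$
and under it $[\epsilon]$ is identified with the tuple $([\epsilon_i])_i$, where $\epsilon_i$ is the Gauss map of $L_i$ and thus its tangential characteristic class. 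Hence $[\epsilon]=0$ if and only if $[\epsilon_i]=0$ for every $i$. Assuming each $L_i$ is WI, Proposition \ref{prop.tangent} yields that each $L_i$ is parallelizable with $[\epsilon_i]=0$; assembling these component-wise facts, $L$ is parallelizable with $[\epsilon]=0$, and one final application of Proposition \ref{prop.tangent} concludes that $L$ is WI.

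I do not expect a genuine obstacle here. The only point that deserves a small remark is why one does not attempt the more naive ``gluing'' strategy, namely combining separate weak equations $\Phi_i:\EE^n\to\EE^{n-k}$ of the individual components into a single submersion: the $\Phi_i$ are by no means unique and will generically disagree away from the $L_i$, so some compatibility must be enforced by hand. Routing the argument through the homotopical criterion of Proposition \ref{prop.tangent} sidesteps this difficulty entirely and reduces the corollary to the trivial observation that a map out of a disjoint union is a tuple of maps out of its pieces.
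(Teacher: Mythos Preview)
Your proof is correct and follows essentially the same approach as the paper: both reduce the statement via Proposition \ref{prop.tangent} to the observation that parallelizability and vanishing of the tangential characteristic class $[\epsilon]$ are component-wise properties. The paper's version is terser, but the content is the same.
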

\begin{proof}
By Proposition \ref{prop.tangent}, $L$ is WI if and only if it is parallelizable and its tangential characteristic class $[\epsilon] = 0$. Now $L$ is parallelizable exactly if all its connected components $L_i$ are parallelizable and the result follows from the fact that $[\epsilon] = 0$ if and only if the tangential class $[\epsilon_i]$ of each $L_i$ vanishes.
\end{proof}

\begin{remark}
Reinterpreting Corollary \ref{thm.cod1}, we see that Corollary \ref{cor.components} also holds in the case $n=k+1$.
\end{remark}

Note that by Hirsch-Smale classification, the tangential characteristic classes $[\epsilon_h]$ and $[\epsilon_{h'}]$ of any two regularly homotopic immersions $h,h': L \to \EE^n$ of a parallelizable $k$-manifold $L$ are equal. Therefore, we can formulate our first important result, which tackles the question of WI integrability in large codimension. We do not assume a priori that $L$ is parallelizable.

\begin{theorem}\label{t.aob} Let $L$ be a $k$-dimensional manifold. For $n\geq 2k+1$, the following conditions are equivalent:
 \begin{enumerate}
\item Any embedding of $L$ in $\EE^n$ is WI and thus has trivial normal bundle.
\item There exists an embedding of $L$ in $\EE^n$ which is WI.
\item $L$ is parallelizable.
\end{enumerate}
Moreover if $L$ is parallelizable and $n=2k$, then $\nu(L)$ is trivial and either all embeddings of $L$ in $\EE^n$ are WI or none is WI.
\end{theorem}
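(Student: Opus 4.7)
The plan is to prove the chain $(3)\Rightarrow(1)\Rightarrow(2)\Rightarrow(3)$ for $n\ge 2k+1$ and then obtain the $n=2k$ addendum from Theorem~\ref{T:reghom} combined with Hirsch--Smale. The implication $(1)\Rightarrow(2)$ is immediate once one knows an embedding exists, which is guaranteed by Whitney's embedding theorem in codimension $n-k\ge k+1$. The implication $(2)\Rightarrow(3)$ is essentially a restatement of Lemma~\ref{L:easy}: a WI submanifold of the contractible space $\EE^n$ is parallelizable. For the key implication $(3)\Rightarrow(1)$, I would start from an arbitrary embedding $h:L\to\EE^n$ and apply Construction~\ref{const.tangent} to obtain a trivialization $\theta:L\to\VV_{n,k}$ of $T(L)$. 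Proposition~\ref{prop.calcul}(1) gives $[L,\VV_{n,k}]=0$ as soon as $n\ge 2k+1$, so $[\theta]=0$ automatically; Proposition~\ref{prop.tangent} then yields that $h$ is WI. Triviality of $\nu(L)$ follows from Lemma~\ref{L:trivnu}(2), since $L$ is parallelizable and $n\ge 2k$.

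For the $n=2k$ assertion, the triviality of $\nu(L)$ is again a direct consequence of Lemma~\ref{L:trivnu}(2). To establish the dichotomy for WI, I would fix a global trivialization $\tau$ of $T(L)$ and observe, using Theorem~\ref{T:reghom}, that all embeddings of $L$ in $\EE^{2k}$ are regularly homotopic (the case $k=1$ being already handled by Corollary~\ref{thm.cod1}, where WI is equivalent to $L$ being open, a condition intrinsic to $L$). By Hirsch--Smale (Theorem~\ref{HiSm}), the Gauss-type map $\hat h_\tau:L\to\VV_{n,k}$ depends on $h$ only through its regular homotopy class, so the homotopy class $[\theta]$ produced by Construction~\ref{const.tangent} is the same for every embedding once $\tau$ is fixed. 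Lemma~\ref{inject1} applies at the critical dimension $n=2k$ and converts the vanishing of $[\theta]$ into the vanishing of the tangential characteristic class $[\epsilon]$; by Proposition~\ref{prop.tangent}, the latter is equivalent to $h$ being WI. Hence the WI character is simultaneously true for every embedding or false for every embedding.

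The main obstacle I expect is bookkeeping the dependence of the constructions on the chosen trivialization $\tau$ of $T(L)$ when $n=2k$: Hirsch--Smale only guarantees that the map $\chi_\tau$ is a bijection onto $[L,\VV_{n,k}]$ for a given $\tau$, and a priori a change of $\tau$ translates classes. What saves the argument is that the statement ``$[\theta]=0$ for some (equivalently every) trivialization'' is well-defined thanks to Lemma~\ref{inject1}, because the composition with the bundle map $p:\VV_{n,k}\to\GG_{n,k}$ kills the $O(k)$-ambiguity in the range $n\ge 2k$. By contrast, for $n\ge 2k+1$ the whole set $[L,\VV_{n,k}]$ collapses to a point, so this subtlety disappears and the implication $(3)\Rightarrow(1)$ becomes formal.
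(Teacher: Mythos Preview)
Your proposal is correct and follows essentially the same route as the paper: the chain $(3)\Rightarrow(1)\Rightarrow(2)\Rightarrow(3)$ via Proposition~\ref{prop.calcul} and Proposition~\ref{prop.tangent} for $n\ge 2k+1$, and Theorem~\ref{T:reghom} plus Proposition~\ref{prop.tangent} for the $n=2k$ dichotomy, with $k=1$ handled separately by Corollary~\ref{thm.cod1}. The only cosmetic difference is that for $n=2k$ the paper works directly with the tangential characteristic class $[\epsilon]\in[L,\GG_{n,k}]$, which is manifestly independent of $\tau$ and invariant under regular homotopy, whereas you track $[\theta]$ and then invoke Lemma~\ref{inject1} to kill the $\tau$-ambiguity; both arguments are equivalent.
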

\begin{proof}
Implications $1 \Rightarrow 2 \Rightarrow 3$ are immediate by Lemma~\ref{L:easy}. Next observe that Proposition \ref{prop.calcul} implies that $[L,\VV_{n,k}]=0$ for $n\geq 2k+1$, so if $L$ is parallelizable, any trivialization of $T(L)$ is homotopic to a constant, which means that $3 \Rightarrow 1$ on account of Proposition~\ref{prop.tangent}. The three conditions are equivalent.

Now consider the case $n=2k$. Assuming that $L$ is parallelizable, it follows from Lemma~\ref{L:trivnu} that its normal bundle $\nu(L)$ is trivial and if $k\geq 2$, any two embeddings of $L$ in $\EE^n$ are regularly homotopic on account of Theorem \ref{T:reghom}. Their tangential characteristic classes are equal and the last claim follows by Proposition \ref{prop.tangent}. It extends to the case $k=1$ because any embedding of any $L$ in $\EE^2$ is WI if and only if $L$ is open, cf. Corollary \ref{thm.cod1}.

\end{proof}

\begin{corollary}\label{cor:37}
Any embedding of a $3$-manifold in $\EE^n$ is WI for $n\geq 7$; any embedding with trivial normal bundle of a $7$-manifold in $\EE^n$ is WI for $n\geq 15$.
\end{corollary}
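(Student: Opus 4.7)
Both assertions reduce immediately to Theorem \ref{t.aob} (implication $3\Rightarrow 1$): in both cases we have $n\geq 2k+1$ ($n\geq 7$ for $k=3$, $n\geq 15$ for $k=7$), so once $L$ is known to be parallelizable, every embedding of $L$ in $\EE^n$ is automatically WI. The only work is therefore to verify parallelizability in each case.

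For the $3$-dimensional statement, parallelizability comes for free: under the paper's standing orientability convention, $L$ is an orientable $3$-manifold (on each component), and the classical theorem of Stiefel asserts that every such manifold has trivial tangent bundle. Applying Theorem \ref{t.aob} immediately yields the claim.

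For the $7$-dimensional statement, I would combine the hypothesis that $\nu(L)$ is trivial with the canonical splitting $T(L)\oplus\nu(L)\cong T(\EE^n)|_L\cong\varepsilon^n$ on $L$, which gives $T(L)\oplus\varepsilon^{n-7}\cong\varepsilon^n$; thus $T(L)$ is stably trivial. I would then invoke the destabilization fact that a stably parallelizable oriented $7$-manifold is parallelizable: the Bott-stability isomorphisms $\pi_i(SO(7))\cong\pi_i(SO)$ for $i\leq 5$ dispose of the obstructions in degrees $\leq 6$, while the remaining top obstruction in $H^7(L;\pi_6(SO(7)))\cong H^7(L;\ZZ)$ is detected by the Euler class of $T(L)$ and therefore vanishes because $L$ is odd-dimensional, so $\chi(L)=0$. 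With $L$ parallelizable, Theorem \ref{t.aob} closes the argument.

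\paragraph*{Main obstacle.} The delicate point is the destabilization step for $k=7$: the stabilization map $\pi_6(SO(7))\to\pi_6(SO)$ has non-trivial kernel $\ZZ$, so stable triviality of an arbitrary rank-$7$ oriented bundle on a $7$-complex does not formally imply triviality. What rescues the argument for the specific bundle $T(L)$ is precisely the automatic vanishing of the top characteristic class (the Euler class) in odd dimension, which is morally the same input (the octonionic $H$-space structure of $\e^7$) that makes $\e^7$ itself parallelizable.
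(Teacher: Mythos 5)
Your overall route is the paper's: establish that $L$ is parallelizable and then apply Theorem \ref{t.aob} (implication $3\Rightarrow 1$) with $n\geq 2k+1$. The $3$-dimensional half is exactly the paper's argument (Stiefel plus Theorem \ref{t.aob}), and your reduction for $k=7$ (trivial normal bundle plus $T(L)\oplus\nu(L)\cong\varepsilon^n$ gives stable parallelizability) is also the right first move; the paper disposes of the remaining step by citing Sutherland \cite{Su64}, whereas you attempt to prove the destabilization ``stably parallelizable $\Rightarrow$ parallelizable'' for closed oriented $7$-manifolds by obstruction theory, and that is where your argument is wrong as written.

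Concretely: $\pi_6(SO(7))=0$, so the stabilization map $\pi_6(SO(7))\to\pi_6(SO)$ has trivial kernel, not kernel $\ZZ$, and the top obstruction group is $H^7(L;\pi_6(SO(7)))=0$, not $H^7(L;\ZZ)$; there is no Euler class to invoke at this stage. (The vanishing of $\pi_6(SO(7))$ comes from the fibration $SO(7)\to SO(8)\to\e^7$: octonionic multiplication gives a section of $SO(8)\to\e^7$, so the boundary map $\pi_7(\e^7)\to\pi_6(SO(7))$ is zero, while $\pi_6(SO(8))\cong\pi_6(SO)=0$.) More seriously, the mechanism you propose --- ``odd dimension forces $\chi(L)=0$, the Euler class kills the top obstruction'' --- is not a valid principle: $\e^5$ is stably parallelizable, odd-dimensional, and not parallelizable. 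In odd dimensions $k\neq 1,3,7$ the relevant kernel is $\ZZ_2$, invisible to the Euler class, and the true obstruction is the Kervaire semicharacteristic (Bredon--Kosinski \cite{BK66}) --- which is precisely why $k\in\{3,7\}$ plays an exceptional role throughout this paper. So your intermediate claim is true for $k=7$, but only because $\pi_6(SO(7))=0$ (equivalently, $T\e^7$ is trivial), not for the reason you give. Replacing that paragraph either by the computation above or by the citation of \cite{Su64}, as in the paper, completes your proof.
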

\begin{proof}
Recalling that any $3$-manifold is parallelizable \cite{St35} and that any $7$-manifold which embeds with trivial normal bundle into $\EE^n$ for some $n\geq 8$ is also parallelizable \cite{Su64}, the result follows by Theorem \ref{t.aob}.
\end{proof}

Let us observe a striking consequence of Theorem \ref{t.aob}: for $n\geq 2k$, the weak integrability of the submanifold $L$ does not depend on the embedding, it is an intrinsic character of $L$. In fact, we shall see that there exist parallelizable $k$-submanifolds of $\EE^{2k}$ which are not WI although their normal bundle is trivial. Note also that Theorem \ref{t.aob} implies the second part of Lemma~\ref{L:trivnu} (except for the case $n=2k$) and also complements
Kervaire's theorem \cite{Ke59} which asserts that any embedding of a $k$-sphere in $\EE^n$ has trivial normal bundle if $n-k >\frac{n+1}{3}$.

\begin{appl}\label{appl.un}
The embedding of any $1$-dimensional manifold $L$ in $\EE^n$, $n\geq 3$, is WI. This result was already obtained by a different method in Application~\ref{appl.low}.
\end{appl}

\section{Normal characteristic class: WI embeddings in dimension $k+2 \leq n \leq 2k$} \label{Euc3}

In Section \ref{S:tangent} we have characterized WI embeddings of codimension $m \geq k+1$ using a homotopical obstruction related to the parallelizability of $L$. In this section we shall study WI embeddings of codimension $m \leq k$. To do so we introduce a second obstruction which is related to the triviality of the normal bundle of the embedding, another necessary condition for weak integrability.

\subsection{Normal characteristic class and weak integrability}\label{S:normal}

Before defining our ``normal characteristic class'', we first recall a precise description of the normal bundle of an immersion. Here submanifolds are no longer assumed to be parallelizable.

\begin{const}\label{const.normal}(\textit{Normal bundle of immersions - trivializations}). Let $h:L \to \EE^n$ be an immersion of the $k$-dimensional manifold $L$ into $\EE^n$, $n\geq k+1$.

(1) The pullback $h^*[T(\EE^n)]$ of the tangent bundle to $\EE^n$ contains naturally the tangent bundle $T(L)$ and comes endowed with a metric for which $\dd h$ is a fiberwise isometry. The orthogonal bundle to $T(L)$ in $h^*[T(\EE^n)]$ is the \textit{normal bundle $\nu(h)$} of the immersion $h$, the differential $\dd h$ mapping its fiber $\nu_x(L)$, $x\in L$, onto the orthogonal space to $\dd h[T_x(L)]$ in $T_{h(x)}(\EE^n)$. If $h$ is an embedding, $\nu(h)$ identifies naturally with the usual normal bundle of the submanifold $h(L)$.

(2) Next assume that $\nu(h)$ is trivial and let $\hat \sigma: \nu(h) \to \RR^{n-k}$ be one of its  trivializations. The pullback of the canonical frame of $\RR^{n-k}$ defines a field of frames of $\nu(h)$ whose image by $\dd h$ is a field of normal frames to $h(L)$ in $\EE^n$. Composing with the canonical parallelism of $\EE^n$, we associate to this field a map $\sigma: L \to \VV_{n,n-k}$ from which we can recover $\hat \sigma$ without ambiguity. We call $\sigma$ a \textit{trivialization of $\nu(h)$}.

(3) In a similar way, the immersion $h$ defines a field of normal planes to $h(L)$ by $\dd h[\nu_x(L)]$ and using again the canonical parallelism of $\EE^n$, we define a map $\eta:L \to \mathbb{G}_{n,n-k}$ from $L$ to the Grassmannian of $(n-k)$-planes in $\RR^n$. All these maps fit together in a commutative diagram:
$$
\xymatrix{ O(n-k) \ar[r]^j &\mathbb{V}_{n,n-k} \ar[r]^p &  \mathbb{G}_{n,n-k} \\
     & L \ar[u]^{\sigma} \ar[ru]^{\eta} &  \\  }\,,
$$
where any trivialization $\sigma$ of $\nu(L)$ appears as a lift of $\eta$, thus verifying $\eta = p \circ \sigma$.
\end{const}

\begin{definition} The homotopy class $[\eta] \in [L,\GG_{n,n-k}]$ of $\eta$
is called the {\it normal characteristic class}
of the immersion $h$.
\end{definition}

As for the tangent class, the fact that the normal characteristic class of an immersion vanishes, i.e. $[\eta] = 0$, does not generally imply that the homotopy class of any trivialization of $\nu(h)$ is trivial. Nevertheless, this property holds for $n\leq 2k$, a remarkable fact which will be systematically exploited in the next subsections.

The proofs of the following results go along the same lines as those of Lemma~\ref{inject1} and Proposition \ref{prop.tangent} respectively but using the normal characteristic class instead of the tangent one, so we will omit these proofs.

\begin{lemma}\label{inject}
For $k+1\leq n\leq 2k$ and any immersion $h:L \to \EE^n$ of a $k$-dimensional manifold $L$, the following three conditions are equivalent:
\begin{enumerate}
\item The normal characteristic class of $h$ vanishes, i.e. $[\eta]=0$.
\item $[\sigma]=0$ for any trivialization $\sigma$ of $\nu(h)$.
\item There exists a trivialization $\sigma_0$ of $\nu(L)$ which is homotopic to a constant.
\end{enumerate}
\end{lemma}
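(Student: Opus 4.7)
My plan is to mirror the proof of Lemma \ref{inject1} verbatim, with tangential data replaced by normal data, as the author suggests. The whole argument is driven by one geometric fact: the dimensional hypothesis $n\leq 2k$ is equivalent to $n\geq 2(n-k)$, which is precisely the classical regime in which the inclusion of the fibre
$$
j:O(n-k)\longrightarrow \VV_{n,n-k}
$$
of the fibration $O(n-k)\to \VV_{n,n-k}\xrightarrow{p}\GG_{n,n-k}$ is null-homotopic; this is exactly \cite[Lemma 10.15]{Wh78}, the same result invoked in the proof of Lemma \ref{inject1} but applied now in the dual dimensional range. This is the only nontrivial external input, and once it is granted the rest of the proof is a formal diagram chase using Construction \ref{const.normal}.

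Given this, I would first observe that any trivialization $\sigma:L\to\VV_{n,n-k}$ of $\nu(h)$ is a lift of $\eta=p\circ\sigma$, so that the classes $[\sigma]$ and $[\eta]$ are related by $[\eta]=p_*[\sigma]$. The null-homotopy of $j$ implies that the induced map $p_*:[L,\VV_{n,n-k}]\to[L,\GG_{n,n-k}]$ is injective: two lifts of the same $\eta$ differ by fibrewise action of a map $L\to O(n-k)$, and its image in $[L,\VV_{n,n-k}]$ via $j$ is null-homotopic, so the two lifts define the same class. Consequently $[\eta]=0$ forces $[\sigma]=0$ for every lift, giving the equivalence $(1)\Leftrightarrow(2)$.

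Finally, the implications $(2)\Rightarrow(3)\Rightarrow(1)$ are immediate: $(2)\Rightarrow(3)$ is trivial (any trivialization will do as $\sigma_0$, since $\nu(h)$ is trivial by hypothesis), and $(3)\Rightarrow(1)$ follows because $[\eta]=p_*[\sigma_0]=p_*[\mathrm{const}]=0$. The main (indeed only) obstacle is the null-homotopy of $j$ in the range $n\leq 2k$; I do not expect any further subtlety beyond that, since everything else is a transcription of the argument already written out for Lemma \ref{inject1}.
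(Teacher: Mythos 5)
Correct, and essentially the paper's own approach: the paper omits this proof, stating that it goes along the same lines as Lemma \ref{inject1}, and your argument is exactly that transcription, the key point being that $n\leq 2k$ is equivalent to $n\geq 2(n-k)$, so the fibre inclusion $j:O(n-k)\to \VV_{n,n-k}$ is null-homotopic by the same lemma of Whitehead. The only consequence actually needed is that any lift of a null-homotopic $\eta$ is itself null-homotopic (lift the null-homotopy of $\eta$ so that the lift lands in a single fibre, then compose with the null-homotopic $j$); your fibrewise-action sketch captures this and is at the same level of detail as the paper's own assertion of injectivity of $p_*$ in the proof of Lemma \ref{inject1}.
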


In particular, we get a new homotopic characterization of weak integrability in terms of the normal class.

\begin{proposition}\label{prop.normal} Let $L\subset \EE^n$ be a $k$-dimensional submanifold of $\EE^n$. If the codimension $m = n-k \geq 2$, the following conditions are equivalent:
\begin{enumerate}
\item $\nu(L)$ extends over $\EE^n$.
\item $\nu(L)$ is trivial and admits a trivialization $\sigma$ which is homotopic to a constant, i.e. $[\sigma]=0$.
\item The normal characteristic class of $L$ is zero, i.e. $[\eta]=0$.
\item $L$ is WI and thus parallelizable.
\end{enumerate}
\end{proposition}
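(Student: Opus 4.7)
The plan is to mimic the proof of Proposition \ref{prop.tangent} step by step, systematically replacing the tangent data $(T(L),\theta,\epsilon)$ by the normal data $(\nu(L),\sigma,\eta)$ from Construction \ref{const.normal}, and invoking Lemma \ref{inject} in place of Lemma \ref{inject1}. I would establish the chain $1 \Rightarrow 2 \Rightarrow 3 \Rightarrow 1$ and then close the loop $1 \Leftrightarrow 4$ using the results of Chapter \ref{Ch:gen}.

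For $1 \Rightarrow 2$: if $\nu(L)$ admits an extension $\tilde\nu(L)$ over $\EE^n$, then $\tilde\nu(L)$ is automatically trivial because $\EE^n$ is contractible, so $\nu(L)$ itself is trivial. Any global trivialization $\tilde\sigma: \tilde\nu(L) \to \RR^{n-k}$ restricts to a trivialization $\sigma$ of $\nu(L)$ whose classifying map $L \to \VV_{n,n-k}$ factors through the contractible ambient space $\EE^n$, hence is null-homotopic.

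For $2 \Rightarrow 3$: this is immediate from the factorization $\eta = p \circ \sigma$ in the commutative diagram of Construction \ref{const.normal}, which gives $[\eta] = p_*[\sigma] = 0$.

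For $3 \Rightarrow 1$: if $[\eta]=0$, the map $\eta: L \to \GG_{n,n-k}$ is null-homotopic, and by the homotopy extension theorem applied to the embedding $L \hookrightarrow \EE^n$ it extends to a map $\tilde\eta: \EE^n \to \GG_{n,n-k}$. The pullback of the tautological $(n-k)$-plane bundle via $\tilde\eta$ then provides a vector bundle $\tilde\nu(L)$ over $\EE^n$ which extends $\nu(L)$. Finally, the equivalence $1 \Leftrightarrow 4$ follows from Theorem \ref{thm.main} together with the observation that every bundle over the contractible space $\EE^n$ is trivial; the parallelizability of $L$ in statement 4 is then a direct consequence of Lemma \ref{L:easy}. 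The only mildly delicate point is the use of the homotopy extension property in step $3 \Rightarrow 1$, but this is routine since $L$ is a closed submanifold of $\EE^n$ (so the inclusion is a cofibration) and $\GG_{n,n-k}$ has the homotopy type of a CW-complex; this is exactly the normal-bundle analogue of the corresponding step in the proof of Proposition \ref{prop.tangent}, so no new difficulty arises.
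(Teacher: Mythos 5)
Your proposal is correct and is essentially the proof the paper intends: the paper explicitly omits the argument, stating that it "goes along the same lines" as Proposition \ref{prop.tangent} with the normal class replacing the tangent one, which is precisely the substitution you carry out (contractibility of $\EE^n$ for $1\Rightarrow 2$, the factorization $\eta = p\circ\sigma$ for $2\Rightarrow 3$, homotopy extension plus pullback of the tautological bundle for $3\Rightarrow 1$, and Theorem \ref{thm.main}/Corollary \ref{cor.main} with Lemma \ref{L:easy} for $1\Leftrightarrow 4$). The codimension hypothesis $m\geq 2$ enters exactly where you use it, namely in the equivalence with weak integrability.
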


We could also deduce from Proposition \ref{prop.normal} another proof of Corollary~\ref{cor.components} using the normal characteristic class instead of the tangent one. The next corollary is obvious from Propositions \ref{prop.tangent} and \ref{prop.normal}.

\begin{corollary}For any immersion $h:L \to \EE^n$ with trivial normal bundle of a parallelizable manifold $L$, $[\eta]=0$ if and only if $[\epsilon]=0$.
\end{corollary}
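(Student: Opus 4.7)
The plan is to reduce the equivalence to the elementary geometric observation that taking orthogonal complements identifies the Grassmannian $\GG_{n,k}$ with $\GG_{n,n-k}$. Write $\perp:\GG_{n,k}\to \GG_{n,n-k}$ for the map sending a $k$-plane $V\subset\RR^n$ to its orthogonal complement $V^\perp$. It is a diffeomorphism, with inverse the analogous orthogonal-complement map in the opposite direction, and therefore induces a bijection $\perp_*:[L,\GG_{n,k}]\to [L,\GG_{n,n-k}]$ on homotopy classes that carries the class of a constant map to the class of a constant map.

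The second step is to unwind the definitions of $\epsilon$ and $\eta$ from Constructions \ref{const.tangent} and \ref{const.normal}. By construction, $\epsilon(x)$ is the image under the canonical parallelism of $\EE^n$ of the tangent $k$-plane $\dd h(T_x L)\subset T_{h(x)}\EE^n$, while $\eta(x)$ is the image of the normal $(n-k)$-plane $\dd h(\nu_x(h))$. Since $\nu(h)$ is defined precisely as the orthogonal complement of $T(L)$ inside the pullback $h^*[T(\EE^n)]$ endowed with its Euclidean metric, we get the pointwise identity $\eta(x)=\epsilon(x)^\perp$, i.e.\ the factorization $\eta=\perp\circ\epsilon$.

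The corollary is then immediate: applying the bijection $\perp_*$ yields $[\eta]=\perp_*[\epsilon]$, so $[\eta]=0$ if and only if $[\epsilon]=0$. There is no real obstacle in this argument; the hypotheses that $L$ be parallelizable and $\nu(h)$ be trivial are not logically needed for the equivalence of homotopy classes in the Grassmannians, but they are the natural context, because by Lemmas \ref{inject1} and \ref{inject} these are precisely the settings in which the vanishing of $[\epsilon]$ (respectively $[\eta]$) controls the homotopy class of a trivialization of $T(L)$ (respectively $\nu(h)$), and in which Propositions \ref{prop.tangent} and \ref{prop.normal} translate the identity $[\eta]=0\iff [\epsilon]=0$ into the expected equivalence of the corresponding weak integrability conditions.
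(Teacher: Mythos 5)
Your proof is correct, but it follows a genuinely different route from the paper's. The paper disposes of this corollary in one line by citing Propositions \ref{prop.tangent} and \ref{prop.normal}: each of $[\epsilon]=0$ and $[\eta]=0$ is there shown to be equivalent to weak integrability, so the two vanishings are equivalent to each other. That argument is a free consequence of results already proved, but it literally applies only to embedded submanifolds of codimension $m\geq 2$ (WI is defined for embeddings, and both propositions assume $m\geq 2$), even though the corollary is stated for immersions. Your argument instead identifies the two Gauss-type maps directly: since $\nu(h)$ is by Construction \ref{const.normal} the orthogonal complement of $T(L)$ in $h^*[T(\EE^n)]$ and the canonical parallelism is an isometry, one has the pointwise identity $\eta=\perp\circ\epsilon$, where $\perp:\GG_{n,k}\to\GG_{n,n-k}$ is the orthogonal-complement diffeomorphism; as $\perp_*$ is a bijection on homotopy sets sending the constant class to the constant class (the Grassmannians being path-connected), $[\eta]=0$ iff $[\epsilon]=0$. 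This is more elementary and strictly more general: it works for arbitrary immersions in any codimension and, as you observe, does not actually use parallelizability or triviality of $\nu(h)$ — those hypotheses only matter when one wants, via Lemmas \ref{inject1} and \ref{inject}, to pass from the Grassmannian classes to homotopy classes of trivializations. The paper's route buys economy of means given its earlier machinery; yours buys directness, generality, and a statement-level match with the immersion hypothesis.
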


Next we want to recall that the normal class of an immersion is invariant by regular homotopy. The proof is standard, but we sketch it for the sake of completeness. In turn, it also provides an alternative proof of the claim in Theorem~\ref{t.aob} concerning embeddings into $\EE^{2k}$ using the normal map.

\begin{proposition}\label{lem.equivalence}Let $h$ and $h'$  be two regularly homotopic immersions of a $k$-manifold $L$ in $\EE^n$. Then the normal bundles $\nu(h)$ and $\nu(h')$ are equivalent and the corresponding normal characteristic classes $[\eta_h]$ and $[\eta_{h'}]$ are equal. Moreover, if $h$ and $h'$ are embeddings, they are simultaneously WI or not. In particular, all embeddings of $L$ in $\EE^{2k}$ are WI or none is WI.
\end{proposition}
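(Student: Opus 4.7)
The plan is to reduce everything to the continuity of the Gauss map of the normal plane field along a regular homotopy, and then invoke Proposition~\ref{prop.normal}.

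Let $H:L\times[0,1]\to \EE^n$ be a regular homotopy with $H_0=h$ and $H_1=h'$. By the definition of regular homotopy recalled in Section~\ref{Euc1}, the induced bundle map $H_*:T(L)\times[0,1]\to T(\EE^n)$ is continuous and fiberwise of maximal rank $k$. Composing with the canonical parallelism $T(\EE^n)\cong \EE^n\times\RR^n$ and then passing to orthogonal complements in $\RR^n$, I obtain a continuous family of normal maps $\eta_t:L\to \GG_{n,n-k}$, following Construction~\ref{const.normal}(3), with $\eta_0=\eta_h$ and $\eta_1=\eta_{h'}$. This is the key step, and really the only one: once one has a regular homotopy, the normal plane field $x\mapsto dH_t[T_x L]^\perp$ depends continuously on $(x,t)$ precisely because $H$ does.

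From this continuous family, two consequences follow immediately. First, $\eta_h$ and $\eta_{h'}$ are homotopic, so $[\eta_h]=[\eta_{h'}]$ in $[L,\GG_{n,n-k}]$. Second, the normal bundle $\nu(H_t)$ is by construction isomorphic to the pullback $\eta_t^*(\gamma_{n,n-k})$ of the tautological bundle $\gamma_{n,n-k}\to\GG_{n,n-k}$; since homotopic maps into the Grassmannian produce isomorphic pullback bundles, $\nu(h)\cong\nu(h')$ as vector bundles over $L$.

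Now assume $h$ and $h'$ are embeddings. By Proposition~\ref{prop.normal}, an embedding with codimension $m\geq 2$ is WI if and only if its normal characteristic class vanishes. Since $[\eta_h]=[\eta_{h'}]$, either both classes vanish (both embeddings are WI) or neither does (neither is WI). This handles the codimensions $m\geq 2$; for the final claim about $\EE^{2k}$, the codimension $m=k\geq 2$ case follows upon invoking Theorem~\ref{T:reghom}, which states that for $k\geq 2$ any two embeddings of $L$ in $\EE^{2k}$ are regularly homotopic. The remaining case $k=1$, $n=2$ is codimension one and is covered directly by Corollary~\ref{thm.cod1}: any embedding of a $1$-manifold in $\EE^2$ is WI if and only if $L$ is open, which is intrinsic to $L$.

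The main (and essentially only) technical point is the continuity statement for $\eta_t$; this is built into the very definition of regular homotopy, so once that is unpacked, the rest is formal. I would not expect any genuine obstacle, but care must be taken to treat the $n=2k=2$ edge case separately because Theorem~\ref{T:reghom} is stated for $k\geq 2$.
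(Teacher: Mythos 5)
Your proposal is correct and takes essentially the same route as the paper: the continuity required in the definition of regular homotopy makes the normal data vary continuously in $t$ (the paper phrases this as the bundles $\nu(g_t)$ fitting together into a bundle over $L\times[0,1]$, you phrase it as a homotopy of Gauss maps $\eta_t$ and pullbacks of the tautological bundle), which gives $\nu(h)\cong\nu(h')$ and $[\eta_h]=[\eta_{h'}]$. The conclusion for embeddings then follows from Proposition~\ref{prop.normal}, and the claim in $\EE^{2k}$ from Theorem~\ref{T:reghom} together with the $k=1$ case via Corollary~\ref{thm.cod1}, exactly as the paper does (the latter edge case appearing in the proof of Theorem~\ref{t.aob}).
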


\begin{proof} First assume that the regular homotopy is defined by a smooth map $g: L \times [0,1] \to M$. Then the $1$-parameter family of normal bundles $\nu(g_t)$ of the immersions $g_t$ fit together defining a bundle over $L \times [0,1]$. Consequently, all these bundles are equivalent and the spaces of homotopy classes $[L \times  \{t\}, \VV_{n,n-k}]$ identify canonically. In particular, $\nu(h)$ is equivalent to $\nu(h')$ and $[\eta_h]=[\eta_{h'}]$.
The general case is analogous as the bundles $\nu(g_t)$ again define a global bundle due to the continuity assumption in the definition of regular homotopies.

Finally, as in the proof of Theorem~\ref{t.aob}, the fact that any two embeddings $h$ and $h'$ of $L$ in $\EE^{2k}$ are simultaneously WI or not is a consequence of Theorem~\ref{T:reghom}.
\end{proof}

The normal characteristic class and Proposition \ref{prop.normal} can be used to characterize WI open submanifolds. This is the main result of this subsection.

\begin{theorem}\label{thm.open} Let $L \subset \EE^n$ be an embedded $k$-dimensional open submanifold, $k\leq n-1$. Then $L$ is WI if and only if $\nu(L)$ is trivial. In particular, the fact that the normal bundle $\nu(L)$ of $L$ is trivial implies that $L$ is parallelizable.
\end{theorem}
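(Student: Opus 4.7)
The forward implication is immediate from Lemma \ref{L:easy}: if $L$ is WI, then $\nu(L)$ extends over $\EE^n$ as a trivial bundle, hence $\nu(L)$ itself is trivial. For the converse direction I would split according to the codimension. The codimension one case $n=k+1$ is already taken care of by Corollary \ref{thm.cod1}, since an open codimension one submanifold of the contractible (and hence $\beta_1 = 0$) space $\EE^n$ is automatically SI, in particular WI. So the substantive case is $n-k \geq 2$.

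For $n - k \geq 2$, by Proposition \ref{prop.normal} it suffices to show that some trivialization $\sigma: L \to \VV_{n,n-k}$ of the normal bundle is homotopic to a constant, i.e. $[\sigma]=0$. The plan here is to exploit the fact that $L$ is open to reduce the dimension of the relevant CW model. Precisely, since every connected component of $L$ is non-compact, $L$ admits a proper smooth function without maxima, and by standard Morse theory this can be perturbed to a proper Morse function with no critical points of index $k$. Consequently $L$ has the homotopy type of a CW complex of dimension at most $k-1$.

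Now I would invoke the same obstruction-theoretic argument that appears in the proof of Proposition \ref{prop.calcul}: the Stiefel manifold $\VV_{n,n-k}$ is $(k-1)$-connected, so $\pi_j(\VV_{n,n-k})=0$ for every $j \leq k-1$. Running the cell-by-cell induction on the $(k-1)$-dimensional CW model of $L$, one makes $\sigma$ constant on successive skeleta; since the model has no cells of dimension $\geq k$, the process terminates and produces a null-homotopy of $\sigma$. Thus $[\sigma] = 0$, and Proposition \ref{prop.normal} yields that $L$ is WI.

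The final assertion, that triviality of $\nu(L)$ forces parallelizability of $L$, is then immediate: the preceding argument shows that $L$ is WI, and since $\EE^n$ is contractible, Corollary \ref{cor.main} (or equivalently the second item of Lemma \ref{L:easy}) implies that $L$ is parallelizable. The only non-routine ingredient in the whole argument is the homotopy-type bound for open manifolds; everything else is a direct application of Proposition \ref{prop.normal} combined with the connectivity of Stiefel manifolds already used earlier in the paper.
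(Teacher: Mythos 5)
Your proof is correct and follows essentially the paper's route: the forward direction via Lemma \ref{L:easy}, codimension one via Corollary \ref{thm.cod1}, and codimension $\geq 2$ via Proposition \ref{prop.normal} after showing that a trivialization of $\nu(L)$ is null-homotopic because $\VV_{n,n-k}$ is $(k-1)$-connected and $L$ is open. The only difference is that the paper simply cites Proposition \ref{prop.calcul} (openness gives $[L,\VV_{n,n-k}] = H^k(L;\pi_k(\VV_{n,n-k})) = 0$ since an open $k$-manifold has no compact components), whereas you re-derive this special case from the standard fact that an open $k$-manifold has the homotopy type of a CW complex of dimension at most $k-1$ --- equivalent in substance, though your Morse-theoretic justification of that fact (perturbing a proper function without maxima into a Morse function with no index-$k$ critical points) is stated a bit loosely and would deserve a reference or the cancellation/push-to-infinity argument the paper itself uses elsewhere.
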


\begin{proof}
The case $k=n-1$ is a consequence of Corollary \ref{thm.cod1}, so let us assume that $k\leq n-2$. As $L$ is open, it follows that $[L,\VV_{n,n-k}] = 0$ by Proposition~\ref{prop.calcul}. This shows that any trivialization $\sigma$ of the normal bundle $\nu(L)$ is in the trivial homotopy
class and we conclude by means of Proposition \ref{prop.normal}.
\end{proof}

As any codimension $2$ submanifold of $\EE^n$ has trivial normal bundle, cf. Lemma \ref{L:trivnu}, we conclude from Theorem \ref{thm.open} that any proper embedding of an open $k$-manifold $L$ in $\EE^{k+2}$ is WI.

\begin{appl} It is a standard fact~\cite{Wh61} that any open orientable manifold $L$ of dimension $k\leq 3$ is parallelizable. We recover this result just realizing that $L$ can be
properly embedded into $\EE^{2k-1}$ \cite{Hi61}. As the codimension of
this embedding is $\leq 2$, its normal bundle is trivial and the result follows by Theorem~\ref{thm.open}.
\end{appl}

\begin{appl}
Let $L$ be a non-singular algebraic hypersurface of $\CC^n$. By definition $L$ is given as the zero set of a complex polynomial, i.e.
$$L=\{z\in\CC^n:P(z)=0\}\,,$$
with $\nabla P|_{L}\neq 0$. Identifying $\CC^n$ with $\EE^{2n}$ we have that $L$ is a codimension $2$ real-submanifold, which has trivial normal bundle and is necessarily open. Theorem~\ref{thm.open} implies that the tangent bundle of $L$ is differentially trivial.
\end{appl}

\subsection{Total weak integrability}\label{S:TWI}

Here we introduce a family of manifolds for which weak integrability is a completely intrinsic character of the manifold, not at all depending on the embeddings.

\begin{definition}
A parallelizable $k$-manifold $L$ will be called {\it totally weakly integrable (TWI)} if for any $n \geq k+2$, any embedding of $L$ in $\EE^n$ is WI if and only if it has trivial normal bundle.
\end{definition}

Reinterpreting previous results, we make the following observations:

\medskip

\noindent (1) Any open manifold is TWI by Theorem \ref{thm.open}.

\noindent (2) To study total weak integrability of non-open manifolds, it will suffice to consider compact connected $k$-manifolds on account of Corollary~\ref{cor.components}.

\noindent (3) In the case of compact connected parallelizable $k$-manifolds, we already know by Theorem \ref{t.aob}, that any embedding into $\EE^n$ is WI for $n \geq 2k+1$. Therefore such a manifold will be TWI exactly if the corresponding property also holds for $k+2 \leq n \leq 2k$ for embeddings with trivial normal bundle.

\medskip

Therefore the goal of this subsection is to characterize the compact connected manifolds which are TWI. Let us first make a preliminary construction which will be frequently used in the sequel.

\begin{const}\label{const.fattening}(\textit{Fattening immersions and embeddings}).

(1) Observe that the natural embedding $\rho:\EE^n \to \EE^p = \EE^n \times \EE^{p-n}$ induces an embedding $r: \VV_{n,n-k}\to \VV_{p,p-k}$ which is defined by adding a $(p-n)$-normal frame to a tangent frame of $\EE^n \subset \EE^p$. For any immersion $h: L \to \EE^{n}$, the composition $g= \rho \circ h: L \to \EE^{p}$ is an immersion of $L$ into $\EE^{p}$ which will be called the \textit{$p$-fattening} of $h$. It satisfies the following properties:

\noindent (a) The normal bundle of $g$ is $\nu(g) = \nu(h) \oplus \mu$, where $\mu$ is the tangent bundle of $\EE^{p-n}$ in $\EE^{p}$. As $\mu$ is trivial, $\nu(g)$ will be trivial as soon as $\nu(h)$ is trivial. Furthermore, adding a $(p-n)$-frame to a trivialization $\sigma_h:L\to \VV_{n,n-k}$ of the normal bundle $\nu(h)$, we obtain a trivialization $\sigma_g$ of $\nu(g)$ which satisfies $\sigma_g = r \circ \sigma_h$.

\noindent (b) Similarly, denoting by $s: \GG_{n,n-k}\to \GG_{p,p-k}$ the corresponding
embedding of the Grassmanians, we see that $\eta_g = s \circ \eta_h$.

(2) To this fattening procedure, we associate a fibration of Stiefel manifolds

$$
\xymatrix{
\VV_{n,n-k}\ar[r]^r & \VV_{p,p-k} \ar[r] & \VV_{p,p-n}
}\,,
$$
whose associated homotopy exact sequence reads as:

$$
\xymatrix{\cdots \to \pi_{j+1}(\VV_{p,p-n}) \to \pi_{j}(\VV_{n,n-k}) \ar[r]^{r_*} & \pi_{j}(\VV_{p,p-k}) \to \pi_{j}(\VV_{p,p-n}) \to \cdots}
$$
\end{const}

The following remarkable dichotomy complements the corresponding result for $n=2k$ (cf. Theorem \ref{t.aob}) and shows that the WI character of the embeddings mostly depends on the manifold itself.

\begin{proposition}\label{prop.small}
For any $k$-dimensional manifold $L$, one of the following conditions holds:
\begin{enumerate}
\item For any $k+2\leq n\leq 2k$, any embedding of $L$ into $\EE^n$ with trivial normal bundle is WI and in particular $L$ is parallelizable.
\item No embedding of $L$ into $\EE^n$ is WI for $k+2\leq n\leq 2k$.
\end{enumerate}
\end{proposition}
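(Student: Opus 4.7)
The plan is to establish the stronger statement that if some embedding $h:L\to\EE^{n_0}$, $k+2\leq n_0\leq 2k$, is WI then every embedding $h':L\to\EE^{n'}$, $k+2\leq n'\leq 2k$, with trivial normal bundle is WI. Before attacking this, I would dispose of trivial reductions: the range is empty unless $k\geq 2$; if $L$ is not parallelizable, Lemma~\ref{L:easy} forbids any WI embedding and condition~(2) holds automatically; Corollary~\ref{cor.components} together with Theorem~\ref{thm.open} further allows me to restrict to compact connected $L$, since open components with trivial normal bundle are already WI.

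The heart of the argument goes through $\EE^{2k}$ as a common reference. First I would fatten $h$ and $h'$ to embeddings $h_{2k}$ and $h'_{2k}$ in $\EE^{2k}$ using Construction~\ref{const.fattening}; this is legal because WI implies trivial normal bundle for $h$ by Lemma~\ref{L:easy}, and $\nu(h')$ is trivial by hypothesis. Since $h$ is WI, Proposition~\ref{prop.normal} (together with Lemma~\ref{inject}) furnishes a null-homotopic trivialization $\sigma_h$ of $\nu(h)$; then $\sigma_{h_{2k}}=r\circ\sigma_h$ is null-homotopic, so $h_{2k}$ is WI. By Theorem~\ref{T:reghom} the two embeddings $h_{2k}$ and $h'_{2k}$ of $L$ into $\EE^{2k}$ are regularly homotopic, and Proposition~\ref{lem.equivalence} then yields $[\eta_{h'_{2k}}]=[\eta_{h_{2k}}]=0$, so $h'_{2k}$ is WI by Proposition~\ref{prop.normal}.

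The delicate step, and the one I expect to be the main obstacle, is the descent from $h'_{2k}$ WI back to $h'$ WI. Fix any trivialization $\sigma_{h'}$ of $\nu(h')$; then $r\circ\sigma_{h'}=\sigma_{h'_{2k}}$ is null-homotopic, i.e.\ $r_*[\sigma_{h'}]=0$ in $[L,\VV_{2k,k}]$. The Stiefel fibration from Construction~\ref{const.fattening} provides the exact sequence
$$
\pi_{k+1}(\VV_{2k,2k-n'})\to \pi_k(\VV_{n',n'-k})\xrightarrow{r_*}\pi_k(\VV_{2k,k}),
$$
and $\VV_{2k,2k-n'}$ is $(n'-1)$-connected. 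The hypothesis $n'\geq k+2$ forces $k+1\leq n'-1$, so the leftmost term vanishes and $r_*$ is injective on $\pi_k$. By Proposition~\ref{prop.calcul}, since $L$ is compact and connected, $[L,\VV_{n',n'-k}]\cong \pi_k(\VV_{n',n'-k})$ and $[L,\VV_{2k,k}]\cong\pi_k(\VV_{2k,k})$, so $r_*$ is injective on these homotopy sets. Therefore $[\sigma_{h'}]=0$, and $h'$ is WI by Lemma~\ref{inject}. The razor-thin connectivity estimate $n'\geq k+2$ is exactly what dictates the range appearing in the statement, and is the only place the codimension hypothesis is genuinely used.
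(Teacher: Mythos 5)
Your proof is correct and follows essentially the same route as the paper: fattening to $\EE^{2k}$, transferring weak integrability there via Theorem~\ref{T:reghom} and Proposition~\ref{lem.equivalence}, and descending through the homotopy exact sequence of the Stiefel fibration using the $(n'-1)$-connectivity of $\VV_{2k,2k-n'}$ together with Lemma~\ref{inject} and Proposition~\ref{prop.normal}. The only cosmetic deviations are your preliminary reduction to compact connected $L$ (the paper argues directly for general $L$) and your use of mere injectivity of $r_*$ where the paper records a full isomorphism on $\pi_k$.
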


\begin{proof} We proceed in two steps using the definitions and notations introduced in Construction \ref{const.fattening}.

(1) First consider an embedding $h:L \to \EE^n$, $n \leq 2k$, with trivial normal bundle and its $2k$-fattening $g$.
Since $\VV_{2k,2k-n}$ is $(n-1)$-connected, our assumption $n \geq k+2$ implies that $\pi_{k+1}(\VV_{2k,2k-n}) = \pi_{k}(\VV_{2k,2k-n}) = 0$,
which in turn shows that the induced homomorphism  $r_*:\pi_{k}(\VV_{n,n-k})\to  \pi_{k}(\VV_{2k,k})$ in the corresponding exact sequence is an isomorphism. Accordingly, the two classes $[\sigma_h]$ and $[\sigma_g] = r_*([\sigma_h])$ are simultaneously zero or not. As $n\leq 2k$, it then follows from Lemma \ref{inject} that the normal characteristic classes $[\eta_h]$ and $[\eta_g]$ vanish simultaneously or not, and hence the two embeddings $h$ and $g$ are simultaneously WI or not on account of Proposition \ref{prop.normal}.

(2) Now assume that $h$ is WI and take another embedding  $h': L \to \EE^{n'}$ with trivial normal bundle and $k+2 \leq n' \leq 2k$. The $2k$-fattening $g'$ of $h'$ is regularly homotopic to $g$, cf. Theorem \ref{T:reghom}, and hence $g'$ is WI because $g$ is so by Proposition \ref{lem.equivalence}. The discussion in the previous item finally implies that $h'$ is WI and the proposition follows.
\end{proof}

Combining Proposition \ref{prop.small} with Theorem \ref{t.aob}, we get the main result of this subsection, whose proof is straightforward.

\begin{theorem}\label{T:TWI1}For any $k$-dimensional parallelizable manifold $L$, the following conditions are equivalent:
\begin{enumerate}
\item $L$ is TWI.
\item There exists $k+2 \leq n \leq 2k$ and at least one embedding $h: L \to \EE^n$ which is WI.
\end{enumerate}
\end{theorem}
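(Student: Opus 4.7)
The proof should be a direct combination of the two substantive results established just before, namely Theorem \ref{t.aob} (which disposes of the high-codimension range $n\geq 2k+1$) and Proposition \ref{prop.small} (which supplies the crucial all-or-nothing dichotomy in the low-codimension range $k+2\leq n\leq 2k$). The plan is to split the equivalence into its two implications and handle each using one of these ingredients.

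For the implication $1 \Rightarrow 2$, I would choose $n = 2k$ (noting that the hypothesis $k+2\leq n\leq 2k$ forces $k\geq 2$, which is the only interesting case since otherwise the range is empty). By Whitney's embedding theorem there exists an embedding $h: L \to \EE^{2k}$, and by Lemma \ref{L:trivnu}(2) its normal bundle is automatically trivial because $L$ is parallelizable. The TWI hypothesis then immediately forces $h$ to be WI, giving the required embedding in the prescribed codimension range.

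For the implication $2 \Rightarrow 1$, I would argue that we must verify that every embedding of $L$ in any $\EE^n$ with $n \geq k+2$ and trivial normal bundle is WI. Split according to the codimension. In the range $n\geq 2k+1$, Theorem \ref{t.aob} already guarantees that \emph{every} embedding of a parallelizable manifold is WI, with no further hypothesis needed. In the range $k+2\leq n\leq 2k$, apply Proposition \ref{prop.small}: it presents the strict dichotomy that either every embedding of $L$ into $\EE^n$ with trivial normal bundle in this range is WI, or no embedding of $L$ in this range is WI. The hypothesis of item $2$ rules out the second alternative, so the first alternative must hold and we are done.

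The argument is essentially a bookkeeping exercise; there is no real obstacle since the heavy lifting has already been carried out in Theorem \ref{t.aob} and Proposition \ref{prop.small}. The only subtle point worth flagging explicitly is the implicit convention that $k\geq 2$ in the statement (the range of $n$ is empty otherwise), and the observation that, thanks to Corollary \ref{cor.components}, the definition of TWI reduces to the connected compact case so that combining the codimension-$2k$ witness with the dichotomy really does cover all embeddings with trivial normal bundle.
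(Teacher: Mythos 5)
Your proposal is correct and is essentially the paper's own proof: the paper simply notes that the theorem follows straightforwardly by combining Theorem \ref{t.aob} (codimensions $n\geq 2k+1$) with the dichotomy of Proposition \ref{prop.small} (codimensions $k+2\leq n\leq 2k$), which is exactly the bookkeeping you carry out, including the Whitney embedding into $\EE^{2k}$ with trivial normal bundle (Lemma \ref{L:trivnu}) as the witness for $1\Rightarrow 2$.
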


Let us finish by describing some applications of Theorem~\ref{T:TWI1}.

\begin{proposition}\label{P:tori}Any parallelizable $k$-manifold $L$ whose compact components are tori $\TT^k$, $k\geq 1$, is TWI.
\end{proposition}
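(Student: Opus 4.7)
By Corollary \ref{cor.components}, WI decomposes over connected components of the embedded submanifold, and triviality of the normal bundle obviously does as well; embeddings of a disjoint union are just disjoint unions of embeddings of the components. Consequently TWI is preserved under disjoint unions of TWI manifolds. By the first observation following the definition of TWI, every open parallelizable manifold is TWI (Theorem~\ref{thm.open}), so the problem reduces to verifying that the compact connected manifold $\TT^k$ is TWI for every $k\geq 1$.

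\textbf{The case $k=1$ and the strategy for $k\geq 2$.} For $k=1$, the range $k+2\leq n\leq 2k$ is empty, so only $n\geq 2k+1=3$ is relevant; by Theorem~\ref{t.aob} every embedding of $\e^1$ into $\EE^n$, $n\geq 3$, is WI and therefore has trivial normal bundle, so the equivalence in the definition of TWI is trivially satisfied. For $k\geq 2$, I would appeal to Theorem~\ref{T:TWI1}: it suffices to exhibit a single WI embedding of $\TT^k$ into some $\EE^n$ with $k+2\leq n\leq 2k$. My candidate is the iterative hypersurface embedding: view $\TT^{k-1}\subset \EE^k$ and take the boundary of a closed tubular neighborhood of $\TT^{k-1}\times\{0\}$ inside $\EE^{k+1}=\EE^k\times\RR$, so that
\[
\TT^k=\partial(\TT^{k-1}\times D^2)\subset \EE^{k+1}\subset \EE^{k+2},
\]
and note $n=k+2\leq 2k$ holds because $k\geq 2$.

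\textbf{Verifying the codimension-$2$ embedding is WI.} The normal bundle of $\TT^k\subset \EE^{k+2}$ is trivialized by $\sigma(p)=(N(p),e_{k+2})$, where $N:\TT^k\to \e^k\subset\EE^{k+1}$ is the outward unit normal of the hypersurface $\TT^k\subset\EE^{k+1}$. By Proposition~\ref{prop.normal} and Lemma~\ref{inject} (applicable since $k+1\leq n=k+2\leq 2k$), to conclude WI it suffices to show $\sigma$ is null-homotopic. Parametrise $\TT^k=\TT^{k-1}\times\e^1$ by $(p,\phi)$ so that the outward normal reads $N(p,\phi)=\cos\phi\cdot u(p)+\sin\phi\cdot e_{k+1}$, where $u(p)$ is a unit normal to $\TT^{k-1}$ in $\EE^k$. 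The linear deformation $N_t(p,\phi):=\cos((1-t)\phi)\cdot u(p)+\sin((1-t)\phi)\cdot e_{k+1}$ is a homotopy from $N$ to the map $(p,\phi)\mapsto u(p)$, whose image lies in the equator $\e^{k-1}\subset\e^k$; since $\e^{k-1}$ is contractible inside $\e^k$ for $k\geq 2$ (slide it through a hemisphere to a pole), $N$ is null-homotopic in $\e^k$. Carrying this null-homotopy through while keeping the second frame vector equal to $e_{k+2}$ (which remains orthogonal to every stage of the homotopy, all of which takes place in $\EE^{k+1}$) yields a null-homotopy of $\sigma$ in $\VV_{k+2,2}$, and the proof concludes by the cited results. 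The only delicate step is the identification of the null-homotopy in $\e^k$; this is the main obstacle and is what forces the separate treatment of $k=1$, where the corresponding Gauss map $\e^1\to\e^1$ is the identity and not null-homotopic.
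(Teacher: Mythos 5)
Your reduction to the single compact component $\TT^k$, the treatment of $k=1$, and the choice of test embedding $\TT^k=\partial(\TT^{k-1}\times D^2)\subset \EE^{k+1}\subset\EE^{k+2}$ together with Theorem~\ref{T:TWI1}, Lemma~\ref{inject} and Proposition~\ref{prop.normal} all match the paper's strategy. The gap is in the one step you yourself flag as delicate: the explicit null-homotopy of the Gauss map. The formula $N_t(p,\phi)=\cos((1-t)\phi)\,u(p)+\sin((1-t)\phi)\,e_{k+1}$ is not a map on $\TT^{k-1}\times\e^1$ for $0<t<1$, because $\phi\mapsto\cos((1-t)\phi)$ is not $2\pi$-periodic: at $t=1/2$ one gets $N_{1/2}(p,0)=u(p)$ but $N_{1/2}(p,2\pi)=-u(p)$. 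So the claimed deformation of $N$ onto the equatorial map $(p,\phi)\mapsto u(p)$ does not exist as written, and the subsequent contraction of the equator is moot. (Note also that for $k=2$ the map $u:\e^1\to\e^1$ has degree $1$, so even the target of your intended deformation is not null-homotopic in that case; any correct argument cannot pass through ``$N$ is homotopic to $u\circ\mathrm{pr}$''.)

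The conclusion you need is nevertheless true and can be repaired cleanly: $N$ is the outward Gauss map of the hypersurface $\partial W\subset\EE^{k+1}$ with $W=\TT^{k-1}\times D^2$, so its degree equals $\chi(W)=\chi(\TT^{k-1})\cdot\chi(D^2)=0$ for $k\geq 2$ (extend $N$ to a vector field on $W$ with isolated zeros and apply Poincar\'e--Hopf); since $\TT^k$ is a closed connected oriented $k$-manifold, Hopf's degree theorem identifies $[\TT^k,\e^k]$ with $\ZZ$ via the degree, hence $[N]=0$ and your frame $\sigma=(N,e_{k+2})$ is null-homotopic in $\VV_{k+2,2}$ as desired. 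Equivalently, in the language of the paper this is the statement $Ci=0$ for this particular codimension-one immersion, fed into the fattening construction. The paper itself avoids computing the Gauss map altogether: it extends a non-vanishing vector field transverse to $\TT^k$ (built from a Reeb component on $\TT^{k-1}\times D^2$) over all of $\EE^{k+1}$, so that the normal bundle of the fattened embedding visibly extends over $\EE^{k+2}$, and then invokes Proposition~\ref{prop.normal}. Either repair is fine; as it stands, your homotopy is not.
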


\begin{proof} On account of Corollary \ref{cor.components}, we can restrict to the case of an individual torus $\TT^k$. It is then enough to show that there exists an embedding of $\TT^k$ into $\EE^{k+2}$ which is WI, cf. Theorem \ref{T:TWI1}. To prove this claim we proceed in three steps. The case $k=1$ being already covered by Application \ref{appl.un}, we assume that $k\geq 2$.

(1) Considering the solid torus $K^3 := \mathbb{S}^1 \times D^2$ as a $3$-dimensional Reeb component, we see that it admits a non-vanishing vector field $X_3$ transverse to the boundary $\partial K^3 = \TT^2$. We construct a similar vector field $X_k$ on $K^k:=\TT^{k-2}\times D^2= \TT^{k-3} \times K^3$ by lifting $X_3$ transversally to the fibers $\TT^{k-3}$.

(2) Next we claim that for any $k \geq 2$, the torus $\TT^k$ embeds into $\EE^{k+1}$ in such a way that it bounds a manifold diffeomorphic to $K^{k+1}$. Indeed, this is the case for the standard embedding of $\TT^2$ into $\EE^3$ and we go on by induction on $k$. Having defined $\TT^{k-1} \subset \EE^k \subset \EE^{k+1}$, we recall that its normal bundle in $\EE^{k+1}$ is trivial by Lemma \ref{L:trivnu}, and thus any tubular neighborhood $N_{k+1}$ of $\TT^{k-1}$ is diffeomorphic to $K^{k+1}$. Moreover $\partial N_{k+1}$ is an embedding of $\TT^k$ into $\EE^{k+1}$ which bounds a manifold diffeomorphic to $K^{k+1}$.

(3) Now proceeding as in Application \ref{appl.low}, we extend the vectorfield $X_{k+1}$ on $N_{k+1}$ as a non-vanishing vector field $Y_{k+1}$ on $\EE^{k+1}$ which is transverse to $\TT^k$. Finally, we consider the $(k+2)$-fattening $\TT^k \subset \EE^{k+1} \subset \EE^{k+2}$ of the previous embedding of $\TT^k$ in $\EE^{k+1}$. Due to the existence of $Y_{k+1}$, its normal bundle extends all over $\EE^{k+2}$ and so it is WI on account of Proposition \ref{prop.normal}. Consequently $\TT^k$ is TWI by Theorem \ref{T:TWI1}.
\end{proof}

To find examples of parallelizable manifolds which are not TWI, we now focus on spheres. Recall that the only parallelizable spheres are $\e^1$, $\e^3$ and $\e^7$ (see \cite{BM58}). We already know from Application \ref{appl.un} that $\e^1$ is TWI, and from Theorem \ref{t.aob} that all the embeddings of $\e^k$ in $\EE^n$ are WI for $k\in\{3,7\}$ and $n\geq 2k+1$. The following result covers Theorem D of Section~\ref{S.mostrelevant}.

\begin{proposition}\label{spheres} In both cases $k=3,7$, no embedding of $\e^k$ in $\EE^n$ is WI for any $k+2\leq n\leq 2k$. Therefore $\e^k$ is not TWI.
\end{proposition}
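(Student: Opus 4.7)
The plan is to reduce to a single bad embedding via the dichotomy in Proposition~\ref{prop.small}. Once one exhibits, for each $k\in\{3,7\}$, a single embedding of $\e^k$ into some $\EE^n$ with $k+2\leq n\leq 2k$, trivial normal bundle and failing to be WI, condition~(1) of that dichotomy is ruled out, so condition~(2) forces every embedding in the range to fail. The statement that $\e^k$ is not TWI then follows immediately from Theorem~\ref{T:TWI1}.

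The obvious candidate is the standard equatorial fattening $h:\e^k\subset \EE^{k+1}\subset\EE^{k+2}$, so $n=k+2$. Its normal bundle is trivial, being the sum of the outward radial line in $\EE^{k+1}$ and the trivial line along $e_{k+2}$, with natural trivialization $\sigma(x)=(x,e_{k+2})\in \VV_{k+2,2}$. By Proposition~\ref{prop.normal} combined with Lemma~\ref{inject} (applicable since $k+1\leq k+2\leq 2k$ for $k\geq 2$), the embedding $h$ is WI if and only if $[\sigma]=0$ in $\pi_k(\VV_{k+2,2})$, a group isomorphic to $\ZZ/2$ by Remark~\ref{R:computV}.

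The heart of the argument, and the step I expect to be the main obstacle, is showing that $[\sigma]\neq 0$. I would identify $\VV_{k+2,2}$ with the unit tangent bundle $T^1\e^{k+1}$ via the second projection $p_2(v_1,v_2)=v_2$. Since $p_2\circ\sigma\equiv e_{k+2}$, the map $\sigma$ factors through the fiber $p_2^{-1}(e_{k+2})$, which is the equatorial $\e^k\subset \e^{k+1}$, and as a self-map of that fiber $\sigma$ is the identity. The long exact sequence of this sphere bundle reads in degree $k$ as
$$
\pi_{k+1}(\e^{k+1})\xrightarrow{\partial}\pi_k(\e^k)\xrightarrow{\iota_*}\pi_k(\VV_{k+2,2})\to\pi_k(\e^{k+1})=0,
$$
and the connecting homomorphism $\partial$ equals multiplication by the Euler number of the unit tangent bundle of $\e^{k+1}$, namely $\chi(\e^{k+1})=2$ since $k+1\in\{4,8\}$ is even. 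Hence $\iota_*$ is the reduction $\ZZ\to\ZZ/2$, and $[\sigma]=\iota_*[\mathrm{id}_{\e^k}]$ is the non-trivial class.

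With $[\sigma]\neq 0$ in hand, Lemma~\ref{inject} and Proposition~\ref{prop.normal} give that $h$ is not WI, Proposition~\ref{prop.small} then upgrades this to the full proposition, and Theorem~\ref{T:TWI1} delivers the TWI conclusion. The reason $k\in\{3,7\}$ is the delicate case is precisely that $\e^k$ is parallelizable, so the easy tangential obstruction used in Example~\ref{exsphere} for non-parallelizable spheres vanishes, and the only obstacle to weak integrability is the subtle $\ZZ/2$ class in $\pi_k(\VV_{k+2,2})$ that the Euler-number computation above detects.
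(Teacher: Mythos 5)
Your proof is correct and follows essentially the same route as the paper: the standard fattened embedding $\e^k\subset\EE^{k+1}\subset\EE^{k+2}$, the same fibration $\e^k=\VV_{k+1,1}\to\VV_{k+2,2}\to\e^{k+1}$ and its homotopy exact sequence to show that the normal trivialization represents the nontrivial class, followed by Lemma~\ref{inject}, Proposition~\ref{prop.normal}, Proposition~\ref{prop.small} and Theorem~\ref{T:TWI1}. The only cosmetic difference is that you re-derive $\pi_k(\VV_{k+2,2})\cong\ZZ_2$ and the mod-$2$ reduction of the fiber inclusion via the Euler number $\chi(\e^{k+1})=2$, whereas the paper quotes the known value of this homotopy group and uses surjectivity of $r_*$.
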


\begin{proof}The proof is similar to that of Proposition \ref{prop.small}. Using the notations of Construction \ref{const.fattening}, we consider the following fibration of Stiefel manifolds and its associated homotopy sequence:
$$
\xymatrix{
\e^k = \VV_{k+1,1}\ar[r]^r & \VV_{k+2,2} \ar[r] & \VV_{k+2,1}= \e^{k+1}}
$$
$$
\xymatrix{\cdots \to \pi_{k+1}(\e^{k+1})=\ZZ \to \pi_{k}(\e^k)=\ZZ \ar[r]^{r_*} & \pi_{k}(\VV_{k+2,2}) \to \pi_k(\e^{k+1})=0 \to \cdots}
$$

The standard embedding $h: \e^k \to \EE^{k+1}$ has trivial normal bundle and therefore so does its $(k+2)$-fattening $g: \e^k \to \EE^{k+2}$. The standard trivialization $\sigma_h:\e^3\to \VV_{4,1}=\e^3$ of $\nu(h)$ is the generator of $\pi_3(\e^3)$, so $[\sigma_h]\neq 0$. Moreover, the composition $\sigma_g = r \circ \sigma_h$ is a trivialization of $\nu(g)$ and, since $r_*$ is onto, it follows that $[\sigma_g] =r_*[\sigma_h]$ generates $\pi_{3}(\VV_{5,2})=\ZZ_2$ and so $[\sigma_g]\neq 0$.

Finally, Lemma \ref{inject} implies that the normal class of the fattening is not trivial, i.e. $[\eta_g]\neq 0$, thus showing that the embedding $g$ is not WI and the claim follows applying Proposition \ref{prop.small}.
\end{proof}

Using more sophisticated tools from the theory of immersions, we will provide in the next two subsections a complete classification of manifolds which are TWI.

\subsection{TWI manifolds of dimension $k \neq 3,7$}\label{S:main}

In this subsection we shall prove that for $k\neq 3,7$, the necessary condition for a parallelizable $k$-submanifold $L\subset \EE^n$ to be WI, i.e. triviality of the normal bundle, is also sufficient thus showing that all parallelizable manifolds of dimension $k\neq 3,7$ are TWI.

The main trick to prove this result consists in relating our embeddings to codimension $1$ immersions, for which our normal class reduces to the usual degree studied by Hopf and other authors (see \cite{Ho25} and \cite{Ho26}). We first review the corresponding results to be used in the sequel.

\begin{const}\label{const.degree}(\textit{Degree and normal class of codimension $1$ immersions}). Consider an immersion $h:L \to \EE^{k+1}$ of a compact and connected $k$-dimensional manifold $L$. At each point $y \in h(L)$ the given orientations of $L$ and $\EE^{k+1}$ determine a unique normal vector $\nu_y$ thus defining a canonical trivialization $\sigma_h:L \to \VV_{k+1,1}=\e^k$ of the normal bundle $\nu(h)$. According to Proposition \ref{prop.calcul} its homotopy class is an element
$$
[\sigma_h] \in [L,\e^k] = H^k(L,\pi_k(\e^k)) = \pi_k(\e^k) = \ZZ\,,
$$
so it identifies with the usual \textit{degree} of $\sigma_h$, which was called by Hopf the \textit{curvatura integra} of $h$; we denote it by $Ci(h)$ and describe it in some cases which are of interest to us.

(1) For the immersion $h$ of a parallelizable $k$-manifold $L$, Bredon-Kosinski~\cite{BK66} (see also Thomas~\cite{Th69}) proved that
$$
Ci(h)=[\sigma_h] = \begin{cases}
0 & \text{if $k$ is even\,,}\\
0 ~~\text{mod $2$} & \text{if $k$ is odd and $k\notin \{1,3,7\}$\,.}
\end{cases}
$$

(2) Let $h_j:L_j \to \EE^{k+1}$, $j \in \{1,2\}$, be two immersions of compact connected $k$-manifolds separated by a $k$-plane. Connecting them by means of a tube, we define their connected sum $h= h_1\sharp h_2: L = L_1 \sharp L_2 \to \EE^{k+1}$ whose curvatura integra is given by the following formula of Milnor \cite{Mi56}:
$$
Ci(h) = Ci(h_1) + Ci(h_2) - 1\,.
$$

(3) Finally we observe that by Lemma \ref{inject}, the normal characteristic class $[\eta]$ of any codimension $1$ immersion $h$ vanishes if and only if $Ci(h)=0$.
\end{const}

\begin{const}\label{const.compression}(\textit{Compression of embeddings}). According to Lemma \ref{L:trivnu}, any embedding $h:L \to \EE^{2k}$ of a compact connected and parallelizable $k$-manifold $L$ has trivial normal bundle. Now it is proved in \cite{Hirsch59} that $h$ is regularly homotopic to an immersion $h':L\to \EE^{k+1}\times \EE^{k-1}$ which is transverse to the factor $\EE^{k-1}$. The canonical projection of $\EE^{2k}$ onto $\EE^{k+1}$ defines a codimension $1$ immersion $f: L \to \EE^{k+1}$ which will be called a \textit{compression of~$h$}. Denote by $g$ the $2k$-fattening of $f$ (cf. Construction \ref{const.fattening}), and consider the 1-parameter family of maps $\lambda_t: \EE^{2k}= \EE^{k+1}\times \EE^{k-1} \to \EE^{k+1}\times \EE^{k-1}$ given by $\lambda_t(u,v):= (u, tv)$ for $t \in [0,1]$. Then $g_t:= \lambda_t \circ h'$ defines a regular homotopy between $g$ and $h'$, thus proving that the initial embedding $h$ is regularly homotopic to $g$.

If we assume that $L$ is compact, connected and has non-empty boundary, then any embedding $h:L\to \EE^{n}$ ($n\geq k+1$) with trivial normal bundle can be compressed to an (equidimensional) immersion $f:L\to \EE^k$, see e.g. the proof of Theorem~4.7 in~\cite{Hi61}. Proceeding exactly as in the previous paragraph, it follows that the $n$-fattening $g$ of $f$ is regularly homotopic to $h$.
\end{const}

The following proposition is the key result of this subsection.

\begin{proposition}\label{prop.key} Let $h:L \to \EE^{2k}$ be an embedding of a compact connected $k$-manifold $L$ and let $f: L \to \EE^{k+1}$ be a compression of $h$. Then $h$ is WI if and only if for the canonical trivialization $\sigma_f$ of the normal bundle of $f$ we have
$$
Ci(f)=[\sigma_f] = \begin{cases}
0 & \text{if $k$ is even}\,,\\
0 ~~\text{mod.~ $2$} & \text{if $k$ is odd}\,.
\end{cases}
$$
\end{proposition}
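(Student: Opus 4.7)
The natural strategy is to transport the question from $h:L\to\EE^{2k}$ to the codimension one immersion $f:L\to\EE^{k+1}$ by way of the fattening procedure. First I would invoke Construction \ref{const.compression} to assert that $h$ is regularly homotopic to the $2k$-fattening $g:L\to\EE^{2k}$ of $f$; then Proposition \ref{lem.equivalence} gives that $h$ is WI if and only if $g$ is. Since $g$ has codimension $k$, Lemma \ref{inject} applies to $g$, so Proposition \ref{prop.normal} tells us that $g$ is WI precisely when $[\sigma_g]=0$ in $[L,\VV_{2k,k}]$.

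Next I would re-express this obstruction in terms of $Ci(f)$. By Construction \ref{const.fattening}(1)(a), the canonical trivializations satisfy $\sigma_g = r\circ \sigma_f$, where $r:\VV_{k+1,1}\to\VV_{2k,k}$ is the Stiefel inclusion coming from adjoining the extra $(k-1)$-frame; hence $[\sigma_g] = r_*[\sigma_f]$. By Proposition \ref{prop.calcul}(2) together with Poincar\'e duality on the closed oriented $k$-manifold $L$, one has $[L,\VV_{k+1,1}] = \pi_k(\e^k) = \ZZ$ and $[L,\VV_{2k,k}] = \pi_k(\VV_{2k,k})$, and under the first identification $[\sigma_f]$ matches exactly the curvatura integra $Ci(f)$ of Construction \ref{const.degree}. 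The problem thus reduces to determining the induced homomorphism $r_*:\ZZ \to \pi_k(\VV_{2k,k})$.

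Finally I would compute $r_*$ through the long exact homotopy sequence of the Stiefel fibration $\VV_{k+1,1}\to\VV_{2k,k}\to\VV_{2k,k-1}$ from Construction \ref{const.fattening}(2). Because $\VV_{2k,k-1}$ is $k$-connected, $\pi_k(\VV_{2k,k-1})=0$ and the sequence forces $r_*$ to be surjective. Combined with Remark \ref{R:computV}, according to which $\pi_k(\VV_{2k,k}) = \ZZ$ when $k$ is even and $\pi_k(\VV_{2k,k})=\ZZ_2$ when $k\geq 3$ is odd, this surjectivity makes $r_*$ an isomorphism (up to sign) in the even case and reduction modulo $2$ in the odd case, matching the two stated conditions on $Ci(f)$ exactly. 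The main obstacle will be the bookkeeping surrounding the fattening: verifying from Construction \ref{const.compression} that the $2k$-fattening $g$ of the compression really is regularly homotopic to $h$, and then checking compatibly that the canonical normal trivialization $\sigma_g$ factors as $r\circ\sigma_f$ under the identifications above. Once those functoriality points are nailed down, the rest is classical homotopy of Stiefel manifolds.
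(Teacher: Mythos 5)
Your argument is essentially the paper's own proof: the same reduction of $h$ to the $2k$-fattening $g$ of the compression $f$ via Construction \ref{const.compression} and Proposition \ref{lem.equivalence}, the same appeal to Lemma \ref{inject} and Proposition \ref{prop.normal} to reduce weak integrability to $[\sigma_g]=0$, and the same Stiefel fibration $\VV_{k+1,1}\to\VV_{2k,k}\to\VV_{2k,k-1}$ with its exact sequence to identify $r_*$ as an isomorphism for $k$ even and reduction mod $2$ for $k$ odd. The only point the paper adds is to dispose of $k=1$ separately as a trivial case (there the codimension is $1$, so Proposition \ref{prop.normal} does not apply and $\pi_1(\VV_{2,1})=\ZZ$), which your restriction of the odd computation to $k\geq 3$ leaves implicit but does not affect the substance of the argument.
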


\begin{proof}

Indeed, as the $2k$-fattening $g$ of $f$ is regularly homotopic to $h$, we see, according to Proposition \ref{lem.equivalence} and Lemma \ref{inject}, that $h$ is WI if and only if the associated trivialization $\sigma_g$ of the normal bundle of $g$ is homotopic to zero, i.e. $[\sigma_g]=0$. The case $k=1$ being trivial, assume that $k\geq 2$ and consider the fibration and homotopy exact sequence associated to $g$:
$$
\xymatrix{
\e^k=\VV_{k+1,1}\ar[r]^r & \VV_{2k,k} \ar[r] & \VV_{2k,k-1}\,,
}
$$
$$\xymatrix{
\cdots \ar[r]&  \pi_k(\e^k)=\ZZ\ar[r]^{r_*}&  \pi_k(\VV_{2k,k})\ar[r] &  \pi_k(\VV_{2k,k-1})=0\ar[r]&\cdots}
$$
where $[\sigma_g] = r_*([\sigma_f])$. There are two possibilities:

(a) If $k$ is even, $\pi_k(\VV_{2k,k}) = \ZZ$ and $r_*:  \pi_k(\e^k) \to  \pi_k(\VV_{2k,k})$ is an isomorphism. Therefore $[\sigma_g]=0$ if and only if $[\sigma_f]=0$.

(b) If $k$ is odd, $\pi_k(\VV_{2k,k}) = \ZZ_2$, the homomorphism $r_*$ is onto and $[\sigma_g]=0$ if and only if $[\sigma_f]$ belongs to the kernel of $r_*$, which is equal to the subgroup $2\ZZ \subset \pi_k(\e^k)= \ZZ$.

The proof is complete.
\end{proof}

We reach to the main theorem of this subsection, which includes the first claim in Theorem C of Section~\ref{S.mostrelevant}.

\begin{theorem}\label{T:classif}
For $k \notin \{3,7\}$, any $k$-dimensional parallelizable manifold $L$ is TWI.
\end{theorem}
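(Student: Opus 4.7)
My plan is to reduce to the compact connected case via the observations following the definition of TWI, then to produce a single WI embedding of $L$ into $\EE^{2k}$ by combining Proposition~\ref{prop.key} with Bredon--Kosinski's vanishing of the curvatura integra. As soon as such an embedding is exhibited, Theorem~\ref{T:TWI1} will give TWI at once.

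First I would reduce. By observation (1) after the definition of TWI, open manifolds are TWI; by observation (2), which uses Corollary~\ref{cor.components}, the TWI property can be checked on connected components, so the problem reduces to compact connected parallelizable $k$-manifolds. If $k=1$, the only candidate is $\e^1$, and the range $k+2 \leq n \leq 2k$ is empty, so TWI follows directly from Theorem~\ref{t.aob} applied to $n\geq 3=2k+1$. Hence I may assume $k\geq 2$ with $k \notin \{3,7\}$ and $L$ compact and connected.

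For the main step I would fix any embedding $h: L \to \EE^{2k}$, which exists by Whitney's theorem; Lemma~\ref{L:trivnu}(2) ensures that $\nu(h)$ is trivial. Construction~\ref{const.compression} then provides a codimension $1$ compression $f: L \to \EE^{k+1}$ whose $2k$-fattening is regularly homotopic to $h$, and Proposition~\ref{prop.key} reduces the WI character of $h$ to a single vanishing condition on the curvatura integra $Ci(f) = [\sigma_f] \in \pi_k(\e^k)=\ZZ$: namely $Ci(f)=0$ when $k$ is even and $Ci(f)\equiv 0 \pmod 2$ when $k$ is odd. Because $L$ is parallelizable of dimension $k \notin \{1,3,7\}$, the Bredon--Kosinski--Thomas theorem recalled in Construction~\ref{const.degree}(1) asserts precisely these congruences for every codimension $1$ immersion of $L$ into $\EE^{k+1}$; applying it to $f$ yields that $h$ is WI.

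Since $2k \geq k+2$, Theorem~\ref{T:TWI1} then concludes that $L$ is TWI, which ends the argument. The only input that is not already built up in the preceding sections is the Bredon--Kosinski--Thomas vanishing theorem, invoked as a black box; no step in the plan is expected to be an obstacle, the key conceptual point being that the parity conditions in Proposition~\ref{prop.key} have been tailored to match exactly the output of Bredon--Kosinski, so the proof is essentially bookkeeping once those two results are combined.
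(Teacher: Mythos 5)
Your proposal is correct and follows essentially the same route as the paper's proof: reduce to the compact connected case, take a Whitney embedding into $\EE^{2k}$ with trivial normal bundle, compress it to a codimension one immersion, and combine Proposition~\ref{prop.key} with the Bredon--Kosinski vanishing of the curvatura integra before invoking Theorem~\ref{T:TWI1}. The only cosmetic difference is that you settle $k=1$ directly via Theorem~\ref{t.aob} whereas the paper cites Application~\ref{appl.un}, which amounts to the same thing.
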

\begin{proof} Again we can assume that $L$ is compact and connected.
Such a manifold embeds into $\EE^{2k}$ by the celebrated theorem of Whitney \cite{Wh44} and any such embedding $h$ has trivial normal bundle on account of Lemma \ref{L:trivnu}.

The case $k=1$ is already handled in Application \ref{appl.un}, so assume that $k\geq 2$ and let $f$ be a compression of $h$. Combining the result of Bredon-Kosinski concerning the curvatura integra $Ci(f)$, cf. Construction \ref{const.degree}, with Proposition \ref{prop.key}, we see that $h$ is WI. Applying Theorem \ref{T:TWI1}, we conclude that $L$ is TWI as we desired to prove.
\end{proof}

\begin{example} Due to the definition of TWI manifolds, it becomes relevant to ask whether the normal bundle of any embedding of such a manifold $L$ is trivial. Indeed, in \cite{HS63} one can find an explicit example of a $22$-dimensional parallelizable manifold $L_0$ such that any of its embeddings in $\EE^{30}$ has non-trivial normal bundle, but for $31\leq n\leq 37$ there are both embeddings into $\EE^n$ with trivial and non-trivial normal bundle. Of course, the embeddings of $L_0$ with non-trivial $\nu(L_0)$ cannot be WI but $L_0$ has WI embeddings into $\EE^{44}$ (see Lemma~\ref{L:trivnu}). In fact, this manifold $L_0$ is TWI according to Theorem \ref{T:classif}.
\end{example}

As is apparent from the proof of Theorem \ref{T:classif}, the crucial result to obtain weak integrability is Bredon-Kosinski's theorem, which allows us to conclude, using the compression method, that the embeddings of $L$ into $\EE^{2k}$ correspond to the zero class in the bijection defined by the Hirsch-Smale map $\tilde \chi_\tau$.

\begin{example}For $k\neq 3, 7$, the following $k$-manifolds are parallelizable and therefore TWI in view of Theorem \ref{T:classif}:
\begin{enumerate}
\item Any surface whose compact components are tori; any torus $ \mathbb{T}^k$ (compare with Proposition \ref{P:tori}).
\item Any product $\e^{k_1}\times\cdots\times \e^{k_r}$ of $r\geq 2$ spheres with at least one which is odd-dimensional \cite{St67}.
\item Any product $L_1\times\cdots\times L_r$ of $r\geq 2$ manifolds of dimension $3$ (see \cite{St35}).
\item Any Lie group or homogeneous space of the form $G/T$ where $G$ is a compact connected Lie group and $T$ a non-maximal toral subgroup, e.g. \cite{Si82}.

\end{enumerate}
\end{example}

\subsection{TWI manifolds of dimension $3$ and $7$}\label{Euc4}

In this subsection we shall assume that $k\in\{3,7\}$ unless otherwise stated. Our goal is to complete the characterization of TWI manifolds and, in particular, to provide examples of parallelizable manifolds which are not TWI. Again we can restrict to the case of manifolds which are compact and connected. 

Let us recall that for $n\geq 2k+1$, the weak integrability of $k$-manifolds embedded in $\EE^n$ follows from Corollary \ref{cor:37}. On the other hand, any $3$-manifold or any parallelizable $7$-manifold can be immersed into $\EE^{k+1}$ with arbitrary curvatura integra \cite{Mi56}. This is the reason why it is more complicated to characterize WI $3$- and $7$-manifolds than WI manifolds of dimension $k\neq 3,7$.

The following proposition is key in order to prove the main theorem of this subsection, which completes the proof of Theorem~$C$ in Section~\ref{S.mostrelevant}. It is the analogue of Bredon-Kosinski's formula (cf. Construction~\ref{const.degree}) for codimension one immersions of (parallelizable) $3$- and $7$-manifolds which are compressions of embeddings. The proof of this result is essentially due to M. Takase, to whom we are very grateful for allowing us to reproduce it here. Let us recall that the semicharacteristic $\chi^*(L)$ of a compact $k$-manifold $L$ is defined as:
$$\chi^*(L)=\sum_{i=0}^{k'}\text{rank}\,H_i(L;\ZZ_2)\,,$$
where we have set $k=2k'+1$. 

\begin{proposition}\label{prop.3and7}
Let $L$ be a compact and connected parallelizable manifold of dimension $k$, $k\in\{3,7\}$. Then any embedding $h:L\to \EE^{2k}$ is regularly homotopic to an embedding $h': L\to \EE^{2k}$ whose compression to $\EE^{k+1}$ is an immersion $f$ such that $Ci(f)=\chi^*(L)$ mod. $2$.
\end{proposition}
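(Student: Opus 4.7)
The plan is to pass through the compression $f$, track how $Ci(f)\bmod 2$ behaves under regular homotopies of the ambient embedding, and finally identify the resulting invariant with $\chi^*(L)\bmod 2$. Because Theorem~\ref{T:reghom} makes all embeddings of $L$ into $\EE^{2k}$ regularly homotopic, I expect that $Ci(f)\bmod 2$ is actually an invariant of the manifold~$L$, so the modification from $h$ to $h'$ only reflects the choice of a transversalization of the compression.

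\emph{Step 1 (extraction of a parity invariant).} Apply Construction~\ref{const.compression} to $h$ to obtain an initial compression $f_{0}: L \to \EE^{k+1}$ of some $h'_{0}$ regularly homotopic to $h$. Under the decomposition $\nu_{h'_{0}} = \nu_{f_{0}} \oplus \epsilon^{k-1}$ (where $\epsilon^{k-1}$ denotes a trivial rank-$(k-1)$ bundle), the canonical trivialization $\sigma_{h'_{0}}$ of $\nu_{h'_{0}}$ is the fattening $r\circ\sigma_{f_{0}}$ of the canonical trivialization of $\nu_{f_{0}}$. Consulting the exact sequence of Construction~\ref{const.fattening} for the fibration $\e^{k}\to \VV_{2k,k}\to \VV_{2k,k-1}$, together with $\pi_{k}(\VV_{2k,k})=\ZZ_{2}$ and $\pi_{k}(\VV_{2k,k-1})=0$ for $k\in\{3,7\}$, the map $r_{*}:\pi_{k}(\e^{k})=\ZZ\to \ZZ_{2}$ is reduction mod~$2$. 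Hence $[\sigma_{h'_{0}}] = Ci(f_{0})\bmod 2$ in $\pi_{k}(\VV_{2k,k})$, and by Proposition~\ref{lem.equivalence} together with Theorem~\ref{T:reghom}, this class is an invariant of $L$ alone.

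\emph{Step 2 (flexibility to realize the parity).} In principle Step~1 already shows $Ci(f)\bmod 2$ is constant along regular homotopies, so the role of the passage from $h$ to $h'$ is to choose a transversalization whose compression genuinely achieves this common value. Concretely, given any $h$, pick $h'$ in its regular homotopy class that is transverse (in the sense of Construction~\ref{const.compression}) to the $\EE^{k-1}$ factor; the resulting $f$ then has $Ci(f)\bmod 2 = [\sigma_{h'}]$. If I want additional control, I may also take the connected sum $h\sharp j$ with an embedding $j:\e^{k}\to\EE^{2k}$ whose compression $\tilde\jmath$ has a prescribed $Ci(\tilde\jmath)\bmod 2$; by Milnor's formula,
\[
Ci(f\sharp\tilde\jmath) = Ci(f) + Ci(\tilde\jmath) - 1,
\]
which allows us to reach any prescribed parity (and this flexibility is consistent with Step~1 because the connected sum $h\sharp j$ is regularly homotopic to $h$ by Theorem~\ref{T:reghom}).

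\emph{Step 3 (identification with the semicharacteristic).} This is the decisive and hardest part, where I expect the bulk of Takase's input. The plan is to check the equality $Ci(f) \equiv \chi^{*}(L)\pmod{2}$ on a generating family of compact connected parallelizable $k$-manifolds and propagate it by connected sum. For the sphere $\e^{k}$ with its standard embedding into $\EE^{k+1}\subset \EE^{2k}$ the standard compression has Gauss degree $1$, matching $\chi^{*}(\e^{k})=1$. For $\e^{1}\times \e^{k-1}$ one constructs a compression with $Ci=0$, matching $\chi^{*}=0\bmod 2$ (and this handles the torus case via further connected sums). To propagate, one observes that both sides obey the same connected-sum rule: Milnor's formula for $Ci$ and, from a Mayer--Vietoris computation on the connected sum, the formula $\chi^{*}(L_{1}\sharp L_{2}) \equiv \chi^{*}(L_{1}) + \chi^{*}(L_{2}) - 1 \pmod{2}$.

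\emph{Main obstacle.} The principal difficulty is Step~3: proving that the intrinsic invariant extracted in Step~1 is exactly the mod-$2$ semicharacteristic, rather than some other $\ZZ_{2}$-valued invariant of $L$. In dimension~$3$, the prime decomposition theorem and explicit handle arguments make this tractable; in dimension~$7$, a more delicate cobordism or framed surgery argument is required to reduce every parallelizable manifold to the tested base cases while keeping both invariants under control. Once this identification is complete, the proposition follows by taking $h'$ as the embedding produced in Step~2 achieving $Ci(f)\equiv \chi^{*}(L)\pmod{2}$.
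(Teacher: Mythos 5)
Your Steps 1 and 2 only repackage what is already in the paper (Construction~\ref{const.compression}, Proposition~\ref{lem.equivalence}, Theorem~\ref{T:reghom} and the fibration argument of Proposition~\ref{prop.key}): they show that $Ci(f)\bmod 2$ is a well-defined invariant of $L$. The actual content of the proposition is your Step~3, the identification of this invariant with $\chi^*(L)\bmod 2$, and there you have a genuine gap. The proposed strategy --- verify the equality on a few base cases ($\e^k$, $\e^1\times\e^{k-1}$) and propagate by connected sum using Milnor's formula and $\chi^*(L_1\sharp L_2)\equiv\chi^*(L_1)+\chi^*(L_2)-1$ --- cannot reach all compact connected parallelizable $k$-manifolds: already for $k=3$ the prime manifolds (lens spaces, Seifert fibered and hyperbolic manifolds, \dots) are not connected sums of your base cases, and for each such prime piece computing $Ci$ of a compression is exactly the problem to be solved; for $k=7$ the situation is worse and no surgery argument is supplied. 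The appeal to ``prime decomposition and explicit handle arguments'' is precisely the missing proof, not a proof. The paper avoids this entirely by choosing the representative $h'$ so that $h'(L)$ \emph{bounds} a compact $(k+1)$-manifold $V$ embedded in $\EE^{2k}$ with trivial normal bundle (for $k=3$, a simply connected $4$-manifold via Hirsch's bounding-embedding theorem; for $k=7$, a $3$-connected stably parallelizable $8$-manifold obtained from the surjectivity of the $J$-homomorphism on $\pi_7$, Milnor's bounding theorem and surgery, then embedded via De Sapio), compressing $V$ itself to an equidimensional immersion $\hat f:V\to\EE^{k+1}$ and applying Haefliger's formula $Ci(\hat f|_{\partial V})=\chi(V)$ together with $\chi(V)\equiv\chi^*(\partial V)\bmod 2$. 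Some such bounding argument (or an equivalent input) is indispensable; without it your Step~3 is an assertion, not a proof.

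A secondary error: in Step~2 the claimed ``flexibility to reach any prescribed parity'' by summing with an embedding $j:\e^k\to\EE^{2k}$ is illusory and in fact contradicts your Step~1. Since all embeddings of $\e^k$ in $\EE^{2k}$ are regularly homotopic and the standard one compresses with $Ci=1$, every compression of every such $j$ has odd curvatura integra, so Milnor's formula never changes the parity --- consistent with the parity being a regular homotopy invariant, but useless for adjusting it. No such adjustment is needed (or possible): the proposition asserts that the invariant value equals $\chi^*(L)\bmod 2$, and that is what must be proved.
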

\begin{proof}
(i) Let us start with the case $k=3$. By~\cite{Hi612} there exists an embedding $h':L\to \EE^6$ such that $h'(L)=\partial V$, where $V$ is a compact simply connected $4$-manifold embedded in $\EE^6$ with trivial normal bundle, we call $\hat h: V\to \EE^6$ this embedding. Theorem~\ref{T:reghom} ensures that $h$ and $h'$ are regularly homotopic. According to Construction~\ref{const.compression} there is an immersion $\hat f:V\to \EE^4$ which is a compression of $\hat h$. Therefore $f:=\hat f|_{\partial V}$ is a compression of $h'$, and the result follows just realizing that $Ci(f)=\chi(V)=\chi^*(L)$ mod. $2$ by~\cite[Theorem 3bis]{Ha60} and~\cite{BK66}.

(ii) Now let us assume that $k=7$. We first prove that for any parallelizable $7$-manifold $L$ there is a compact stably parallelizable $8$-manifold $V$ with boundary $\partial V=L$; moreover $V$ can be taken such that $\pi_1(V)$, $\pi_2(V)$ and $\pi_3(V)$ are trivial. Indeed, recalling that $SO$ is the infinite special orthogonal group and that $\pi_i^S$ is the $i$-th stable homotopy group of the spheres, it is standard that the $J$-homomorphism $J_r:\pi_7(SO(r))\to \pi_{r+7}(\e^r)$ in the stable range, that is for $r\geq 9$, defines a map $J:\pi_7(SO)\to \pi_7^S$ which is onto because $\text{order}\,(\text{im}\,J)=240$ by~\cite[Theorem 1.6]{Ad66} and $\pi_7^S=\ZZ_{16}\oplus\ZZ_5\oplus\ZZ_3$ has the same order. The cokernel $\pi_7^S/\text{im}\,J$ of $J$ is hence trivial and a direct application of a result of Milnor~\cite[Theorem 6.7]{Mi59} implies that $L$ bounds a compact stably parallelizable $8$-manifold. In fact, a standard surgery procedure on this manifold allows to transform it without altering its boundary into a compact stably parallelizable $8$-manifold $V$ which is $3$-connected~\cite[Section 4]{Mi61}, as we desired to prove.

The double manifold $DV$, which is a boundaryless compact $8$-manifold homeomorphic to $\partial(V\times[0,1])$, is obviously stably parallelizable. A straightforward application of Van Kampen's theorem implies that $\pi_1(DV)=\pi_1(V)=0$, so it is simply connected. We conclude from~\cite{DS65} that $DV$, and hence $V$, embeds in $\EE^{14}$ with trivial normal bundle, so there exists an embedding $h': L\to \EE^{14}$ such that $h'(L)=\partial V$, and $h'$ is regularly homotopic to $h$ by Theorem~\ref{T:reghom}. Now we proceed as in item (i), calling $\hat f: V\to \EE^8$ an immersion which is a compression of the embedding of $V$ in $\EE^{14}$ and noticing that $f:=\hat f|_{\partial V}$ is a compression of $h'$. Since $Ci(f)=\chi(V)=\chi^*(L)$ mod. $2$ by~\cite[Theorem 3bis]{Ha60} and~\cite{BK66}, the statement of the proposition follows. 
\end{proof} 

\begin{theorem}\label{thm.caract}
Let $L$ be a $k$-manifold, $k\in\{3,7\}$. Then $L$ is TWI if and only if each compact component $L_i$ is parallelizable and $\chi^*(L_i)=0$ mod. $2$.
\end{theorem}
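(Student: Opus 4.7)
The plan is to assemble Theorem~\ref{T:TWI1}, Proposition~\ref{prop.small}, Proposition~\ref{prop.3and7}, Proposition~\ref{lem.equivalence}, and Proposition~\ref{prop.key} once a componentwise reduction has been performed. The real content has already been packaged into Proposition~\ref{prop.3and7}, so what remains is a careful chaining of equivalences.

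First I would reduce the statement to a single compact connected component. By Corollary~\ref{cor.components}, an embedding of $L$ into $\EE^n$ is WI iff each of its components is WI; by positioning connected components into disjoint coordinate slabs of $\EE^n$, this is promoted to the statement that $L$ is TWI iff every component of $L$ is TWI. Open components are automatically TWI by Theorem~\ref{thm.open}, so only the compact components $L_i$ impose conditions. The definition of TWI requires $L$ to be parallelizable, which for compact $L_i$ is exactly the first condition in the theorem.

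Now fix a compact connected parallelizable $k$-manifold $L_i$ with $k\in\{3,7\}$. By Theorem~\ref{T:TWI1}, $L_i$ is TWI iff some embedding $h: L_i\to\EE^n$ is WI for some $k+2\leq n\leq 2k$, and by the dichotomy of Proposition~\ref{prop.small} we may fix $n=2k$. Lemma~\ref{L:trivnu} guarantees that every embedding of $L_i$ in $\EE^{2k}$ has trivial normal bundle, so the TWI condition becomes simply that an (equivalently, any) arbitrary embedding $h:L_i\to\EE^{2k}$ be WI. Proposition~\ref{prop.3and7} then produces an embedding $h': L_i\to\EE^{2k}$, regularly homotopic to $h$, whose compression $f: L_i\to\EE^{k+1}$ satisfies $Ci(f)\equiv\chi^*(L_i)\pmod 2$. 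Proposition~\ref{lem.equivalence} ensures $h$ and $h'$ are simultaneously WI or not. Since $k$ is odd, Proposition~\ref{prop.key} identifies the WI character of $h'$ with the condition $Ci(f)\equiv 0\pmod 2$, which by construction is precisely $\chi^*(L_i)\equiv 0\pmod 2$.

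The main obstacle, namely producing an embedding into $\EE^{2k}$ whose compression has the prescribed curvatura integra, is already handled by Proposition~\ref{prop.3and7} (via bounding $L_i$ by a suitable stably parallelizable $(k+1)$-manifold $V\subset\EE^{2k}$ and invoking Haefliger's formula $Ci(f)=\chi(V)$ together with the Bredon--Kosinski computation). With that ingredient in hand, the theorem is essentially a combinatorial assembly of the previous results, and no additional technical input is needed.
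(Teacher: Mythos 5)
Your proposal is correct and follows essentially the same route as the paper: after reducing to a compact connected component, the paper's proof is exactly the combination of Theorem~\ref{T:TWI1} (via the dichotomy of Proposition~\ref{prop.small}) with Propositions~\ref{prop.key} and~\ref{prop.3and7}, using regular homotopy invariance (Proposition~\ref{lem.equivalence}) to pass between $h$ and $h'$. Your write-up merely makes the componentwise reduction and the choice $n=2k$ explicit, which the paper leaves implicit.
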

\begin{proof}
It is enough to consider the case that $L$ is compact and connected. The claim follows by combining Theorem~\ref{T:TWI1} with Propositions~\ref{prop.key} and~\ref{prop.3and7}.
\end{proof}

\begin{appl}\label{appl.caract}

A straightforward application of Theorem~\ref{thm.caract} yields the following results:

(1) No $\ZZ_2$-homology sphere of dimension $k\in\{3,7\}$ is TWI. This holds in particular for the sphere $\e^k$, thus recovering Proposition~\ref{spheres}.

(2) For a $3$-dimensional lens space $L_{p,q}$, it is ready to compute that $\chi^*(L_{p,q})=p$ mod. $2$, therefore $L_{p,q}$ is TWI if and only if $p$ is even.

(3) Let $\Sigma$ be a compact oriented surface, then the product $L:=\Sigma \times \e^1$ is TWI.
\end{appl}

\begin{remark}
As $\pi_3(\VV_{5,3}) = \ZZ$, the Hirsch-Smale Theorem \ref{HiSm} implies that there are infinitely many regular homotopy classes of
immersions of $\e^3$ in $\EE^5$. Moreover, there are also infinitely many of these classes which contain embeddings~\cite{Hu85} and Theorem~\ref{thm.caract} shows that none of these embeddings is WI. Concerning $\e^7$, recall that the normal bundle of any embedding is trivial provided that $n\neq 11$ \cite{Ke59,Ma59}. To the best of our knowledge it is not yet known whether there exist embeddings of $S^7$ in $\EE^{11}$ with non-trivial normal bundle.
\end{remark}

It is interesting to observe that the weak integrability of a connected sum $L=L_1\sharp\cdots\sharp L_r$ of compact $k$-manifolds can be understood in terms of the integrability of its components $L_j$. For example, this allows to restrict to prime manifolds when studying the integrability of $3$-manifolds~\cite{Mi62}. For the sake of completeness, we state the following result for any odd dimensional manifold, which is not assumed  a priori to be parallelizable.

\begin{proposition}\label{P:sums}For $k\geq2$ and odd, a connected sum
$L=L_1\sharp\cdots\sharp L_r$ of compact connected $k$-manifolds which embed in $\EE^{2k}$ with trivial normal bundle (this holds e.g. if they are parallelizable), is TWI if and only if:

$$\sum_{j=1}^r Ci(f_j)= r-1\,\,\text{mod.}~2\,.$$

\end{proposition}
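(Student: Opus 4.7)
The plan is to reduce weak integrability of $L$ to the evaluation of a single curvatura integra modulo $2$, and then to apply Milnor's connected-sum formula. By Theorem~\ref{T:TWI1} together with the dichotomy of Proposition~\ref{prop.small}, to decide whether $L$ is TWI it suffices to determine whether \emph{some} embedding of $L$ into $\EE^{2k}$ is WI; since $k\geq 2$, Theorem~\ref{T:reghom} combined with the regular-homotopy invariance of the normal characteristic class (Proposition~\ref{lem.equivalence}) guarantees that this property does not depend on which embedding is chosen. Thus I may test WI on one embedding $h:L\to\EE^{2k}$ of my own choosing.

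Next I would build $h$ explicitly from the data of the statement. By Construction~\ref{const.compression}, each given embedding $h_j:L_j\to\EE^{2k}$, which has trivial normal bundle by hypothesis, admits a compression $f_j:L_j\to\EE^{k+1}$ whose $2k$-fattening is regularly homotopic to $h_j$. I would position the images $f_j(L_j)$ in $\EE^{k+1}$ so that they are pairwise separated by disjoint affine hyperplanes, and then form their codimension-$1$ connected sum
$$f:=f_1\sharp\cdots\sharp f_r:L\to\EE^{k+1}$$
by thin tubes as in Construction~\ref{const.degree}(2). Identifying $\EE^{2k}=\EE^{k+1}\times\EE^{k-1}$, let $g:L\to\EE^{2k}$ be the $2k$-fattening of $f$, i.e.\ the composition of $f$ with the inclusion $\EE^{k+1}\hookrightarrow\EE^{k+1}\times\{0\}$. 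Then $g$ is transverse to the factor $\EE^{k-1}$ and projects onto $f$, so after a generic perturbation making $g$ regularly homotopic to an embedding $h:L\to\EE^{2k}$, the map $f$ is by construction a compression of $h$ in the sense of Construction~\ref{const.compression}.

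To finish I would iterate Milnor's formula from Construction~\ref{const.degree}(2), obtaining
$$Ci(f)\;=\;\sum_{j=1}^{r}Ci(f_j)\;-\;(r-1).$$
Since $k$ is odd, Proposition~\ref{prop.key} states that the embedding $h$ is WI if and only if $Ci(f)\equiv 0\pmod 2$, which is exactly the condition
$$\sum_{j=1}^{r}Ci(f_j)\;\equiv\;r-1\pmod 2.$$
Combined with the reduction of the first paragraph, this gives the stated equivalence. The main delicate point I anticipate is the geometric identification carried out in the second paragraph: one must verify that the generic perturbation of $g$ to an embedding $h$ is regularly homotopic to a genuine geometric connected sum $h_1\sharp\cdots\sharp h_r$ of the given embeddings, so that the compressions $f_j$ appearing in the final formula are truly those attached to the original $h_j$. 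By Theorem~\ref{T:reghom} applied componentwise (each $h_j$ is regularly homotopic to the $2k$-fattening of $f_j$), this reduces to checking that the tubes used to perform the connected sum in $\EE^{k+1}$ can be coherently fattened and perturbed in $\EE^{2k}$ without leaving the regular-homotopy class of an embedded connected sum. Once this coherence is in place, the computation above goes through verbatim.
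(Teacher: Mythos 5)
Your proposal is essentially the paper's proof: the same reduction via Theorem~\ref{T:TWI1} and Proposition~\ref{prop.small}, the same use of compressions (Construction~\ref{const.compression}), of Milnor's connected-sum formula (Construction~\ref{const.degree}) and of Proposition~\ref{prop.key}. The only divergence is the order of construction, and there your phrase ``after a generic perturbation making $g$ regularly homotopic to an embedding'' is not literally available --- a generic perturbation of an immersion of a $k$-manifold in $\EE^{2k}$ only produces transverse double points, and not every regular-homotopy class of such immersions contains an embedding (e.g.\ for $\e^3$ in $\EE^6$) --- so what you must show is exactly that $g$ is regularly homotopic to the embedded connected sum $h_1\sharp\cdots\sharp h_r$, the delicate point you yourself flag, which the paper sidesteps by forming the embedding $h=h_1\sharp\cdots\sharp h_r$ first and checking that its compression can be taken to be $f_1\sharp\cdots\sharp f_r$.
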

\begin{proof}
Let $h_j:L_j\to\EE^{2k}$ be Whitney embeddings of the manifolds $L_j$ two by two separated by $k$-planes and let $h$ be their connected sum. By assumption, all these embeddings have trivial normal bundle, and hence we can compress them to immersions $f_j$ and $f$ into $\EE^{k+1}$ (see Construction \ref{const.compression}). Furthermore, it is easy to check that $f$ is the connected sum of all the $f_j$ and hence we can apply claim (2) of Construction \ref{const.degree} to conclude that
$$
Ci(f) = \sum_{j=1}^r Ci(f_j) - (r-1)\,.
$$
Proposition \ref{prop.key} asserts that $h$ is WI (and hence $L$ is parallelizable) if and only if $Ci(f) = 0$ \text{mod}. $2$. We conclude using Theorem \ref{T:TWI1}.
\end{proof}

\begin{corollary}\label{C:sums}
If $L=L_1\sharp\cdots\sharp L_r$ is the connected sum of $r$ compact and connected odd-dimensional $k$-manifolds, then
\begin{enumerate}
\item If all $L_j$ are TWI, then $L$ is TWI if and only if $r$ is odd. For example, the connected sum of an even number of copies of $\TT^3$ is not TWI.
\item If no $L_j$ is TWI, neither is $L$.

\item If $k\notin \{3,7\}$ and each $L_j$ is parallelizable, $L$ is TWI (and hence parallelizable) if and only if $r$ is odd.
\end{enumerate}
\end{corollary}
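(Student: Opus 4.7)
The whole corollary is a straightforward bookkeeping exercise on top of Proposition~\ref{P:sums}, so the plan is to apply that proposition twice: once in the trivial case $r=1$ (to interpret the TWI character of each summand as a parity condition on its curvatura integra) and once in the general case (to read off the TWI character of $L$). For this to make sense the summands must embed in $\EE^{2k}$ with trivial normal bundle, which in all three items follows from parallelizability via Lemma~\ref{L:trivnu}(2); we therefore assume throughout that each $L_j$ is parallelizable, which is automatic under the hypotheses of items (1) and (3) and is the natural hypothesis to read into item (2) so that Proposition~\ref{P:sums} applies.

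The first step is to specialise Proposition~\ref{P:sums} to $r=1$. For a single compact connected parallelizable $k$-manifold $L_j$ embedded in $\EE^{2k}$ with trivial normal bundle (cf.\ Lemma~\ref{L:trivnu}(2)), and with compression $f_j:L_j\to\EE^{k+1}$ as in Construction~\ref{const.compression}, Proposition~\ref{P:sums} reads: $L_j$ is TWI if and only if $Ci(f_j)\equiv 0\pmod 2$. This gives the dictionary we need.

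For item (1), all $L_j$ being TWI thus yields $Ci(f_j)\equiv 0\pmod 2$ for every $j$, so $\sum_{j=1}^r Ci(f_j)\equiv 0\pmod 2$. Plugging this into Proposition~\ref{P:sums} for the connected sum $L=L_1\sharp\cdots\sharp L_r$, we find $L$ is TWI if and only if $0\equiv r-1\pmod 2$, that is, if and only if $r$ is odd. The example at the end of item (1) is then immediate, since $\TT^3$ is TWI (it is parallelizable and satisfies $\chi^*(\TT^3)\equiv 0\pmod 2$, or alternatively by Proposition~\ref{P:tori}), hence a connected sum of an even number of copies of $\TT^3$ is not TWI. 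For item (2), the same dictionary now gives $Ci(f_j)\equiv 1\pmod 2$ for every $j$, so $\sum_{j=1}^r Ci(f_j)\equiv r\pmod 2$, and the equality $r\equiv r-1\pmod 2$ required by Proposition~\ref{P:sums} never holds; hence $L$ is not TWI.

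Item (3) is then a direct combination: the hypothesis $k\notin\{3,7\}$ together with parallelizability of each $L_j$ forces every $L_j$ to be TWI by Theorem~\ref{T:classif}, so item (1) applies verbatim and gives TWI of $L$ iff $r$ is odd; the ``and hence parallelizable'' clause follows because TWI manifolds are parallelizable by definition. The only point that requires attention in the write-up is the parallelizability assumption hidden in item (2): since TWI presupposes parallelizability, negating TWI does not automatically preserve the hypothesis of Proposition~\ref{P:sums}, so one must state explicitly that the summands are taken to be parallelizable (or, equivalently, that they embed in $\EE^{2k}$ with trivial normal bundle). Beyond this small clarification there is no real obstacle: the whole corollary is arithmetic modulo~$2$ applied to the single formula of Proposition~\ref{P:sums}.
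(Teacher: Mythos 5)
Your proposal is correct and is essentially the argument the paper intends: the corollary is stated without proof as an immediate consequence of Proposition~\ref{P:sums}, and your double application of that proposition (once with $r=1$ as a dictionary identifying the TWI character of each summand with the parity of $Ci(f_j)$, once for the connected sum itself, together with Theorem~\ref{T:classif} for item~3) is exactly that bookkeeping. Your caveat about item~2 is reasonable, since as literally stated the summands need not satisfy the hypothesis of Proposition~\ref{P:sums}; note, however, that the weaker hypothesis that each $L_j$ embeds in $\EE^{2k}$ with trivial normal bundle already suffices there, because the $r=1$ case of the proposition is an equivalence and thus forces $Ci(f_j)\equiv 1$ mod~$2$ for every non-TWI summand satisfying it, whether parallelizable or not.
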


\section{Strong integrability of $k$-manifolds in $\EE^n$ for $n\neq 2k+1$}\label{Euc5}

The aim of this section is to clarify the relationship between SI and complete intersection in $\EE^n$ for $n\neq 2k+1$, which will allow us to improve the results of Section~\ref{s.cod2} and to tackle Question~\ref{conjci} in most cases. Let us first introduce a preliminary technical lemma. 

\begin{lemma}\label{lem.lift} Let $L$ be a $k$-dimensional manifold. If there is a WI embedding of $L$ in $\EE^n$, there also exists a SI embedding of $L$ in $\EE^{p}$ for any $p \geq n+1$.
\end{lemma}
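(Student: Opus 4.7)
The plan is to reduce to the case $p=n+1$ and show that an appropriate embedding of $L$ into $\EE^{n+1}$ is TCI, so that Theorem~\ref{thm.main2} delivers SI. For the reduction, if $h':L\to\EE^{n+1}$ is SI with strong equation $\Psi$, then for any $p\geq n+2$ the submersion $(x,s)\mapsto(\Psi(x),s)$ on $\EE^{n+1}\times\EE^{p-n-1}$ has zero fibre $h'(L)\times\{0\}\cong L$, giving a SI embedding in $\EE^p$. So I only need to handle $p=n+1$.

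I would embed $L$ as $h(L)\times\{0\}\subset\EE^n\times\EE=\EE^{n+1}$; its codimension is $n+1-k\geq 2$, so Theorem~\ref{thm.main2} reduces the task to showing $h(L)\times\{0\}$ is TCI. Let $\Phi$ be a WI equation of $h$ and write $\Phi^{-1}(0)=h(L)\sqcup C$. The subset $h(L)$ is closed in $\EE^n$ because $h$ is proper, and open in $\Phi^{-1}(0)$ because it is a $k$-submanifold of the $k$-manifold $\Phi^{-1}(0)$; hence $C$ is also closed in $\EE^n$ and disjoint from $h(L)$.

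Using a partition of unity I would pick smooth $\alpha,\mu:\EE^n\to\EE$ with $\alpha|_{h(L)}=1$, $\alpha|_C=0$, $\mu|_{h(L)}=0$, $\mu|_C=1$, and define
\[
\Xi:\EE^{n+1}\to\EE^{n+1-k},\qquad \Xi(x,t):=\bigl(\Phi(x),\; t\alpha(x)+\mu(x)\bigr).
\]
A case split on $x\in h(L)\sqcup C$ shows $\Xi^{-1}(0)=h(L)\times\{0\}$, and at such a point the Jacobian of $\Xi$ is block lower-triangular with diagonal blocks $d\Phi_x$ (of rank $n-k$) and $\alpha(x)=1$, so $0$ is a regular value and $\Xi$ is a CI equation. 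To establish tameness, recall from Lemma~\ref{L:easy} that $\nu(h(L))$ extends over $\EE^n$ as the trivial bundle $\nu(\cF_\Phi)$, globally trivialized by $d\Phi$. Pulling back along the projection $\pi:\EE^{n+1}\to\EE^n$ and adding the $t$-direction yields an extension $\tilde\nu:=\pi^*\nu(\cF_\Phi)\oplus\EE_t$ of $\nu(h(L)\times\{0\})$; the fiberwise formula $(v,s)\mapsto(d\Phi\cdot v,\;d\mu\cdot v+s)$ defines a global trivialization of $\tilde\nu$ (its matrix is block triangular with non-vanishing diagonal) whose restriction to $h(L)\times\{0\}$ is precisely the trivialization associated to $\Xi$. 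Hence $h(L)\times\{0\}$ is TCI, and Theorem~\ref{thm.main2} yields SI.

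The main obstacle is the presence of the extra leaves $C\subset\Phi^{-1}(0)$ besides $h(L)$; one cannot in general eliminate them inside $\EE^n$ without destroying either the submersion property of $\Phi$ or the desired zero set. The role of the extra factor $\EE$ is precisely to provide room to separate $C$ from $h(L)$ at level $0$ (through the term $\mu$), while the multiplier $\alpha$ preserves the non-degenerate derivative of $\Xi$ in the $t$-direction along $h(L)\times\{0\}$. Tameness then comes essentially for free, since $\EE^{n+1}$ is contractible and the WI hypothesis on $h$ already supplies an extension of $\nu(h(L))$ over $\EE^n$ together with a compatible global trivialization.
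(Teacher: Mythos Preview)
Your argument is correct: the map $\Xi(x,t)=(\Phi(x),\,t\alpha(x)+\mu(x))$ is a CI equation for $h(L)\times\{0\}$, and your check of tameness via $\tilde\nu=\pi^*\nu(\cF_\Phi)\oplus\EE_t$ with the block-triangular trivialization $(v,s)\mapsto(d\Phi\,v,\,d\mu\,v+s)$ goes through, so Theorem~\ref{thm.main2} applies. The reduction to $p=n+1$ by taking products is also fine.

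The paper, however, takes a more elementary and self-contained route: instead of producing a CI equation and then appealing to the Phillips--Gromov machinery behind Theorem~\ref{thm.main2}, it writes down an explicit \emph{submersion} on $\EE^{n+1}$. Choosing tubular neighborhoods $N_0\subset N_1$ of $L$ with $\Phi^{-1}(0)\cap N_1=L$ and a bump function $\lambda$ equal to $0$ on $N_0$ and $1$ off $N_1$, the paper sets
\[
\Phi'(x,t)=\bigl(\Phi(x),\,(1-\lambda(x))t+\lambda(x)e^t\bigr),
\]
which is a submersion everywhere (the $t$-derivative of the last component is $(1-\lambda)+\lambda e^t>0$) and whose zero set is exactly $L$. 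The trade-off is clear: your approach is conceptually clean and shows how the lemma fits into the TCI framework, but it imports the h-principle; the paper's approach is a direct one-line formula that avoids any heavy theory. Note incidentally that the role of your multiplier $\alpha$ is precisely what forces you through TCI: it kills the $t$-derivative along $C$, so your $\Xi$ fails to be a submersion there, whereas the paper's use of $e^t$ keeps the $t$-derivative positive everywhere while still making the last component nonvanishing outside $N_1$.
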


\begin{proof} It is clearly sufficient to prove the lemma for $p=n+1$. So given a weak equation $\Phi: \EE^n \to \EE^{n-k}$ of $L$ in $\EE^n$, fix a closed tubular neighborhood $N_1$ of $L$ in $\EE^n$ such that $\Phi^{-1}(0) \cap N_1 = L$. Next choose a second closed tubular neighborhood $N_0 \subset \text{int}(N_1)$ and a smooth bump function $\lambda: \EE^n \to [0,1]$ equal to $0$ on $N_0$ and to $1$ on the complement of $N_1$. It is easy to check that the map $\varphi: \EE^{n+1} \to \EE$ defined by
$$
\varphi(x,t):=[1-\lambda(x)]t + \lambda(x)e^t
$$
for $(x,t) \in \EE^n\times \EE=\EE^{n+1}$, is a submersion such that $\varphi^{-1}(0) \subset q^{-1}(N_1)$, where $q$ is the canonical projection $q:\EE^{n+1} \to \EE^n$. It follows that $\varphi^{-1}(0) \cap q^{-1}(L) = L$.

Finally, the map $\Phi':\EE^{n+1} \to \EE^{n-k+1}$ defined by
$$
\Phi'(x,t):= [\Phi(x), \varphi(x,t)]
$$
for $(x,t) \in \EE^n\times\EE=\EE^{n+1}$ is a submersion and a weak equation for $L \subset \EE^{n+1}$. Due to the choice of $N_1$, it verifies
 $$
\Phi'^{-1}(0) = \Phi^{-1}(0) \cap \varphi^{-1}(0) \subset \Phi^{-1}(0) \cap q^{-1}(N_1) \subset q^{-1}(L)
$$
and we conclude that
$$
\Phi'^{-1}(0) \subset \varphi^{-1}(0) \cap q^{-1}(L) = L\,,
$$
thus showing that $\Phi'$ is in fact a strong equation of $L$ in $\EE^{n+1}$.
\end{proof}

The next theorem extends Theorem \ref{t.aob} to cover the study of strong integrability of parallelizable manifolds in large codimension.

\begin{theorem}\label{thm.SIgreat} Any embedding of a parallelizable $k$-dimensional manifold $L$ in $\EE^n$ is SI and hence a tame complete intersection for $n\geq 2k+2$. This result also holds for $n=2k+1$ in the following restrictive cases:
\begin{enumerate}
\item $L$ is connected, TWI and $k\geq 2$.
\item $L$ is open.

\end{enumerate}
\end{theorem}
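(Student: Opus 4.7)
The strategy combines an existence argument (Lemma~\ref{lem.lift}) with an isotopy argument (Wu's Theorem~\ref{T:Wu}). The key observation is that strong integrability is invariant under ambient isotopy: if an ambient isotopy $\Psi:\EE^n\times[0,1]\to\EE^n$ satisfies $\Psi_1\circ h_0=h_1$ and $\Phi_1$ is a strong equation for $h_1(L)$, then $\Phi_1\circ\Psi_1$ is a submersion with zero set exactly $h_0(L)$, so $h_0$ is SI as well. Consequently, in any dimension where Wu's theorem ensures all embeddings of $L$ in $\EE^n$ to be ambient-isotopic, it suffices to exhibit a single SI embedding, which one obtains by lifting a WI embedding via Lemma~\ref{lem.lift}. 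The final clause ``tame complete intersection'' is automatic from Theorem~\ref{thm.main2}.

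For $n\geq 2k+2$ take a Whitney embedding $h_0:L\to\EE^{2k+1}\subset\EE^n$; by Theorem~\ref{t.aob} it is WI, and Lemma~\ref{lem.lift} lifts it to an SI embedding in $\EE^n$ (since $n\geq(2k+1)+1$). At $n\geq 2k+2$ Wu's theorem requires no connectedness hypothesis, so every embedding of $L$ in $\EE^n$ is isotopic to this one and hence SI. For case~(1) at $n=2k+1$, embed the compact connected $L$ in $\EE^{2k}$ via Whitney; the normal bundle is trivial by Lemma~\ref{L:trivnu}, and the TWI hypothesis then yields WI; Lemma~\ref{lem.lift} produces an SI embedding in $\EE^{2k+1}$, and the connected version of Wu's theorem, valid at $n=2k+1$, transports SI to every embedding.

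Case~(2) at $n=2k+1$ ($L$ open) must be handled directly, since Wu's theorem then requires connectedness and disconnected open $L$ is allowed. Given any $h:L\to\EE^{2k+1}$, Lemma~\ref{L:trivnu} gives $\nu(L)$ trivial and Theorem~\ref{thm.open} gives $h$ WI, so $\nu(L)$ extends as a trivial bundle $\tilde\nu(L)$ over $\EE^{2k+1}$. One then argues that $h$ is TCI and invokes Theorem~\ref{thm.main2}: each open component of $L$ is null-homologous in $\EE^{2k+1}$, hence bounds a Seifert manifold and admits a CI equation by Theorem~\ref{seif-int}; the non-compactness of the components supplies room to modify the CI equation so that the normal trivialization it induces agrees with the one coming from the WI extension, by pushing critical points and frame mismatches to infinity along properly embedded arcs, exactly as in the proof of Theorem~\ref{T:Sect1}. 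The principal obstacle is this last reconciliation step: assembling componentwise CI equations into a single \emph{tame} CI equation requires careful homotopy of normal trivializations, and here the relative Phillips--Gromov h-principle (Theorem~\ref{Gromov}), applied on the complement of a tubular neighborhood of $L$, is the tool that makes the WI and CI trivializations compatible globally, thereby closing the argument.
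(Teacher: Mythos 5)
Your treatment of the case $n\geq 2k+2$ and of case (1) at $n=2k+1$ coincides with the paper's: lift a WI embedding (Whitney embedding into $\EE^{2k+1}$, resp. $\EE^{2k}$, which is WI by Theorem~\ref{t.aob}, resp. by triviality of the normal bundle plus the TWI hypothesis) to an SI embedding via Lemma~\ref{lem.lift}, then transport strong integrability to every embedding by Wu's Theorem~\ref{T:Wu}. That part is fine.

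Case (2) is where your argument has two genuine gaps. First, you derive the CI property of an open $L\subset\EE^{2k+1}$ from the vanishing of its fundamental class by producing a Seifert manifold and invoking Theorem~\ref{seif-int}. But the implication ``null-homologous $\Rightarrow$ admits a Seifert manifold'' is only available in codimension $1$ and $2$; here the codimension is $k+1\geq 3$ (for $k\geq2$), and Remark~\ref{R:int} explicitly records that the converse of ``CI $\Rightarrow$ null-homologous'' fails in general (exotic spheres with vanishing class that are not CI). The fact that an \emph{open} submanifold of $\EE^{2k+1}$ is CI is a cited result of Bochnak--Kucharz, not a consequence of null-homology. Second, your reconciliation of the CI trivialization with the WI one --- ``pushing critical points and frame mismatches to infinity along properly embedded arcs, exactly as in the proof of Theorem~\ref{T:Sect1}'' --- imports a codimension-one Morse-theoretic technique that does not apply to homotoping normal $(k+1)$-frames; and the relative h-principle is not the tool that makes the two trivializations compatible (it is already packaged inside Theorem~\ref{thm.main2}, which can only be invoked \emph{after} tameness is established). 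The correct, and much shorter, argument is homotopy-theoretic: since $L$ is open, Proposition~\ref{prop.calcul} gives $[L,\VV_{2k+1,k+1}]=0$, so the trivialization of $\nu(L)$ associated to \emph{any} CI equation is nullhomotopic and therefore extends over $\EE^{2k+1}$; hence every CI equation is automatically tame, and Theorem~\ref{thm.main2} concludes. Without identifying this point, your case (2) does not close.
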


\begin{proof} Any embedding of $L$ in $\EE^{2k+1}$ is WI by Theorem \ref{t.aob} and hence, applying Lemma \ref{lem.lift}, we obtain a SI embedding of $L$ in $\EE^n$, $n\geq 2k+2$. Now observe that without any additional assumption, all embeddings of a $k$-manifold in $\EE^n$ are isotopic for $n\geq 2k+2$ on account of Wu's Theorem~\ref{T:Wu}, thus concluding that any embedding of $L$ in $\EE^n$ is SI in this case.

Now we consider the two particular cases. If $k\geq 2$ and $L$ is TWI, its Whitney embedding~\cite{Wh44} into $\EE^{2k}$ is WI and therefore it admits a SI embedding in $\EE^n$ for any $n\geq 2k+1$ according to Lemma \ref{lem.lift}. Assuming that $L$ is connected, Wu's theorem again implies that any embedding is isotopic to the former one and hence SI in $\EE^n$, $n\geq 2k+1$.

Finally, an open manifold $L \subset \EE^{2k+1}$ is CI according to~\cite{BK96} and as noticed in the proof of Theorem~\ref{thm.open}, any trivialization of $\nu(L)$ is in the trivial class, thus showing that $L$ is TCI (Definition~\ref{D:ci}). We conclude applying Theorem~\ref{thm.main2}.
\end{proof}

\begin{remark}
Any compact parallelizable $k$-manifold, $k\in\{1,3,7\}$, is WI in $\EE^{2k+1}$ by Theorem~\ref{t.aob} but we do not know whether it is SI unless it is TWI. We shall deal with this problem in the next section.
\end{remark}

Theorem \ref{thm.SIgreat} concerns $k$-embeddings in dimension $n\geq 2k+1$, so let us now focus on the case of low codimension, i.e. $k+2 \leq n \leq 2k$. The following result generalizes Theorem \ref{T:SIandTCI} and tackles Question~\ref{conjci} in most cases.

\begin{theorem}\label{T:lowdim} Let $L$ be a $k$-manifold embedded in $\EE^n$, $2+k\leq n\leq 2k$. The following conditions are equivalent:
\begin{enumerate}
\item $L$ is SI and hence a tame complete intersection.
\item $L$ is both WI and CI.
\end{enumerate}
Therefore, Question~\ref{conjci} has an affirmative answer in all Euclidean spaces for $n \neq 2k+1$.
\end{theorem}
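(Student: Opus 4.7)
To establish the equivalence, note first that the implication $1\Rightarrow 2$ is immediate, since a strong equation of $L$ is simultaneously a weak equation and a CI equation. The substantive direction is the converse, and my strategy is to invoke Theorem \ref{thm.main2}: it suffices to promote a CI equation $\Xi$ of $L$ to a tame one, i.e.\ to arrange that its associated trivialization $\sigma_\Xi$ of $\nu(L)$ extends to a trivialization of some bundle extension $\tilde\nu(L)$ of the normal bundle over $\EE^n$.

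The key input is Lemma \ref{inject}, which applies in the range $k+1\leq n\leq 2k$ and therefore under our hypothesis $k+2\leq n\leq 2k$. It guarantees that every trivialization of $\nu(L)$ represents the same class in $[L,\VV_{n,n-k}]$, and by Proposition \ref{prop.normal} this class vanishes if and only if $L$ is WI. So the WI assumption forces $[\sigma_\Xi]=0$; hence $\sigma_\Xi:L\to\VV_{n,n-k}$ is null-homotopic and, by the homotopy extension theorem, admits a smooth extension $\tilde\sigma:\EE^n\to\VV_{n,n-k}$.

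It remains only to check that $\tilde\sigma$ produces the data required by Definition \ref{D:ci}. Assigning to each $x\in\EE^n$ the $(n-k)$-plane spanned by the frame $\tilde\sigma(x)$ in $\RR^n\cong T_x(\EE^n)$ defines a smooth rank $(n-k)$ subbundle $\tilde\nu(L)\subset T(\EE^n)$ which extends $\nu(L)$, and by construction $\tilde\sigma$ itself is a trivialization of $\tilde\nu(L)$ restricting to $\sigma_\Xi$ on $L$. Hence $\Xi$ is tame, $L$ is TCI, and Theorem \ref{thm.main2} yields that $L$ is SI.

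For the final assertion regarding Question \ref{conjci}: Theorem \ref{thm.SIgreat} already shows that any WI embedding into $\EE^n$ with $n\geq 2k+2$ is automatically SI (and SI trivially implies WI and CI), while Theorem \ref{T:Sect1} handles the codimension one case $n=k+1$. Combined with the case $k+2\leq n\leq 2k$ proved above, this yields an affirmative answer to Question \ref{conjci} in every Euclidean space except for the single range $n=2k+1$. I do not expect any step of the plan to present a serious obstacle; the heart of the argument is the obstruction-theoretic observation that WI forces the normal trivialization coming from any CI equation to be null-homotopic, after which the bundle extension is essentially automatic.
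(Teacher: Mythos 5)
Your proposal is correct and follows essentially the same route as the paper: using Proposition \ref{prop.normal} and Lemma \ref{inject} to conclude that the trivialization associated to a CI equation is null-homotopic, hence extends over $\EE^n$, making $L$ a tame complete intersection, and then applying Theorem \ref{thm.main2}; the final assertion is likewise settled via Theorem \ref{thm.SIgreat} (together with the codimension-one case). Your explicit spelling-out of the homotopy-extension and bundle-construction step is just a more detailed version of the paper's ``$\sigma_0$ extends over $\EE^n$''.
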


\begin{proof}
It is clear that $1 \Rightarrow 2$. To prove the converse implication consider a trivialization $\sigma_0:L \to \mathbb{V}_{n,n-k}$ of the normal bundle $\nu(L)$ associated to a CI equation $\Xi:\EE^n\to \EE^{n-k}$ of $L$. Assuming that $L$ is also WI, its normal characteristic class vanishes by Proposition \ref{prop.normal}, and then Lemma \ref{inject} implies that $[\sigma]=0$ for any trivialization $\sigma$ of $\nu(L)$ because $n\leq 2k$. In particular, $\sigma_0$ extends over $\EE^n$, which means that $L$ is a tame complete intersection. Implication $2 \Rightarrow 1$ then follows by Theorem \ref{thm.main2}.

Finally, since any parallelizable $k$-submanifold of $\EE^n$ is WI for $n\geq 2k+2$ and also a complete intersection \cite{BK96}, we conclude from Theorem \ref{thm.SIgreat} and the previous discussion that Question~\ref{conjci} has an affirmative answer for $n\neq 2k+1$.
\end{proof}

\begin{remark}
There exist embeddings of exotic spheres of dimension $k=2^p$, $p\geq 4$, in $\EE^{2k-2}$ with trivial normal bundle which are not complete intersections~\cite{BK96}. These submanifolds are not parallelizable so they do not provide examples of WI submanifolds which are not SI.
\end{remark}

\begin{corollary}
Let $L$ be a $k$-submanifold of $\EE^n$ with $k+2\leq n\leq 2k$. If $(n-k) \in\{2,4,8\}$, then $L$ is SI if and only if it is WI.
\end{corollary}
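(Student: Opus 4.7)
The direction SI $\Rightarrow$ WI is immediate from the definitions. For the converse, I invoke Theorem~\ref{T:lowdim}, which in the range $k+2\le n\le 2k$ asserts that SI is equivalent to WI together with CI. Thus the whole task reduces to the following statement: if $m=n-k\in\{2,4,8\}$ and $L\subset\EE^n$ is WI, then $L$ is a complete intersection.

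Assume henceforth that $L$ is WI. Theorem~\ref{thm.main} then produces a trivialization $\sigma$ of $\nu(L)$ that extends as a trivialization $\tilde\sigma$ of an extension $\tilde\nu(L)$ over $\EE^n$. Construction~\ref{const.trivial} converts this into a tubular neighborhood $N_L$, a product trivialization $\theta:N_L\to L\times D^m$, and a boundary map $\phi:\partial N_L\to\e^{m-1}$. Following the blueprint of the implication $2\Rightarrow 3$ in Proposition~\ref{l.extension}, building a CI equation of $L$ amounts to extending $\phi$ to a map $\psi:V\to\e^{m-1}$ on $V:=\EE^n\setminus\mathrm{int}(N_L)$ and then gluing $\psi$ with $\theta$ via the retraction $\EE^m\setminus\mathrm{int}(D^m)\simeq\e^{m-1}$, as done there.

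The decisive structural fact is that the sphere $\e^{m-1}$ is an H-space precisely when $m\in\{2,4,8\}$ (Adams' Hopf-invariant-one theorem): $\e^1=K(\ZZ,1)$, $\e^3\cong SU(2)$, and $\e^7$ carries octonionic multiplication. For $m=2$ the extension is immediate: Proposition~\ref{l.extension} needs only the vanishing of $[L^\epsilon]\in H^\infty_{n-2}(\EE^n;\ZZ)$, which is automatic by contractibility of $\EE^n$ (and when $H^1(L;\ZZ)\ne 0$, one applies directly the Bochnak--Kucharz criterion [Theorem~1.11 in \cite{BK96}] cited in the proof of Lemma~\ref{L:h1}, combining trivial normal bundle from Lemma~\ref{L:easy} with the vanishing of the fundamental class). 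For $m\in\{4,8\}$, the H-space multiplication endows $[V,\e^{m-1}]$ with a natural group structure compatible with restriction to $\partial V$, and reduces the extension question to the vanishing of a single primary obstruction in $H^m(V,\partial V;\ZZ)$; by Lefschetz duality in the contractible ambient $\EE^n$ this group is identified with a cohomology group of $L$, and the primary class of $\phi$ vanishes there because $\phi$ arises from the global trivialization $\tilde\sigma$ of $\tilde\nu(L)$.

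The main obstacle is to control the secondary and higher obstructions for $m=4,8$, since $\e^{m-1}$ is not Eilenberg--MacLane. The H-space structure is what makes the argument go through: the $k$-invariants in the Postnikov tower of an H-space are primitive, and the dimension restriction $\dim V=n\le 2k=2(n-m)$ (equivalently $n\ge 2m$) keeps us in the range where primitivity forces the obstructions on classes coming from the WI trivialization to vanish. An alternative route, which I would explore if the direct Postnikov analysis proved cumbersome, is to transfer the extension problem through the Hopf fibration $\e^{m-1}\to\e^{2m-1}\to\e^m$: the higher connectivity of $\e^{2m-1}$ trivializes all obstructions below the relevant dimensional threshold, and the H-space structure ensures compatibility of the transfer.
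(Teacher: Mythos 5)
Your reduction is exactly the paper's: the implication SI $\Rightarrow$ WI is trivial, and Theorem~\ref{T:lowdim} converts the converse into the single claim that a WI submanifold of codimension $m\in\{2,4,8\}$ in $\EE^n$ is a complete intersection. The paper disposes of that claim in one line by quoting the theorem of Bochnak and Kucharz \cite{BK96}: a submanifold of $\EE^n$ with trivial normal bundle and codimension $2$, $4$ or $8$ is CI (the normal bundle is trivial here by Lemma~\ref{L:easy}). Your proposal instead tries to reprove that theorem by obstruction theory, and this is where there is a genuine gap.

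For $m=2$ your argument is fine (and is essentially Proposition~\ref{l.extension} plus the Bochnak--Kucharz codimension-two criterion). For $m\in\{4,8\}$, however, the extension problem for $\phi:\partial V\to\e^{m-1}$ over the $n$-dimensional manifold $V$ carries obstructions in $H^{j+1}(V,\partial V;\pi_j(\e^{m-1}))$ for all $j$ up to $n-1$, and since $\e^{m-1}$ is far from being an Eilenberg--MacLane space these do not stop after the primary one. You acknowledge this, but the mechanism you offer --- ``the $k$-invariants of an H-space are primitive, and the dimension restriction $n\le 2k$ keeps us in the range where primitivity forces the obstructions to vanish'' --- is an assertion, not an argument: primitivity of $k$-invariants does not by itself kill higher obstructions, no computation is given showing which obstruction groups survive in the range $n\ge 2m$, and the fallback via the Hopf fibration $\e^{m-1}\to\e^{2m-1}\to\e^m$ is left entirely unexplored (``I would explore if\dots''). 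There is also a smaller slip in the primary step: Lefschetz duality identifies $H^m(V,\partial V;\ZZ)$ with $H^\infty_{k}(V;\ZZ)$, a locally finite homology group of $V$, not a cohomology group of $L$, and the vanishing of the class of the pushed-off fiber $L'$ in that group needs an argument in the spirit of Lemma~\ref{L:h1}. In short: the skeleton of your proof is correct and coincides with the paper's, but the decisive input --- that trivial normal bundle in codimension $2$, $4$, $8$ implies CI --- is a nontrivial theorem that you neither prove nor cite; replacing your obstruction-theoretic sketch by the reference to \cite{BK96} would close the gap and recover the paper's proof verbatim.
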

\begin{proof}
The submanifold $L$ being WI or SI it has trivial normal bundle. As the codimension of $L$ is $2$, $4$ or $8$, it follows from a theorem of Bochnak and Kucharz \cite{BK96} that $L$ is a complete intersection. Applying Theorem \ref{T:lowdim} the claim is proved.
\end{proof}

\begin{corollary}
A $4$-dimensional parallelizable manifold $L$ admits a SI embedding in $\EE^n$ for any $n\geq 6$.
\end{corollary}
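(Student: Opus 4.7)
The main idea is to combine three tools already established in this section: the total weak integrability of $L$ coming from Theorem~\ref{T:classif}, Whitney's embedding theorem together with the preceding corollary (the one asserting that in codimension $2$, $4$ or $8$ WI is equivalent to SI), and Lemma~\ref{lem.lift} to bootstrap WI embeddings in low codimension into SI embeddings in all higher codimensions.

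Since $L$ is parallelizable and $k=4\notin\{3,7\}$, Theorem~\ref{T:classif} directly gives that $L$ is TWI. To handle the range $n\geq 8$, I first apply Whitney's theorem to embed $L$ in $\EE^{2k}=\EE^8$; by Lemma~\ref{L:trivnu}(2), the normal bundle of this embedding is trivial, so TWI makes it a WI embedding. Since its codimension is $n-k=4\in\{2,4,8\}$, the preceding corollary upgrades it to an SI embedding in $\EE^8$. For any $n\geq 9$, I then apply Lemma~\ref{lem.lift} to this WI embedding of $L$ in $\EE^8$ to produce an SI embedding of $L$ in $\EE^n$, which settles the range $n\geq 8$.

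For the remaining cases $n=6$ and $n=7$, the codimensions are $2$ and $3$ respectively. For $n=6$, any embedding $h\colon L\to \EE^6$ has trivial normal bundle by Lemma~\ref{L:trivnu}(1); TWI then yields that $h$ is WI, and since the codimension is $2\in\{2,4,8\}$, the preceding corollary again delivers SI. For $n=7$, I would start from such a WI embedding of $L$ in $\EE^6$ and invoke Lemma~\ref{lem.lift} with $p=7$ to transfer it into a SI embedding in $\EE^7$.

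The most delicate point of this plan is the case $n=7$: my reduction relies on $L$ embedding in $\EE^6$, which is not automatic from parallelizability alone, and codimension $3$ falls outside $\{2,4,8\}$, so the preceding corollary does not apply in $\EE^7$ directly. A more robust argument would either exhibit an embedding of $L$ in $\EE^7$ with trivial normal bundle (so that TWI produces a WI embedding) and then prove that such an embedding is a tame complete intersection so as to conclude SI via Theorem~\ref{T:lowdim}, or verify using classical obstruction theory that a parallelizable $4$-manifold always embeds in $\EE^6$. This is the step where the ``interpolation'' between the easy codimension-$2$ and Whitney-codimension-$4$ cases must be carried out, and is where I expect the real work of the proof to lie.
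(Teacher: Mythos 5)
Your plan is sound in outline but has one genuine gap, which you yourself flag: everything for $n=6$ and $n=7$ hinges on $L$ actually admitting an embedding in $\EE^6$, and parallelizability alone does not obviously give this. This is not a routine verification — it is precisely the nontrivial input the paper supplies by citing Cappell and Shaneson \cite{CS79}, who show that a parallelizable $4$-manifold always embeds in $\EE^6$. Without that theorem (or an equivalent obstruction-theoretic argument, which you defer), your proof establishes the corollary only for $n\geq 8$. Your alternative fallback for $n=7$ (find an embedding in $\EE^7$ with trivial normal bundle and show it is a tame complete intersection) is also not a way out as stated, since codimension $3$ is outside the range where \cite{BK96} guarantees the CI property from triviality of the normal bundle, so you would be trading one unproved existence statement for another.

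Once the $\EE^6$ embedding is granted, your argument for $n=6$ is exactly the paper's: codimension $2$ forces the normal bundle to be trivial (Lemma~\ref{L:trivnu}) and forces the embedding to be a complete intersection (\cite{BK96}); Theorem~\ref{T:classif} gives TWI hence WI; and Theorem~\ref{T:lowdim} (equivalently the preceding corollary) upgrades WI $+$ CI to SI. The paper then obtains every $n>6$ in one stroke by fattening the $\EE^6$ embedding, i.e.\ by Lemma~\ref{lem.lift}, which is also your route for $n=7$ and $n\geq 9$. Your separate detour through the Whitney embedding in $\EE^8$ is correct but redundant: the single chain $\EE^6 \to \EE^7 \to \cdots$ via Lemma~\ref{lem.lift} already covers all $n\geq 7$.
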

\begin{proof}
A parallelizable $4$-manifold always admits an embedding in $\EE^6$~\cite{CS79} whose normal bundle is trivial on account of Lemma \ref{L:trivnu}, and therefore this embedding is a complete intersection~\cite{BK96}. Then Theorem~\ref{T:classif} and Theorem~\ref{T:lowdim} imply that this embedding is SI and so will be any $n$-fattening for $n>6$.
\end{proof}

\begin{example} Any embedding $h:L \to \EE^n$ of a surface $L$ whose compact components are tori, is SI in the following cases:

\begin{enumerate}
\item $n=3$ and $L$ is open, by Corollary \ref{thm.cod1}. 

\item $n=4$ because any codimension two submanifold of $\EE^n$ is a complete intersection~\cite{BK96} and hence Theorem~\ref{T:lowdim} applies.

\item $n=5$ and $L$ is connected or open by Theorem \ref{thm.SIgreat}.

\item $n\geq 6$ again by Theorem \ref{thm.SIgreat}.
\end{enumerate}
\end{example}

The problem of characterizing SI embeddings in $\EE^{2k+1}$ remains open in general, except for the cases covered by Theorem \ref{thm.SIgreat}. In addition, it is not known whether any parallelizable submanifold of $\EE^{2k+1}$ with several compact connected components is a complete intersection. In the next section we shall focus on embeddings in $\EE^{2k+1}$, and in particular we shall provide examples of embeddings which are WI and CI but not SI.

\section{Strong integrability of $k$-manifolds in $\EE^{2k+1}$}\label{S:miyoshi}

In this section $L$ will be an oriented parallelizable manifold of dimension $k$. If $k\notin \{3,7\}$, then Theorem~\ref{T:classif} implies that $L$ is TWI, and hence any embedding of a connected manifold $L$ in $\EE^{2k+1}$ is SI provided that $k>1$ on account of Theorem~\ref{thm.SIgreat}. Therefore we shall assume in what follows that $k\in\{1,3,7\}$. In Subsections~\ref{subsec1} and~\ref{subsec2} we shall deal with the case $k=1$, while the remaining cases are studied in Subsection~\ref{S:SIk}.
 
In~\cite[Theorem 1]{Mi95} S. Miyoshi claimed the following result, which we restate using our terminology.

\begin{claim}[Miyoshi's ``theorem'']\label{thm.miyoshi}
Let $M$ be an open oriented $3$-manifold. A link $L\subset M$ is SI if and only there exists an orientation $\epsilon$ of $L$ such that $0=[L^\epsilon]\in H^\infty_1(M;\ZZ)$. In particular, any link is SI in $M=\EE^3$.  
\end{claim}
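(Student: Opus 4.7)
The plan is to split Claim~\ref{thm.miyoshi} into its two directions and attempt each using the general machinery developed in Chapter~\ref{Ch:gen}.

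\textbf{The ``only if'' direction.} This is the straightforward half. If $L$ is SI with strong equation $\Phi\colon M\to\EE^2$, then $\Phi$ also serves as a CI equation (a submersion automatically has $0$ as a regular value), so $L$ is a complete intersection. Remark~\ref{R:int} then gives $0=[L^\epsilon]\in H^\infty_1(M;\ZZ)$ for the orientation $\epsilon$ induced by any Seifert manifold.

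\textbf{The ``if'' direction.} Assume $[L^\epsilon]=0$ for some orientation $\epsilon$, and proceed in three steps. \emph{Step 1 (weak integrability):} by Application~\ref{appl.low}(2), every $1$-dimensional submanifold of an open oriented $3$-manifold is WI, using Gromov's trivializable contact structure on $M$ to extend $\nu(L)$ trivially and then Theorem~\ref{thm.main} to produce a weak equation. \emph{Step 2 (complete intersection):} in codimension $2$ the hypothesis $0=[L^\epsilon]\in H^\infty_1(M;\ZZ)$, together with triviality of $\nu(L)$, yields a Seifert surface for $L$ (this is the codimension $2$ analogue of Proposition~\ref{thm.CI}, based on the Bochnak--Kucharz correspondence in Theorem~\ref{seif-int}); hence $L$ is CI. \emph{Step 3 (combining WI and CI into SI):} by Theorem~\ref{thm.main2}, $L$ is SI precisely when it is a \emph{tame} complete intersection, i.e.\ when there exists a CI equation $\Xi$ of $L$ whose associated trivialization $\sigma_\Xi$ of $\nu(L)$ extends to a trivialization of the extension $\tilde\nu(L)$ produced in Step~1. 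So I would try to modify the CI equation (or equivalently, its Seifert surface) so that the framing it induces on $L$ agrees with the WI framing from Step~1.

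\textbf{The main obstacle.} Step~3 is where any such argument must fight the structure of the problem. Two trivializations $\sigma_\Xi$ and $\sigma_\Phi$ of the rank~$2$ trivial bundle $\nu(L)$ differ by a map $L\to\mathrm{GL}^+(2,\RR)\simeq\e^1$, whose homotopy class lives in $[L,\e^1]\cong H^1(L;\ZZ)$. For a link of $r$ circles this group is $\ZZ^r$, and a priori there is no reason the discrepancy can be killed by continuously deforming either the CI equation or the Seifert surface. This is precisely why Theorem~\ref{T:SIandTCI} insists on $H^1(L;\ZZ)=0$, a hypothesis that \emph{fails} for every link. Concretely, the framing induced on $L$ by a Seifert surface records its (self-)linking data, and modifying the surface changes this framing only in a constrained way, leaving a genuine obstruction to matching an arbitrary prescribed WI trivialization.

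I therefore expect the attempted proof to go through cleanly in Steps~1 and~2 but to stall in Step~3 exactly at this framing obstruction. Given the advance billing in Section~\ref{S.mostrelevant} (Theorem~E asserts that no knot in $\EE^3$ is SI), I expect the obstruction to be not merely technical but genuinely fatal to the claim, so that the ``proof'' can at best establish the weaker assertion that $L$ is simultaneously WI and CI, with SI requiring an additional hypothesis on the framing that the authors will make explicit when stating the correct analogue of Miyoshi's theorem in Section~\ref{S:miyoshi}.
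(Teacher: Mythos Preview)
Your analysis is correct and matches the paper's own treatment exactly. The paper does \emph{not} prove Claim~\ref{thm.miyoshi}; it explains that Miyoshi's argument fails precisely at your Step~3, because ``he did not check whether the trivialization associated to a CI equation of $L$ is tame in the sense described by our Definition~\ref{D:ci}'' --- i.e.\ the framing obstruction you identify in $[L,\e^1]\cong\ZZ^r$. The paper then confirms the obstruction is genuinely fatal: Theorem~\ref{thm.silink} computes the Seifert normal class $[\sigma_{h_i}]=1+\sum_{j\neq i}\mathrm{lk}(L_i,L_j)$ mod~$2$, and Corollary~\ref{cor.nosi} deduces that no knot in $\EE^3$ is SI, directly contradicting the claim.
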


The main tools of Miyoshi's ``proof'' of Claim~\ref{thm.miyoshi} were the relative version of Phillips-Gromov $h$-principle, cf. Theorem~\ref{Gromov}, and his extension lemma~\cite[Lemma 3.1]{Mi95} which shows the existence of a CI equation for any link $L$ satisfying $0=[L^\epsilon]\in H^\infty_1(M;\ZZ)$. A careful analysis of Miyoshi's demonstration reveals that there is a substantial gap because he did not check whether the trivialization associated to a CI equation of $L$ is tame in the sense described by our Definition~\ref{D:ci}. Accordingly, his ``proof'' of Claim~\ref{thm.miyoshi} is not conclusive. As an aside remark, let us also mention that his proof of his second main theorem~\cite[Theorem 2]{Mi95} is not complete either because he did not prove that the normal bundle of $L$ extends as a trivial subbundle of $T(M)$; nevertheless the result holds true as proved in Application~\ref{appl.low}. 

The goal of Subsections~\ref{subsec1} and~\ref{subsec2} is to see that Claim~\ref{thm.miyoshi} does not hold in general, and also to provide a complete characterization of links in $\EE^3$ which are SI. In Subsection~\ref{S:SIk} we shall characterize SI connected manifolds in $\EE^{2k+1}$ for $k\in\{3,7\}$ using the semicharacteristic $\chi^*(L)$. In addition, we will show some examples of manifolds which are WI and CI but not SI ($\e^k$ in $\EE^{2k+1}$ for $k\in\{1,3,7\}$) and of manifolds whose SI character depends on the embedding (links in $\EE^3$). Let us observe that we have not handled the case of links in general open $3$-manifolds.

\subsection{Preferred trivializations and Seifert surfaces for links}\label{subsec1}

In this subsection $L$ will be an oriented (smooth) link in $\EE^3$, i.e. a disjoint union of embedded oriented circles. We denote by $\{L_i\}_{i=1}^r$ the connected components of $L$, and for each $i$, $N_i$ stands for an oriented (closed) tubular neighborhood of $L_i$ (with $N_i\cap N_j=\emptyset$ if $i\neq j$) and $T_i:=\partial N_i$ inherits an orientation because it is a boundary. Let us denote by $h_i:\e^1\to \EE^3$ an embedding such that $h_i(\e^1)=L_i$ and by $h$ the sum of all $h_i$, its image being $L$. Setting $T:=\bigcup_{i=1}^r T_i$, $N_L:=\bigcup_{i=1}^r N_i$ and $V:=\EE^3\backslash \text{int}\,N_L$, let us denote by $p_i:T_i\to N_i$ and $q:T\to V$ the obvious inclusion embeddings.

For each $1\leq i\leq r$ we choose a dual pair $(\alpha_i,\beta_i)$ of embedded oriented circles in $T_i$ as follows: $\beta_i$ is the oriented boundary of a fiber of the fibration $N_i\to L_i$ and $\alpha_i$ is such that it intersects with $\beta_i$ in exactly one point, this intersection being transversal and the pair $(\alpha_i,\beta_i)$ defining the orientation of $T_i$. Such a pair $(\alpha_i,\beta_i)$ defines a basis of $H_1(T_i,\ZZ)$ and $\beta_i$ being fixed as above, any two choices for the first generator are related by a homotopical relation: 
\begin{equation}\label{eq.alpha}
[\alpha'_i]=[\alpha_i]+m_i[\beta_i]\,,\,\,\,\,\,\,m_i\in \ZZ\,.
\end{equation}  
Note that any embedded circle $\alpha_i\subset T_i$ which is isotopic to $L_i$ in $N_i$, produces a trivialization of $T_i$ and is homotopic to a trivialization of $N_i$. We say that this trivialization of $N_i$ is \emph{associated} to $\alpha_i$.

Now we introduce the definition of a preferred trivialization (also used by Miyoshi). We use the notation introduced in Construction~\ref{const.trivial} without further mention.

\begin{definition}\label{def.preferred}
We say that a trivialization $\theta$ of a closed tubular neighborhood $N_L$ of $L$ is {\em preferred} if any fiber $\alpha$ of the associated trivialization $\phi:T\to \e^1$ is homologous to zero in $V$, that is $0=q_*([\alpha])\in H_1(V,\ZZ)$.
\end{definition}    

The following lemma establishes the existence and uniqueness of preferred trivializations for links.  

\begin{lemma}\label{L:preferred}
For any oriented link $L\subset \EE^3$ there exists, up to isotopy, exactly one preferred trivialization of $N_L$. 
\end{lemma}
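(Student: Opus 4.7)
The plan is to establish existence by exhibiting a concrete preferred trivialization coming from a Seifert surface of $L$, and uniqueness by an elementary homology calculation in the link complement $V$ using Alexander duality.

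First, for uniqueness I would begin by recalling that isotopy classes of trivializations of a single solid torus component $N_i$ are in bijection with isotopy classes of sections $\alpha_i$ of the fibration $N_i \to L_i$ on $T_i = \partial N_i$, and these in turn correspond to the generators of $H_1(T_i,\ZZ)$ dual to $[\beta_i]$, i.e. to elements of the form $[\alpha_i] + m_i[\beta_i]$ for $m_i \in \ZZ$, as indicated in~\eqref{eq.alpha}. Now Alexander duality (or a Mayer--Vietoris argument on $\EE^3 = V \cup N_L$) yields
\[
H_1(V;\ZZ) \cong \ZZ^r,
\]
freely generated by the classes $\mu_i := q_*[\beta_i]$ of the meridians. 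If $\theta$ and $\theta'$ are two preferred trivializations of $N_L$ with associated longitudes $\alpha_i$ and $\alpha'_i = \alpha_i + m_i\beta_i$ on each $T_i$, then both $q_*[\alpha_i]$ and $q_*[\alpha'_i]$ vanish in $H_1(V;\ZZ)$, so $m_i\,\mu_i = 0$ in $H_1(V;\ZZ)$, which forces $m_i = 0$ for every $i$. Thus $\alpha_i$ and $\alpha'_i$ are isotopic on $T_i$, and so are $\theta$ and $\theta'$ as trivializations of $N_L$.

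For existence I would invoke the classical Seifert algorithm, which produces an oriented embedded surface $\Sigma \subset \EE^3$ with $\partial \Sigma = L$. After a small isotopy I may assume $\Sigma$ meets each torus $T_i$ transversally in a single simple closed curve $\alpha_i \subset T_i$, and that $\Sigma \cap N_i$ is a small collar of $L_i$ in $\Sigma$. Then $\Sigma_V := \Sigma \cap V$ is a compact oriented surface properly embedded in $V$ whose boundary is precisely $\bigsqcup_i \alpha_i$. This shows that $q_*[\alpha_i] = 0 \in H_1(V;\ZZ)$, since each $\alpha_i$ cobounds (together with the $\alpha_j$'s, which themselves represent $0$ by the same surface) in $V$; a standard refinement using the fact that $H_1(V;\ZZ)$ is torsion-free and that one may take connected Seifert surfaces for each component separately by Seifert's algorithm shows that each $\alpha_i$ bounds individually in $V$. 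The trivialization $\theta$ of $N_L$ associated to the family $\{\alpha_i\}$ is then preferred by construction.

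The main obstacle is the existence statement, and more specifically the passage from "the family $\{\alpha_i\}$ bounds collectively in $V$" to "each $\alpha_i$ bounds individually." I would handle this by applying Seifert's algorithm componentwise: for each $i$, construct a Seifert surface $\Sigma_i$ for the single knot $L_i$ inside $\EE^3$, put it in general position with respect to the other components, and note that $\Sigma_i \cap V$ need not have boundary only on $T_i$ because $\Sigma_i$ may be pierced by other $L_j$'s. However, the intersections of $\Sigma_i$ with the $N_j$'s ($j\neq i$) can be removed by tubing them along arcs on the $L_j$'s, each such tubing changing the boundary curve on $T_i$ by a multiple of the corresponding $\beta_i$ only and not introducing new boundary components, so after this modification $\Sigma_i \cap V$ is a surface with boundary precisely a single longitude on $T_i$. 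This exhibits $\alpha_i$ as null-homologous in $V$, and assembling all the $\alpha_i$ yields the desired preferred trivialization.
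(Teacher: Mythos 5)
Your uniqueness argument is essentially the one in the paper: Mayer--Vietoris for $\EE^3=N_L\cup V$ shows that the meridian classes $\mu_1,\dots,\mu_r$ form a basis of $H_1(V;\ZZ)\cong\ZZ^r$, and since two trivializations differ on each $T_i$ by $[\alpha_i']=[\alpha_i]+m_i[\beta_i]$, comparing the fiber classes forces every $m_i=0$. That part stands (you will want the linear independence of the $\mu_i$, which you do have, rather than merely $\mu_i\neq 0$, once the definition is read as explained below).

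The existence half has a genuine gap. Definition~\ref{def.preferred} requires the \emph{whole} fiber $\alpha=\bigcup_i\alpha_i$ of $\phi:T\to\e^1$ to be null-homologous in $V$, i.e. $\sum_i q_*[\alpha_i]=0$; it does not require each $\alpha_i$ to bound individually, and the individual statement your ``refinement'' aims at is false as soon as some linking number is nonzero. For any curve $\alpha_i\subset T_i$ isotopic to $L_i$ in $N_i$ one has $q_*[\alpha_i]=\mathrm{lk}(\alpha_i,L_i)\,\mu_i+\sum_{j\neq i}\mathrm{lk}(L_i,L_j)\,\mu_j$, and changing the framing $\alpha_i\mapsto\alpha_i+m_i\beta_i$ only moves the coefficient of $\mu_i$; the off-diagonal coefficients $\mathrm{lk}(L_i,L_j)$ are immovable. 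For the Hopf link the $0$-framed longitude of one component is homologous in $V$ to the meridian of the other, so \emph{no} longitude on $T_1$ is null-homologous in $V$. Concretely, your tubing of $\Sigma_i$ along $L_j$ can only cancel intersection points of $\Sigma_i$ with $L_j$ in pairs of opposite sign, so when $\mathrm{lk}(L_i,L_j)\neq0$ the surface $\Sigma_i\cap V$ retains $|\mathrm{lk}(L_i,L_j)|$ meridian boundary circles on $T_j$ and never has boundary only on $T_i$.

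The repair is immediate, because the first half of your existence argument already proves what is actually needed: a Seifert surface $\Sigma$ for the whole link, made transverse to $T$, gives $\partial(\Sigma\cap V)=\bigcup_i\alpha_i$ and hence $\sum_i q_*[\alpha_i]=0$, which is precisely the condition of Definition~\ref{def.preferred}. (The paper instead argues algebraically: $q_*$ carries the span of the $[\beta_i]$ isomorphically onto $H_1(V;\ZZ)$, so starting from an arbitrary trivialization one can adjust the integers $m_i$ to kill the total class.) Delete the ``refinement'' paragraph, phrase both existence and uniqueness in terms of the total fiber class, and the proof is correct.
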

\begin{proof}
The Mayer-Vietoris sequence for homology with integer coefficients yields
$$
\xymatrix{
0=H_2(\EE^3) \ar[r] & H_1(T)= \ZZ^{2r} \ar[r]^{p} & [\oplus_{i} H_1(N_i)] \oplus H_1(V) \ar[r]& H_1(\EE^3) = 0\,,}
$$
thus implying that $p=(\oplus_i p_{i*}) \oplus q_*$ is an isomorphism and $H_1(V;\ZZ)=\ZZ^r$. The lemma then stems from the following three observations:
\begin{enumerate}
\item For each $1\leq i\leq r$ we have $0 = p_{i*}([\beta_i]) \in H_1(N_i;\ZZ)$ and since $(\oplus_i p_{i*}) \oplus q_*$ is an isomorphism, we conclude that $0 \neq q_*([\beta_i]) \in H_1(V;\ZZ)$.

\item Since the map $q_*: H_1(T;\ZZ)\to H_1(V;\ZZ)$ is onto, we conclude that for each $i\in\{1\,\cdots,r\}$ we can choose an embedded circle $\alpha_i\subset T_i$ in such a way that $q_*([\alpha_i])=0$. This shows that the trivialization of $N_L$ associated to $\alpha=\bigcup_i \alpha_i$ is preferred by Definition~\ref{def.preferred}.
    
\item To prove the uniqueness, let us assume that there are $\alpha_i\subset T_i$ and $\alpha'_i\subset T_i$ for all $1\leq i\leq r$ with 
$$
q_*([\alpha_i])= q_*([\alpha'_i])=0\,,\,\,\,\,\,\,\,\,i\in\{1,\cdots,r\}\,.
$$
On account of Eq.~\eqref{eq.alpha} we deduce that $q_*(m_i [\beta_i]) = 0$ and hence $m_i=0$ for each $i$. It follows that the two sets of generators $\{\alpha_i\}_{i=1}^r$ and $\{\alpha'_i\}_{i=1}^r$ define pairwise the same homotopy classes, and so are isotopic.
\end{enumerate}  
\end{proof}

An appropriate way to describe this unique (up to isotopy) preferred trivialization of $N_L$ will be using the Seifert surfaces of $L$, so let us finish this subsection by recalling their construction~\cite[Chapter 5A]{Ro75}. 

\begin{const}(\textit{Seifert surfaces and associated preferred trivializations}).\label{const.seifert}

(1) For any oriented link $L\subset\EE^3$, there exists a compact oriented embedded surface $S\subset \EE^3$, which can be assumed to be connected, such that $L=\partial S$ as singular chains. Assuming that $S$ is transverse to $\partial N_L$, we observe that for each $i$, $\alpha_i:=S\cap T_i$ is an embedded circle isotopic to $L_i$ which defines a trivialization $\theta_i$ of $N_i$. This gives rise to a {\em Seifert trivialization} $\theta$ of $N_L$ which is preferred because its typical fiber $\alpha=S\cap T$ is homologous to zero in $V$, i.e. $0=q_*([\alpha])\in H_1(V,\ZZ)$. Since preferred trivializations of links are unique (Lemma~\ref{L:preferred}), the Seifert trivialization does not depend on the specific Seifert surface at hand.

(2) Let $(x,y,z)$ be the canonical coordinate system in $\EE^3$. By a general position argument, we can assume that the natural projection $\pi:\EE^3\to\EE^2$ given by $\pi(x,y,z)=(x,y)$ is such that $h'=\pi \circ h$ is an immersion in general position with respect to itself, that is its image $L'=\pi(L)$ has finitely many transverse crossings and each crossing point is the intersection of only two branches. Let us set $h'_i=\pi\circ h_i$ and $L'_i=\pi(L_i)$, then $h'$ and $h'_i$ are compressions of $h$ and $h_i$, cf. Construction~\ref{const.compression}, whose $3$-fattenings are denoted by $g$ and $g_i$ respectively. It is easy to check that $h$ and $g$, and $h_i$ and $g_i$, are regularly homotopic.

(3) Near each crossing point of $L'$, delete the over- and under-crossings and replace them by short-cut arcs, minding the orientation. This way we have a disjoint collection of $\tilde r$ simple closed oriented curves $\{\gamma_j\}_{j=1}^{\tilde r}\subset \{z=0\}$. Each circle bounds a disk $S_j$ in the plane and, although they may be nested, these disks can be made disjoint by pushing them slightly off the plane, starting with innermost ones and working outwards. The boundary of each disk $S_j$ is oriented by $L$, thus each $S_j$ inherits a natural orientation denoted by $\epsilon_j = \pm 1$ according to the fact that this orientation coincides or not with the canonical orientation of $\{z=0\}$. Finally, we connect these disks together at the old crossings with half-twisted strips to form a surface $S$ whose boundary is isotopic to the original link $L$, and after using a connected sum procedure, it is possible to assume that $S$ is connected.

(4) The procedure described in item (3) clearly applies to the case of a knot. In particular, we have a Seifert surface for each component $L_i$ of $L$, the simple closed oriented curves introduced above being denoted by $\gamma^i_j\subset \{z=0\}$, $1\leq j\leq r_i$, and the disks that they bound in the plane inheriting orientations $\epsilon^i_j=\pm 1$. It is important to notice that there is no obvious relationship between the Seifert surface of the link and those of its components.

(5) Finally we associate to the Seifert surface $S$ a normal frame $\mathcal S$ of $L$, called the {\it Seifert frame}. Calling $Z$ the unique vector field tangent to $L$ and coherent with its orientation, we take a vector field $X$ which is normal to $L$ and tangent to the Seifert surface and set $\mathcal S=(X,Z\wedge X)$. This frame defines an infinitesimal trivialization of the normal bundle  $\nu(L)$ of $L$ which gives a {\it Seifert normal map} $\sigma_h: L\to \VV_{32}$. As before, this map is unique because preferred trivializations are unique up to isotopy.
\end{const}

\subsection{The characterization of SI knots and links}\label{subsec2}

In this subsection, as in the previous one, $L$ will be an oriented (smooth) link in $\EE^3$. The first important observation is that any trivialization associated to a CI equation of $L$ is preferred. Conversely, proceeding as in the proof of Proposition~\ref{l.extension}, it can be readily derived that a preferred trivialization extends as a CI equation for any $L$ in $\EE^3$. Taking this fact into account, the following result is a straightforward consequence of Theorem~\ref{thm.main2} and Proposition~\ref{prop.normal}.   

\begin{proposition}\label{prop:preferred1}
$h$ is SI if and only if $N_{L}$ admits a preferred (or Seifert) trivialization whose associated normal map $\sigma_h$ is homotopic to a constant, i.e. $[\sigma_h]=0$. This happens if and only if $[\sigma_{h_i}]=0$ for all i. 
\end{proposition}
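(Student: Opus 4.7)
The plan is to derive the proposition from three tools developed earlier: Theorem~\ref{thm.main2} (SI equals TCI), the normal-class characterization of when $\nu(L)$ extends trivially over $\EE^3$ given by Proposition~\ref{prop.normal}, and the existence and uniqueness up to isotopy of a preferred trivialization of $N_L$ (Lemma~\ref{L:preferred}), together with the remark recorded immediately before the proposition that preferred trivializations are exactly those trivializations associated to a CI equation of $L$.

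For the forward implication, I would start from a strong equation $\Phi:\EE^3\to\EE^2$ of $L$. Since $\Phi$ is a submersion with $\Phi^{-1}(0)=L$, it is in particular a CI equation, so its associated trivialization of $N_L$ is preferred by the remark just recalled. Moreover, $\Phi$ is globally defined on the contractible ambient $\EE^3$, so the normal bundle $\nu(\cF_\Phi)$ of its foliation extends $\nu(L)$ over $\EE^3$ as a trivial bundle, and the induced trivialization of $\nu(\cF_\Phi)$ restricts to $\sigma_h$; being extendable over a contractible space, $\sigma_h$ is nullhomotopic.

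For the reverse implication, I would invoke Lemma~\ref{L:preferred} to fix the (essentially unique) preferred trivialization of $N_L$, which by the remark recalled above extends as a CI equation $\Xi:\EE^3\to\EE^2$ of $L$. Assuming $[\sigma_h]=0$, the map $\sigma_h:L\to\VV_{3,2}$ is homotopic to a constant and hence, by the homotopy extension theorem, extends to a map $\tilde\sigma_h:\EE^3\to\VV_{3,2}$ with $\tilde\sigma_h|_L=\sigma_h$. Interpreting $\tilde\sigma_h$ pointwise as a $2$-frame in $\RR^3\cong T_x(\EE^3)$, this produces a trivial extension $\tilde\nu(L)$ of $\nu(L)$ over $\EE^3$ together with a trivialization restricting to the one associated to $\Xi$; hence $\Xi$ is tame in the sense of Definition~\ref{D:ci}, and Theorem~\ref{thm.main2} gives that $L$ is SI.

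Finally, the second equivalence $[\sigma_h]=0\Leftrightarrow[\sigma_{h_i}]=0$ for all $i$ is immediate from $L=\bigsqcup_i L_i$, which yields the natural identification $[L,\VV_{3,2}]=\prod_i[L_i,\VV_{3,2}]$. No step in this plan looks genuinely obstructive; the only point that warrants some care is that the class $[\sigma_h]$ be well-defined independently of which preferred trivialization is chosen, a fact guaranteed by the uniqueness-up-to-isotopy clause in Lemma~\ref{L:preferred}.
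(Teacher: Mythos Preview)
Your proposal is correct and follows essentially the same route the paper indicates: the paper merely records that the result is a ``straightforward consequence of Theorem~\ref{thm.main2} and Proposition~\ref{prop.normal}'' together with the observation that preferred trivializations are exactly those associated to CI equations, and your argument unpacks precisely this, using Lemma~\ref{L:preferred} for uniqueness and the contractibility of $\EE^3$ (which is the content of the $1\Leftrightarrow 2$ part of Proposition~\ref{prop.normal}) to pass between $[\sigma_h]=0$ and extendability.
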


To characterize the links which are SI we need to use two trivializations of $\nu(L)$: the Seifert trivialization and the {\it curvature trivialization} that we introduce as follows.  

\begin{const}
Let $\hat Y$ be a normal vector field to $L'\subset \EE^2$, and take the horizontal lift vector field $Y$ on $L$ such that $\pi_*(Y_p)=\hat Y_{\pi(p)}$ for any $p\in L$. The {\it curvature frame} $\mathcal C=(Y,Z\wedge Y)$ defines an infinitesimal trivialization of the normal bundle  $\nu(L)$ of $L$ which gives a {\it curvature normal map} $\tilde\sigma_h: L\to \VV_{32}$. It is obvious that this map is unique up to homotopy.
\end{const}

\begin{lemma}\label{lem.curvature}
For each $i\in\{1,\cdots,r\}$, $[\tilde \sigma_{h_i}]=\sum_{j=1}^{r_i}\epsilon^i_j$ mod. $2$. 
\end{lemma}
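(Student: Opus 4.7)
The plan is to compute $[\tilde\sigma_{h_i}]\in\pi_1(\VV_{3,2})=\ZZ_2$ by transporting the curvature frame along the regular homotopy that links $h_i$ with its planar $3$-fattening, and then to identify the resulting class with the Whitney rotation number of $L'_i$ modulo $2$.

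First I would use the regular homotopy $g_t(p)=(h'_i(p),\,t\,h^z_i(p))$ from Construction~\ref{const.compression}, which interpolates from the planar fattening $g_i$ at $t=0$ to $h_i$ at $t=1$. Because $Y$ is by definition the horizontal lift of $\hat Y$, the pair $(Y,\,Z_t\wedge Y)$ is a bona fide normal frame along $g_t$ for every $t$: the vector $Y$ lies in the $xy$-plane and is orthogonal to the tangent $Z_t$, while $Z_t\wedge Y$ is automatically orthogonal to both. At $t=0$ we have $Z_0=\hat Z$ and hence $(Y,Z_0\wedge Y)=(\hat Y,\hat Z\wedge\hat Y)=(\hat Y,\partial_z)$, once $\hat Y$ is normalised so that $(\hat Z,\hat Y)$ is the positively oriented basis of the $xy$-plane. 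Proposition~\ref{lem.equivalence} then gives $[\tilde\sigma_{h_i}]=[(\hat Y,\partial_z)]$ in $[L_i,\VV_{3,2}]$.

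Next, the map $p\mapsto(\hat Y_p,\partial_z)$ has a constant second entry and factors as $L_i\xrightarrow{\hat Y}\e^1\xrightarrow{j}\VV_{3,2}$, where $j(v):=(v,\partial_z)$ is the inclusion of the fibre over $\partial_z$ of the unit tangent bundle $\VV_{3,2}\to\e^2$, $(v_1,v_2)\mapsto v_2$. The long exact sequence of this fibration, together with the fact that the boundary map $\pi_2(\e^2)\to\pi_1(\e^1)$ is multiplication by the Euler number $\chi(\e^2)=2$, yields both $\pi_1(\VV_{3,2})=\ZZ_2$ and the identification of $j_\ast:\pi_1(\e^1)=\ZZ\to\pi_1(\VV_{3,2})=\ZZ_2$ with reduction modulo~$2$. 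Since $\hat Y$ is the $\pi/2$-rotation of the unit tangent of $L'_i$, its degree equals the Whitney rotation number $\mathrm{rot}(L'_i)$, and altogether $[\tilde\sigma_{h_i}]\equiv\mathrm{rot}(L'_i)\pmod 2$.

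It then remains to prove the integer equality $\mathrm{rot}(L'_i)=\sum_{j=1}^{r_i}\epsilon^i_j$. At each transverse self-crossing of $L'_i$ the two branches have tangents $v_1,v_2\in\e^1$ with oriented angle $\alpha\in(-\pi,\pi)\setminus\{0\}$; before the smoothing they contribute $0$ to the turning integral, while after the oriented smoothing of Construction~\ref{const.seifert} the crossing is replaced by two short arcs that turn from $v_2$ to $v_1$ by $-\alpha$ and from $v_1$ to $v_2$ by $+\alpha$ respectively, again contributing $0$. Iterating over all crossings one obtains $\mathrm{rot}(L'_i)=\sum_j\mathrm{rot}(\gamma^i_j)$, and each $\gamma^i_j$ is a simple closed plane curve whose Whitney rotation number is $\pm 1$ according to the orientation of the disk $S^i_j$ it bounds, which equals $\epsilon^i_j$ by construction. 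Combined with the previous step this finishes the proof. The most delicate point will be the first step, where one must verify that the family $(Y,Z_t\wedge Y)$ gives a legitimate continuous deformation of trivialisations of normal bundles to which Proposition~\ref{lem.equivalence} can be applied; the remaining steps reduce to the standard long exact sequence of a fibration and to a classical Whitney-type cancellation.
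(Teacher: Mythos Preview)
Your proof is correct and follows essentially the same route as the paper's: both reduce $[\tilde\sigma_{h_i}]$ to the mod~$2$ reduction of the rotation number (curvatura integra) of the planar compression $h'_i$ via the fibration $\e^1\to\VV_{3,2}\to\e^2$ (this is exactly the fattening sequence invoked from Proposition~\ref{spheres}), and then compute that rotation number as $\sum_j\epsilon^i_j$ by resolving the crossings of $L'_i$ into the Seifert circles $\gamma^i_j$. The only cosmetic difference is that the paper phrases the crossing resolution as a regular homotopy of $h'_i$ making the two branches tangent (so that invariance of $Ci$ is automatic), whereas you verify directly that the oriented smoothing contributes zero local turning; these are the same local computation.
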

\begin{proof}
Let us first compute the curvatura integra $Ci(h'_i)$ of $h'_i$. By definition, $Ci(h'_i)$ is the degree of the normal map $\sigma_{h'_i}:L'_i\to \e^1$. We note that the immersion $h'_i$ is regularly homotopic to an immersion $h''_i$ obtained by deforming $h'_i$ so that the two branches of $L'_i$ near a crossing point become infinitesimally tangent. Of course $Ci(h''_i) = Ci(h'_i)$. 

The image $L''_i$ of $h''_i$ decomposes into $r_i$ smoothly embedded circles $\tilde\gamma^i_j$ homotopic to the corresponding circles $\gamma^i_j$. It is clear that $Ci(h''_i) = \sum_{j=1}^{r_i} Ci(\tilde\gamma^i_j)$ and $Ci(\tilde\gamma^i_j) = \epsilon^i_j$ for each $j$ because the orientations of the disks bound by $\tilde\gamma^i_j$ and $\gamma^i_j$ are the same. Now it is easy to prove that $[\sigma_{g_i}] = Ci(h'_i)$ mod. $2$ by using the same argument as in the proof of Proposition~\ref{spheres}. The lemma then follows by recalling that $g_i$ and $h_i$ are regularly homotopic and noticing that $\tilde \sigma_{h_i}$ is homotopic to $\sigma_{g_i}$ by construction.
\end{proof}
 
In order to compute the class of the Seifert normal map $[\sigma_{h}]$, we fix the trivialization of the normal bundle $\nu(L)$ given by the curvature frame $\mathcal C$. At each point of $L$ the Seifert frame $\mathcal S$ and the curvature frame $\mathcal C$ are related by a normal twist, which can be precisely described by the following description. The standard fibration $SO(2)\to \VV_{3,2}\to \GG_{3,2}$ yields the homotopy exact sequence:
$$
\xymatrix{
\ldots\ar[r] & \pi_1(SO(2))=\ZZ \ar[r]^s & \pi_1(\VV_{3,2})=\ZZ_2 \ar[r] & \pi_1(\GG_{3,2})=0 \ar[r] & \ldots
}
$$
The map $s$ is onto and therefore for each $i\in\{1,\cdots,r\}$ there exists a map $\psi_i:L_i\to SO(2)$ such that 
\begin{equation}\label{eq.cs}
[\sigma_{h_i}]=[\tilde \sigma_{h_i}]+[\psi_i]\,\,\, \text{mod}.\,\,\, 2\,.
\end{equation}
We call $[\psi_i]\in \pi_1(\e^1)=\ZZ$ the {\it relative homotopy class} of $[\sigma_{h_i}]$ with respect to $[\tilde \sigma_{h_i}]$, and we denote it by $[\mathcal S_i,\mathcal C_i]$. 

The aim of the next lemma is to compute these relative homotopy classes $[\mathcal S_i,\mathcal C_i]$, but let us first introduce some notation. 

\begin{notation}\label{not.sign}
(1) We assign a sign to each crossing of $L'$ in the usual way: at each crossing point we have the direction $v_1$ of the over-branch and the direction $v_2$ of the under-branch; if this couple of vectors $\{v_1,v_2\}$ defines the same orientation as the canonical basis of $\RR^2$ we assign the sign $+1$ to the crossing, and $-1$ otherwise. 

(2) For each $i\in\{1,\cdots,r\}$ we denote by $\{q^i_{jk}\}_k$ the set of crossing points of $L'_i$ with $L'_j$, so that when $i=j$ it gives the set of crossings of $L'_i$ with itself. Similarly, we call $n^j_i$ the number of $+1$ crossings of $L'_i$ with $L'_j$, and $m^j_i$ the number of $-1$ crossings of $L'_i$ with $L'_j$. It is well known~\cite[Chapter 5D]{Ro75} that the linking number of $L_i$ with $L_j$ for $j\neq i$ is given by:
\begin{align}\label{eq.link}
\text{lk}(L_i,L_j)=\frac{n^j_i-m^j_i}{2}\,.
\end{align}
For $i=j$ the number $W_i:= n^i_i-m^i_i$ is defined as the Whitney number~\cite[Section 14.3]{DFN85} of $L_i$.
\end{notation}

\begin{lemma}\label{lem.relative}
For each $i\in\{1,\cdots,r\}$, $[\mathcal S_i,\mathcal C_i]=W_i+\sum_{j\neq i}\text{lk}(L_i,L_j)$.
\end{lemma}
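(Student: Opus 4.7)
The plan is to reinterpret the integer $[\mathcal S_i,\mathcal C_i]\in\pi_1(SO(2))=\ZZ$ --- by definition the degree of the rotation $\psi_i\colon L_i\to SO(2)$ that carries $\mathcal C_i$ to $\mathcal S_i$ --- as a difference of self-linking numbers of $L_i$ with respect to the two framings, and then to evaluate each self-linking number separately by standard knot-theoretic arguments.

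The first, essentially formal, step is to observe that for any nowhere-vanishing normal section $v$ of $\nu(L_i)$ the pushoff $L_i^v$ of $L_i$ a small distance along $v$ defines a self-linking integer $\text{sl}(L_i,v):=\text{lk}(L_i,L_i^v)$, and that two framings $v_1,v_2$ differing by a rotation $\psi\colon L_i\to SO(2)$ satisfy
$$\text{sl}(L_i,v_1)-\text{sl}(L_i,v_2)=\deg\psi.$$
Together with the convention of equation~(\ref{eq.cs}), this identity reduces the proof to computing $\text{sl}(L_i,\mathcal C_i)$ and $\text{sl}(L_i,\mathcal S_i)$ individually.

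The curvature self-linking is essentially immediate: since $\mathcal C_i=(Y,Z\wedge Y)$ is the horizontal lift of a planar normal frame to $L'_i=\pi(L_i)$, the pushoff $L_i^{\mathcal C}$ is exactly the blackboard framing of $L_i$ read from the diagram $L'_i$, and the classical writhe formula identifies its self-linking with the signed count of self-crossings of $L'_i$, namely $W_i=n_i^i-m_i^i$. For the Seifert self-linking, the idea is to cut out an annular neighborhood $A_i:=\nu(L_i)\cap S$ of $L_i$ in $S$ and consider $S_i:=S\setminus\text{int}(A_i)$; with the orientations induced from $S$, its boundary decomposes (up to sign) as $\partial S_i=L_i^{\mathcal S}+\sum_{j\neq i}L_j$. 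Pushing $L_i$ slightly off $S$ in a direction transverse to $S$ produces a curve isotopic to $L_i$ and disjoint from $S_i$, so its algebraic intersection with $S_i$ vanishes, and the duality between algebraic intersection and linking with the boundary components of $S_i$ yields
$$\text{sl}(L_i,\mathcal S_i)=\sum_{j\neq i}\text{lk}(L_i,L_j).$$
Combining the two self-linking calculations with the first step then gives the claimed formula.

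The main obstacle is the careful bookkeeping of signs throughout the argument: the induced orientation of $L_i^{\mathcal S}$ as a boundary component of $S_i$ versus as a parallel copy of $L_i$, the sign convention in~(\ref{eq.cs}) relating $\psi_i$ to the difference of framings, and the writhe convention identifying $\text{sl}(L_i,\mathcal C_i)$ with $+W_i$ rather than $-W_i$. None of these steps involves any deep argument beyond standard framing calculus for oriented links in $\EE^3$, but they must all be aligned correctly to recover the exact formula as stated (as opposed to its negative).
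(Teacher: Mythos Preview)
Your approach is correct and genuinely different from the paper's. The paper argues locally: using the explicit Seifert algorithm surface of Construction~\ref{const.seifert}, it observes that the Seifert vector $X$ coincides (up to sign) with the curvature vector $Y$ away from small neighborhoods of the crossing points of the diagram, and then counts that $X$ makes a half-twist of sign $\pm 1/2$ relative to $Y$ along each branch passing through a crossing --- hence a full $\pm 1$ twist at each self-crossing of $L_i'$ and a $\pm 1/2$ contribution at each crossing with another component $L_j'$. Summing and invoking the crossing formula~\eqref{eq.link} for linking numbers gives the result directly.

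Your route via self-linking numbers is more conceptual and has the pleasant feature of not depending on the specific Seifert surface produced by the algorithm (the uniqueness of the preferred trivialization, Lemma~\ref{L:preferred}, guarantees this is harmless). One small slip: with the natural orientations you set up, $\partial S_i = L_i^{\mathcal S} + \sum_{j\neq i} L_j$ with $L_i^{\mathcal S}$ oriented \emph{parallel} to $L_i$, so the vanishing of $\text{lk}(L_i,\partial S_i)$ actually gives $\text{sl}(L_i,\mathcal S_i) = -\sum_{j\neq i}\text{lk}(L_i,L_j)$, not $+\sum$. This is absorbed into the overall sign ambiguity you flag in the final paragraph, and since the lemma is only used modulo~$2$ in Corollary~\ref{lem.link}, the sign is immaterial for the application; but if you want the formula exactly as stated you must fix the convention $[\mathcal S_i,\mathcal C_i] = \text{sl}(\mathcal C_i) - \text{sl}(\mathcal S_i)$ rather than the reverse.
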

\begin{proof}
We consider disjoint neighborhoods $\hat A^i_{jk}$ of the crossing points $q^i_{jk}$, and set $A^i_{jk}:=\pi^{-1}(\hat A^i_{jk})$. We can assume without loss of generality that the Seifert vector field $X$ is horizontal and collinear (but possibly of opposite direction) with the curvature vector field $Y$ at each point of $L\backslash\bigcup_{j,k} A^i_{jk}$. Therefore the windings of $X$ with respect to $Y$ are located in each $A^i_{jk}$. By Construction~\ref{const.seifert} of the Seifert surface for $L$, $X$ twists by $\pm 1/2$ along each branch of $L_i$ when passing through $A^i_{jk}$, the sign being given by the orientation of the corresponding crossing point $q^i_{jk}$, see Notation~\ref{not.sign}. Therefore, the vector field $X$ makes a total twist $\pm 1$ with respect to $Y$ in $A^i_{ik}$ and a half twist of $\pm 1/2$ in $A^i_{jk}$ for $j\neq i$. According to the notation introduced before we conclude that:
$$[\mathcal S_i,\mathcal C_i]=n^i_i-m^i_i+\sum_{j\neq i}\frac{n^j_i-m^j_i}{2}\,,$$
and the lemma follows by Eq.~\eqref{eq.link} and the definition of the Whitney number.  
\end{proof}

\begin{corollary}\label{lem.link}
For each $i\in\{1,\cdots,r\}$, the Seifert normal map $\sigma_{h_i}:L_i\to \VV_{32}$ verifies that $[\sigma_{h_i}]=1+\sum_{j\neq i}\text{lk}(L_i,L_j)$ mod. $2$.
\end{corollary}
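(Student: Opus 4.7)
The plan is to plug Lemmas~\ref{lem.curvature} and~\ref{lem.relative} into the defining relation~\eqref{eq.cs} for $[\sigma_{h_i}]$, and then reduce everything to a parity statement about the Seifert algorithm applied to the individual component $L_i$.

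First I would combine equation~\eqref{eq.cs} (recalling that $[\psi_i]=[\mathcal S_i,\mathcal C_i]$ by definition) with the two preceding lemmas to obtain, modulo $2$,
\[
[\sigma_{h_i}]\;=\;[\tilde\sigma_{h_i}]+[\mathcal S_i,\mathcal C_i]\;=\;\sum_{j=1}^{r_i}\epsilon^i_j\;+\;W_i\;+\;\sum_{j\neq i}\operatorname{lk}(L_i,L_j).
\]
Working modulo $2$ trivializes the signs: since each $\epsilon^i_j=\pm 1$, one has $\sum_{j=1}^{r_i}\epsilon^i_j\equiv r_i\pmod 2$; and since $W_i=n^i_i-m^i_i\equiv n^i_i+m^i_i=c_i\pmod 2$, where $c_i$ denotes the total number of self-crossings of $L'_i$, the displayed formula becomes $[\sigma_{h_i}]\equiv r_i+c_i+\sum_{j\neq i}\operatorname{lk}(L_i,L_j)\pmod 2$.

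The heart of the argument is then the parity identity $r_i+c_i\equiv 1\pmod 2$, which I would obtain from a Euler characteristic count on the Seifert surface of $L_i$ \emph{viewed as a knot}, as alluded to in Construction~\ref{const.seifert}(4). Running the Seifert algorithm on the diagram $L'_i$ produces $r_i$ disks in the plane (one per oriented smoothing circle $\gamma^i_j$), which are subsequently connected by $c_i$ half-twisted bands, one at each self-crossing. Topologically these bands are 1-handles, so the resulting surface $S_i$ has $\chi(S_i)=r_i-c_i$. Since the diagram $L'_i$ is connected, $S_i$ is a connected compact oriented surface whose boundary is the single circle $L_i$, hence $\chi(S_i)=1-2g_i$ is odd; this gives $r_i-c_i\equiv 1\pmod 2$, and equivalently $r_i+c_i\equiv 1\pmod 2$.

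Substituting yields the desired formula $[\sigma_{h_i}]\equiv 1+\sum_{j\neq i}\operatorname{lk}(L_i,L_j)\pmod 2$. The main conceptual point, which would be the step I would treat most carefully, is keeping clear the distinction between the Seifert surface of the link $L$ (used to define the Seifert trivialization $\sigma_h$) and the auxiliary Seifert surface of the single knot $L_i$ (used only to supply the parity identity $\chi(S_i)\equiv 1\pmod 2$); the two surfaces are a priori unrelated, as the authors emphasize, but the mod~$2$ computation only needs the latter.
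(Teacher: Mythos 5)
Your proposal is correct, and its skeleton coincides with the paper's: both combine Lemmas~\ref{lem.curvature} and~\ref{lem.relative} with Eq.~\eqref{eq.cs} to get $[\sigma_{h_i}]=\sum_{j=1}^{r_i}\epsilon^i_j+W_i+\sum_{j\neq i}\text{lk}(L_i,L_j)$ mod~$2$, and then dispose of the first two terms by a parity identity. Where you diverge is in how that identity is justified. The paper simply asserts $\sum_{j=1}^{r_i}\epsilon^i_j+W_i=1$ mod~$2$ and cites Whitney's formula as presented in \cite[Section 14.3]{DFN85}. You instead reduce mod~$2$ to the equivalent statement $r_i+c_i\equiv 1$ (using $\epsilon^i_j=\pm1$ and $W_i\equiv n^i_i+m^i_i=c_i$) and prove it by an Euler characteristic count: the Seifert surface $S_i$ of the knot $L_i$ is built from $r_i$ disks and $c_i$ bands, so $\chi(S_i)=r_i-c_i$, which must be odd since $S_i$ is a connected orientable surface with a single boundary circle. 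This is a correct and genuinely self-contained replacement for the citation (the connectedness of $S_i$ does hold here because the diagram $L'_i$ of a single component is connected, so the graph of Seifert circles joined by crossing-bands is connected), and it buys independence from the external reference at the cost of a slightly longer argument; you are also right to flag, as the paper does in Construction~\ref{const.seifert}(4), that the auxiliary surface $S_i$ of the knot $L_i$ plays no role in defining $\sigma_{h_i}$ and is used only for this parity count.
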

\begin{proof}
Lemmas~\ref{lem.curvature} and~\ref{lem.relative} and Eq.~\eqref{eq.cs} imply that $$[\sigma_{h_i}]=\sum_{j=1}^{r_i}\epsilon^i_j+W_i+\sum_{j\neq i}\text{lk}(L_i,L_j)\,\,\,\,\,\,\text{mod}\, 2\,.$$ The claim then follows by noticing that $\sum_{j=1}^{r_i}\epsilon^i_j+W_i=1$ mod. 2, see e.g.~\cite[Section 14.3]{DFN85}.

\end{proof}

Let us observe that in the proof of Lemma~\ref{lem.relative} there are no more vertical twists than the ones associated to the self-crossings and the crossings with other components. This is the key distinction with a trivialization which is not preferred. Extra twists may make a trivialization have zero normal class.

\begin{theorem}\label{thm.silink}
A link in $\EE^3$ is SI if and only if $$\sum_{j\neq i}\text{lk}(L_i,L_j)=1\,\,\,\text{mod.}\,\,\,2\,$$
for all $i\in\{1,\cdots,r\}$.
\end{theorem}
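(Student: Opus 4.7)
The proof at this point is essentially a direct assembly of the machinery that has been built up in Subsections~\ref{subsec1} and~\ref{subsec2}. My plan is to combine Proposition~\ref{prop:preferred1} with Corollary~\ref{lem.link} in a single short argument.

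First I would invoke Proposition~\ref{prop:preferred1}: the embedding $h$ of the link $L=\bigsqcup_{i=1}^r L_i$ is SI if and only if the Seifert normal map $\sigma_h$ is homotopic to a constant, which happens exactly when $[\sigma_{h_i}]=0$ in $\pi_1(\VV_{3,2})=\ZZ_2$ for every component $L_i$. This reduces the question to computing, for each $i$, the parity of $[\sigma_{h_i}]$.

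Next I would simply quote Corollary~\ref{lem.link}, which already states
\[
[\sigma_{h_i}] \;=\; 1 + \sum_{j\neq i}\operatorname{lk}(L_i,L_j) \pmod 2.
\]
Setting this equal to $0$ modulo $2$ for every $i$ is exactly the stated condition $\sum_{j\neq i}\operatorname{lk}(L_i,L_j)=1 \pmod 2$, which finishes the proof.

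There is really no obstacle in this final step, because all the genuine work has already been done: the existence and uniqueness of a preferred trivialization (Lemma~\ref{L:preferred}), the identification of the Seifert trivialization with the preferred one (Construction~\ref{const.seifert}), the reduction of strong integrability to the vanishing of the normal class (Theorem~\ref{thm.main2} and Proposition~\ref{prop.normal}, as packaged in Proposition~\ref{prop:preferred1}), and the explicit evaluation of the Seifert normal class in terms of self-crossings and linking numbers (Lemmas~\ref{lem.curvature} and~\ref{lem.relative}). I would also include a brief remark that, as immediate corollaries, any knot ($r=1$) fails the condition since the empty sum is $0\not\equiv 1 \pmod 2$, recovering Theorem~E of the introduction and furnishing explicit counterexamples to Miyoshi's Claim~\ref{thm.miyoshi}; and that for a two-component link the criterion is just the oddness of $\operatorname{lk}(L_1,L_2)$.
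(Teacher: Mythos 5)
Your proposal is correct and is exactly the paper's argument: the authors also prove Theorem~\ref{thm.silink} by combining Proposition~\ref{prop:preferred1} (SI is equivalent to $[\sigma_{h_i}]=0$ for every component) with Corollary~\ref{lem.link} (which gives $[\sigma_{h_i}]=1+\sum_{j\neq i}\mathrm{lk}(L_i,L_j)$ mod~$2$). The additional remarks about knots and two-component links match the corollaries the paper draws immediately afterwards.
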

\begin{proof}
It is a straightforward consequence of Proposition~\ref{prop:preferred1} and Corollary \ref{lem.link}. 
\end{proof}

\begin{remark}
Theorem~\ref{thm.silink} implies that the zero set of any WI equation of a knot has at least other component, open or compact, which is linked with the knot. It is easy to prove that any link  admits a WI equation whose zero set contains exactly $r$ compact components, which are of course $L$.
\end{remark}

\begin{corollary}\label{cor.nosi}
No knot in $\EE^3$ is SI. This contradicts Miyoshi's Claim~\ref{thm.miyoshi}.
\end{corollary}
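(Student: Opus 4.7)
The plan is to derive this corollary directly from Theorem~\ref{thm.silink}, which characterizes SI links in $\EE^3$ by a linking-number parity condition on each component. A knot $K\subset\EE^3$ is by definition a single embedded circle, so it corresponds to the case $r=1$ of a link with components $\{L_i\}_{i=1}^r$. Applied to this situation, the criterion of Theorem~\ref{thm.silink} requires that for the unique index $i=1$ one has
\[
\sum_{j\neq 1}\text{lk}(L_1,L_j)=1 \quad\text{mod.}\,2.
\]
However, the sum on the left is empty because there are no other components, so it evaluates to $0$. This contradicts the parity requirement, so $K$ cannot be SI.

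To present this cleanly, I would first quote Theorem~\ref{thm.silink} specialized to $r=1$, then observe that the empty sum is zero, and conclude that the equivalent condition for strong integrability fails identically for every knot, regardless of knot type. A brief remark should be added explaining that this contradicts Claim~\ref{thm.miyoshi}: Miyoshi's assertion would in particular force the unknot to be SI in $\EE^3$, but even the unknot fails the criterion above.

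The only potential subtlety, and the part I would double-check, is that Theorem~\ref{thm.silink} was genuinely proved for all $r\geq 1$ (including $r=1$) and not only for honest links with at least two components; inspecting the derivation, the Seifert-trivialization argument and the computation of $[\sigma_{h_i}]=1+\sum_{j\neq i}\text{lk}(L_i,L_j)$ mod. $2$ in Corollary~\ref{lem.link} go through verbatim for a single component, giving $[\sigma_h]=1\neq 0$ in $\pi_1(\VV_{3,2})=\ZZ_2$. Hence by Proposition~\ref{prop:preferred1} no knot admits a strong equation, completing the argument.
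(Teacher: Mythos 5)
Your proposal is correct and matches the paper's argument: Corollary~\ref{cor.nosi} is obtained exactly by specializing Theorem~\ref{thm.silink} to a one-component link, where the empty sum $\sum_{j\neq 1}\mathrm{lk}(L_1,L_j)=0\neq 1$ mod $2$, so the SI criterion fails for every knot. Your sanity check that the Seifert-trivialization computation of $[\sigma_{h_i}]$ goes through verbatim for $r=1$ is a sensible addition but not needed beyond what the paper already establishes.
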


We conclude that links in $\EE^3$ are examples of manifolds whose SI character depends on the embedding, in strong contrast with the WI character, which does not depend on the embedding. For example, any unlinked union of knots, the Borromean rings and the Whitehead link are not SI, but any link consisting of two knots which have odd linking number is SI.

\subsection{Connected $k$-manifolds in $\EE^{2k+1}$ for $k\in\{3,7\}$}\label{S:SIk}

In this subsection we show that some of the ideas used to study the strong integrability of links in $\EE^3$ can be extended to analyze embeddings of parallelizable connected $k$-manifolds $L$ in $\EE^{2k+1}$ for $k\in\{3,7\}$. Recall from Lemma~\ref{L:trivnu} that any of these embeddings have trivial normal bundle. First, we need to extend the notion of preferred trivialization of a tubular neighborhood (compare with Definition~\ref{def.preferred}). For the sake of notational simplicity we shall usually identify $L$ with its embedded image $h(L)$ in $\EE^{2k+1}$.

\begin{definition}
Let $L$ be a $k$-submanifold of $\EE^{2k+1}$ with trivial normal bundle. We say that a trivialization $\theta$ of a closed tubular neighborhood $N_L$ of $L$ is {\em preferred} if any fiber $L'$ of the associated trivialization $\phi:\partial N_L\to \e^k$ is homologous to zero in $V:=\EE^{2k+1}\backslash \text{int}\,N_L$, that is $0=[L']\in H^\infty_k(V;\ZZ)$ when endowed with the corresponding orientation.
\end{definition} 

In the next lemma we establish the uniqueness (up to isotopy) of preferred trivializations of a tubular neighborhood of $L$. The proof is analogous to the proof of Lemma~\ref{L:preferred}, so details will be omitted.

\begin{lemma}\label{lem.unique}
For any embedding $h:L\subset \EE^{2k+1}$ there exists, up to isotopy, exactly one preferred trivialization of $N_L$. 
\end{lemma}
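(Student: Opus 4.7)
The plan is to mirror the proof of Lemma~\ref{L:preferred}, replacing its $1$-dimensional Mayer--Vietoris computation by its $k$-dimensional analogue and the Eilenberg--MacLane identification $[L,\e^1] = H^1(L;\ZZ)$ used in Lemma~\ref{L:h1} by Hopf's degree theorem for compact connected oriented $k$-manifolds. I would start by using the trivialization of $\nu(L)$ to identify $T:=\partial N_L$ with $L \times \e^k$ and write down the Mayer--Vietoris sequence for the decomposition $\EE^{2k+1} = N_L \cup V$. The contractibility of $\EE^{2k+1}$ gives an isomorphism
\[
(p_*, q_*)\colon H_k(T;\ZZ) \xrightarrow{\sim} H_k(N_L;\ZZ) \oplus H_k(V;\ZZ).
\]
The K\"unneth formula then yields $H_k(T;\ZZ) = \ZZ\langle a\rangle \oplus \ZZ\langle b\rangle$, where $a=[L \times \{\mathrm{pt}\}]$ and $b=[\{\mathrm{pt}\} \times \e^k]$ is the ``meridian sphere''. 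Since $H_k(N_L;\ZZ) = \ZZ[L]$ with $p_*(a)=[L]$ and $p_*(b)=0$, the Mayer--Vietoris isomorphism forces $H_k(V;\ZZ) \cong \ZZ$ with $q_*(b)$ a generator; this is the exact analogue of the assertion $0 \neq q_*([\beta_i])$ in Lemma~\ref{L:preferred}.

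Next I would parameterize associated trivializations up to isotopy of fibers by a single integer. As in Lemma~\ref{L:h1}, the typical fiber $L'$ of an associated trivialization $\phi\colon T \to \e^k$ is the graph in $L \times \e^k$ of a smooth map $\vartheta\colon L \to \e^k$. By Hopf's theorem applied to the closed connected oriented $k$-manifold $L$, the set $[L, \e^k]$ is in bijection with $\ZZ$ via the degree $d:=\deg\vartheta$, so isotopy classes of typical fibers are indexed by $\ZZ$. A direct K\"unneth computation applied to the graph embedding $\iota_\vartheta\colon L \to L \times \e^k$, exploiting the identities $\pi_1 \circ \iota_\vartheta = \mathrm{id}_L$ and $\pi_2 \circ \iota_\vartheta = \vartheta$, gives
\[
[L'] = a + d \cdot b \in H_k(T;\ZZ).
\]

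Finally, because $L'$ is compact, the preferred condition $0 = [L'] \in H^\infty_k(V;\ZZ)$ translates into $q_*[L'] = 0$ in $H_k(V;\ZZ) \cong \ZZ$, i.e.\ into the linear equation $q_*(a) + d\cdot q_*(b) = 0$. Since $q_*(b)$ is a generator, this equation admits a unique integer solution $d=d_0$, which simultaneously establishes existence and uniqueness (up to isotopy) of the preferred trivialization. I anticipate no serious obstacle; the only mildly delicate point is the identification $[L']=a+d\,b$, which requires combining Hopf's degree theorem with the K\"unneth isomorphism, both entirely standard.
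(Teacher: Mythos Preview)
Your uniqueness argument is precisely the ``mutatis mutandis'' step the paper has in mind: the Mayer--Vietoris computation and the conclusion that the fiber class $[L']=a+d\,b$ is determined by $q_*[L']=0$ are the direct higher-dimensional analogues of items~(1) and~(3) in the proof of Lemma~\ref{L:preferred}, with Hopf's degree theorem replacing the elementary classification of simple closed curves on a torus.

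Your existence argument, however, differs from the paper's and contains a small gap. The paper does \emph{not} deduce existence from the Mayer--Vietoris computation; instead it invokes the Bochnak--Kucharz result \cite{BK96} that any parallelizable compact connected $k$-manifold in $\EE^{2k+1}$ is a complete intersection, and observes that any CI equation furnishes a preferred trivialization directly. In your approach, knowing that the equation $q_*(a)+d\,q_*(b)=0$ has a unique integer solution $d_0$ does not by itself produce a preferred trivialization: you must also check that there \emph{is} a trivialization of $\nu(L)$ whose typical fiber is the graph of a degree-$d_0$ map. Your sentence ``isotopy classes of typical fibers are indexed by $\ZZ$'' asserts this surjectivity without justification; Hopf's theorem only gives $[L,\e^k]\cong\ZZ$, not that every class in $[L,\e^k]$ arises from a trivialization. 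Concretely, two trivializations of $\nu(L)$ differ by an element of $[L,SO(k+1)]$, so what you need is that the map $[L,SO(k+1)]\to[L,\e^k]$ induced by the orbit map $SO(k+1)\to\e^k$ is onto. This is true for $k\in\{3,7\}$ because $\e^k$ is parallelizable, so the principal bundle $SO(k+1)\to\e^k$ admits a section $s$, and $s\circ\vartheta$ lifts any $\vartheta:L\to\e^k$. Once you add this one line, your route is complete and has the merit of avoiding the external reference~\cite{BK96}; the paper's route is shorter but less self-contained.
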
 
\begin{proof}
Any such a manifold is CI by~\cite{BK96}, and hence any CI equation endows a tubular neighborhood $N_L$ of $L$ with a preferred trivialization. The uniqueness of this preferred trivialization follows exactly, mutatis mutandis, proceeding as in the proof of Lemma~\ref{L:preferred}.
\end{proof}  

The following result is a straightforward consequence of Theorem~\ref{thm.main2} and Proposition~\ref{prop.normal}. It shows that the direct implication of Proposition~\ref{prop:preferred1} holds.

\begin{proposition}\label{prop:preferred}
Let $L$ be a $k$-manifold. If an embedding $h:L\to \EE^n$ is SI then $N_{L}$ admits a preferred trivialization whose associated infinitesimal trivialization $\sigma_h$ is homotopic to a constant, i.e. $[\sigma_h]=0$.
\end{proposition}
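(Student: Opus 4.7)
The plan is to unpack the SI hypothesis by choosing a strong equation $\Phi:\EE^n\to\EE^{n-k}$ of $h$ and reading off both conclusions from its differential. Setting $m:=n-k$ and $N_L:=\Phi^{-1}(\overline{D^m_\delta})$ for $\delta>0$ small enough that $N_L$ is a closed tubular neighborhood of $L$, the natural trivialization $\theta:=\Phi|_{N_L}/\delta$ of $N_L$ has associated boundary map $\phi:=\theta|_{\partial N_L}$ and infinitesimal trivialization $\sigma_h=d\Phi|_L$ of $\nu(L)$, so it suffices to show that this particular $\theta$ is preferred and that $[\sigma_h]=0$.

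To see that $\theta$ is preferred, I would exhibit a Seifert manifold in $V:=\EE^n\setminus\text{int}\,N_L$ for a typical fiber $L'=\Phi^{-1}(\delta y)$ of $\phi$, with $y\in\e^{m-1}$. The candidate is $S':=\Phi^{-1}(R')$, where $R':=\{ty:t\geq\delta\}\subset\EE^m$ is the ray based at $\delta y$. Because $\Phi$ is a submersion, $S'$ is a $(k+1)$-submanifold of $V$ with $\partial S'=L'$; its normal bundle is isomorphic via $d\Phi$ to the pullback of the normal bundle of $R'$ in $\EE^m$, which is trivial since $R'$ is contractible. Thus $S'$ is Seifert for $L'$, and Remark~\ref{R:int} applied to $V$ yields $0=[L']\in H^\infty_k(V;\ZZ)$, which is exactly the preferred condition.

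For $[\sigma_h]=0$, I would use that $d\Phi:T(\EE^n)\to\EE^m\times\RR^m$ is a bundle map of maximal rank defined over all of $\EE^n$, so (after choosing a Riemannian metric to split $T(\EE^n)$) it restricts to a trivialization of the normal bundle $\nu(\cF_\Phi)$ of the foliation defined by $\Phi$; this produces a canonical map $\tilde\sigma:\EE^n\to\VV_{n,m}$ extending $\sigma_h$. Since $\EE^n$ is contractible, $\tilde\sigma$ is null-homotopic, and hence so is $\sigma_h=\tilde\sigma|_L$. The same conclusion also follows more abstractly via Theorem~\ref{thm.main2} (upgrading SI to TCI and thus supplying the extension of $\sigma_h$ directly from Definition~\ref{D:ci}) combined with the equivalences of Proposition~\ref{prop.normal}.

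The main subtlety, which is why the argument deserves more than a one-line invocation of Proposition~\ref{prop.normal}, is ensuring that the \emph{particular} trivialization $\sigma_h$ and the \emph{particular} $\theta$ produced by $\Phi$ are the ones satisfying the two required properties, rather than some other trivialization existing only abstractly. This compatibility is automatic precisely because the strong equation $\Phi$ produces $\theta$, $\sigma_h$, the Seifert manifold $S'$ and the extension $\tilde\sigma$ all at once.
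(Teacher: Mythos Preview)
Your argument is correct and follows essentially the same route as the paper, which simply cites Theorem~\ref{thm.main2} and Proposition~\ref{prop.normal}: a strong equation is in particular a tame CI equation, so its associated normal trivialization extends over $\EE^n$ (whence $[\sigma_h]=0$) and its associated tubular trivialization is preferred. Your version just makes this last point explicit by exhibiting the Seifert manifold $S'=\Phi^{-1}(R')$ for the fiber $L'$ inside $V$, which the paper leaves implicit when it asserts (at the start of Subsection~\ref{subsec2} and in the proof of Lemma~\ref{lem.unique}) that CI equations yield preferred trivializations.
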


The main theorem of this subsection is a complete characterization of SI embeddings of connected manifolds of dimension $k\in\{3,7\}$ in $\EE^{2k+1}$. In particular, it provides examples of embeddings which are WI and CI but not SI (compare with Question~\ref{conjci}). Its corollary below extends Corollary~\ref{cor.nosi} to embeddings of higher dimensional spheres and proves Theorem~E of Section~\ref{S.mostrelevant}.

\begin{theorem}\label{thm.sk}
Let $L$ be a connected $k$-manifold which is parallelizable, $k\in\{3,7\}$. Then the following conditions are equivalent:
\begin{enumerate}
\item There exists an embedding $h:L\to \EE^{2k+1}$ which is SI.
\item Any embedding $h:L\to \EE^{2k+1}$ is SI.
\item The semicharacteristic of $L$ is an even number, i.e. $\chi^*(L)=0$ mod. $2$.
\end{enumerate}
Moreover any embedding $h:L\to \EE^{2k+1}$ is WI and CI. 
\end{theorem}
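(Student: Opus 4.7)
The \emph{moreover} clause and the equivalence $(1)\Leftrightarrow (2)$ can be disposed of quickly. By Theorem~\ref{t.aob}, every embedding $h:L\to\EE^{2k+1}$ is WI because $L$ is parallelizable. For CI, I invoke Proposition~\ref{prop.3and7} to produce an embedding $h':L\to\EE^{2k}$ bounding a compact stably parallelizable $(k+1)$-manifold $V\subset\EE^{2k}$ with trivial normal bundle; under the standard inclusion $\EE^{2k}\subset\EE^{2k+1}$, $V$ retains a trivial normal bundle and is a Seifert manifold for $h'(L)$ in $\EE^{2k+1}$, whence $h'(L)$ is CI by Theorem~\ref{seif-int}. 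Since $L$ is connected and $n=2k+1$, Wu's Theorem~\ref{T:Wu} yields that $h$ and $h'$ are isotopic, so $h$ is CI too; the same isotopy together with the isotopy extension theorem gives $(1)\Leftrightarrow (2)$.

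The core of the proof is $(2)\Leftrightarrow (3)$. By Theorem~\ref{thm.main2}, $h$ is SI iff it is TCI, and by Lemma~\ref{lem.unique} there is, up to isotopy, a unique preferred trivialization $\theta$ of $N_L$ with associated infinitesimal trivialization $\sigma_h:L\to\VV_{2k+1,k+1}$ of $\nu(L)$. I claim that $h$ is TCI iff $[\sigma_h]=0$ in $[L,\VV_{2k+1,k+1}]$. The direct implication is Proposition~\ref{prop:preferred}; for the converse, if $[\sigma_h]=0$ then, since $L$ is null-homotopic in the contractible ambient, $\sigma_h$ extends to a map $\tilde\sigma:\EE^{2k+1}\to\VV_{2k+1,k+1}$, which defines an extension of $\nu(L)$ as a trivial $(k+1)$-subbundle $\tilde\nu(L)\subset T(\EE^{2k+1})$ trivialized by $\tilde\sigma$, so $h$ is TCI.

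It remains to compute $[\sigma_h]\in\pi_k(\VV_{2k+1,k+1})=\ZZ_2$ (cf.\ Remark~\ref{R:computV}) and identify it with $\chi^*(L)$ mod.~$2$. Thanks to Wu's theorem I may take $h$ to be the $(2k+1)$-fattening of the specific embedding $h':L\to\EE^{2k}$ supplied by Proposition~\ref{prop.3and7}, whose preferred trivialization $\sigma_{h'}$ comes from the Seifert manifold $V$ and whose compression $f:L\to\EE^{k+1}$ satisfies $Ci(f)=\chi^*(L)$ mod.~$2$. Two homotopy exact sequences suffice. First, the fibration $\e^k=\VV_{k+1,1}\to\VV_{2k,k}\to\VV_{2k,k-1}$ already used in the proof of Proposition~\ref{prop.key} shows that the natural map $\pi_k(\e^k)=\ZZ\to\pi_k(\VV_{2k,k})=\ZZ_2$ is reduction mod.~$2$, so $[\sigma_{h'}]=Ci(f)$ mod.~$2$. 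Second, the fibration
\[
\VV_{2k,k}\longrightarrow\VV_{2k+1,k+1}\longrightarrow\VV_{2k+1,1}=\e^{2k}
\]
has $\pi_{k+1}(\e^{2k})=\pi_k(\e^{2k})=0$ for $k\geq 3$, so the inclusion induces an isomorphism $\pi_k(\VV_{2k,k})\to\pi_k(\VV_{2k+1,k+1})$; hence $[\sigma_h]$ is the image of $[\sigma_{h'}]$ under this isomorphism, giving $[\sigma_h]=\chi^*(L)$ mod.~$2$. Combining everything, $h$ is SI iff $[\sigma_h]=0$ iff $\chi^*(L)\equiv 0\pmod 2$.

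The main technical hurdle I foresee is not topological but bookkeeping: one must verify that the trivialization of $\nu(L)$ obtained by fattening the frame coming from $V$ (the outward normal of $\partial V\subset V$ plus a normal frame of $V$ in $\EE^{2k+1}$) is indeed a preferred trivialization in the sense of the definition before Lemma~\ref{lem.unique}, i.e.\ its fibers are null-homologous in $\EE^{2k+1}\setminus\inte N_L$. This is essentially guaranteed by the existence of $V$ itself, but the identification with the unique preferred class requires an argument in the spirit of the one used for links in Subsection~\ref{subsec1}.
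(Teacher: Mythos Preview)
Your proposal is correct and follows essentially the same route as the paper: Wu's theorem for $(1)\Leftrightarrow(2)$, reduction to the class $[\sigma_h]\in\pi_k(\VV_{2k+1,k+1})\cong\ZZ_2$ of the unique preferred trivialization, and the computation $[\sigma_h]=Ci(f)=\chi^*(L)$ mod~$2$ via the fattening fibration $\VV_{2k,k}\to\VV_{2k+1,k+1}\to\e^{2k}$ together with Proposition~\ref{prop.3and7}.

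There are only two minor differences worth recording. First, the paper obtains the ``moreover'' CI claim by citing \cite{BK96} directly rather than constructing a Seifert manifold, and for $(3)\Rightarrow(2)$ it simply invokes Theorems~\ref{thm.caract} and~\ref{thm.SIgreat} (TWI plus Lemma~\ref{lem.lift}), so only the implication $(2)\Rightarrow(3)$ requires the $[\sigma_h]$ computation; your argument instead proves both directions simultaneously via $[\sigma_h]=\chi^*(L)$, which is equally valid. Second, the ``bookkeeping hurdle'' you flag is handled in the paper by a cleaner trick than the one you sketch: rather than using the Seifert manifold $V$, the paper observes that for the $(2k+1)$-fattening, \emph{any} trivialization $\sigma_{h'}$ of $\nu(h')$ fattens to a $\sigma_h$ whose typical fiber is isotopic to $F=\partial N_L\cap(L\times\EE)$, i.e.\ a vertical push-off of $L$, and this $F$ is trivially null-homologous in $\EE^{2k+1}\setminus\inte N_L$; combined with Lemma~\ref{inject} (all trivializations of $\nu(h')$ have the same $\ZZ_2$-class since $2k\leq 2k$), this bypasses any dependence on $V$.
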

\begin{proof}
Theorem~\ref{t.aob} and Ref.~\cite{BK96} imply that any embedding $h:L\to \EE^{2k+1}$ is WI and CI. On the other hand, Wu's Theorem~\ref{T:Wu} ensures that all the embeddings $h:L\to \EE^{2k+1}$ are isotopic, so this proves $1\Leftrightarrow 2$. The fact that $3\Rightarrow 2$ follows from Theorems~\ref{thm.caract} and~\ref{thm.SIgreat}. To prove that $2\Rightarrow 3$ we proceed in three steps:

(1) Since all the embeddings $h:L\to \EE^{2k+1}$ are isotopic, we can assume that $h$ is the $(2k+1)$-fattening of an embedding $h':L\to \EE^{2k}$ with trivial normal bundle, which exists by Whitney's theorem~\cite{Wh44}. Let $\sigma_{h'}:L\to \VV_{2k,k}$ be a trivialization of the normal bundle of $h'$. Following Construction~\ref{const.fattening}, $\sigma_{h'}$ defines a trivialization $r\circ \sigma_{h'}=\sigma_h:L\to \VV_{2k+1,k+1}$ of the normal bundle of $h$ using the fattening sequence
$$
\xymatrix{
\VV_{2k,k}\ar[r]^r & \VV_{2k+1,k+1} \ar[r] & \e^{2k}
}\,.
$$
Since its associated homotopy exact sequence is
$$
\xymatrix{ \pi_{k+1}(\e^{2k})=0 \ar[r] & \pi_{k}(\VV_{2k,k}) \ar[r]^{r_*}& \pi_{k}(\VV_{2k+1,k+1}) \ar[r]& \pi_{k}(\e^{2k})=0 \,,}
$$
with $\pi_{k}(\VV_{2k,k})=\pi_{k}(\VV_{2k+1,k+1})=\ZZ_2$, we conclude that $r_*$ is an isomorphism and $[\sigma_h]=[\sigma_{h'}]$.

(2) Let $f:L\to \EE^{k+1}$ be a compression of $h'$. Proceeding as in the proof of Proposition~\ref{prop.key}, it follows from Lemma~\ref{inject} and the fact that $[\sigma_{h'}]\in \ZZ_2$ that $[\sigma_{h'}]=Ci(f)$ mod. $2$ for any trivialization of $\nu(h')$. Therefore Proposition~\ref{prop.3and7} and item $1$ imply that $[\sigma_h]=[\sigma_{h'}]=\chi^*(L)$ mod. $2$, and this does not depend on the trivialization of $\nu(h')$.

(3) We claim that the trivialization $\sigma_h$ obtained by fattening $\sigma_{h'}$ is preferred for any trivialization of $\nu(h')$. Indeed, since $h:L\to \EE^{2k}\times \EE$ is transverse to the factor $\EE$ and $\sigma_h$ is obtained from $\sigma_{h'}$ by adding a vertical vector, we have that any fibre of the associated differentiable trivialization $\phi_h$ is isotopic in $\partial N_L$ to $F:=\partial N_L\cap (L\times\EE)$, and this does not depend on $\sigma_{h'}$. The trivialization $\phi_h$ is hence preferred because $0=[F^\epsilon]\in H_k^\infty(V;\ZZ)$ when $F$ is endowed with the orientation $\epsilon$ induced by $L$.  The implication $2\Rightarrow 3$ then follows from the uniqueness of preferred trivializations, cf. Lemma~\ref{lem.unique}, and Proposition~\ref{prop:preferred}.    
\end{proof}

\begin{corollary}
No embedding of $\e^k$, $k\in\{1,3,7\}$, in $\EE^{2k+1}$ is SI.
\end{corollary}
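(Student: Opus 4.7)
The plan is to split into two cases according to the dimension $k$, invoking the characterization theorems just established.

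For $k \in \{3,7\}$, the sphere $\e^k$ is connected and parallelizable, so Theorem~\ref{thm.sk} applies. The task reduces to verifying that the semicharacteristic $\chi^*(\e^k)$ is odd. Writing $k = 2k'+1$, we have $k' = 1$ (if $k=3$) or $k'=3$ (if $k=7$), and in either case $k' < k$. Since $H_0(\e^k;\ZZ_2) = \ZZ_2$ and $H_i(\e^k;\ZZ_2) = 0$ for all $1 \leq i \leq k'$, a direct count gives
\[
\chi^*(\e^k) \;=\; \sum_{i=0}^{k'} \mathrm{rank}\,H_i(\e^k;\ZZ_2) \;=\; 1 \;\not\equiv\; 0 \pmod 2.
\]
By the equivalence $(1)\Leftrightarrow(3)$ in Theorem~\ref{thm.sk}, this rules out the existence of any SI embedding of $\e^k$ in $\EE^{2k+1}$.

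For $k = 1$, we cannot use Theorem~\ref{thm.sk} (which was stated for $k \in \{3,7\}$), but we can directly invoke Theorem~\ref{thm.silink}: a link $L = \bigcup_i L_i$ in $\EE^3$ is SI if and only if $\sum_{j\neq i}\mathrm{lk}(L_i,L_j) \equiv 1 \pmod 2$ for every index $i$. A knot has only one component, so this sum is empty and thus equal to $0 \not\equiv 1 \pmod 2$. Hence no knot in $\EE^3$ is SI, which is precisely Corollary~\ref{cor.nosi}.

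There is essentially no obstacle here, since both main ingredients---Theorem~\ref{thm.sk} and Theorem~\ref{thm.silink}---have already been proved. The only mildly delicate step is making sure one genuinely applies Theorem~\ref{thm.sk} in the right form: the equivalence requires $L$ to be a connected parallelizable manifold of dimension $3$ or $7$, which $\e^3$ and $\e^7$ obviously are, and the semicharacteristic computation above is elementary because all reduced $\ZZ_2$-homology of $\e^k$ below degree $k$ vanishes.
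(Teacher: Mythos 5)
Your proof is correct and follows the same route as the paper: the paper likewise dispatches $k=1$ via Corollary~\ref{cor.nosi} (i.e.\ Theorem~\ref{thm.silink} applied to a one-component link) and $k\in\{3,7\}$ via Theorem~\ref{thm.sk}. Your explicit check that $\chi^*(\e^k)=1$ is the only detail the paper leaves implicit, and it is computed correctly.
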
  
\begin{proof}
The case $k=1$ is just Corollary~\ref{cor.nosi}. The other cases follow from Theorem~\ref{thm.sk}.
\end{proof} 

\section{Applications to foliation theory}\label{Euc6}

As explained in Section \ref{S:pre}, a WI submanifold $L$ is a union of leaves of the simple foliation $\cF_{\Phi}$ defined by an arbitrary weak equation $\Phi$ of $L$. In Euclidean spaces a converse statement holds, thus establishing the equivalence of these two properties.

\begin{proposition}\label{P:foliat}
A submanifold $L\subset \EE^n$ of codimension $m\geq 2$ is WI if and only if it is a union of proper leaves of some codimension $m$ foliation of $\EE^n$.
\end{proposition}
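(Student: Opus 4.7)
The plan is to establish both implications separately, using Theorem~\ref{thm.main} as the pivotal tool for the nontrivial direction.

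For the ``only if'' direction, I would start with a weak equation $\Phi:\EE^n\to \EE^m$ of $L$ and consider the simple foliation $\cF_\Phi$ whose leaves are the connected components of the level sets $\Phi^{-1}(c)$. Since $\Phi$ is a submersion, each level set $\Phi^{-1}(c)$ is a closed embedded codimension $m$ submanifold of $\EE^n$, hence its connected components are properly embedded. To finish I would observe that $L\subset \Phi^{-1}(0)$ has the same dimension as $\Phi^{-1}(0)$ and is itself properly embedded, so $L$ is simultaneously open and closed in $\Phi^{-1}(0)$, i.e. a union of connected components, i.e. a union of (proper) leaves of $\cF_\Phi$.

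For the ``if'' direction, suppose $L$ is a union of proper leaves of a codimension $m$ foliation $\cF$ of $\EE^n$. The foliation has a globally defined normal bundle $\nu(\cF):=T(\EE^n)/T(\cF)$, a rank $m$ vector bundle over $\EE^n$. Since $\EE^n$ is contractible, $\nu(\cF)$ is trivial. Because each component of $L$ is a leaf of $\cF$, one has $T(L)=T(\cF)|_L$ as subbundles of $T(\EE^n)|_L$, and consequently $\nu(L)=\nu(\cF)|_L$ via the canonical identification of the normal bundle of a properly embedded leaf with the quotient bundle $T(\EE^n)/T(\cF)$ restricted to the leaf. Thus $\nu(L)$ admits the extension $\tilde\nu(L):=\nu(\cF)$ over $\EE^n$, which is trivial. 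Applying Theorem~\ref{thm.main} (condition $2\Rightarrow 1$), we conclude that $L$ is WI.

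No step is genuinely difficult: the forward direction is essentially a tautology modulo the dimension-matching argument for connected components, and the reverse direction reduces, via Theorem~\ref{thm.main}, to the contractibility of $\EE^n$ plus the identification of the normal bundle of a leaf with the restriction of $\nu(\cF)$. The only point requiring mild care is the identification $\nu(L)=\nu(\cF)|_L$, which is where the hypothesis that the leaves are \emph{proper} (so the embedded normal bundle really agrees with the foliation's normal bundle, rather than being a quotient relative to a dense immersion) is used.
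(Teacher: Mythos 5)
Your proof is correct and follows essentially the same route as the paper: the forward direction is the observation that $L$ is a union of (proper) leaves of the simple foliation $\cF_\Phi$ defined by any weak equation, and the reverse direction extends $\nu(L)$ over $\EE^n$ by the normal bundle $\nu(\cF)$ of the foliation, which is trivial by contractibility, and then invokes the homotopical characterization of weak integrability (the paper cites Proposition~\ref{prop.normal}, you cite Theorem~\ref{thm.main}; in $\EE^n$ these are interchangeable). Your extra care about why properness of the leaves is needed for the identification $\nu(L)=\nu(\cF)|_L$ is a welcome elaboration of a point the paper leaves implicit.
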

\begin{proof}
If $L$ is a union of proper leaves in $\EE^n$, we can extend the normal bundle $\nu(L)$ of $L$ over $\EE^n$ by the normal bundle of the foliation, and hence $L$ is WI by Proposition \ref{prop.normal}. The converse statement is trivial.
\end{proof}

In light of Proposition \ref{P:foliat}, the problem of realizability of $L$ as a union of proper leaves in $\EE^n$ is equivalent to the study of its weak integrability, which seems a priori to be a much stronger requirement. In particular, all the results of our Chapter~\ref{Ch:Euc} can be translated into this context of realizability, however it is important to notice that this approach settles the question of realizability only for compact manifolds: it does not take into account the fact that an open manifold $L$ could possibly be a non-proper leaf. The following is our most significant result in this context, it includes Theorem F and Corollary G of Section~\ref{S.mostrelevant}.

\begin{theorem}\label{T:realization} Let $L$ be a compact and connected $k$-dimensional submanifold of $\EE^n$. Then:
\begin{enumerate}
\item If $k\notin\{3,7\}$, $L$ is a leaf of a foliation of $\EE^n$, $n\geq k+2$, if and only if it is parallelizable and has trivial normal bundle.
\item If $k \in \{3,7\}$, $L$ is a leaf of a foliation of $\EE^n$, $n\leq 2k$, if and only if it is parallelizable, it has trivial normal bundle and $\chi^*(L)=0$ mod. $2$. For example, no $\ZZ_2$-homology sphere of dimension $k$ can be realized as a leaf of a foliation of $\EE^n$ for $n\leq 2k$.
\item For any $k\geq 1$, $L$ is a leaf of a foliation of $\EE^n$, $n\geq 2k+1$, if and only if it is parallelizable.
\end{enumerate}
\end{theorem}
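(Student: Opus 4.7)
The plan is to translate each of the three statements into a question about weak integrability, and then to apply the classification of WI embeddings developed in Sections~\ref{S:tangent}--\ref{Euc4}. Since $L$ is compact and connected, Proposition~\ref{P:foliat} asserts that $L$ is a union of proper leaves of a codimension $n-k$ foliation of $\EE^n$ precisely when $L$ is WI. A compact connected $k$-submanifold contained in the $k$-dimensional zero set $\Phi^{-1}(0)$ of a submersion is both open and closed in $\Phi^{-1}(0)$ and therefore coincides with a single connected component, i.e.\ with a leaf of $\cF_\Phi$. Hence ``$L$ is a leaf of a foliation of $\EE^n$'' is equivalent to ``the embedding $L\subset\EE^n$ is WI,'' and this is the form I shall prove.

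The necessary direction of each statement follows immediately from Lemma~\ref{L:easy}: any WI submanifold of the contractible space $\EE^n$ is parallelizable and has trivial normal bundle. For statement~$2$ there is the additional necessity of the semicharacteristic condition, which I extract as follows: if some embedding of $L$ with trivial normal bundle in $\EE^n$ ($k+2\leq n\leq 2k$, $k\in\{3,7\}$) is WI, the dichotomy of Proposition~\ref{prop.small} promotes $L$ to a TWI manifold, and Theorem~\ref{thm.caract} then forces $\chi^*(L)\equiv 0\pmod 2$.

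For sufficiency I dispatch each statement by invoking the appropriate classification theorem applied to the given embedding. For statement~$3$, Theorem~\ref{t.aob} asserts that in the range $n\geq 2k+1$ any embedding of a parallelizable $k$-manifold is WI, regardless of its normal bundle. For statement~$1$, Theorem~\ref{T:classif} shows that every parallelizable manifold of dimension $k\notin\{3,7\}$ is TWI, so triviality of $\nu(L)$ suffices to ensure that the embedding $L\subset\EE^n$ is WI for any $n\geq k+2$; in the range $n\geq 2k$ the triviality of $\nu(L)$ is in any case automatic by Lemma~\ref{L:trivnu}, which is consistent with statement~$3$. For statement~$2$, the hypothesis $\chi^*(L)\equiv 0\pmod 2$ together with Theorem~\ref{thm.caract} makes $L$ a TWI manifold, so the embedding in $\EE^n$ with $k+2\leq n\leq 2k$ is WI as soon as $\nu(L)$ is trivial.

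The main bookkeeping obstacle is to ensure that Proposition~\ref{prop.small} and Theorem~\ref{thm.caract} together cover the range $k+2\leq n\leq 2k$ for $k\in\{3,7\}$ and that the resulting sufficient conditions patch consistently with Theorem~\ref{t.aob}'s range $n\geq 2k+1$; once this is done there is no gap. The remark on $\ZZ_2$-homology spheres is then a direct corollary of statement~$2$: such an $L$ of odd dimension $k=2k'+1$ satisfies $H_0(L;\ZZ_2)=\ZZ_2$ and $H_i(L;\ZZ_2)=0$ for $1\leq i\leq k'$, so $\chi^*(L)=1\not\equiv 0\pmod 2$, which obstructs its realization as a leaf in $\EE^n$ for $n\leq 2k$ (if $L$ additionally fails to be parallelizable it cannot be a leaf either, a fortiori).
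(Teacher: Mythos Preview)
Your proof is correct and follows essentially the same approach as the paper: you translate the problem to weak integrability via Proposition~\ref{P:foliat}, then invoke Theorem~\ref{T:classif} for statement~1, Theorem~\ref{thm.caract} (together with Proposition~\ref{prop.small}/Theorem~\ref{T:TWI1}) for statement~2, and Theorem~\ref{t.aob} for statement~3. The paper's proof is more terse, citing only the three theorems, while you spell out the necessity and sufficiency directions separately and verify the $\ZZ_2$-homology sphere remark explicitly; this added detail is all accurate.
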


\begin{proof}
Taking into account Proposition \ref{P:foliat}, statement $1$ follows from Theorem~\ref{T:classif}, statement $2$ is proved using Theorem~\ref{thm.caract}, and statement $3$ is a consequence of Theorem~\ref{t.aob}.
\end{proof}

Note that for $k \in \{3,7\}$, the sphere $\e^k$ is easily realized as a leaf of a simple foliation of $\EE^n$, $n\geq 2k+1$, using the quaternionic or octonionic Hopf fibration $\e^k\to \e^{2k+1}\to \e^{k+1}$ and identifying $\EE^{2k+1}$ with the complement of a point $\{p\}$ in $\e^{2k+1}$. Note also that Theorem \ref{T:realization} provides a new insight into Vogt's question \cite{V93} asking which closed 3-manifolds occur as leaves of a foliation of $\EE^5$ with all leaves compact (but a priori not necessarily diffeomorphic).

\begin{appl}
Let $p$ be an odd number and $L_p:=L_{p,1}\sharp L_{p,1}$ be the connected sum of two copies of the lens space $L_{p,1}$. It can be proved \cite{GiLi83} that $L_p$ embeds into $\EE^4$ and thus so does (properly) the non-compact manifold $L_*:=\sharp_{p\in\,\text{odd}}\,L_p$. According to Application~\ref{appl.caract} and Corollary~\ref{C:sums}, $L_p$ is not TWI so it cannot be a leaf in $\EE^n$ for $n\leq 6$, but Corollary \ref{thm.cod1} implies that $L_*$ is SI and hence a leaf of a simple codimension one foliation of $\EE^4$. In contrast, $L_*$ cannot be a leaf, proper or not, of a codimension one foliation of any closed $4$-manifold \cite{Gh85,In85}. 

In general, any open codimension $m\geq 1$ submanifold properly embedded in $\EE^n$ with trivial normal bundle, is a leaf of a simple foliation of $\EE^n$, cf. Theorem~\ref{thm.open} and Proposition~\ref{P:foliat}.
\end{appl}

We would also like to make some remarks about applications of our results to a field which has aroused some interest in the last few years. First we need a definition.

\begin{definition}\label{D:crit}
A closed set $K\subset \EE^n$ is \textit{critical} if there exists a smooth function $F:\EE^n\to\EE$ such that $K=\{x\in\EE^n:\dd F(x)=0\}$.
\end{definition}

\begin{proposition}\label{P:critical}
A submanifold $L\subset \EE^n$ of codimension $m\geq 1$ which is SI is also critical.
\end{proposition}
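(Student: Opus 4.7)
The plan is to exhibit an explicit smooth function on $\EE^n$ whose critical set equals $L$. The natural candidate is the squared norm of a strong equation: if $\Phi:\EE^n\to\EE^m$ is a submersion with $\Phi^{-1}(0)=L$ (which exists because $L$ is SI), set
$$F(x):=|\Phi(x)|^2=\sum_{i=1}^m \Phi_i(x)^2,$$
where $\Phi=(\Phi_1,\dots,\Phi_m)$ in the canonical coordinates of $\EE^m$. Clearly $F\in C^\infty(\EE^n,\EE)$.

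First I would compute the differential:
$$\dd F(x)=2\sum_{i=1}^m \Phi_i(x)\,\dd\Phi_i(x).$$
If $x\in L$, every $\Phi_i(x)=0$, so $\dd F(x)=0$, which shows that $L$ is contained in the critical set of $F$.

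For the converse inclusion, I would use the fact that $\Phi$ is a submersion: at every point $x\in\EE^n$ the covectors $\dd\Phi_1(x),\dots,\dd\Phi_m(x)$ are linearly independent in $T_x^*\EE^n$. The relation $\sum_i \Phi_i(x)\,\dd\Phi_i(x)=0$ then forces $\Phi_i(x)=0$ for every $i$, that is $x\in\Phi^{-1}(0)=L$. Hence the critical set of $F$ equals $L$ and, according to Definition~\ref{D:crit}, $L$ is critical.

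No step of this argument is substantial; the only thing to verify is the elementary linear-algebra assertion that when the $\dd\Phi_i$ are linearly independent, $\sum_i \Phi_i(x)\,\dd\Phi_i(x)=0$ implies all $\Phi_i(x)=0$, which is immediate. Note that the argument uses only that $L=\Phi^{-1}(0)$ for some submersion $\Phi$, hence applies uniformly in every codimension $m\geq 1$, and makes no appeal to the specific results of the previous sections.
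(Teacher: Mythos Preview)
Your argument is correct and follows exactly the same approach as the paper: one sets $F:=\sum_{i=1}^m \Phi_i^2$ for a strong equation $\Phi$ of $L$. You have simply spelled out the verification that $\{\dd F=0\}=L$ via the linear independence of the $\dd\Phi_i$, which the paper leaves implicit.
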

\begin{proof}
 A strong equation $\Phi:\EE^n\to \EE^m$ of $L$ defines $m$ coordinate functions $\Phi_i:\EE^n\to\EE$ and $L$ is critical with respect to the function $F:=\sum_{i=1}^m \Phi_i^2$.
\end{proof}

Using Proposition \ref{P:critical} and the results of Section \ref{Euc5} the following corollary follows immediately. It extends a classical theorem in the subject which states that any tame link in $\EE^3$ is critical~\cite{GP93}.

\begin{corollary} Any torus $\TT^2$ in $\EE^n$, $n\geq 4$, is critical. Any parallelizable $k$-submanifold $L \subset \EE^n$, $n \geq 2k+2$, is also critical.

\end{corollary}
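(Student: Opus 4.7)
The plan is to invoke Proposition~\ref{P:critical}, which says that any SI submanifold is critical, and then establish that both families of submanifolds in the statement are in fact SI. The work therefore splits into two parts according to whether the ambient dimension sits in the high-codimension range covered by Theorem~\ref{thm.SIgreat} or in the low-codimension regime where extra arguments are needed for the torus case.

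For the second claim, any parallelizable $k$-submanifold $L\subset \EE^n$ with $n\geq 2k+2$ is SI by the first sentence of Theorem~\ref{thm.SIgreat}, so applying Proposition~\ref{P:critical} immediately shows $L$ is critical. This takes care of $\TT^2\subset \EE^n$ for all $n\geq 6$ as well, since $\TT^2$ is parallelizable and $6=2\cdot 2+2$.

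For the torus claim in the remaining dimensions $n=4,5$, I would argue as follows. First, $\TT^2$ is TWI by Proposition~\ref{P:tori}. When $n=5$, $\TT^2$ is connected, parallelizable, TWI with $k=2\geq 2$, so the restrictive case $(1)$ of Theorem~\ref{thm.SIgreat} applies and any embedding of $\TT^2$ in $\EE^5$ is SI. When $n=4$, the embedding has codimension two, so its normal bundle is trivial by Lemma~\ref{L:trivnu}; being TWI, $\TT^2$ is then WI in $\EE^4$. Moreover any codimension-two submanifold of $\EE^n$ is a complete intersection by \cite{BK96}, so $\TT^2\subset \EE^4$ is both WI and CI; Theorem~\ref{T:lowdim} then yields SI. In every case Proposition~\ref{P:critical} gives that the submanifold is critical.

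There is no real obstacle here: the argument is essentially a bookkeeping exercise routing the submanifold through the correct theorem depending on the codimension. The only point deserving care is the edge case $n=2k+1$ for the torus (i.e. $n=5$), where one must verify the hypotheses of the restrictive clause in Theorem~\ref{thm.SIgreat}; connectedness of $\TT^2$ and its total weak integrability from Proposition~\ref{P:tori} are exactly what is needed.
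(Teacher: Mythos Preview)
Your proposal is correct and follows essentially the same route as the paper: the paper states that the corollary follows immediately from Proposition~\ref{P:critical} together with the results of Section~\ref{Euc5}, and the example immediately preceding the corollary lays out precisely the case analysis you give for $\TT^2$ (codimension two plus Theorem~\ref{T:lowdim} for $n=4$, the connected TWI clause of Theorem~\ref{thm.SIgreat} for $n=5$, and the general clause of Theorem~\ref{thm.SIgreat} for $n\geq 6$).
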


\chapter{Final remarks}\label{Ch:final}

\section{Analytic manifolds and integrable embeddings}

In the entire paper we have worked in the smooth category, i.e. weak and strong equations are maps $\Phi\in C^\infty(M,\EE^m)$. Indeed many of the techniques we have used, e.g. extension theorems and the relative Phillips-Gromov h-principle, are intrinsically $C^\infty$. Therefore it is natural to ask whether our results can be extended to the real-analytic class provided that $M$ is an analytic manifold. A deep theorem of Shiota allows us to settle this question quite easily.

\begin{proposition}
Let $L$ be an analytic submanifold of an analytic manifold $M$. If $L$ is WI [resp. SI], it admits a weak [resp. strong] equation of class~$C^\omega$.
\end{proposition}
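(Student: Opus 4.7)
The plan is to combine the existence of analytic tubular neighborhoods with a relative form of Shiota's approximation theorem: a smooth map from an analytic manifold to $\EE^m$ can be approximated in the strong Whitney $C^\infty$-topology by an analytic map that coincides with the original on a neighborhood of any prescribed closed analytic subset where the original is already analytic. Throughout, $\Phi$ denotes the given smooth weak or strong equation of $L$, and the goal is to produce an analytic $\Phi^\omega$ with the same properties.

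The first step will be to replace $\Phi$ by a globally smooth equation that is already analytic on a neighborhood of $L$. Since $L$ is analytic in $M$, it admits an analytic tubular neighborhood $\pi^\omega:N_L\to L$ with analytic normal bundle $\nu^\omega(L)$. The hypothesis that $L$ is WI implies, by Lemma \ref{L:easy}, that $\nu(L)$ is smoothly trivial; Grauert's theorem then yields an analytic trivialization of $\nu^\omega(L)$, and composing with $\pi^\omega$ produces an analytic submersion $\theta^\omega:N_L\to\EE^m$ with $(\theta^\omega)^{-1}(0)=L$. After possibly flipping a sign of $\theta^\omega$, one may assume that $\dd\theta^\omega|_L$ and $\dd\Phi|_L$ induce the same orientation on $\nu(L)$; shrinking $N_L$ so that $\Phi^{-1}(0)\cap N_L=L$ in the SI case, Hadamard's lemma in the trivialization $N_L\simeq L\times D^m$ produces a smooth $B:N_L\to GL(m,\RR)$ such that $\Phi=B\cdot\theta^\omega$, with $B|_L\in GL^+(m,\RR)$. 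Choosing a smooth cutoff $\lambda$ equal to $1$ on a smaller tubular neighborhood $N_L'\subset N_L$ and vanishing outside $N_L$, together with a smooth homotopy $B_t:N_L\to GL^+(m,\RR)$ from the identity to $B$, I set $\Phi_1:=B_{1-\lambda}\cdot\theta^\omega$ inside $N_L$ and $\Phi_1:=\Phi$ outside $N_L$. By construction $\Phi_1$ is a global smooth submersion, analytic on $N_L'$ where it coincides with $\theta^\omega$, and in the SI case $(\Phi_1)^{-1}(0)=L$ because $B_{1-\lambda}$ is invertible throughout.

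The second step is to apply Shiota's relative approximation theorem to $\Phi_1$, with the closed analytic set taken as a smaller tubular neighborhood $\overline{N_L''}\subset N_L'$. This yields an analytic $\Phi^\omega:M\to\EE^m$ that agrees with $\Phi_1$ on $N_L''$ and is arbitrarily close to $\Phi_1$ in the strong Whitney $C^\infty$-topology; openness of the submersion condition ensures that $\Phi^\omega$ is a submersion, and since $\Phi^\omega|_{N_L''}=\theta^\omega$, it vanishes on $L$ and is an analytic weak equation of $L$, settling the WI case. For the SI case I would additionally impose $|\Phi^\omega-\Phi_1|(x)<|\Phi_1(x)|$ for every $x\in M\setminus N_L''$ (a legitimate constraint in the Whitney topology because $\Phi_1$ has no zeros outside $N_L''$), which forces $(\Phi^\omega)^{-1}(0)\subset N_L''$; combined with $\Phi^\omega|_{N_L''}=\theta^\omega$, this gives $(\Phi^\omega)^{-1}(0)=L$.

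The main obstacle is the smooth interpolation performed in the first step: one must build the homotopy $B_t$ smoothly in $(t,x)$ and keep it valued in the identity component of $GL(m,\RR)$, since otherwise the submersion property could be lost or spurious zeros of $\Phi_1$ introduced in the SI case. Connectedness of $GL^+(m,\RR)$ together with the orientation matching of $\dd\Phi|_L$ and $\dd\theta^\omega|_L$ makes this possible, but the construction is the technical heart of the argument; once $\Phi_1$ is in hand, the remainder is a routine application of Shiota's theorem and Whitney-topology estimates.
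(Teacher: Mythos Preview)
Your approach has a genuine and fatal gap in the second step. The ``relative Shiota approximation theorem'' you invoke---producing an analytic $\Phi^\omega$ on $M$ that \emph{coincides} with $\Phi_1$ on the open tubular neighborhood $N_L''$ and simultaneously approximates $\Phi_1$ in the Whitney topology elsewhere---cannot exist. By the identity principle for real-analytic functions, an analytic map on a connected manifold is completely determined by its restriction to any nonempty open subset. So once you require $\Phi^\omega|_{N_L''}=\theta^\omega|_{N_L''}$, the function $\Phi^\omega$ is uniquely pinned down on the connected component of $M$ containing $N_L''$; there is no remaining freedom to make it close to $\Phi_1$ outside $N_L''$, and in fact $\theta^\omega$ need not even extend analytically to all of $M$. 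Relative approximation theorems of the type you describe work in the smooth category via partitions of unity, but this mechanism is unavailable for analytic maps.

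There is also a secondary issue in your first step: writing $\Phi=B\cdot\theta^\omega$ with $B$ valued in $GL(m,\RR)$ requires $\Phi^{-1}(0)\cap N_L=L$, which you only arrange in the SI case; in the WI case other components of $\Phi^{-1}(0)$ may enter $N_L$, forcing $B$ to drop rank. Moreover, $\Phi_1=B_{1-\lambda}\cdot\theta^\omega$ is not obviously a submersion, since $d\Phi_1$ picks up a term $(dB_{1-\lambda})\cdot\theta^\omega$ which may spoil surjectivity away from $L$.

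The paper sidesteps all of this by using a quite different result of Shiota: any smooth submersion $\Phi_0:M\to\EE^m$ becomes analytic after composing with a suitable smooth diffeomorphism $\phi_0$ of $M$. This gives an analytic submersion $\Phi_1$ whose zero set contains the analytic submanifold $\phi_0(L)$; one then uses that two smoothly diffeomorphic analytic submanifolds are analytically diffeomorphic to carry $\phi_0(L)$ back to $L$ by an analytic diffeomorphism $\phi_1$, and $\Phi_1\circ\phi_1^{-1}$ is the desired analytic equation. No approximation relative to an open set is ever needed.
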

\begin{proof}
Let $\Phi_0:M\to \EE^m$ be a smooth weak equation of $L$. The fact that $\Phi_0$ is a submersion implies \cite[Theorem 10.1]{Sh81} that there is a diffeomorphism $\phi_0\in \text{Diff}^\infty(M)$ such that $\Phi_1:=\Phi_0\circ \phi_0^{-1}$ is of class $C^\omega(M,\EE^m)$. Of course $\phi_0(L)\subset \Phi_1^{-1}(0)$ and since both $L$ and $\phi_0(L)$ are analytic submanifolds related by a smooth diffeomorphism $\phi_0$, there is an analytic diffeomorphism $\phi_1$ of $M$ such that $\phi_1(\phi_0(L))=L$ (see \cite[Corollary 8.6]{Sh81} or also \cite{HM62}). Then $\Phi:=\Phi_1\circ \phi_1^{-1}$ is a weak equation of $L$ of class $C^\omega$; it will be strong if $\Phi_0$ is strong.
\end{proof}

The extension of our results to the holomorphic setting is more complicated. In this context, $L$ is a closed complex submanifold of a Stein manifold $M$ and using a holomorphic version of the relative Phillips-Gromov h-principle \cite[Theorem 2.5]{Fo03}, it is possible to extend some of our results concerning weak integrability. The study of strong integrability is less straightforward because smooth complete intersections are not necessarily holomorphic complete intersections \cite{Fo01}, which gives rise to several difficulties. A careful analysis of the holomorphic situation is left to the interested reader.

Finally some words on the polynomial setting are in order. It is well known that any smooth compact submanifold $L\subset \EE^n$ embedded with trivial normal bundle is isotopic to a non-singular compact component of an algebraic set \cite[Section 14]{BCR98}. This general result is explicitly illustrated by Perron in \cite{Pe82} where he constructs a polynomial map $F:\EE^4\to\EE^2$ such that $F^{-1}(0)\cap B_\epsilon$ is a figure-eight knot in $B_\epsilon:=\{x\in\EE^4:|x|=\epsilon\}$, provided that $\epsilon>0$ is small enough. Using the stereographic projection and possibly multiplying by some non-vanishing global factor, it is then easy to obtain a polynomial map $G:\EE^3\to\EE^2$ for which $0$ is a regular value and $G^{-1}(0)$ is a figure-eight knot, but this map $G$ is not a submersion. More generally, if $L$ is WI we do not know whether it
admits or not a polynomial (weak) equation, possibly up to isotopy. In particular the following questions arise:

\begin{problem}
Are polynomial submersions $Sub_{\text{pol}}(\EE^n,\EE^m)$ dense in smooth submersions $Sub(\EE^n,\EE^m)$ in some $C^r$-weak topology?
\end{problem}
\begin{problem}
Does there exist, up to isotopy, a polynomial weak equation $\Phi:\EE^n\to \EE^{n-1}$ for any (finite) link $L\subset \EE^n$, $n\geq 3$?
\end{problem}

\section{Open problems}

To finish this work, we state some more questions which we have not been able to solve.

%

\begin{problem} A compact $3$-manifold can be realized as a leaf of a foliation in $\EE^5$ if and only if its semicharacteristic is $0$ mod. $2$. Characterize those which can be realized as a leaf of a compact foliation in $\EE^5$ (compare with Vogt's question~\cite{V93}).
\end{problem}

\begin{problem}

Let $L$ be a WI submanifold of $\EE^n$. The results in Sections~\ref{Euc5} and~\ref{S:miyoshi} imply that there are two kind of obstructions which prevent $L$ from being SI: 
\begin{enumerate}
\item If $n\leq 2k$, $L$ is not SI if and only if it is not CI. 
\item If $n=2k+1$, $L$ is not SI if and only if it is CI but not TCI or it has several connected components and is not CI.
\end{enumerate}

The knots and links in $\EE^3$ provide examples of the second kind. Therefore, for the first kind, it remains open to find WI submanifolds which are not complete intersections in $\EE^n$ for $n\leq 2k$. If would also be interesting to find connected manifolds whose CI (and hence SI) character in $\EE^n$, $n\leq 2k$, depends on the embedding (with trivial normal bundle). Finally, we notice that Question~\ref{conjci} remains open for $L\subset \EE^{2k+1}$ with several compact components.   
\end{problem}

\begin{problem}
The proofs of this paper are non-constructive and hence we do not provide any tool to obtain explicit weak or strong equations for a given submanifold $L$. In particular, it is open to construct an explicit $C^\omega$ weak equation $\Phi:\EE^3\to \EE^2$ (in terms of elementary functions) for a knot (or link) in $\EE^3$.
\end{problem}

\end{document}